\documentclass[11pt]{aims}
\usepackage{amsmath, amssymb, mathrsfs}
  \usepackage{paralist}
  \usepackage{graphics, color} %% add this and next lines if pictures should be in esp format
  \usepackage{epsfig} %For pictures: screened artwork should be set up with an 85 or 100 line screen
\usepackage{graphicx}  \usepackage{epstopdf}%This is to transfer .eps figure to .pdf figure; please compile your paper using PDFLeTex or PDFTeXify.
 \usepackage[colorlinks=true]{hyperref}
 \hypersetup{urlcolor=blue, citecolor=red}
 \usepackage[toc,page]{appendix}
\usepackage{chngcntr}
%% COMMENT Environment
\usepackage{listings}% http://ctan.org/pkg/listings
\lstset{
  basicstyle=\ttfamily,
  mathescape
}
%\begin{lstlisting}
%Some verbatim text and $f(x)=ax^2+bx+c$.
%\end{lstlisting}

\usepackage[left=2cm,bottom=2cm,top = 2cm, right =2cm]{geometry}

\usepackage{titlesec}
\titleformat{\section}{\Large\bfseries}{\thesection.}{4pt}{}
\titleformat{\subsection}{\large\bfseries}{\thesection.\arabic{subsection}.}{4pt}{}
\titleformat{\subsubsection}{\bfseries}{\thesection.\arabic{subsection}.\arabic{subsubsection}.}{4pt}{}
\titleformat*{\paragraph}{\bfseries}
\titleformat*{\subparagraph}{\bfseries}
\setcounter{secnumdepth}{3}
%
%  \textheight=8.4 true in
%   \textwidth=5.6 true in
%    \topmargin 30pt
%     \setcounter{page}{1}
%          
%\def\currentvolume{}
% \def\currentissue{}
%  \def\currentyear{}
%   \def\currentmonth{}
%    \def\ppages{}

\newtheorem{theorem}{Theorem}[section]

\newtheorem{lemma}[theorem]{Lemma}
\newtheorem{proposition}[theorem]{Proposition}
\theoremstyle{definition}
\newtheorem{definition}[theorem]{Definition}
\newtheorem{remark}[theorem]{Remark}
\newtheorem{comment}[theorem]{Comment}

\newcommand{\Rb}{\mathbb{R}}
\newcommand{\Ac}{\mathcal{A}}

\newcommand{\Cc}{\mathcal{C}}
\newcommand{\Dc}{\mathcal{D}}
\newcommand{\Ec}{\mathcal{E}}
\newcommand{\Fc}{\mathcal{F}}
\newcommand{\Gc}{\mathcal{G}}

\newcommand{\Oc}{\mathcal{O}}

\newcommand{\Mc}{\mathcal{M}}

\newcommand{\Vc}{\mathcal{V}}

\newcommand{\Pc}{\mathcal{P}}

%% SIMPLIFIED NOTATIONS

\newcommand{\inn}{\textup{in}}
\newcommand{\out}{\textup{ex}}

%% OPERATOR
\newcommand{\Ls}{\mathscr{L}}
\newcommand{\Ms}{\mathscr{M}}
\newcommand{\Hs}{\mathscr{H}}
\newcommand{\As}{\mathscr{A}}

\newcommand{\rj}{\left< r \right>}

\newcommand{\pa}{\partial}
\newcommand{\ba}{\begin{array}}
\newcommand{\ea}{\end{array}}
\newcommand{\la}{\langle}
\newcommand{\ra}{\rangle}
\newcommand{\e}{\varepsilon}
\newcommand{\tq}{\tilde q}
\newcommand{\tnu}{\tilde \nu}

\newcommand{\tr}{\tilde r}

\newcommand{\tm}{\tilde m}

%%      -------------------------------------------------------------------------------
%%      -------------------------- BEGIN ARTICLE ----------------------------
%%      -------------------------------------------------------------------------------

\begin{document}                        %% Standard LaTeX command

%%      -----------------------------------------------------------------------
%%      -------------------------------- TITLE -----------------------------
%%      -----------------------------------------------------------------------

\title{Refined description and stability for singular solutions of the 2D Keller-Segel system}

%%      -----------------------------------------------------------------------------
%%      ------------------------------- AUTHORS -----------------------------
\author[C. Collot]{Charles Collot}
\address{Courant Institute of Mathematical Sciences, New York University, 251 Mercer Street, New York, NY 10003, United States of America.}
\email{(Expired email)  cc5786@nyu.edu
(Current email) charles.collot@cyu.fr}
\author[T. Ghoul]{Tej-Eddine Ghoul}
\address{Department of Mathematics, New York University in Abu Dhabi, Saadiyat Island, P.O. Box 129188, Abu Dhabi, United Arab Emirates.}
\email{teg6@nyu.edu}
\author[N. Masmoudi]{Nader Masmoudi}
\address{Department of Mathematics, New York University in Abu Dhabi, Saadiyat Island, P.O. Box 129188, Abu Dhabi, United Arab Emirates.}
\email{masmoudi@cims.nyu.edu}
\author[V. T. Nguyen]{Van Tien Nguyen}
\address{Department of Mathematics, New York University in Abu Dhabi, Saadiyat Island, P.O. Box 129188, Abu Dhabi, United Arab Emirates.}
\email{Tien.Nguyen@nyu.edu}

\begin{abstract}
We construct solutions to the two dimensional parabolic-elliptic Keller-Segel model for chemotaxis that blow up in finite time $T$. The solution is decomposed as the sum of a stationary state concentrated at scale $\lambda$ and of a perturbation. We rely on a detailed spectral analysis for the linearised dynamics in the parabolic neighbourhood of the singularity performed by the authors in \cite{CGNNarx19a}, providing a refined expansion of the perturbation. Our main result is the construction of a stable dynamics in the full nonradial setting for which the stationary state collapses with the universal law $\lambda \sim 2e^{-\frac{2+\gamma}{2}}\sqrt{T-t}e^{-\sqrt{\frac{|\ln (T-t)|}{2}}}$ where $\gamma$ is the Euler constant. This improves on the earlier result by Raphael and Schweyer \cite{RSma14} and gives a new robust approach to so-called type II singularities for critical parabolic problems. A by-product of the spectral analysis we developed is the existence of unstable blowup dynamics with speed $\lambda_\ell \sim C_0(T-t)^{\frac{\ell}{2}} |\ln(T-t)|^{-\frac{\ell }{2(\ell - 1)}}$ for $\ell \geq 2$ integer. 

\end{abstract}

% With AMS-LaTeX, \maketitle follows the abstract
\maketitle   

%%      ---------------------------------------------------------------------
%%      ------------------- TABLE OF CONTENTS (OPTIONAL) --------------------
%%      ---------------------------------------------------------------------

%% ***** IF YOUR PAPER IS OVER 40 PAGES AND YOU WISH TO HAVE A TABLE
%% ***** OF CONTENTS, PLEASE UNCOMMENT THE FOLLOWING LINE

% \tableofcontents

%%      ---------------------------------------------------------------------
%%      ---------------------------- BODY OF PAPER --------------------------
%%      ---------------------------------------------------------------------

%%      Please input or insert the body of your paper here.

\section{Introduction}
\subsection{The Keller-Segel system}
This paper is concerned with the Keller-Segel system modelling chemotaxis. \textit{Chemotaxis} is a biological phenomenon
describing the change of motion of a population density (of certain cells, animals, and of particles as well) in response (\textit{taxis}) to an external chemical stimulus spread in the environment where they reside. The chemical signal can be secreted by the species itself or supplied to it by an external source. As a consequence, the species changes its movement toward (\textit{positive chemotaxis}) or away from (\textit{negative chemotaxis}) a higher concentration of the chemical substance. A possible fascinating issue of a positive chemotactical movement is the aggregation of the organisms involved to form a more complex organism or body. The first mathematical model for chemotaxis was proposed by Keller-Segel \cite{KSjtb70} to describe the aggregation of the slime mold amoebae Dictyostelium discoideum (see also Patlak \cite{PATbmb53} for an earlier model and \cite{KSjtb71a}, \cite{KSjtb71b}, \cite{KELbook80} for various assessments). Since the publication of \cite{KSjtb70}, a large literature has addressed the mathematical, biological and medical aspects of \textit{chemotaxis}, showing the importance of the problem and the great interest that different kinds of scientists carry on it. We recommend the reference \cite{Hjdmv03} for a survey of the mathematical problems encountered in the study of the Keller-Segel model and also a wide bibliography, including references on other types of models describing \textit{chemotaxis}.

The present paper deals with a simplified version of the Keller-Segel model introduced by Nanjundiah \cite{Njtb73}, which reads as follows:
\begin{equation}\label{eq:KS}
\arraycolsep=1.4pt\def\arraystretch{1.6}
\left\{ \begin{array}{rl}
\partial_t u &= \Delta u - \nabla \cdot (u \nabla \Phi_u),\\
0 & =\Delta \Phi_u + u,
\end{array}
\right. \quad (x,t)\in \mathbb{R}^2\times \mathbb{R}_+.
\end{equation}
Here $u$ represents the cell density, and $ \Phi_u$ is the
concentration of chemoattractant which can be defined directly by
\begin{equation}
\Phi_u(x,t) = -\frac{1}{2\pi}\int_{\mathbb{R}^2}\log|x - y|u(y,t)dy.
\end{equation}
The nonlinear term $\nabla . (u \nabla \Phi_u)$ models the cell movement towards higher concentrations of the chemical signal. The more the cells aggregate, the more is the attracting chemical substance produced by the cells. This process is counterbalanced by cell diffusion, but if the cell density is sufficiently large, the nonlocal chemical interaction counterbalances the diffusion and results in a blowup of the cell density. The solution blows up in finite time $T$ in the sense that
$$\limsup_{t \to T}\|u(t)\|_{L^\infty(\mathbb{R}^2)} = +\infty,$$
and the blowup set $\mathcal{S}$ is then defined by
\begin{equation}
\mathcal{S} = \{\hat a \in \mathbb{R}^2 \mid \exists (x_k, t_k) \to (\hat a,T) \; \text{such that}\; |u(x_k, t_k)| \to +\infty\}.
\end{equation}

Solutions of system \eqref{eq:KS} satisfy the conservation of the total mass as well as the positivity of the cell density,
\begin{equation}
M:= \int_{\mathbb{R}^2}u(x,t)dx = \int_{\mathbb{R}^2}u_0(x)dx, \quad  \mbox{and} \quad \mbox{if }u_0\geq 0 \ \mbox{ then } \ u(t) \geq 0.
\end{equation}
There is also a scaling invariance: if $u$ is a solution then so is the rescaled function 
$$u_\lambda(x,t) =\lambda^{-2} u(\lambda^{-1} x,\lambda^{-2} t), \quad \forall\lambda > 0.$$
As the mass which is a conserved quantity is invariant under the above renormalisation, the problem is called critical. In two space dimensions, it was first proved (see J\"ager-Luckhaus \cite{JLtams92}, Corrias-Perthame-Zaag \cite{CPZmjm04}) that there is global existence for solutions with small initial mass, while blowup occurs for a large initial mass. The existence of a mass threshold was then conjectured in \cite{Njtb73}, \cite{CPmb81}, \cite{Clec84}, namely that the chemotactic collapse (blowup) should occur if and only if $M$ is greater than $8\pi$. This $8\pi$ mass threshold conjecture is later proven in \cite{DNRjde98}, \cite{BDPjde06}, \cite{BCCjfa12} (see also \cite{Namsa95}, \cite{Njia01} for related results in the bounded domain case). Following \cite{BCCjfa12}, the criticality of the mass value can be seen by computing the second moment 
\begin{equation}
\frac{d}{dt}\int_{\mathbb{R}^2}|x|^2u(x,t)dx = 4M\left(1 - \frac{M}{8\pi}\right).
\end{equation}
Thus, if $M > 8\pi$, the right hand side is strictly negative, and positive solutions with finite second moment cannot be globally defined, or this second moment would reach zero in finite time.

Below the threshold $M < 8\pi$, Dolbeault-Perthame announced in \cite{DPcrasp04} that there is global existence of a solution for system \eqref{eq:KS} in a weak sense. This result is further completed and improved in \cite{BDPjde06}, \cite{BCCjfa12} through the existence of \textit{free-energy solutions}. Furthermore, the asymptotic behavior is given by a unique self-similar profile of the system (see also \cite{NSproc04} for radially symmetric results concerning self-similar behavior).

At the threshold $M = 8\pi$, the authors of \cite{BKGLtmna06, BKLNmmas06} show the existence of global radially symmetric solutions to system \eqref{eq:KS} for initial data with finite or infinite second moment. In \cite{BCMcpam08}, Blanchet-Carrillo-Masmoudi proved the existence of solutions to \eqref{eq:KS} concentrating in infinite time through the \textit{free energy functional} introduced by Nagai-Senbai-Yoshida in \cite{NSYfe97}. Furthermore, they showed that the solution converges to a delta Dirac distribution at the center of mass.

The system \eqref{eq:KS} has a family of explicit stationary solutions of the form
\begin{equation}\label{def:Qmua}
\forall\lambda > 0, \; a \in \mathbb{R}^2, \quad U_{\lambda, a}(x) =\frac{1}{\lambda^2}U\left(\frac{x - a}{\lambda}\right) \quad \text{with}\quad U(x)=  \frac{8}{(1 + |x|^2)^2}.
\end{equation}
These solutions have the threshold mass $M=8\pi$ and infinite second moment. They play an important role in the description of concentration both in finite and infinite time. Ghoul-Masmoudi \cite{GMcpam18} construct concrete infinite time blowup solutions to \eqref{eq:KS} with threshold mass $M=8\pi$ admitting the asymptotic dynamic as $t \to +\infty$,
$$u(x,t) \sim U_{\lambda(t)}(x)e^{-\frac{|x|^2}{2t}} \quad \textup{with} \quad\lambda^2(t) \sim \frac{I}{\ln t} \;\; \textup{and} \;\; I = \int_{\mathbb{R}^2} |x|^2 u_0(x)dx,$$
see also Davila-del Pino-Dolbeault-Musso-Wei \cite{DPDMWarx19} for an entirely different approach from that of \cite{GMcpam18} that leads to the same blowup rate.

It is worth to mention that the study of the positive steady states of the problem \eqref{eq:KS}, namely the solutions of the elliptic system 
\begin{equation}\label{eq:KSst}
\arraycolsep=1.4pt\def\arraystretch{1.6}
\left\{ \begin{array}{ll}
0 &= \Delta  u - \nabla .( u \nabla \Phi_u),\\
0 & =\Delta \Phi_u +  u,
\end{array}
\right. \quad x \in \mathbb{R}^2, \;\; u > 0,
\end{equation}
is equivalent to the study of the ground states of the equation
\begin{equation}\label{eq:ev}
\Delta v +\lambda_0 e^v = 0, \quad x \in \mathbb{R}^2 \;\;\text{and} \;\;\lambda_0 > 0. 
\end{equation}
This basic feature observed in \cite{MWnon06} follows from the fact that the solution of \eqref{eq:KSst} satisfies the relation
$$\int_{\mathbb{R}^2}u|\nabla (\log u - \Phi_u)|^2 dx = 0,$$
so that $u =\lambda e^{\Phi_u}$ for some positive constant $\lambda$, resulting in equation \eqref{eq:ev}. Note that $ V_{\lambda,a}(x) = \log U_{\lambda,a}(x),$ where $U_{\lambda,a}$ is defined by \eqref{def:Qmua}, is a solution to \eqref{eq:ev} with $\lambda_0 = 1$. The asymptotic behaviour of solutions of \eqref{eq:ev}, in a bounded domain $\Omega$ of $\mathbb{R}^2$ or in the whole space for which $\lambda_0 \int_{\Omega}e^{v}$ remains uniformly bounded, is well understood after the works by Brezis -Merle \cite{BMcpde91}, Nagasaki-Suzuki \cite{NSaa90}, Li-Shafrir \cite{LSiumj94}, Manuel-Wei \cite{MWnon06} and references therein. Their results read as follows: $\lambda_0 e^v$ approaches a superposition of Dirac deltas in the interior of $\Omega$. 
More precisely, the authors in \cite{MWnon06} show that for all $\lambda_0$ sufficiently small, there exists a solution $v_{\lambda_0}$ to equation \eqref{eq:ev} such that
$$v_{\lambda_0}(x) = \sum_{i = 1}^m V_{\mu_i\sqrt{\lambda_0}, a_i}(x) + \mathcal{O}(1) \quad \text{and}\quad \lambda \int_{\Omega} e^{v_{\lambda_0}(x)}dx \to  8\pi m \quad \text{as} \quad\lambda_0 \to 0,$$
where $V_{\mu_i\sqrt {\lambda_0},a_i}$ is defined above, and the $a_i$'s are the local maxima of $v_{\lambda_0}$ in the interior of $\Omega$ and the $\mu_i$'s are the positive constants.\\

\noindent Above the threshold $M>8\pi$, concrete examples of finite time blowup solutions are constructed by Herrero-Vel\'azquez in \cite{HVma96} (the scaling law found there is false but after correcting it the rest of the proof remains valid), with a further stability study in \cite{Vsiam02, Vsiam04b, Vsiam04a} and by Raph\"ael-Schweyer \cite{RSma14}. Regarding the temporal blowup rate, the central issue is to distinguish type I from type II blowup. We say that a solution $u(t)$ of \eqref{eq:KS} exhibits type I blowup at $t = T$ if there exist a constant $C > 0$ such that
\begin{equation}\label{def:TypeI}
\limsup_{t\to T}(T-t)\|u(t)\|_{L^\infty(\mathbb{R}^2)} \leq C,
\end{equation}
otherwise, the blowup is of type II. This notion is motivated by the ODE $u_t=u^2$ obtained by discarding diffusion and transport in the equation. The lower blowup rate estimate
$$ \|u(t)\|_{L^\infty(\mathbb{R}^2)} \geq C(T-t)^{-1}$$
is obtained for any blowup solutions of \eqref{eq:KS} by Kozono-Sugiyama \cite{KSjee08}. Importantly, it is known that in the two dimensional case any blowup solution of \eqref{eq:KS} is of type II (see Theorem 8.19 in \cite{SSbook11} and Theorem 10 in \cite{NScm08} for such a statement). In \cite{Sjmpa13}, Suzuki studies the problem \eqref{eq:KS} in a bounded domain, with Dirichlet condition for the Poisson part, i.e. $\Phi_u\mid_{\partial \Omega} = 0$, so that the blowup is excluded on the boundary. More precisely, he proves that 
\begin{equation}\label{eq:converu}
u(x,t) \rightharpoonup \sum_{\hat a \in \mathcal{S}} m(\hat a) \delta_{\hat a}(dx) + f(x)dx \quad \text{in}\;\; \Mc(\bar \Omega) = \mathcal{C}(\bar \Omega)'
\end{equation}
as $t \to T$, where $0 < f(x) \in L^1(\Omega)\cap \mathcal{C}(\Omega \setminus \mathcal{S})$. Furthermore, the author also asserts that $m(\hat a) = 8\pi$ holds for each $\hat a \in \Omega \cap \mathcal{S}$.\\

\subsection{Statement of the result}

Singularity formation for critical problems has attracted a great amount of work since the seminal results for dispersive equations by Merle-Raphael \cite{MRam05}, Krieger-Schlag-Tataru \cite{KSTim08}, Rodnianski-Sterbenz \cite{RSam10}, Raphael-Rodnianski \cite{RRmihes12} and references therein. The approach of \cite{MRam05}, \cite{RRmihes12}, \cite{MRRim13}, relying on a careful understanding of the solution near the stationary state, and on modulation laws computed via so-called tail dynamics, has been carried on to parabolic problems \cite{RScpam13,RSapde2014,RSma14}, \cite{Sjfa12} and \cite{Capde17}. Type II singularities for the semilinear heat equation had been previously studied by means of matched asymptotic expansions in an unpublished paper by Herrero-Velazquez (see \cite{HVcras94} for an announcement of their result and \cite{FHVslps00} for a formal analysis). This result was later confirmed by Mizoguchi \cite{Made04, Mma07}, and a new inner-outer gluing technique developed recently by Davila-del Pino-Wei \cite{DPWim19} (see also \cite{PMWats19} and references therein for recent results). A new approach for the construction of singular solutions of parabolic problems was initiated in \cite{HRjems19}, \cite{CRSmams19}, \cite{CMRjams19}, \cite{MRSimrn18} and the present paper fits into this series of works. The aim is to study type II blow-up as well as self-similar singularities, for supercritical and critical equations, in a unified and more natural approach (see first comment below). The present paper aims at applying for the first time this new approach to the delicate degenerate problem of the critical collapse for the parabolic-elliptic Keller-Segel problem \eqref{eq:KS}. In comparison with \cite{RSma14}, we obtain a refined expansion for the scale (proving the precise universal law \eqref{id:lawlambdastable}), the nonradial stability of the dynamics, the existence of unstable blow-up laws and remove the slightly supercritical mass restriction ($M$ close to $8\pi$). The solutions we construct are in the following function space
\begin{equation}\label{def:spacemathcalE}
\mathcal E:=\left\{u:\mathbb R^2\rightarrow \mathbb R, \quad \| u \|_{\mathcal E}^2:=\sum_{k=0}^2 \int_{\mathbb R^2}\langle x \rangle^{\frac 32 +2k}|\nabla^k u|^2<\infty  \right\}.
\end{equation}

\begin{theorem}[Stable blowup solutions]\label{theo:Stab}

There exists a set $\mathcal O\subset \mathcal E \cap L^1(\mathbb R^2)$ of initial data $u_0$ such that the following holds for the associated solution to \eqref{eq:KS}. It blows up in finite time $T=T(u_0)>0$ according to the dynamic
$$
u(x,t)=\frac{1}{\lambda^2(t)}\left(U+\tilde u \right)\left(\frac{x-x^*(t)}{\lambda (t)}\right),
$$
where
\begin{itemize}
\item \emph{(Precise law for the scale)}
\begin{equation}\label{id:lawlambdastable}
\lambda (t)=2e^{-\frac{2+\gamma}{2}}\sqrt{T-t}e^{-\sqrt{\frac{|\ln T-t|}{2}}}(1+o_{t\uparrow T}(1)).
\end{equation}
\item \emph{(Convergence of the blow-up point)} There exists $X=X(u_0)\in \mathbb R^2$ such that $x^*(t)\rightarrow X$ as $t\uparrow T$.
\item \emph{(Convergence to the stationary state profile)}
\begin{equation}\label{est:theo1conver}
\int_{\mathbb R^2} \left(\tilde u^2 (t,y)+\langle y \rangle^2|\nabla \tilde u (t,y)|^2\right)dy\rightarrow 0 \quad \mbox{as }t\uparrow T.
\end{equation}
\item \emph{(Stability)} For any $u_0\in \mathcal O$, there exists $\delta(u_0)>0$ such that if $v_0\in \mathcal E \cap L^1(\mathbb R^2)$ satisfies $\| v_0-u_0\|_{\mathcal E}\leq \delta(u_0)$ then $v_0\in \mathcal O$ and the same conclusions hold true for the corresponding solution $v$.
\item \emph{(Continuity)} For any fixed $u_0 \in \mathcal O$, one has $(T(v_0),X(v_0))\rightarrow (T(u_0),X(u_0))$ as $\| v_0-u_0\|_{\mathcal E}\rightarrow 0$.
\end{itemize}

\end{theorem}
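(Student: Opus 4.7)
The plan is to use the now-standard framework for type II singularity formation: decompose the solution near blowup as a modulated stationary profile plus a perturbation, derive ODEs for the modulation parameters via orthogonality projections, and close the perturbation estimates in an energy norm adapted to the spectral structure of the linearised operator, which has been set up in the companion paper \cite{CGNNarx19a}. The universal double-logarithmic law \eqref{id:lawlambdastable} will emerge as the unique solvable law of the resulting degenerate modulation equation for $\lambda$.

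First I would write $u(x,t)=\lambda^{-2}(t) v(y,s)$ with $y=(x-x^*(t))/\lambda(t)$, $ds/dt=\lambda^{-2}$, and split $v=U+w$. This yields
\begin{equation*}
\ps w \;=\; \Hs w \;-\; \frac{\lambda_s}{\lambda}\,\Lambda(U+w)\;-\;\frac{x^*_s}{\lambda}\cdot \nabla(U+w)\;+\;\mathrm{NL}(w),
\end{equation*}
where $\Hs$ is the linearisation of the Keller--Segel operator around $U$ and $\Lambda=2+y\cdot\nabla$. Next I would build a refined approximate profile $\tilde U=U+\alpha(s) T_1 + \alpha^2(s) T_2 + \ldots$ by solving a cascade of compatibility equations against the generalised kernel elements $T_i$ produced by the spectral analysis of \cite{CGNNarx19a}. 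The crucial feature is that strict solvability fails at a logarithmic level: the formal modulation law turns out to be $\lambda_s/\lambda \sim \alpha(s)$ with $\alpha(s) \sim c/\sqrt{s}$, whose integration yields precisely the $e^{-\sqrt{|\ln(T-t)|/2}}$ correction, while the constant $2e^{-(2+\gamma)/2}$ is fixed by an inner--outer matching between the inner $U$-region and the outer self-similar region at scale $\sqrt{T-t}$.

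Second, I would impose orthogonality of $w$ against the adjoint generalised kernel directions (one scaling mode, two translation modes), producing three modulation ODEs for $\lambda$ and $x^*$. For the remaining infinite-dimensional part I would close energy estimates in the norm $\|\cdot\|_{\Ec}$ of \eqref{def:spacemathcalE}. The weighted $H^2$ structure is chosen so that (i) the quadratic form associated to $\Hs$ is coercive on the orthogonal complement of its generalised kernel, with explicit coercivity constants from \cite{CGNNarx19a}, and (ii) the nonlocal nonlinear interactions coming from $\nabla\cdot(w\nabla \Phi_w)$ and $\nabla\cdot(w\nabla \Phi_U+U\nabla \Phi_w)$ remain controllable via weighted Hardy and logarithmic Hardy--Littlewood--Sobolev inequalities adapted to the polynomial weights $\langle y\rangle^{3/2+2k}$.

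Third, the main argument is a bootstrap on a trapped set defined by the smallness of $\|w\|_{\Ec}$ together with closeness of $(\lambda,x^*)$ to the predicted law \eqref{id:lawlambdastable}. Running the energy estimate forward in $s$ and integrating the modulation ODEs, I would show that the trapped set is stable in forward time. Because the problem is critical, the scaling direction is neutral rather than linearly unstable, so no topological Brouwer-type argument is needed and stability in $\Ec$ follows directly from the closed estimates. Existence of $\mathcal O$, its openness in $\Ec\cap L^1$, the convergence of $x^*$ and of $\tilde u$ in \eqref{est:theo1conver}, and the continuity of $(T,X)$ are then obtained from standard continuous-dependence arguments combined with the quantitative bootstrap. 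The main obstacle I expect is the extreme degeneracy of the scaling modulation equation: because $U$ is the ground state at critical mass $8\pi$, the naive projection on the scaling direction vanishes and the tail/matching argument must be pushed to an unusual logarithmic order; extracting the sharp constant $2e^{-(2+\gamma)/2}$ and controlling the subleading $o_{t\uparrow T}(1)$ correction in the full nonradial setting is where the bulk of the technical work will lie. A secondary difficulty is that $L^1$ is not directly controlled by $\Ec$, so conservation of mass and the sign of $u$ must be propagated separately to ensure the constructed solutions lie in the intended class.
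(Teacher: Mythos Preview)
Your outline is essentially the Rapha\"el--Schweyer \cite{RSma14} framework: a single renormalisation at the soliton scale $\lambda$, an approximate profile built from the generalised kernel $(T_j)$ of $\Ls_0$, and modulation laws read off from tail/matching. That approach does prove a version of the result, but the paper takes a genuinely different route, and several of your claimed steps would not close as written.

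The central structural difference is that the paper works primarily in \emph{parabolic} variables $z=(x-x^*)/\mu$, $\mu\sim\sqrt{T-t}$, and linearises around $U_\nu$ with $\nu=\lambda/\mu\to 0$. The resulting operator $\As^\zeta=\As^\zeta_0-\beta\zeta\partial_\zeta$ (in partial mass) has \emph{discrete} spectrum $\alpha_n=2\beta(1-n+\tfrac{1}{2\ln\nu}+\tilde\alpha_n)$, with $\tilde\alpha_n$ computed to order $|\ln\nu|^{-2}$ in \cite{CGNNarx19a}; the Euler constant $\gamma$ enters there. The perturbation is expanded on the eigenfunctions $\phi_{n,\nu}$, not on the $T_j$, and projecting on $\phi_{0,\nu},\phi_{1,\nu}$ yields the modulation system directly---no tail computation or inner/outer matching is needed. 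This is what pins down the constant $2e^{-(2+\gamma)/2}$; your matching sketch does not give a rigorous mechanism for it.

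Your plan to close the energy in the $\mathcal E$ norm of \eqref{def:spacemathcalE} would fail: $\mathcal E$ is only the initial-data space. The main Lyapunov functional is $\|m_\e\|_{L^2_{\omega_\nu/\zeta}}$ with the \emph{Gaussian} weight $\omega_\nu=\nu^2 U_\nu^{-1}e^{-\beta\zeta^2/2}$, driven by the spectral gap $\alpha_{N+1}\sim -2\beta N$. This is supplemented by inner ($|z|\le\zeta_*$), mid-range, and outer norms treated separately. A point you do not anticipate, and which is essential: in this degenerate critical problem the nonlinear term $\partial_\zeta(m_\e^2)/(2\zeta)$ cannot be closed as a forcing against the spectral gap. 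The paper introduces a \emph{second} decomposition at a nearby scale $\tilde\nu$ fixed by a local orthogonality to $T_0$, puts part of the nonlinearity back into a modified operator $\bar\As^\zeta=\tfrac12(\As^\zeta+\tilde\As^\zeta)$, and proves its spectral gap persists (Lemma~\ref{lemm:SpecAbar}). Without this device the $L^2_{\omega_\nu/\zeta}$ estimate does not close.

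Finally, for the nonradial part the paper does not use coercivity of $\Ls_0$ alone but of a modified operator $\tilde\Ls$ (localised Poisson field) in a mixed scalar product $\langle u,v\rangle_*=\int u\sqrt\rho\,\Ms(v\sqrt\rho)$ that already absorbs the scaling term $-b\nabla\cdot(y\,\cdot\,)$; this is what makes the nonradial analysis tractable and is specific to \cite{CGNNarx19a}.
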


\begin{remark}

In Theorem \ref{theo:Stab}, the initial datum $u_0$ can possibly be non-radial, and possibly sign-changing. The exponent $3/2$ in the definition \eqref{def:spacemathcalE} of the function space $\mathcal E$ allows for initial data that are arbitrarily large in $L^1$ (but sufficiently spread out away from the singularity). Additionally, our proof involves a detailed understanding of the perturbation $\tilde u$, see Definition \ref{def:bootstrap}.
\end{remark}

We are also able to construct for problem \eqref{eq:KSst} blowup solutions having other unstable blowup speeds by the same analysis. This corresponds to the case where the leading order part of the perturbation is located on an eigenmode with faster decay, while the eigenmodes with slower decay are not excited. This is only obtained here in the radial case. The corresponding solutions are sign-changing.

\begin{theorem}[Unstable blowup solutions]\label{theo:UnStab}

For any $\ell\in \mathbb N$ with $\ell \geq 2$, there exists an initial datum $u_0\in \mathcal E\cap L^1$ with spherical symmetry, such that the corresponding solution to  \eqref{eq:KS} blows up in finite time $T>0$ according to the dynamic
$$
u(x,t)=\frac{1}{\lambda^2(t)}\left(U+\tilde u \right)\left(\frac{x}{\lambda (t)}\right),
$$
where
\begin{equation}\label{id:lawlambdaunstable}
\lambda(t) \sim C(u_0) (T-t)^\frac{\ell}{2} |\ln (T-t)|^{-\frac{\ell}{2(\ell - 1)}},
\end{equation}
and
$$
\int_{\mathbb R^2} \left(\tilde u^2 (t,y)+\langle y \rangle^2|\nabla \tilde u (t,y)|^2\right)dy\rightarrow 0 \quad \mbox{as }t\uparrow T.
$$
\end{theorem}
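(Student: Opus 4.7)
The approach is to deploy the same framework as for the proof of Theorem \ref{theo:Stab}, but now selecting an initial configuration that excites the $\ell$-th eigenmode of the linearised operator around $U$ constructed in \cite{CGNNarx19a}, while ensuring that the $\ell-1$ eigenmodes with slower decay rate are suppressed. Since only radial blow-ups are sought, one works throughout in the subspace of radial functions, so only the radial sector of the spectrum is used.

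First, pass to self-similar variables
$$u(x,t)=\frac{1}{\lambda^2(t)}v(y,s), \qquad y=\frac{x}{\lambda(t)}, \qquad \frac{ds}{dt}=\frac{1}{\lambda^2(t)},$$
and decompose $v(y,s)=Q_{b(s)}(y)+w(y,s)$ with modulation parameters $b=(b_1,\ldots,b_\ell)\in \Rb^\ell$. The approximate profile $Q_b$ is constructed iteratively, starting from $U$ and adding corrections that cancel the source terms of the formal linearised equation up to order $|b|^{\ell+1}$, in the spirit of \cite{RSma14, MRRim13}. Imposing $\ell$ orthogonality conditions on $w$ against the first $\ell$ eigenfunctions then produces modulation ODEs schematically of the form
$$\dot b_k = \mu_k b_k + N_k(b) + R_k(w), \quad 1\leq k\leq \ell, \qquad \frac{\lambda_s}{\lambda}=-b_1+\textup{l.o.t.}$$
On the target trajectory $b_1=\cdots=b_{\ell-1}=0$, the $b_\ell$ equation reduces to a scalar nonlinear ODE whose leading solution, combined with the scaling ODE for $\lambda$, yields exactly the law \eqref{id:lawlambdaunstable}, including the logarithmic correction $|\ln(T-t)|^{-\ell/(2(\ell-1))}$, after integration and conversion from $s$ back to $t$ via $T-t=\int_s^\infty \lambda^2(\sigma)d\sigma$.

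The analytical core is a bootstrap argument run inside a trapping region for $(b,w)$: the parameter $b_\ell$ stays close to its formal asymptotic, each unstable coefficient $b_k$ with $k<\ell$ lies in a small ball of natural size, and $w$ is controlled in a weighted Sobolev norm stronger than $\|\cdot\|_\Ec$ and adapted to the spectral gap beyond the $\ell$-th eigenfunction. The energy estimate closes thanks to the coercivity of the linearised operator on the orthogonal complement of the first $\ell$ eigenmodes, which is an output of the spectral analysis in \cite{CGNNarx19a}; this controls $w$ up to residual errors from $Q_b$ and nonlinear interactions, both manageable by construction. The $\ell-1$ unstable directions are then handled by a standard Brouwer-type argument on the finite-dimensional parameter $(b_1(0),\ldots,b_{\ell-1}(0))$: were every trajectory to eventually exit the trapping region, the exit map would yield a continuous retraction of a disk of dimension $\ell-1$ onto its boundary, a contradiction, and hence the existence of trapped initial data follows.

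The main obstacle, as already for $\ell=1$ in \cite{RSma14}, is the sharp construction of the approximate profile $Q_b$: the residual must be small enough in the weighted energy norm for the bootstrap on $w$ to close, and the exact nonlinear coefficient in the $b_\ell$ equation must be identified in order to produce the precise exponent $\ell/(2(\ell-1))$ of the logarithmic correction. This requires a matched asymptotic expansion between the inner region $|y|\lesssim 1$, where $U$ dominates, and the intermediate parabolic region where the self-similar scaling takes over, together with careful bookkeeping of the logarithmic contributions generated by the $|y|^{-4}$ tail of $U$ interacting with the rescaled heat semigroup; any loss here would pollute the delicate logarithmic law.
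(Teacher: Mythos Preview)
Your high-level architecture---approximate profile, modulation equations, bootstrap for the remainder, topological argument for the $\ell-1$ unstable directions---matches the paper's, but the concrete route you propose is that of \cite{RSma14, MRRim13} rather than the paper's spectral approach, and your description conflates the two.

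The paper does \emph{not} build $Q_b$ iteratively via tail dynamics. It works in the parabolic variable $z=x/\mu$, $\mu\sim\sqrt{T-t}$, with a second inner scale $\nu=\lambda/\mu\to 0$, and takes the approximate profile to be $Q_\nu$ plus a finite linear combination of the eigenfunctions $\phi_{n,\nu}$ of the linearised partial-mass operator $\mathscr{A}^\zeta$ around $Q_\nu$, all taken directly from \cite{CGNNarx19a}; see \eqref{def:mW7}. Projecting onto $\phi_{0,\nu}$ and $\phi_{\ell,\nu}$ and imposing the compatibility $a_\ell/(4\nu^2)=-1+1/(2\ln\nu)$ gives $\nu_\tau/\nu=\beta(1-\ell)+\beta\ell/(2\ln\nu)+O(|\ln\nu|^{-3/2})$ (Remark \ref{rem:unBlowuprateell}), and the law \eqref{id:lawlambdaunstable} follows by a one-line integration. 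The logarithmic correction is already encoded in the $1/(2\ln\nu)$ term of the eigenvalue expansion \eqref{def:specAsb}; no further matched asymptotics are needed---this is precisely the payoff the paper emphasises over the \cite{RSma14} approach.

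Your single-scale setup $y=x/\lambda$ is the concrete gap: the spectral analysis of \cite{CGNNarx19a} lives in parabolic variables, is parametrised by $\nu$, and relies on the Gaussian weight $e^{-\beta|z|^2/2}$ to obtain a compact resolvent and the discrete eigenvalues with their $1/\ln\nu$ structure; at scale $\lambda$ this is unavailable. Also, writing $\lambda_s/\lambda=-b_1+\mathrm{l.o.t.}$ and then prescribing $b_1=\cdots=b_{\ell-1}=0$ on the target trajectory buries the blowup mechanism in the unnamed lower-order terms. In the paper $\nu$ is an independent modulation parameter (fixed by orthogonality to $\phi_{0,\nu}$), distinct from the eigenmode coefficients $a_n$, and it is the evolution of $\nu$---driven by its coupling to $a_\ell$---that produces the law.
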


\begin{remark}

We only give a complete proof of Theorem \ref{theo:Stab}, and sketch how it can be adapted to derive the conclusion of Theorem \ref{theo:UnStab}. Only one major issue arises. These unstable blow-ups are related with eigenmodes of the linearized dynamics that decay faster. Since we do not control the constant (giving the decay rate) in the nonradial coercivity estimate of Proposition \ref{pr:coercivitenonradial}, we are hence only able to construct such unstable blow-ups in the radial sector.
\end{remark}

\paragraph{Notations.} Throughout this paper, we use the notation $A \lesssim B$ to say that there exists a constant $C > 0$ such that $ 0 \leq A \leq CB$. Similarly, $A \sim  B$ means that there exist constants $0 < c < C$ such that $cA \leq B \leq CA$. We denote by
$$\left< r \right> = \sqrt{1 + r^2}$$
the Japanese bracket. Let $\chi \in \mathcal{C}_c^\infty(\mathbb{R}^2)$ be a cut-off function with $0 \leq \chi \leq 1$, $\chi(x) = 1$ for $|x| \leq 1$ and $\chi(x) = 0$ for $|x| \geq 2$. We define for all $M > 0$, 
\begin{equation}\label{def:chiM}
\chi_{_M}(x) = \chi\Big(\frac{x}{M}\Big).
\end{equation}
Given $\nu >0$ and a function $f$, we introduce:
$$f_\nu(z) = \frac{1}{\nu^2}f\Big(\frac{z}{\nu} \Big), \quad y = \frac{z}{\nu}, \quad \zeta = |z|, \quad r = |y|.$$
We introduce the differential operator
\begin{equation}\label{def:Lambda}
\Lambda f(z) = \frac{d}{d \nu}\big[f_\nu(z)\big]_{\nu = 1} = \nabla \cdot(zf) = 2f  + z\cdot \nabla f,
\end{equation}
and the linearized operator around the scaled stationary solution $U_\nu$,
\begin{align}\label{def:Lsz}
&\mathscr{L}^z f(z) = \mathscr{L}^z_0 f - \beta\Lambda f,\\
&\label{def:L0Mz}
\mathscr{L}_0^z f = \nabla \cdot(U_\nu \nabla \mathscr{M}^z f) \quad \textup{with} \quad \mathscr{M}^z f = \frac{f}{U_\nu} - \Phi_f.
\end{align}
In the partial mass setting, namely for
$$m_f(\zeta) = \frac{1}{2\pi}\int_{|z| \leq \zeta} f(z)dz,$$
the operator $\mathscr{L}^z$ acting on radially symmetric functions is transformed to 
\begin{align}\label{def:AsAs0zeta}
&\mathscr{A}^\zeta = \mathscr{A}^\zeta_0 -\beta \zeta\partial_\zeta ,\\
&\mathscr{A}^\zeta_0 = \partial_\zeta^2  -\frac{1}{\zeta}\partial_\zeta + \frac{\partial_\zeta(Q_\nu \cdot)}{\zeta} \quad \textup{and} \quad Q_\nu(\zeta) = \frac{4\zeta^2}{\zeta^2 + \nu^2}.
\end{align}
In terms of the $y$-variable, we work with its rescaled versions
\begin{align}\label{def:LsLs0Ms}
&\mathscr{L} f(y)  = \mathscr{L}_0 f - \beta\nu^2\Lambda f,\\
&\label{def:Ls0Ms}
\mathscr{L}_0 f = \nabla \cdot(U \nabla \mathscr{M} f) \quad \textup{and} \quad \mathscr{M} f = \frac{f}{U} - \Phi_f.
\end{align}
In the partial mass setting the linear operator $\Ls$ becomes
\begin{align}\label{def:AsAs0}
&\mathscr{A}  = \mathscr{A}_0  -\beta \nu^2 r\partial_r ,\\
&\mathscr{A}_0 = \partial_r^2 - \frac{1}{r}\partial_r + \frac{\partial_r(Q \cdot)}{r} \quad \textup{and} \quad Q(r) = \frac{4r^2}{1 + r^2}.
\end{align}
We also introduce the weight functions
\begin{align}\label{def:omeganu}
&\omega_\nu(z) = \frac{\nu^2}{U_\nu(z)}e^{-\frac{\beta |z|^2}{2}}, \quad \rho_0(z) =e^{-\frac{\beta |z|^2}{2}}, \\
&\label{def:omegarho}
\omega(y) = \frac{1}{U(y)}e^{-\frac{\beta\nu^2 |y|^2}{2}}, \quad \rho(y) =e^{-\frac{\beta \nu^2 |y|^2}{2}}. 
\end{align}
The partial mass of the solution is formally a radial solution in dimension $0$, so that to take $k$ adapted derivatives we will use the notation $D^k$ for $k\in \mathbb N$, where
$$
D^{2k}=\left(\zeta \partial_\zeta (\frac{\partial_\zeta}{\zeta}) \right)^{2k}, \ \ D^{2k+1}=\partial_\zeta D^{2k}
$$
and the notation for integers modulo $2$,
$$
k\wedge 2 = k\mod 2.
$$
For a function $f$ of a variable $\xi$ representing any variable in the problem, the radial and non-radial parts of $f$ are defined as:
$$
f(\xi)=f^0(\xi)+f^{\perp}(\xi), \qquad f^{0}(\xi)=(2\pi|\xi|)^{-1} \int_{S(0,|\xi|)} f(\tilde \xi)dS.
$$

\subsection{Strategy of the proof} 
We briefly explain the main steps of the proof of Theorem \ref{theo:Stab} and sketch the different points in the proof of Theorem \ref{theo:UnStab}.\\ 

\noindent \underline{\textit{Renormalization and linearization of the problem:}} The essential part of the analysis lies in the parabolic zone $|x-X|\lesssim \sqrt{T-t}$. Since neither $X$ nor $T$ are known a priori, our method will compute them \emph{dynamically}. In view of the scaling invariance of the problem \eqref{eq:KS}, we introduce the change of variables
\begin{equation}\label{def:wztauvarsIntro}
u(x,t) = \frac{1}{\mu^2}w(z,\tau), \quad \Phi_u(x,t) = \Phi_w(z, \tau), \quad z = \frac{x - x^*}{\mu}, \quad \frac{d\tau}{dt} = \frac{1}{\mu^2},
\end{equation}
where $\mu(t)$ and $x^*(t)$ are time dependent parameters to be fixed later. They will in fine satisfy $ -\mu_\tau/\mu \to\beta_\infty >0$, $\mu(t) \sim \sqrt{2\beta_\infty (T-t)}$ (see \eqref{reinte:exprmu}) and $x^*\to X$, so that $z$ is indeed the parabolic variable. The equation satisfied by $w$ is:
\begin{equation}\label{eq:wztauIntro}
\partial_\tau w  = \nabla \cdot(\nabla w - w\nabla \Phi_w) -\beta \nabla \cdot(z w) + \frac{x^*_\tau}{\mu} \cdot \nabla w \quad \textup{with} \quad\beta = -\frac{\mu_\tau}{\mu}. 
\end{equation}
There is no Type I blowup solutions for the problem \eqref{eq:KS} in the sense of \eqref{def:TypeI}. Thus, our goal is to construct an unbounded global-in-time solution $w(z, \tau)$ for equation \eqref{eq:wztauIntro}. In particular, we construct a solution of the form 
$$w(z,\tau) = U_\nu(z) + \eta(z, \tau),$$
where $\nu(\tau)$ is the main parameter function in our analysis which drives the law of blowup, and $\eta$ solves the \emph{linearized equation in the parabolic zone}
\begin{equation}\label{eq:etaztauIntro}
\partial_\tau \eta = \mathscr{L}^z \eta +\left(\frac{\nu_\tau}{\nu}-\beta\right)\Lambda U_\nu- \nabla \cdot \Big(\eta \Phi_\eta  \Big) +\frac{x^*_\tau}{\mu}.\nabla \left(U_\nu+\eta\right)
\end{equation}
Here, $\mathscr{L}^z$ is the linearized operator defined by \eqref{def:Lsz} . Our aim is then reduced to construct for equation \eqref{eq:etaztauIntro} a global-in-time solution $\eta(z, \tau)$ satisfying \eqref{est:theo1conver}. \\

\noindent \underline{\textit{Properties of the linearized operator:}} We expect that only the first two terms contribute to leading order in the right hand side of \eqref{eq:etaztauIntro}. In the radial setting, studying the operator $\mathscr{L}^z$ is equivalent to studying $\mathscr{A}^\zeta$, the linearised operator around $Q_\nu = m_{U_\nu}$ in the partial mass setting  defined by \eqref{def:AsAs0zeta}. Indeed, we have the relation 
\begin{equation}\label{eq:relLszAszeta}
\mathscr{L}^z f(\zeta) = \frac{1}{\zeta} \partial_{\zeta} \Big(\mathscr{A}^\zeta m_f(\zeta) \Big).
\end{equation}
In the regime $0 < \beta \nu^2 \ll 1$, we proved in \cite{CGNNarx19a} that $\mathscr{A}^\zeta$ is self-adjoint in $L^2_{\frac{\omega_\nu}{\zeta}}$ with compact resolvant (see Proposition \ref{prop:SpecRad} for a precise statement), its spectrum being:
\begin{equation}\label{eq:spectrumAIntro}
\textup{spec}(\mathscr{A}^\zeta) = \Big\{\alpha_n = 2\beta\big(1 - n + \frac{1}{2\ln \nu} + \bar \alpha_n\big), \;  \bar \alpha_n  = \mathcal{O}\left( \frac{1}{|\ln \nu|^2}\right), \; n \in \mathbb{N} \Big\}.
\end{equation}  
(a refinement of $\bar \alpha_0$ and $\bar \alpha_1$ up to an accuracy of order $1/|\ln \nu|^2$ is needed to derive a precise blowup rate). The eigenfunction $\phi_{n,\nu}$ of $\mathscr{A}^\zeta$ corresponding to the eigenvalue $\alpha_n$ is explicit to leading order, of the form 
\begin{equation}\label{def:eigenfunctionIntro}
\phi_{n, \nu}(\zeta) = \sum_{j = 0}^n c_{n,j}\beta^j \nu^{2j-2}T_{j}\big(\frac{\zeta}{\nu} \big) + \tilde{\phi}_{n, \nu}, \quad c_{n,j} = 2^j \frac{n!}{(n - j)!}.
\end{equation}
Above, $\tilde{\phi}_{n, \nu}$ is of smaller order, and $(T_j)_{j\in \mathbb N}$ are defined by $T_{j + 1} = -\mathscr{A}_0^{-1}T_{j}$ and $T_{0}(r) = \frac{r^2}{(1 + r^2)^2}=m_{\Lambda U}$. The resonance of $\mathscr{A}_0$ is $T_0$: $\mathscr{A}_0 T_0 = 0$, so they generate the generalised kernel of $\mathscr{A}_0$. They admit the following asymptotic at infinity
\begin{equation}\label{est:Tj}
\textup{for} \;\; j \geq 1, \quad T_j(r) \sim \hat d_j r^{2 j - 2} \ln r \quad \textup{with} \quad \hat d_{j+1} = -\frac{\hat d_j}{4j(j+1)}, \quad \hat d_1 = -\frac{1}{2}.
\end{equation}
Moreover, the following spectral gap estimate holds: for $g\in L^2_\frac{\omega_\nu}{\zeta}$ in the domain of $\mathcal A^\zeta$ with $g\perp \phi_{n,\nu}$ in $L^2_\frac{\omega_\nu}{\zeta}$ for $0 \leq j \leq N$, 
\begin{equation}\label{est:spectralgapIntro}
\big\langle g, \mathscr{A}^\zeta g \big\rangle_{L^2_\frac{\omega_\nu}{\zeta}} \leq \alpha_{N+1}\big \|g\big\|_{L^2_\frac{\omega_\nu}{\zeta}}. 
\end{equation}
On the non-radial sector, we also prove in \cite{CGNNarx19a} that the slightly modified linear operator $\tilde {\mathscr{L}}$ defined by 
$$
\tilde {\mathscr{L}} u =\Delta u-\nabla  \cdot (u \nabla \Phi_{U})-\nabla \cdot (U\nabla \tilde \Phi_u)- b \nabla \cdot (yu), \quad \tilde \Phi_u= \frac{1}{\sqrt{\rho}}(-\Delta)^{-1}(u \sqrt{\rho}),
$$
is coercive for the following well-adapted scalar product
\begin{equation}\label{def:scatsIntro}
\langle u,v\rangle _\ast = \int_{\mathbb{R}^2} u\sqrt{\rho} \mathscr{M}(v \sqrt{\rho}) dy \quad \textup{with} \quad \mathscr{M} u = \frac{u}{U} - \Phi_u
\end{equation}
(equivalent in norm to $L^2_{\omega_0}$ under suitable orthogonality conditions). We show that for $u$ without radial component with $\int_{\mathbb{R}^2} u \partial_i U \sqrt{\rho} dy = 0$ for $i = 1,2$:
\begin{equation}\label{est:coerLsIntro}
\big\langle u,\tilde {\mathscr{L}} u\big\rangle_\ast \leq -\delta_0 \|\nabla u\|^2_{L^2_\omega}
\end{equation}
for a constant $\delta_0 > 0$. The advantage of this coercivity is that the scaling term $b\nabla \cdot(y \cdot)$ is taken into account, which greatly simplifies our analysis for the nonradial part. Note that controling the scaling term is one of the difficulties in the analysis performed in \cite{RSma14} where the renormalized operators of $\mathscr{L}_0, \mathscr{M}$ and the dissipation structure of the problem together with a sharp control of tails at infinity play a crucial role in their analysis. A similar situation happens in many other critical blowup problems, see for example \cite{MMRam14}, \cite{MMRjems15}, \cite{MRRim13}, \cite{RScpam13}, \cite{RSapde2014}, \cite{RRmihes12}, etc. \\

\noindent \underline{\textit{Approximate solution and a formal derivation of the blowup rate:}} Let $N \in \mathbb{N}$ with $N \gg 1$, and consider the approximate solution to \eqref{eq:wztauIntro} of the form 
\begin{align*}
W=W[\nu, \mathbf{a},\beta ](z,\tau) &= U_\nu(\zeta) +a_1(\tau)\big(\varphi_{1,\nu}(\zeta)-\varphi_{0, \nu}(\zeta)\big)+ \sum_{n = 2}^N a_n(\tau)\varphi_{n, \nu}(\zeta),
\end{align*}
where  $\nu(\tau)$, $\mathbf{a}(\tau) = \big(a_1, \cdots, a_n\big)(\tau)$ and $\beta(\tau)$ are parameters to be determined, and the $\varphi_{n, \nu}$'s are the radial eigenfunctions of $\mathscr{L}^z$. In the partial mass setting this gives
\begin{equation}\label{exp:mWIntro}
m_W[\nu, \mathbf{a},\beta ](\zeta, \tau) = Q_\nu(\zeta) + a_1(\tau)(\phi_{1, \nu}(\zeta)-\phi_{0, \nu}(\zeta)\big)+ \sum_{n = 2}^N a_n(\tau)  \phi_{n, \nu}(\zeta).
\end{equation}
Here, the term $a_1(\phi_{1,\nu} - \phi_{0,\nu})$ is the main perturbation term driving the law of blowup speed; and the term $\sum_{n = 2}^N a_n \phi_{n, \nu}$ is a higher order perturbation added to produce a big constant in the spectral gap \eqref{est:spectralgapIntro}, which is used to close the $L^2_\frac{\omega_\nu}{\zeta}$ estimate for the radial part of the remainder. The generated error  $m_E$ (defined by \eqref{exp:mE}) from the approximate solution \eqref{exp:mWIntro} is of size 
\begin{equation}\label{est:errorIntro}
\|m_E\|_{L^2_\frac{\omega_\nu}{\zeta}} \lesssim \frac{\nu^2}{|\ln \nu|}.
\end{equation} 
Assuming temporarily that $W$ is an exact solution to \eqref{eq:KS}, after an appropriate projection of \eqref{eq:KS} onto $\phi_{n, \nu}$ for $n=0,...,N$, we end up with the dynamical system (see Lemma \ref{lemm:Mod} for more details):
$$
\left\{ \begin{array}{l l l }
\textup{Mod}_0&:=& \nu^2\Big(\frac{\nu_\tau}{\nu} -\beta \Big)+\beta \frac{a_1}{4}(\tilde \alpha_1-1-\tilde \alpha_0)= \mathcal{O}\left(\frac{\nu^2}{|\ln \nu|^3} \right),\\
\textup{Mod}_1&:=&a_{1,\tau}-\beta a_1 \left(\frac{1}{\ln \nu}+\frac{\ln 2 -\gamma -2-\ln\beta}{2|\ln \nu|^2}\right)+a_1\frac{\beta_\tau}{\beta}= \mathcal{O}\left(\frac{\nu^2}{|\ln \nu|^3} \right),\\
 \textup{Mod}_n&:=& a_{n, \tau} - 2\beta a_n \alpha_n = \mathcal{O}\left(\frac{\nu^2}{|\ln \nu|^3} \right) \quad \textup{for} \quad 2 \leq n \leq N.
\end{array}
\right.
$$
The first equation describes how the leading term in the perturbation forces the stationary state to shrink, and the second one how the leading term in the perturbation evolves. We now fix the parameter $\beta$ by setting
\begin{equation} \label{defbetaintro}
\frac{a_1}{4\nu^2} = -1 + \frac{1}{2\ln \nu} + \frac{\ln 2 - \gamma - 1 - \ln\beta}{4|\ln \nu|^2}.
\end{equation}
In this case, the first equation reduces to:
\begin{equation}\label{eq:ODEnuIntro}
\frac{\nu_\tau}{\nu} =\beta \Big[-\frac{1}{2|\ln \nu|} + \frac{\ln 2 - \gamma - 1 - \ln\beta}{4|\ln \nu|^2} \Big] + \mathcal{O}\left(\frac{1}{|\ln \nu|^3} \right), 
\end{equation} 
where $\gamma$ is the Euler constant appearing in the refinement of $\bar \alpha_0, \bar \alpha_1$ (see Proposition \ref{prop:SpecRad}). Solving this equation gives  
$$\nu(\tau) \sim A e^{-\sqrt{ \frac{\tau}{2}}} \quad \textup{with} \quad A = \sqrt{2/\beta_\infty}e^{-\frac{\gamma + 2}{2}},$$ 
from which $\tau \sim |\ln (T-t)|$ and $\mu(t) \sim \sqrt{2\beta_\infty(T-t)}$ where $T=\lim_{\tau \to \infty}t(\tau)$, and we derive the blowup rate as stated in Theorem \ref{theo:Stab}. The identity \eqref{defbetaintro} ensured $\nu \rightarrow 0$ as $t\rightarrow T$, hence is dynamical way to compute the blow-up time. Our derivation of the blowup rate is consistent with the formal analysis in \cite{HVma96} by means of matched asymptotic expansions. We would lile to emphasise the fact that the refinement of the first two eigenvalues up to an accuracy of order $\frac{1}{|\ln \nu|^2}$ is crucial in deriving the precise value of $A$ here. Note that the rigorous analysis in \cite{RSma14} could not give the value of $A$. \\

\noindent \underline{\textit{Decomposition of the solution and modulation equations:}} To produce a solution of the full nonlinear problem we decompose the solution as 
\begin{equation}\label{eq:decomwWeIntro}
w(z, \tau) = W[\nu, \mathbf{a},\beta](\zeta) + \varepsilon(z, \tau), \; m_w(\zeta, \tau) = m_W[\nu, \mathbf{a},\beta](\zeta) + m_\varepsilon(\zeta, \tau).
\end{equation}
The uniqueness of this decomposition is ensured by the orthogonality conditions
\begin{equation}\label{eq:orthoIntro}
\big\langle m_\varepsilon , \phi_{n, \nu} \big\rangle _{L^2_\frac{\omega_\nu}{\zeta}} = 0 \quad \textup{for} \;\; 0 \leq n \leq N, \quad \int_{\mathbb{R}^2} \varepsilon^\perp \nabla U_\nu \sqrt{\rho_0} dz = 0.
\end{equation}
The control of the radial part of $\varepsilon$ is done via the partial mass setting, i.e. $m_\varepsilon$, based on the spectral properties of the linear operator $\mathscr{A}^\zeta$, and the control of the nonradial part $\varepsilon^\perp$ of $\varepsilon$ is based on the coercivity estimate \eqref{est:coerLsIntro}. Here, $m_\varepsilon$ and $\varepsilon^\perp$ solve the equations (where $P_\nu=a_1 (\varphi_{1,\nu}-\varphi_{0, \nu})+ \sum_{n = 2}^N a_n \varphi_{n, \nu}$):
\begin{align}
\partial_\tau m_\varepsilon = \mathscr{A}^\zeta m_\varepsilon + \frac{\partial_\zeta ((2P_\nu + m_\varepsilon ) m_\varepsilon )}{2\zeta}  + m_E + N_0(\varepsilon^\perp),\label{eq:meIntro}\\
\partial_\tau \varepsilon^\perp = \mathscr{L}^z \varepsilon^\perp - \nabla \cdot \mathcal{G}(\varepsilon^\perp) + \frac{x_\tau^*}{\mu}\cdot \nabla (W + \varepsilon^0) + N^\perp(\varepsilon^\perp),\label{eq:eperpIntro}
\end{align}
where $m_E$ is the generated error estimated in \eqref{est:errorIntro}, $\nabla \cdot \mathcal{G}(\varepsilon^\perp)$ contains small linear terms, and $N_0$ and $N^\perp$ stand for higher order quadratic nonlinear terms corresponding to the projections on radial and nonradial modes. After projecting the above equations on suitable directions, we arrive at the full modulation equations (see Lemma \ref{lemm:Mod} for complete expressions)
\begin{align}\label{est:ModIntro}
|\textup{Mod}_0| + |\textup{Mod}_1| &= \mathcal{O} \Big( \frac{1}{|\ln \nu|^2} \|m_\varepsilon \|_{L^2_\frac{\omega_\nu}{\zeta}} + \frac{\nu^2}{|\ln \nu|^3} \Big), \qquad |\textup{Mod}_n| & = \mathcal{O} \Big( \frac{1}{|\ln \nu|} \|m_\varepsilon \|_{L^2_\frac{\omega_\nu}{\zeta}} + \frac{\nu^2}{|\ln \nu|^2} \Big),
\end{align}
and (see \eqref{eq:equive0} and Lemma \ref{lemm:xstar} for more details)
\begin{equation}\label{est:xtauIntro}
 \left| \frac{x_\tau^*}{\mu}\right| \lesssim  \|\varepsilon^\perp\|_{L^2_{\omega_\nu}}.
\end{equation}

\noindent \underline{\textit{Control of the remainder:}} In view of \eqref{est:ModIntro}, the main quantity we need to control is the $L^2_\frac{\omega_\nu}{\zeta}$-norm of $m_\varepsilon $, which is of size (based on the error generated by the approximate solution, see \eqref{est:errorIntro})
\begin{equation}\label{est:L2omeIntro}
\|m_\varepsilon \|_{L^2_\frac{\omega_\nu}{\zeta}} \lesssim \frac{\nu^2}{|\ln \nu|},
\end{equation}
so that the leading order dynamical system \eqref{eq:ODEnuIntro} driving the law of blowup still holds up to an accuracy of order $\frac{1}{|\ln \nu|^3}$. At the linear lever, i.e. without taking into account the nonlinear term in \eqref{eq:meIntro}, it's simple to achieve \eqref{est:L2omeIntro} thanks to the spectral gap estimate \eqref{est:spectralgapIntro}. However, the only spectral gap \eqref{est:spectralgapIntro} is not enough to control nonlinear terms directly and to close the estimate \eqref{est:L2omeIntro}. Indeed, the perturbation $\varepsilon$ can be large near the origin, and the sole $L^2_\omega$ orthogonality conditions for it do not allow for dissipation type estimates there. Our idea is to put back certain nonlinear terms in the linearised operator and to prove that the spectral gap \eqref{est:spectralgapIntro} still holds true for this perturbation. We slightly modify the decomposition \eqref{eq:decomwWeIntro} and extract the leading order part of $\varepsilon$ near the origin:
\begin{equation}\label{eq:decom2Intro}
m_w(\zeta, \tau) = Q_{\tilde \nu}(\zeta) + a_1\big(\phi_{1, \tilde \nu}(\zeta) - \phi_{0, \tilde \nu}(\zeta)\big) + \tilde{m}_w,
\end{equation}
where we introduce the new parameter function $\tilde \nu \sim \nu$ (see Lemma \ref{lemm:nutil}) to impose the orthogonality condition localised at the scale of the stationary state
\begin{equation}\label{eq:OrtIntro}
\int_0^{+\infty}\tilde m_v \chi_{_M}T_0 \frac{\omega_0}{r} dr = 0 \quad \textup{with}\quad\tilde m_w(\zeta) =\tilde m_v(\zeta/\tilde \nu),
\end{equation}
where $M \gg 1$ is a fixed constant and the $\phi_{n,\tilde \nu}$'s are the eigenfunctions of the linearized operator $\tilde{\mathscr{A}}^\zeta$ around $Q_{\tilde \nu}$, defined as in \eqref{def:eigenfunctionIntro} with $\nu$ replaced by $\tilde \nu$. The orthogonality condition \eqref{eq:OrtIntro} allows to derive the coercivity of $\mathscr{A}_0$ (see Lemma \ref{lemm:coerA0}). This coercivity together with the dissipation structure of the problem yield the control of $\tilde m_v$ and its derivatives near the origin in the parabolic zone $|z|\lesssim 1$ (or $r \ll \frac{1}{\nu}$) (see \eqref{def:norm mvin} and Lemma \ref{lemm:Innercontrol}), and we obtain a pointwise bound for $\tilde m _v$ (see \eqref{pointwisemerefined}). When using the decomposition \eqref{eq:decom2Intro}, the linear operator $\mathscr{A}^\zeta$ is changed into (see the beginning of Section \ref{sec:mainEneEst})
$$\bar {\mathscr{A}}^\zeta = \frac{\mathscr{A}^\zeta + \tilde{\mathscr{A}}^\zeta}{2}.$$
and the nonlinear terms can now be estimated directly. A remarkable fact is that $\bar {\mathscr{A}}^\zeta$ adds a perturbation to $\mathscr{A}^\zeta$ that avoids the resonance near the origin. As a consequence the spectral structure of $\bar{ \mathscr{A}}^\zeta$ remains the same and the spectral gap still holds true, see Lemma \ref{lemm:SpecAbar}. We finally arrive at
$$\frac{d}{d\tau}\|m_\varepsilon \|^2_{L^2_\frac{\omega_\nu}{\zeta}} \leq -\|m_\varepsilon \|^2_{L^2_\frac{\omega_\nu}{\zeta}} + C\frac{\nu^2}{|\ln \nu|^2},$$
from which  \eqref{est:L2omeIntro} then directly follows after an integration in time. 

The control of the nonradial part is greatly simplified thanks to the coercivity estimate \eqref{est:coerLsIntro}. To measure the size of $\varepsilon^\perp$, we use the well-adapted norm  related to \eqref{def:scatsIntro},
$$\|\varepsilon^\perp\|_0^2 = \nu^2 \int_{\mathbb{R}^2} \varepsilon^\perp \sqrt{\rho_0} \mathscr{M}^z (\varepsilon^\perp \sqrt{\rho_0}) dz \sim \| \varepsilon^\perp\|^2_{L^2_{\omega_\nu}}.$$
In particular, we establish the following monotonicity formula (see Lemma \ref{lemm:ebot0}) 
$$\frac{d}{d\tau}\| \varepsilon^\perp\|^2_0 \leq -\delta'\| \varepsilon^\perp\|^2_0 + Ce^{-2\kappa \tau} \quad \textup{for some $0 < \kappa \ll 1$}, $$
for some constant $\delta'>0$, which gives $\| \varepsilon^\perp\|_{L^2_{\omega_\nu}} \lesssim e^{-\kappa \tau}$ after an integration in time. \\

\noindent A control on additional higher order regularity norms on the solution is also required to close the remaining nonlinear terms. We use parabolic regularity to obtain from our key decay in $L^2_\omega$ decays for higher order derivatives. This is done outside the blow-up zone, where the exponentially decaying weight $\omega$ cannot control the solution. In this zone however the renormalised solution is close to zero and the analysis boils down to the stability of the zero solution subject to small boundary terms. This is also done near the origin as explained previously, where the weight $\omega$ does not control the solution at scale $\nu$. In this zone the renormalised solution is close to the stationary state $U_\nu$, and the scaling term in the dynamics is negligible. We then control the perturbation via suitable coercivity estimates, as the boundary terms coming from the parabolic zone are already controlled.\\

\noindent The rest of paper is organized as follows: In Section \ref{sec:Reno}, we formulate the problem and recall key properties of the linearized operator. Section \ref{sec:StableTheo1} is devoted to the proof of Theorem \ref{theo:Stab} assuming technical details which are left to Section \ref{sec:ControlStab}. In Section \ref{sec:UnstabTheo2}, we sketch the proof of Theorem \ref{theo:UnStab}.

\section{Linear analysis in the parabolic zone} \label{sec:Reno}

\subsection{Parabolic variables and renormalisation}
Given $(\mu,x^*) \in C^1([0,T),(0,\infty)\times \mathbb R^2)$, we introduce the parabolic variables
\begin{align} \label{selfsimvar}
\quad z = \frac{x - x^*(t)}{\mu}, \quad \frac{d\tau }{dt} = \frac{1}{\mu^2}, \ \ \tau(0)=\tau_0,
\end{align}
and the corresponding renormalisation
$$
u(x,t) = \frac{1}{\mu^2(t)}w(z,\tau), \quad \Phi_u(x,t) = \Phi_w(z,\tau).
$$
The renormalisation rate is encoded by the following parameter
\begin{equation} \label{def:beta}
\beta =-\frac{\mu_\tau}{\mu}.
\end{equation}
The variables \eqref{selfsimvar} are indeed parabolic ones as we will have, once translating back to original variables, that $\beta\rightarrow \beta_{\infty}>0$ and $\mu(\tau)\sim \sqrt{T-t}$ from \eqref{reinte:exprmu} for some blow-up time $T>0$. The problem \eqref{eq:KSst} is transformed into the new equation
\begin{align}\label{eq:wztau}
\partial_\tau w  = \nabla \cdot \big(\nabla w - w \nabla \Phi_w\big) -\beta \nabla \cdot(zw) +\frac{x^*_\tau}{\mu} \cdot \nabla w.
\end{align}
In the partial mass setting, and in the parabolic variables \eqref{selfsimvar}, that is, introducing
\begin{equation}\label{def:mwPM}
m_w(\zeta)=\frac{1}{2\pi} \int_{B(0,\zeta)} w(z)dz, \ \ \zeta=|z|,
\end{equation}
the corresponding equation reads as 
\begin{equation} \label{partialmassselfsim}
\pa_\tau m_{w}=\pa_{\zeta}^2m_w-\frac{1}{\zeta}\pa_\zeta m_w+\frac{\pa_\zeta (m_w^2)}{2\zeta}-\beta \zeta\pa_\zeta m_w + N_0(w^\perp),
\end{equation}
where for $S(0,\zeta)$ the sphere at the origin with radius $\zeta$,
\begin{equation} \label{def:N0perp}
N_0(w^\perp) = -\frac{1}{2\pi}\int_{S(0,\zeta)}w^{\perp} \Big(\nabla \Phi_{w^{\perp}} -  \frac{x^*_{\tau}}{\mu} \Big) \cdot \vec n dS,
\end{equation}
with $w^\bot (z)=w(z)-(2\pi \zeta)^{-1}\int_{S(0,\zeta)}wdS $  the nonradial part of $w$.

\subsection{Spectral analysis and coercivity for the linearised operator}

Linearising equation \eqref{partialmassselfsim} around the rescaled soliton $Q_\nu$ (see \eqref{def:AsAs0zeta}) leads to the study of the linearized operator $\As^\zeta$ whose spectrum has been studied in details in \cite{CGNNarx19a} via matched asymptotic expansions. Note that in the radial setting, studying the linear operator $\Ls^z$  is equivalent to studying $\As^\zeta$ through the relation \eqref{eq:relLszAszeta}. In particular, if $\phi_{n, \nu}$ is an eigenfunction of $\As^\zeta$, then $\pa_\zeta \phi_{n, \nu}/\zeta$ is a radial eigenfunction of $\Ls^z$. For the reader's convenience, we recall from Proposition 1.1 of \cite{CGNNarx19a} the spectral properties of the operator $\As^\zeta$.

\begin{proposition}[Spectral properties of $\As^\zeta$, \cite{CGNNarx19a}] \label{prop:SpecRad} The linear operator $\As^\zeta : H^2_{\frac{\omega_\nu}{\zeta}} \to L^2_{\frac{\omega_\nu}{\zeta}}$ is essentially self-adjoint with compact resolvant. Moreover, given any $N\in \mathbb N$, $0 < \beta_* < \beta^*$ and $0<\delta \ll 1$, then there exists $\nu^*>0$ such that the following holds for all $0<\nu \leq \nu^*$ and $\beta_* \leq \beta \leq \beta^*$. \\
\noindent $(i)$ \textup{(Eigenvalues)} The first $N + 1$ eigenvalues $\alpha_{0},...,\alpha_N $ are given by
\begin{equation}\label{def:specAsb}
\alpha_{n} = 2\beta \Big(1 - n   + \frac{1}{2\ln \nu} + \tilde{\alpha}_{n}\Big),\;\; \tilde \alpha_{n} =   O\left(\frac{1}{|\ln \nu|^2}\right).
\end{equation}
Moreover, we have the refined estimate with $\gamma$ the Euler constant:
\begin{equation}\label{est:nu0ntil1}
\tilde{\alpha}_n = \frac{\ln 2 - \gamma - n - \ln \beta}{4|\ln \nu|^2} + \Oc\left(\frac{1}{|\ln \nu|^3} \right) \; \textup{for}\; n = 0,1.
\end{equation}
\noindent $(ii)$ \textup{(Eigenfunctions)} There exist eigenfunctions $\phi_{n, \nu}$ given by \eqref{def:hatphi} with:
\begin{equation}\label{est:PhinL2norm1}
 \langle \phi_{n, \nu}, \phi_{m, \nu} \rangle^2_{L^2_{\frac{\omega_\nu}{\zeta}}} = c_{n}\delta_{m,n}, \qquad \qquad \mbox{for }n,m\leq N.
\end{equation}
where, for some $c>0$,
$$ c_0\sim \frac{|\ln \nu|}{8}, \quad c_1\sim \frac{|\ln \nu|^2}{4}, \quad c|\ln \nu|^2\leq c_n \leq  \frac 1c |\ln \nu|^2,$$
We also have the pointwise estimates for $ k = 0,1,2$,
\begin{align}\label{est:PhinPointEst}
\left|D^k \phi_{n, \nu}(\zeta)\right|& +\left|D^k \beta\partial_\beta \phi_{n, \nu}(\zeta) \right| +\left|D^k \nu\partial_\nu \phi_{n, \nu}(\zeta) \right|  \lesssim \left(\frac{\zeta}{\nu+\zeta}\right)^{2-k\wedge 2}\frac{ \langle\zeta\rangle^{2n - 2+\delta} \big(1 +  \zeta^2\ln \big\la\frac{\zeta}{\nu} \big\ra\, \mathbf{1}_{\{n \geq 1\}} \big)}{(\zeta + \nu)^{2 + k}}.
\end{align}

\noindent $(iii)$ \textup{(Spectral gap estimate)} For any $g \in L^2_{\frac{\omega_\nu}{\zeta}}$ belonging to the domain of $\As^{\zeta}$ with $\langle g, \phi_{j, \nu}\rangle_{L^2_{\frac{\omega_\nu}{\zeta}}} = 0$ for $0 \leq j \leq N$, one has
\begin{equation}\label{est:SpecGap1}
\int_0^{\infty} g(\zeta) \As^\zeta g(\zeta) \frac{\omega_\nu (\zeta)}{\zeta}d\zeta   \leq  \alpha_{N+1}\int_0^{\infty} g^2(\zeta)  \frac{\omega_\nu (\zeta)}{\zeta}d\zeta  .
\end{equation}
\end{proposition}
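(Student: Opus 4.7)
My first step would be to set up the self-adjoint framework. The operator $\As^\zeta$ is obtained by conjugating the radial part of $\Ls^z$ through the partial-mass transform, which turns a degenerate elliptic operator on $\mathbb R^2$ into a Sturm-Liouville operator on $(0,\infty)$. To check symmetry in $L^2_{\omega_\nu/\zeta}$, I would compute $\pa_\zeta(\omega_\nu/\zeta)$ and verify that the resulting integration-by-parts identity for $\la \As^\zeta f,g\ra_{L^2_{\omega_\nu/\zeta}}$ yields a symmetric bilinear form; the key algebraic fact is that the weight $\omega_\nu$ is designed so that the drift $-\beta\zeta\pa_\zeta$ and the potential term $\pa_\zeta(Q_\nu\cdot)/\zeta$ are jointly compatible. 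Essential self-adjointness then follows from limit-point analysis at $0$ and $\infty$, while the exponential decay of $\omega_\nu$ gives compactness of the embedding of the form domain into $L^2_{\omega_\nu/\zeta}$ via standard weighted Rellich-type arguments. This yields discrete real spectrum and finishes the functional-analytic part of (i)--(ii).

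The core of the proof is the matched asymptotic construction of the first $N+1$ eigenfunctions. In the inner region $\zeta\lesssim \nu$ I rescale $r=\zeta/\nu$, so that $\As^\zeta$ becomes $\nu^{-2}(\As_0-\beta\nu^2 r\pa_r)$, and the leading operator $\As_0$ has the explicit generalised kernel $(T_j)_{j\ge 0}$ with $\As_0T_0=0$, $\As_0T_{j+1}=-T_j$. An ansatz of the form $\phi_{n,\nu}^{\inn}=\sum_{j=0}^n c_{n,j}\beta^j\nu^{2j-2}T_j(\zeta/\nu)$ solves the inner eigenequation modulo an error governed by $r\pa_r T_n$ and eigenvalue correction, and its far-field behaviour is dictated by \eqref{est:Tj}. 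In the outer region $\zeta\gg\nu$ one has $Q_\nu\to 4$, so to leading order $\As^\zeta$ reduces to a Laguerre-type operator whose radial eigenmodes are $\zeta^{2n-2}L_n(\beta\zeta^2/2)$ with eigenvalues exactly $2\beta(1-n)$. In the overlap zone $\nu\ll \zeta\ll 1$, the inner expansion behaves like $\hat d_n\beta^n\nu^{-2}\zeta^{2n-2}\ln(\zeta/\nu)$ while the outer candidate has polynomial behaviour $\zeta^{2n-2}$, and matching these two asymptotics fixes the constants $c_{n,j}=2^j n!/(n-j)!$ and produces the quantisation condition that determines $\tilde\alpha_n$.

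The quantisation condition at leading order forces the $1/(2\ln\nu)$ shift in \eqref{def:specAsb}: solvability in the matching zone requires the logarithm generated by $T_n$ on the inner side to balance against $\ln(\zeta/\nu)=\ln\zeta-\ln\nu$, and the constant term $-\ln\nu$ is what contributes at order $1/|\ln\nu|$ to the eigenvalue. For the refined estimate \eqref{est:nu0ntil1} on $\tilde\alpha_0,\tilde\alpha_1$, I would push the inner and outer expansions to one additional order: the subleading constants in the asymptotics of $T_n$ (which for $n=0,1$ can be computed explicitly and contain the $\ln 2$ and $\gamma$ terms from evaluating $T_0=r^2/(1+r^2)^2$ and its antiderivative against $\omega_0/\zeta$ at infinity), combined with the $\beta$-dependence of the Hermite weight normalisation, produce the announced formula. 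The orthogonality identity \eqref{est:PhinL2norm1} and the sharp constants $c_n$ follow by computing the squared norm on the dominant outer region (with the logarithmic enhancement $|\ln\nu|$ for $c_0$ and $|\ln\nu|^2$ for $n\geq 1$ reflecting the logarithmic behaviour of $\phi_{n,\nu}$ through the matching zone), and the pointwise estimate \eqref{est:PhinPointEst} comes from elliptic regularity plus the explicit inner/outer expansions and their derivatives.

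Finally, the spectral gap \eqref{est:SpecGap1} is an immediate consequence of the min-max characterisation applied to the selfadjoint operator $\As^\zeta$ with compact resolvant, once the first $N+1$ eigenpairs have been identified and orthonormalised. The main obstacle throughout is the degeneracy of the problem as $\nu\to 0$: since $Q_\nu$ concentrates, the operator $\As^\zeta$ does not converge in any usual topology, and classical perturbation theory is unavailable. One must therefore run matched asymptotics in two regions separated by a \emph{logarithmically thin} overlap, and extract the $1/|\ln\nu|$ and $1/|\ln\nu|^2$ corrections by careful bookkeeping of error terms across the interface; the refined $O(1/|\ln\nu|^2)$ accuracy required for the precise law \eqref{id:lawlambdastable} makes this matching particularly delicate.
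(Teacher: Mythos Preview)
The paper does not actually prove this proposition: it is quoted verbatim from the companion paper \cite{CGNNarx19a} (see the sentence ``For the reader's convenience, we recall from Proposition 1.1 of \cite{CGNNarx19a} the spectral properties of the operator $\As^\zeta$'' preceding the statement, and the explicit attribution ``whose spectrum has been studied in details in \cite{CGNNarx19a} via matched asymptotic expansions''). So there is no proof in the present paper to compare against.

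That said, your sketch is consistent with the method the authors advertise for \cite{CGNNarx19a}: an inner expansion on the generalised kernel $(T_j)_{j\ge 0}$ of $\As_0$, an outer expansion governed by a Laguerre-type operator after $Q_\nu\to 4$, and a matching in the overlap zone $\nu\ll\zeta\ll 1$ that produces the $1/(2\ln\nu)$ shift and, at the next order, the constants in \eqref{est:nu0ntil1}. Two small cautions on your outline: first, the outer problem is not literally the Hermite/Laguerre operator with eigenfunctions $\zeta^{2n-2}L_n(\beta\zeta^2/2)$, because the limiting potential contributes a nontrivial zeroth-order term and one still has the $-\zeta^{-1}\pa_\zeta$ drift; the correct outer model is the operator $\pa_\zeta^2+(3/\zeta-\beta\zeta)\pa_\zeta$ acting on functions vanishing quadratically at the origin, whose eigenfunctions are generalised Laguerre polynomials in $\beta\zeta^2/2$. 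Second, the refined constants $\ln 2-\gamma-n-\ln\beta$ in \eqref{est:nu0ntil1} do not come solely from the asymptotics of $T_0,T_1$: the Euler constant $\gamma$ enters through the subleading term in the outer solution near $\zeta=0$ (expansion of the confluent hypergeometric/Laguerre function), while $\ln\beta$ comes from the scaling of the outer variable. Getting these constants right requires tracking both inner and outer expansions to one order beyond leading, which is exactly the ``particularly delicate'' step you flag at the end.
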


\medskip
 
Due to the degeneracy of the problem, one has to track precise information of the eigenfunctions, especially for the first two ones. By construction, the eigenfunctions are obtained through the following approximation
\begin{equation}\label{def:hatphi}
 \phi_{n, \nu}(\zeta)= \sum_{j = 0}^n c_{n,j} \beta^j \nu^{2j-2} T_j\big(\frac{\zeta}{\nu}\big)+ \tilde{ \phi}_{n,\nu}(\zeta) ,
\end{equation}
where for $\As_0$ the linearised operator around the stationary state $Q$ introduced in \eqref{def:AsAs0},
\begin{equation} \label{def:ProTj}
T_j=(-1)^j (\As_0)^{-j}T_0, \ \ T_0(r)=\frac 18 r\pa_r Q(r), \ \ c_{n,j} = 2^j \frac{n!}{(n - j)!},
\end{equation}
and for $\psi_0(r) = \frac{r^2}{\rj^4}$ and $\tilde{\psi}_0(r) = \frac{r^4 + 4r^2\ln r - 1}{\rj^4}$ the inverse $\As_0^{-1} $ is given by
\begin{align}\label{def:invAs}
\As_0^{-1} f(r) := \frac{1}{2} \psi_0(r) \int_r^{1} \frac{\zeta^4 + 4\zeta^2 \ln \zeta - 1}{\zeta} f(\zeta) d\zeta + \frac{1}{2}\tilde\psi_0(r)\int_0^r \zeta f(\zeta) d\zeta,
\end{align}
and $\tilde{\phi}_{n,\nu}$ is a smaller order remainder described in the following.
\begin{lemma}[Eigenfunctions of $\As^\zeta$, \cite{CGNNarx19a}] \label{lemm:EigAzeta} Under the hypotheses of Proposition \ref{prop:SpecRad}, one has the following identities and upper bounds for $k = 0, 1,2$.\\
(i) \textup{(Estimates for $\phi_{0,\nu}$)}
\begin{align} \label{est:poinwisephi02}
|D^k \nu \pa_\nu \phi_{0,\nu} &|\lesssim \left(\frac{\zeta}{\nu+\zeta}\right)^{2-k\wedge 2} \frac{\langle \zeta\rangle^2}{(\nu+\zeta)^{2+k}}, \\
|D^k \beta \pa_\beta \phi_{0,\nu} | &\lesssim \left(\frac{\zeta}{\nu+\zeta}\right)^{2-k\wedge 2} \frac{\langle \zeta\rangle^2}{(\nu+\zeta)^{k}},\label{est:poinwisephi021}
\end{align}
and 
\begin{equation} \label{pointwisephi0}
|D^k\tilde{ \phi}_{0,\nu}(\zeta)|\lesssim  \frac{1}{(\nu+\zeta)^k} \left( \mathbf{1}_{\{ \zeta \lesssim \nu\}} +  \frac{|\ln \zeta|}{|\ln \nu|} \mathbf{1}_{\{\zeta \gtrsim \nu\}}\right).
\end{equation}
(ii) \textup{(Estimates for $\phi_{1,\nu}$)} Firstly,
\begin{equation}\label{est:phi1til}
|D^k\tilde{ \phi}_{1,\nu}(\zeta)|\lesssim  \left(\frac{\zeta}{\nu+\zeta}\right)^{2-k\wedge 2} \frac{ \langle\zeta\rangle^{4} }{|\ln \nu|(\zeta + \nu)^{k}},
\end{equation}
and, secondly,
$$\nu \pa_\nu  \phi_{1,\nu}=-2\beta (r\pa_rT_1)\left(\frac{\zeta}{\nu}\right)+R_1, \quad  \beta \pa_\beta \phi_{1,\nu}=\phi_{1,\nu}-\phi_{0,\nu}+R_2,$$
where
\begin{equation} \label{est:poinwisephi1}
|D^k R_1(\zeta)|\lesssim  \left(\frac{\zeta}{\nu+\zeta}\right)^{2-k\wedge 2} \frac{ \langle\zeta\rangle^{4} }{|\ln \nu|(\zeta + \nu)^{k}},
\end{equation}
\begin{equation} \label{est:poinwisephi12}
|D^k R_2(\zeta)|\lesssim  \left(\frac{\zeta}{\nu+\zeta}\right)^{2-k\wedge 2} \frac{ \langle\zeta\rangle^{4} }{(\zeta + \nu)^{k}}.
\end{equation}
(iii) \textup{(Cancellation near the origin)} For all $0\leq n\leq N$:
\begin{equation} \label{eq:pointwisephin-phi0}
|D^k \left(\phi_{n,\nu}- \phi_{0,\nu}\right)|  \lesssim  \min \left(\nu^2 \left\la \frac \zeta \nu\right\ra^2, \frac{1}{\ln \nu} \right) \left(\frac{\zeta}{\nu+\zeta}\right)^{2-k\wedge 2} \frac{ \langle\zeta\rangle^{2n + 2} }{(\zeta + \nu)^{2+ k}}.
\end{equation}
\end{lemma}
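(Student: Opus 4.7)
The plan is to refine the matched asymptotic construction of $\phi_{n,\nu}$ from \cite{CGNNarx19a} that underlies the ansatz \eqref{def:hatphi}. The inner piece $\sum_{j=0}^n c_{n,j}\beta^j\nu^{2j-2}T_j(\zeta/\nu)$ is an explicit sum of rational-logarithmic profiles (recall $T_j=(-1)^j \As_0^{-j}T_0$ via \eqref{def:invAs}) whose pointwise behavior at the origin (polynomial, by the smoothing of $\As_0^{-1}$) and at infinity \eqref{est:Tj} is explicit. The remainder $\tilde\phi_{n,\nu}$ is the outer correction needed to place $\phi_{n,\nu}$ in $L^2_{\omega_\nu/\zeta}$: in the outer zone $\zeta\gtrsim\nu$ the operator $\As^\zeta$ is dominated by the Ornstein--Uhlenbeck drift $-\beta\zeta\pa_\zeta$, and the tail of $\tilde\phi_{n,\nu}$ is governed by an ODE whose matching to the inner zone at $\zeta\sim\nu$ is logarithmic. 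This produces the bounds \eqref{pointwisephi0} and \eqref{est:phi1til}, the $1/|\ln\nu|$ prefactor on $\tilde\phi_{1,\nu}$ reflecting the extra logarithmic gain from one further iteration of $\As_0^{-1}$. Higher derivatives $D^k$ ($k=1,2$) are treated by differentiating the inner/outer representations; the factor $(\zeta/(\nu+\zeta))^{2-k\wedge 2}$ encodes the $\zeta^2$-vanishing of radial partial masses near the origin, preserved on the second derivative by the $D^2$ structure.

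For the $\nu$- and $\beta$-derivatives in (i)--(ii), I would differentiate the eigenvalue relation $(\As^\zeta-\alpha_n)\phi_{n,\nu}=0$ in the parameter and re-apply the same matched asymptotic procedure to the resulting inhomogeneous problem, using the explicit dependence \eqref{def:specAsb} of $\alpha_n$ on $(\nu,\beta)$. The bounds \eqref{est:poinwisephi02}--\eqref{est:poinwisephi021} then follow directly. The two identities for $\phi_{1,\nu}$ are the most delicate point. For $\nu\pa_\nu\phi_{1,\nu}$, the chain rule yields $\nu\pa_\nu[T_1(\zeta/\nu)]=-(r\pa_r T_1)(\zeta/\nu)$ and the coefficient $c_{1,1}=2$ produces the explicit leading term $-2\beta(r\pa_r T_1)(\zeta/\nu)$; the remainder $R_1$ collects the $\nu$-derivative of $\nu^{-2}T_0(\zeta/\nu)$ and of $\tilde\phi_{1,\nu}$, and these combine to give a factor $1/|\ln\nu|$ once one imposes the $L^2$-normalisation \eqref{est:PhinL2norm1}, which forces the leading $T_0$-variation to be absorbed by the outer tail. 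For $\beta\pa_\beta\phi_{1,\nu}$, the only leading $\beta$-dependent piece is $2\beta T_1(\zeta/\nu)$, so $\beta\pa_\beta$ reproduces it; since $c_{1,0}=c_{0,0}=1$ one has $\phi_{1,\nu}-\phi_{0,\nu}=2\beta T_1(\zeta/\nu)+(\tilde\phi_{1,\nu}-\tilde\phi_{0,\nu})$, giving the stated identity with $R_2$ absorbing the $\beta$-derivatives of the remainders and of $\alpha_1$.

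The cancellation estimate (iii) rests on the fact that $c_{n,0}=1$ for every $n$, so the leading singular inner piece $\nu^{-2}T_0(\zeta/\nu)$ cancels in
\begin{equation*}
\phi_{n,\nu}-\phi_{0,\nu}=\sum_{j=1}^n c_{n,j}\beta^j\nu^{2j-2}T_j\!\bigl(\tfrac{\zeta}{\nu}\bigr)+\bigl(\tilde\phi_{n,\nu}-\tilde\phi_{0,\nu}\bigr).
\end{equation*}
The residual inner sum is bounded near the origin by the smoothness of the $T_j$, and after accounting for the $(\nu+\zeta)^{-2-k}$ and $(\zeta/(\nu+\zeta))^{2-k\wedge 2}$ factors in the statement this produces the $\nu^2\langle\zeta/\nu\rangle^2$ branch of the $\min$; the polynomial tail $\langle\zeta\rangle^{2n+2}$ comes from the $r^{2j-2}\ln r$ growth of $T_j$ at infinity via \eqref{est:Tj}. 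In the outer zone both $\alpha_0$ and $\alpha_n$ coincide with $2\beta$ up to $O(1/|\ln\nu|)$ by \eqref{def:specAsb}, forcing the outer profiles of $\phi_{0,\nu}$ and $\phi_{n,\nu}$ to agree modulo $O(1/|\ln\nu|)$ by continuous dependence of the outer ODE on the eigenvalue, which yields the other branch of the $\min$. The main obstacle is to track this $1/|\ln\nu|$ gain rigorously across the outer zone, since this is exactly where the critical degeneracy sits (eigenvalues collapse to $2\beta(1-n)$ as $\nu\to 0$); one must propagate the matched asymptotic analysis of \cite{CGNNarx19a} one order further in the logarithmic expansion than for the eigenfunction itself, and distinguish carefully the $1/|\ln\nu|$ and $1/|\ln\nu|^2$ scales that already appear in the refined eigenvalues \eqref{est:nu0ntil1}.
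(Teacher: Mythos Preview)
The paper does not prove this lemma: it is stated with the citation \cite{CGNNarx19a} in its header and is imported wholesale from that companion spectral-analysis paper, with no proof given here. So there is no ``paper's own proof'' to compare your plan against.

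That said, your plan is a faithful sketch of the approach actually taken in \cite{CGNNarx19a}: the eigenfunctions are built by matched asymptotics between an inner expansion in the $T_j$'s and an outer Ornstein--Uhlenbeck-type profile, the remainder $\tilde\phi_{n,\nu}$ is controlled by the logarithmic matching at $\zeta\sim\nu$, and the parameter derivatives are obtained by differentiating the eigenvalue equation and repeating the construction on the resulting inhomogeneous problem. Your identification of the key mechanisms --- the cancellation $c_{n,0}=1$ for (iii), the explicit $\nu\pa_\nu T_1(\zeta/\nu)=-(r\pa_r T_1)(\zeta/\nu)$ for the first identity in (ii), and the fact that $\beta\pa_\beta$ hits only the $2\beta T_1$ term at leading order for the second --- is correct. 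The one place where your sketch is a bit vague is the claim that the $\nu$-derivative of $\nu^{-2}T_0(\zeta/\nu)$ is absorbed into $R_1$ with a $1/|\ln\nu|$ gain ``by the $L^2$-normalisation''; in fact that term is $O(\nu^{-2}\langle r\rangle^{-2})$ pointwise and already fits the bound \eqref{est:poinwisephi1} without any logarithmic gain (since $\nu^{-2}\langle r\rangle^{-2}\lesssim |\ln\nu|^{-1}$ only fails for $\zeta\lesssim\nu$, where the bound \eqref{est:poinwisephi1} is of size $\nu^{-k}|\ln\nu|^{-1}$ but the term itself is $O(\nu^{-2-k})$ --- so one must check the inner behavior more carefully, and indeed the cancellation comes from the structure of $\nu\pa_\nu\phi_{1,\nu}$ as a whole rather than term-by-term). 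But this is a detail of the execution, not a flaw in the strategy.
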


\bigskip

We also prove in \cite{CGNNarx19a} a coercivity property of the linearized operator $\Ls$ acting on purely nonradial functions avoiding the direction $\nabla U$. Before stating it, note that on the one hand, in the first decomposition for $\Ls$ \eqref{def:LsLs0Ms}, the operator $\Ls_0$ is self-adjoint in $L^2(\Rb^2)$ endowed with the inner product 
\begin{equation} \label{def:premierscalar}
\la u,v \ra_{_\Ms} = \int_{\Rb^2} u \Ms v\, dy, \quad  \|u\|_{_\Ms}^2 =  \la u, u \ra_{_\Ms}.
\end{equation}
The positivity of the norm $ \|\cdot\|_{_\Ms}$ is subject to some suitable orthogonality conditions as showed in the following lemma.
\begin{lemma}[Coercivity of $\Ms$, \cite{RSma14}, \cite{CGNNarx19a}] \label{lemm:coerML0} Let $u$ be such that $\displaystyle\int_{\Rb^2} \frac{|u|^2}{U}dy<+\infty$ and $\int_{|x|=r} u= 0$ for almost every $r>0$. Then, we have
\begin{equation}\label{eq:posM}
\int_{\Rb^2} u \Ms u dy \geq 0,
\end{equation}
and there exist $\delta_1, \delta_2, C > 0$ such that
\begin{equation} \label{bd:coercivite L2}
\int_{\Rb^2} u\Ms u dy \geq \delta_1\int_{\Rb^2} \frac{u^2}{U}dy - C\Big[ \la u, \Lambda U\ra_{L^2}^2  + \la u, \pa_1 U\ra_{L^2}^2 + \la u, \pa_2 U\ra_{L^2}^2 \Big].
\end{equation}
If $u$ additionally satisfies  $\displaystyle\int_{\Rb^2} \frac{|\nabla u|^2}{U}dy<+\infty$ , then one has
\begin{equation} \label{bd:coercivite H1}
\int_{\Rb^2} U |\nabla (\Ms u)|^2 dy \geq  \delta_2\int_{\Rb^2} \frac{|\nabla u|^2}{U} dy - C\Big[\la u, \pa_1 U\ra_{L^2}^2 + \la u, \pa_2 U\ra_{L^2}^2 \Big].
\end{equation}
\end{lemma}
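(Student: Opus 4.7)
The pointwise positivity \eqref{eq:posM} will be proved by spherical harmonic decomposition. I decompose $u=\sum_{k\geq 1}u_k$, where the $k=0$ mode vanishes by the radial-mean-free assumption. Since both multiplication by the radial factor $1/U$ and the Newton potential $u\mapsto \Phi_u$ commute with projection onto each spherical harmonic sector, the quadratic form splits as $\int u\Ms u\,dy=\sum_{k\geq 1}\int u_k\Ms u_k\,dy$. On each sector $k\geq 1$, $\Ms$ reduces to an explicit one-dimensional Sturm--Liouville operator on the radial profile, and nonnegativity follows by direct factorisation using the explicit kernel elements ($\pa_1U,\pa_2U$ for $k=1$, coming from translation invariance; trivial kernel for $k\geq 2$), as in \cite{RSma14}.

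The $L^2$-type coercivity \eqref{bd:coercivite L2} will follow by compactness and contradiction. If the estimate fails, there exists a sequence $(u_n)$ of radial-mean-free functions with $\int u_n^2/U=1$, $\int u_n\Ms u_n\to 0$, and $\la u_n,\pa_iU\ra_{L^2}\to 0$ for $i=1,2$ (the $\Lambda U$ projection being automatically zero by radial symmetry). Extracting a weak limit $u_\infty$ in $L^2_{1/U}$, the key compactness input is that the bilinear form $(u,v)\mapsto \int u\Phi_v\,dy$ is compact on this space: the Newton potential gains regularity while the decaying weight $1/U\sim |y|^4$ controls the far field, so $\int u_n\Phi_{u_n}\to \int u_\infty\Phi_{u_\infty}$. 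Combined with $\int u_n\Ms u_n=\int u_n^2/U-\int u_n\Phi_{u_n}\to 0$, this forces $\int u_\infty^2/U=1$ and $\int u_\infty\Ms u_\infty=0$, so $u_\infty\not\equiv 0$ minimises the nonnegative quadratic form under the constraints. Since $\Ms$ preserves the non-radial sector, the associated Euler--Lagrange equation reads $\Ms u_\infty=c_1\pa_1U+c_2\pa_2U$; testing against $\pa_iU$ and using the identity $\Ms(\pa_iU)=0$ (direct consequence of translation invariance, as $\pa_iU/U=\pa_i\log U=\pa_i\Phi_U$) gives $c_1=c_2=0$. Thus $\Ms u_\infty=0$, equivalent to $-\Delta\Phi_{u_\infty}=U\Phi_{u_\infty}$, the linearised Liouville equation, whose kernel in the natural class is the three-dimensional space $\mathrm{span}(\Lambda U,\pa_1U,\pa_2U)$. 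The orthogonality to $\pa_1U,\pa_2U$ together with the radial-mean-free assumption force $u_\infty=0$, a contradiction.

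The $H^1$-type coercivity \eqref{bd:coercivite H1} will follow by a parallel scheme. Expanding $U\nabla\Ms u=\nabla u-(u/U)\nabla U-U\nabla\Phi_u$ gives
\[
\int_{\Rb^2}U|\nabla\Ms u|^2\,dy=\int_{\Rb^2}\frac{|\nabla u|^2}{U}\,dy+R(u),
\]
where $R(u)$ is a sum of cross terms which either integrate by parts to yield zeroth-order quantities bounded via \eqref{bd:coercivite L2} or are directly absorbed. A second compactness-contradiction argument, now set in the weighted Sobolev space $\{u:\int(|\nabla u|^2+u^2)/U<\infty\}$, closes the estimate; the $\Lambda U$ obstruction is again automatic from the radial-mean-free assumption, so only the translation modes $\pa_iU$ need to be enforced by hand.

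The hardest step will be establishing the compactness of $u\mapsto\Phi_u$ on the heavy weighted space $L^2_{1/U}$: the logarithmic convolution kernel is long-range while the weight $1/U\sim|y|^4$ strongly penalises functions at infinity, so a careful splitting of the Newton potential into a local smoothing piece (treated by Rellich--Kondrachov) and a far-field decaying piece (controlled by the weight) will be required to upgrade weak convergence to the strong convergence of $\int u_n\Phi_{u_n}$ that drives the whole argument.
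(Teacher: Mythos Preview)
The paper itself does not prove this lemma; it is cited from \cite{RSma14} and \cite{CGNNarx19a}, so there is no in-paper proof to compare against. Your strategy---spherical-harmonic splitting for the positivity \eqref{eq:posM}, then a compactness--contradiction argument driven by the nondegeneracy of the linearised Liouville kernel $\textup{span}(\Lambda U,\pa_1U,\pa_2U)$ for the two coercivity estimates---is exactly the standard one used in those references and is correct in outline.

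One minor correction: in the Euler--Lagrange step you omit the multiplier coming from the sphere constraint $\int u^2/U=1$. The equation should read $\Ms u_\infty=\lambda\,u_\infty/U+c_1\pa_1U+c_2\pa_2U$ (restricted to the nonradial sector), and you must first obtain $\lambda=0$ by pairing with $u_\infty$ and using that the infimum of the quadratic form is zero. A second remark: the compactness of $u\mapsto\int u\,\Phi_u$ on $L^2_{1/U}$ is easier than you indicate once you exploit the nonradial hypothesis. On each sector $k\geq1$ the explicit inversion formula \eqref{id:Phiki} gives $|\Phi_u^{(k,i)}(r)|\lesssim r^{-k}$ at infinity (no logarithmic tail), so the ``far-field'' piece is controlled directly by the weight and the splitting you describe becomes routine. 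This is also why the radial mode, where the log kernel genuinely grows, must be excluded in the hypotheses.
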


On the other hand, $\Ls$ can be written the following way:
%\Ls u &= \Ls_0 u -b\nabla.(yu) \quad \textup{with} \quad \Ls_0 u = \nabla \cdot(U \nabla \Ms u) \;\; \textup{and} \;\; \Ms u = \frac{u}{U} - \Phi_u, \label{def:Lsform01}\\
\begin{align}
\Ls u &= \Hs u - \nabla U \cdot \nabla \Phi_u \quad \textup{with} \quad \Hs u = \frac{1}{\omega} \nabla \cdot \Big(\omega \nabla u\Big) + 2(U - b)u,\label{def:LsformH1}
\end{align}
with the weight function $\omega$ defined in \eqref{def:omegarho}. Not containing the nonlocal part, the operator $\Hs$ is self-adjoint in the weighted $L^2_{\omega}(\Rb^2)$ with inner product
\begin{equation}\label{def:L2rhonorm}
\la u, v\ra_{L^2_{\omega}} = \int_{\Rb^2} u v \omega(y) 
\, dy, \quad  \| u \|_{L^2_\omega}^2 = \la u, u \ra_{L^2_\omega}.
\end{equation}
For our problem at hand, we will be able to neglect the part of the nonlocal term in $\Ls$ located away from the origin. We thus introduce the following "mixed" scalar product, with a localised Poisson field, which matches the two previous ones \eqref{def:premierscalar} and \eqref{def:L2rhonorm} to leading order close and away from the origin respectively:
\begin{equation}\label{def:quadform2}
\langle u,v\rangle_\ast:= \int_{\Rb^2} u\sqrt{\rho}\Ms (v \sqrt{\rho})dy.
\end{equation}
To avoid the far away contribution for the Poisson field which is not under control in $L^2(\omega)$, we localize the Poisson field in the linearized operator accordingly:
$$\tilde \Phi_u= \frac{1}{\sqrt{\rho}}(-\Delta)^{-1}(u \sqrt{\rho}),$$
and consider the slightly modified operator
$$
\tilde \Ls u =\Delta u-\nabla  \cdot (u \nabla \Phi_{U})-\nabla \cdot (U\nabla \tilde \Phi_u)- b \nabla \cdot (yu).
$$
We claim that in the non-radial sector, the localised operator $\tilde \Ls$ is coercive for the mixed scalar product \eqref{def:L2rhonorm} under the natural orthogonality assumption to $\nabla U$.
\begin{proposition}[Coercivity of $\tilde \Ls$, \cite{CGNNarx19a}] \label{pr:coercivitenonradial}
There exists $\delta _0 ,C>0$ and $b^*>0$ such that for all $0<b \leq b^*$, if $u \in \dot{H}^1_{\omega}$ and satisfies $\int_{|y|=r}u(y)dy=0$ for almost every $r\in (0,\infty)$, then we have
\begin{equation}\label{est:coerLnuy}
\langle -\tilde \Ls u,u\rangle_\ast \geq \delta_0 \| \nabla u \|_{L^2_\omega}^2 - C \sum_{i = 1}^2 \left(\int_{\Rb^2} u\partial_{i}U\sqrt{\rho}dy\right)^2.
\end{equation}
\end{proposition}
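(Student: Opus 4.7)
The plan is to conjugate out the Gaussian weight $\sqrt{\rho}$ and reduce the claim to the $\dot{H}^1$ coercivity of $\Ms$ already established in Lemma \ref{lemm:coerML0}. Setting $v:=u\sqrt{\rho}$, the non-radiality of $u$ passes to $v$ since $\sqrt{\rho}$ is radial. The key algebraic observation is that the very definition of $\tilde{\Phi}$ gives $\sqrt{\rho}\,\tilde{\Phi}_u = (-\Delta)^{-1}(u\sqrt{\rho}) = \Phi_v$, hence $\Ms v = v/U - \Phi_v = \sqrt{\rho}\,\tilde{\Ms}u$ where $\tilde{\Ms}u := u/U - \tilde{\Phi}_u$, and so $\langle u,u\rangle_{\ast} = \int_{\Rb^2} v\,\Ms v\,dy$. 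Moreover, using the Liouville identity $\nabla\Phi_U=\nabla\log U$, one checks directly that $\tilde{\Ls}_0 u := \tilde{\Ls}u + b\nabla\cdot(yu) = \nabla\cdot(U\nabla\tilde{\Ms}u)$, which is the direct analogue of the self-adjoint factorisation $\Ls_0 u = \nabla\cdot(U\nabla\Ms u)$.

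I would then expand $\langle -\tilde{\Ls}u,u\rangle_{\ast} = -\int_{\Rb^2} \tilde{\Ls}u\cdot\rho\,\tilde{\Ms}u\,dy$ by integration by parts. Using the factorisation above, the $\tilde{\Ls}_0$-part produces the principal positive term $\int U|\nabla\tilde{\Ms}u|^2\rho\,dy$ plus a cross term of order $b$ coming from $\nabla\rho=-by\rho$, which after rewriting $y\cdot\nabla\tilde{\Ms}u\cdot\tilde{\Ms}u=\tfrac12 y\cdot\nabla(\tilde{\Ms}u)^2$ and one further integration by parts becomes a potential-type contribution $\int(\tilde{\Ms}u)^2(\cdots)\rho\,dy$ of size $O(b)$. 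A similar computation for the scaling piece $-b\nabla\cdot(yu)$ yields analogous $O(b)$ expressions. To produce the gradient norm on the right-hand side, I would then invoke \eqref{bd:coercivite H1} applied to $v$, which is admissible since $v$ inherits the non-radiality and since $\langle v,\partial_iU\rangle_{L^2}=\int u\partial_iU\sqrt{\rho}\,dy$ matches the projections appearing in \eqref{est:coerLnuy}. The commutation identities $\nabla\Ms v=\sqrt{\rho}(\nabla\tilde{\Ms}u-\tfrac{b}{2}y\,\tilde{\Ms}u)$ and $\nabla v=\sqrt{\rho}(\nabla u-\tfrac{b}{2}yu)$ link $\int U|\nabla\Ms v|^2dy$ and $\int |\nabla v|^2/U\,dy$ back to $\int U|\nabla\tilde{\Ms}u|^2\rho\,dy$ and $\|\nabla u\|_{L^2_\omega}^2$ up to further $O(b)$ errors.

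The main obstacle is that the scaling $-b\nabla\cdot(y\cdot)$ is not symmetric for $\langle\cdot,\cdot\rangle_\ast$, so the various $O(b)$ and $O(b^2)$ remainders generated above (and by commuting $\nabla$ through $\sqrt{\rho}$) are not a priori signed. These involve weighted $L^2$-type norms of $u$ and $\tilde{\Ms}u$ paired against polynomial factors in $y$, and I would control them by combining the $L^2$-coercivity \eqref{bd:coercivite L2} applied to $v$ (which bounds $\int v^2/U\,dy$ by the principal Dirichlet-type quantity modulo the same two projections on $\partial_iU$) with the exponential decay of $\rho$, which absorbs the $|y|^2$ weights. Choosing $b^\ast$ small enough then lets each error be bounded by a small multiple of $\int U|\nabla\tilde{\Ms}u|^2\rho\,dy + \|\nabla u\|_{L^2_\omega}^2$ and reabsorbed into the principal positive term, producing \eqref{est:coerLnuy} with some $\delta_0>0$. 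Note that the $\Lambda U$ direction appearing in Lemma \ref{lemm:coerML0} disappears automatically because $v$ is purely non-radial while $\Lambda U$ is radial, so only the two $\partial_iU$ projections survive.
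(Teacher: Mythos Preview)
The paper does not prove this proposition here; it is quoted from the companion paper \cite{CGNNarx19a}, so there is no in-text argument to compare against line by line. Your conjugation strategy $v=u\sqrt{\rho}$ is the natural one and your algebraic identities are correct: indeed $\Ms v=\sqrt{\rho}\,\tilde\Ms u$, $\langle u,u\rangle_\ast=\int v\Ms v$, and the factorisation $\tilde\Ls_0 u=\nabla\cdot(U\nabla\tilde\Ms u)$ holds via $\nabla\Phi_U=\nabla\log U$. So the skeleton of the argument is right and almost certainly matches what is done in \cite{CGNNarx19a}.

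There is, however, a genuine soft spot in your final reabsorption step. You assert that ``choosing $b^\ast$ small enough'' makes every remainder a small multiple of the principal term. But several of the remainders you generate scale like $b^k\int |y|^{2k}(\cdots)\rho\,dy$, and the Hardy--Poincar\'e inequalities \eqref{est:HaPo1} give back exactly one power of $b^{-1}$ for each power of $|y|^2$ absorbed by $\rho$; the net effect is an $O(1)$ constant, not $o_{b\to 0}(1)$. For instance, in passing from $\int |\nabla v|^2/U$ to $\|\nabla u\|_{L^2_\omega}^2$ you pick up $\tfrac{b^2}{4}\int |y|^2u^2/U\,\rho$, which by \eqref{est:HaPo1} with $\alpha=3$ is bounded by a fixed universal multiple of $\|\nabla u\|_{L^2_\omega}^2$, with no smallness in $b$. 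The same issue arises for $b\int U(y\cdot\nabla\tilde\Ms u)\tilde\Ms u\,\rho$ after integrating by parts. Thus ``$b$ small'' alone does not close the loop; you need either to track the signs of these $O(1)$ terms (several are in fact nonnegative, e.g.\ $b\int v\Ms v\ge 0$ from \eqref{eq:posM} and the diagonal pieces of the expansion of $\int|\nabla v|^2/U$), or to bound them directly by the dissipation $\int U|\nabla\Ms v|^2$ via \eqref{bd:coercivite L2}--\eqref{bd:coercivite H1} rather than by $\|\nabla u\|_{L^2_\omega}^2$. Once that bookkeeping is done the argument does close; but as written, the sentence ``$b^\ast$ small enough'' is doing more work than it can honestly do.
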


\begin{remark} Proposition \ref{pr:coercivitenonradial} concerns $\tilde{\Ls}$ instead of $\Ls$. However, the difference
$$(\Ls - \tilde{\Ls}) u = \nabla U \cdot \nabla (\tilde{\Phi}_u - \Phi_u),$$
will be controlled by dissipative effects and the fast decay of the stationary solution $U$ together with an appropriate outer norm defined in Definition \ref{def:bootstrap}. 
\end{remark}

\section{Stable blowup dynamics} \label{sec:StableTheo1}
\subsection{Inner variables and renormalisations}

The most important part of the analysis is done in parabolic variables \eqref{selfsimvar}. We will also use the inner variables
\begin{align} \label{blowupvar}
\quad y = \frac{z}{\nu}, \quad \frac{ds }{d\tau} = \frac{1}{\nu^2}, \ \ s(0)=s_0,
\end{align}
and the corresponding renormalisation:
\begin{align} \label{firstrenormalisationinner}
w(z,\tau) = \frac{1}{\nu^2(\tau)}v(y,s), \quad \Phi_w(z,\tau) = \Phi_v(y,s),
\end{align}
so that the problem \eqref{eq:wztau} is transformed into the further renormalised equation
\begin{align}\label{eq:vys}
\partial_s v  = \nabla \cdot \big(\nabla v - v \nabla \Phi_v\big) +\left(\frac{\nu_s}{\nu}-\nu^2\beta \right) \nabla \cdot(zv) +\frac{\nu x^*_\tau}{\mu} \cdot \nabla v .
\end{align}
\noindent  The parameter $\nu$ in \eqref{blowupvar} is fixed via the first orthogonality condition in \eqref{orthogonality}. This is the orthogonal projection in $L^2_{\frac {\omega_\nu} {\zeta}}$ of the solution $w$ onto the set of stationary states $(U_{\lambda})_{\lambda >0}$. Roughly speaking, this says that $w$ is close to $U_\nu$ in the parabolic zone. However, it will not be enough to show that in inner variables \eqref{firstrenormalisationinner}, $v$ is close to $U$, with a sufficient estimate in order to control the nonlinear terms.

To cope with this issue, we use a second decomposition involving a slightly rescaled stationary state at scale $\tilde \nu$, fixed by another orthogonality condition \eqref{eq:orthmwT0}. In particular, we introduce a modified parameter $\tnu$ with the associated modified inner variables for the partial mass \eqref{def:mwPM}
\begin{equation} \label{blowupvartilde}
m_v(\tr, s)= m_w(\zeta, \tau), \quad \tr = \frac{\zeta}{\tnu}, \quad \frac{ds}{d\tau} = \frac{1}{\tnu^2},
\end{equation}
where $m_v$ solves the new equation
\begin{equation} \label{eq:pamblowup}
\pa_s m_{v}=\pa_{\tr}^2m_v-\frac{1}{\tr}\pa_{\tr} m_v+\frac{\pa_{\tr} (m_v^2)}{2\tr}+\left(\frac{\tnu_s}{\tnu}-\tnu^2\beta \right)\tr\pa_{\tr} m_v + \tilde{N}_0(v^\perp),
\end{equation}
where 
$$\tilde{N}_0(v^\perp)= -\frac{1}{2\pi}\int_{S(0,\tr)}v^{\perp} \Big( \nabla \Phi_{v^{\perp}} - \frac{ x_s^*}{\mu \tilde{\nu}} \Big).\vec n dS.$$

\subsection{Ansatz}
\subsubsection{First decomposition in parabolic variables}
In parabolic variables \eqref{selfsimvar} we take an approximate solution to \eqref{eq:wztau} of the form
\begin{align}\label{def:WnuaPro}
W=W[\nu, \mathbf{a}, \beta ](z,\tau) &= U_\nu(\zeta) +a_1(\tau)(\varphi_{1,\nu}-\varphi_{0, \nu})+ \sum_{n = 2}^N a_n(\tau)\varphi_{n, \nu}(\zeta) \\
\nonumber & \equiv U_\nu(\zeta) + \Psi_{1,\nu} + \Psi_{2,\nu}.
\end{align}
where above, we recall that the rescaled stationary state is
$$U_\nu(\zeta)=\frac{1}{\nu^{2}}U\Big(\frac{\zeta}{\nu}\Big) = \frac{8\nu^2}{(\nu^2 + \zeta^2)^2},$$
$\nu(\tau)$, $\mathbf{a}(\tau) = \big(a_1, \cdots, a_n\big)(\tau)$ and $\beta(\tau)$ are unknown functions to be determined, $(\varphi_{n, \nu})_{0\leq n \leq N}$ are the radial eigenfunctions of $\Ls^z$ defined in \eqref{def:Lsz}, and the approximate perturbation is
\begin{equation}\label{def:Psi1Psi2N}
 \Psi_{1, \nu}(\tau, \zeta) = a_1(\tau)(\varphi_{1,\nu}-\varphi_{0, \nu}), \quad \Psi_{2,\nu}(\tau, \zeta) =  \sum_{n = 2}^N a_n(\tau)\varphi_{n, \nu}(\zeta).
\end{equation}
The approximate solution to \eqref{partialmassselfsim} in partial mass \eqref{def:mwPM} corresponding to \eqref{def:WnuaPro} is
\begin{equation}\label{exp:mW}
m_W[\nu, \mathbf{a}, \beta ](\zeta, \tau) = Q_\nu(\zeta) + a_1(\phi_{1, \nu}-\phi_{0, \nu})+ \sum_{n = 2}^N a_n(\tau)  \phi_{n, \nu}(\zeta) \equiv Q_\nu(\zeta) + P_\nu, 
\end{equation}
where the partial mass of the stationary state $Q_\nu(\zeta) = m_{U_\nu}(\zeta)$ is given by 
$$Q_\nu(\zeta) = Q\Big(\frac \zeta \nu\Big) = \frac{4\zeta^2}{\nu^2 + \zeta^2},$$
$(\phi_{n, \nu})_{0\leq n \leq N}$ are the renormalised eigenfunctions of $\As^\zeta$ defined as in \eqref{def:eigenfunctionIntro}, and the partial mass of the approximate perturbation is
\begin{equation}\label{def:P1P2N}
P_\nu = P_{1, \nu} + P_{2,\nu}, \quad P_{1, \nu} = a_1(\phi_{1, \nu}-\phi_{0, \nu}), \quad  P_{2,\nu} = \sum_{n = 2}^N a_n(\tau)  \phi_{n, \nu}(\zeta).
\end{equation}
The full solutions to \eqref{eq:wztau} and \eqref{partialmassselfsim} are then decomposed as
\begin{equation} \label{decomposition}
w=W+\e,  \qquad m_w = m_W + m_\e = Q_\nu + P_{1, \nu} + \bar m_\e \quad \textup{with} \quad \bar m_\e = P_{2, \nu} + m_\e. 
\end{equation}
Decomposing the remainder $\e$ between radial part and nonradial parts (with $0$ or $\perp$ superscript respectively):
$$
\e=\e^{0}+\e^{\perp},  \ \ q=q^{0}+q^{\perp},
$$
the decomposition \eqref{decomposition} is ensured by the orthogonality conditions
\begin{equation} \label{orthogonality}
m_\e \perp_{L^2_{\frac {\omega_\nu} {\zeta}}} \phi_{n, \nu} \mbox{ for } 0\leq n \leq N, \ \ \int_{\Rb^2} q^\perp \partial_{1}U \sqrt{\rho} dy=\int_{\Rb^2} q^\perp \partial_{2}U \sqrt{\rho} dy=0.
\end{equation}
In the blow-up variables \eqref{blowupvar}, we will use the notation $r = |y|$, and
\begin{equation}\label{def:vVq}
v=V+q, \ \ V[\nu,\beta,\bold a](s,y)=U(y)+ \frac{a_1}{\nu^2}\big[\varphi_1(r)-\varphi_0(r)\big] + \sum_{n=2}^N \frac{a_n}{\nu^2}\varphi_n(r),
\end{equation}
where we have the relations for the remainders and eigenfunctions:
$$q(s,y)=\nu^2 \e (\tau,z), \quad m_q(s,r) = m_\e(\tau, \zeta), \quad \varphi_n(r) = \frac{\pa_r \phi_n(r)}{r}, \quad \pa_r\Phi_{\varphi_n}(r) = -\frac{\phi_n(r)}{r},$$
and:
$$
\phi_n(r)=\nu^2\phi_{n, \nu}(\zeta), \ \ \varphi_{n,\nu} (\zeta)=\frac{\pa_\zeta \phi_{n,\nu}(\zeta)}{\zeta}, \ \ \varphi_n(r)=\nu^4\varphi_{n, \nu}(z).
$$

\subsubsection{Second decomposition in parabolic variables and inner variables}

We then consider the decomposition in modified inner variables \eqref{blowupvartilde}
\begin{equation}\label{dec:vtr}
m_v(\tr, s) = Q(\tr) + \tilde P_1(\tr, s) - \tilde N_1(\tr, s) + \tm_v(\tr,s).
\end{equation}
Above, $\tilde P_1(\tr, s) - \tilde N_1(\tr, s)$ is the modified approximate perturbation where
\begin{equation}\label{def:P1nutil}
\tilde P_1(\tr, s) = P_{1, \tnu}(\zeta, \tau) = a_1(\tau)\big(\phi_{1, \tnu}(\zeta) - \phi_{0,\tnu}(\zeta) \big),
\end{equation}
with $\phi_{n, \tnu}$ being the eigenfunction, given by \eqref{def:hatphi} with $\nu$ replaced by $\tilde \nu$, of
$$
\tilde{\As}^\zeta = \tilde{\As}_0^\zeta - \beta \zeta \pa_\zeta, \qquad  \tilde{\As}_0^\zeta = \partial_\zeta^2  -\frac{1}{\zeta}\partial_\zeta + \frac{\partial_\zeta(Q_{\tilde \nu} \cdot)}{\zeta},
$$
which is the linearized operator around $Q_{\tnu}$, and, with $\As_0^{-1}$ given by \eqref{def:invAs},
\begin{align}\label{def:N1til}
\tilde{N}_1(\tr, s) &= \As_0^{-1}\left(\frac{\pa_{\tr}\tilde{P}_1^2}{2\tr} + 8\beta \tnu^2 \tilde \phi_0 \right) = \tnu^4\big(\tilde \As_0^\zeta \big)^{-1}\Big(\frac{\pa_\zeta P_{1,\tnu}^2}{2\zeta} + 8\beta \tilde \phi_{0, \tnu}(\zeta) \Big) := N_{1, \tnu}(\zeta, \tau), 
\end{align}
where $\tilde{\phi}_{0, \tnu}(\zeta) = \phi_{0, \tnu}(\zeta) - \tnu^{-2}T_0(\zeta/ \tnu)$ satisfies the pointwise bound \eqref{pointwisephi0}. The introduction of $\tilde{N}_1$ is just a technical issue for the control of the inner norm \eqref{bootstrap:in}. Roughly speaking, we want the source error term to be of size $\Oc\left(\frac{\nu^4}{|\ln \nu|}\right)$ for the norm \eqref{def:norm mvin}, however the error terms created by $\tilde P_1$ are only of size $\Oc(\nu^4)$ on compact sets, but with strong decay at infinity. The correction $\tilde{N}_1$ then precisely cancels these terms. 

In terms of $(\zeta, \tau)$-variables, the decomposition \eqref{dec:vtr} is equivalent to
\begin{equation}\label{eq:decmwt}
m_w = Q_{\tnu} + P_{1, \tnu} - N_{1, \tnu} + \tm_w, \quad \tm_w(\zeta, \tau) = \tm_v(\tr, s),
\end{equation}
where the modified remainder is
\begin{equation}\label{def:mwtil}
\tm_w = Q_\nu - Q_{\tnu} + P_{1, \nu} - P_{1, \tnu}  + N_{1, \tnu} +  \bar m_\e.
\end{equation} 
The parameter $\tilde \nu$ is fixed by the orthogonality condition in modified inner variables:
\begin{equation}\label{eq:orthmwT0}
\int_0^\infty \tilde m_v(\tr) \chi_{_M}(\tr) T_0(\tr) \frac{\omega_0(\tr)}{\tr} d\tilde r=0 \quad \textup{for some fixed constant} \; M \gg 1,
\end{equation}
with $\chi_{_M}$ being defined as in \eqref{def:chiM}. In particular, this orthogonality condition and the rough bound \eqref{bootstrap:L2omeofme} for $m_\e$ (see Lemma \ref{lemm:nutil} below) ensure $\nu$ and $\tilde \nu$ are close:
$$\big|\nu - \tnu\big| \lesssim \frac{\nu}{|\ln \nu|}.$$

\subsection{The error generated by the approximate solution}
We claim that  $W[\nu,\mathbf{a}, \beta ](z, \tau)$ introduced in \eqref{def:WnuaPro} is a good approximate profile to \eqref{eq:wztau} in the following sense. 

\begin{lemma}[Approximate profile]  \label{lemm:appProf}   Assume that $(\nu, \mathbf{a}, \beta, x^*)$ are $\Cc^1$ maps with
$$(\nu, \mathbf{a}, \beta , x^*): [\tau_0, \tau_1) \mapsto (0,\nu^*] \times (0, a^*]^N \times \Big( \frac 12 - \beta ^*, \frac 12 + \beta ^*\Big) \times \Rb^2,$$
for  $0 < \nu^*, a^*, \beta^* \ll 1$ and $1 \ll \tau_0 <\tau_1 \leq +\infty$, with a priori bounds: 
$$\left|\frac{\nu_\tau}{\nu} \right| \lesssim \frac{1}{|\ln \nu|}, \quad  |a_1| \lesssim \nu^2, \quad |a_n| \lesssim \frac{\nu^2}{|\ln \nu|^2} \;\; \textup{for}\;\; n \in \{2,\cdots,N\}, \quad \big|\beta_\tau \big| \lesssim \frac{1}{|\ln \nu|^3}.$$
Then the error generated by \eqref{def:WnuaPro} for \eqref{eq:wztau} is given by 
\begin{align*}
E(z, \tau) = -\partial_\tau W  + \nabla \cdot(\nabla W - W \nabla \Phi_W) - \beta  \nabla \cdot(z W) + \frac{x^*_{\tau}}{\mu} \cdot \nabla W \equiv \frac{\partial_\zeta m_E}{\zeta} +  \frac{x^*_{ \tau}}{\mu} \cdot \nabla W,
\end{align*}
where 
\begin{equation}\label{eq:mW}
m_E(\zeta, \tau) = -\partial_\tau m_W  - \beta  \zeta \partial_\zeta m_W + \zeta\partial_\zeta \big(\zeta^{-1} \partial_\zeta m_W\big) + \frac{\partial_\zeta m_W^2}{2\zeta},
\end{equation}
can be decomposed as
\begin{equation}\label{exp:mE}
m_E(\zeta, \tau) =  \sum_{n = 0}^N \textup{Mod}_n \;  \phi_{n, \nu}(\zeta) + \tilde{m}_E(\zeta, \tau)+\frac{ \pa_{\zeta}P_\nu ^2}{2\zeta},
\end{equation}
with 
\begin{align}
&\textup{Mod}_0 = \left(\frac{\nu_\tau}{\nu} - \beta \right)8\nu^2 + a_{1, \tau}  - a_1 2\beta(1 + \tilde{\alpha}_0), \label{def:Mod0} \\
&\textup{Mod}_n =  -  \big[a_{n,\tau} - 2\beta( 1 - n + \tilde{\alpha}_n) a_n\big],\label{def:Modn}
\end{align}
and
\begin{align}
\label{expr:mEphi0} &\langle \tilde{m}_E, \phi_{0, \nu} \rangle_{L^2_\frac{\omega_\nu}{\zeta}} = -\frac{a_1}{8}\left( \frac{\nu_\tau}{\nu} - \frac{\beta_\tau}{\beta} |\ln \nu| \right) + \Oc\left( \frac{\nu^2}{|\ln \nu|^2}\right),\\
\label{expr:mEphi1} &\langle  \tilde{m}_E, \phi_{1, \nu} \rangle_{L^2_\frac{\omega_\nu}{\zeta}} = \frac{a_1}{4} |\ln \nu|\left( \frac{\nu_\tau}{\nu} - \frac{\beta_\tau}{\beta} |\ln \nu| \right) + \Oc\left( \frac{\nu^2}{|\ln \nu|}\right),
\end{align}
\begin{align}
\label{expr:mEphi2} \|\tilde{m}_E\|^2_{L^2_{\frac{\omega_\nu}{\zeta}}} = \Oc\left(\frac{\nu^4}{|\ln \nu|^2}\right), \; \langle \tilde{m}_E, \phi_{n, \nu} \rangle_{L^2_\frac{\omega_\nu}{\zeta}} = \Oc(\nu^2) \;\; \textup{for}\;\; n \in \{2, \cdots, N\}.
\end{align}
\end{lemma}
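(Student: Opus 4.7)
The plan is a direct substitution of the ansatz $m_W=Q_\nu+P_\nu$ into \eqref{eq:mW}, followed by a systematic identification of the components along the eigendirections $\phi_{n,\nu}$ furnished by Proposition~\ref{prop:SpecRad}. The key algebraic input is the stationary identity $\As_0 Q_\nu=\pa_\zeta(Q_\nu^2)/(2\zeta)$ (a restatement of the fact that $U_\nu$ solves \eqref{eq:KSst}), which upon expansion of $m_W$ reduces the elliptic and quadratic contributions to
$$
\zeta\pa_\zeta\!\bigl(\zeta^{-1}\pa_\zeta m_W\bigr)+\frac{\pa_\zeta m_W^2}{2\zeta}=\As_0 P_\nu+\frac{\pa_\zeta P_\nu^2}{2\zeta}.
$$
Absorbing the scaling term via $\As^\zeta=\As_0-\beta\zeta\pa_\zeta$ yields $m_E=-\pa_\tau m_W+\As^\zeta P_\nu-\beta\zeta\pa_\zeta Q_\nu+\pa_\zeta P_\nu^2/(2\zeta)$. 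Since $Q_\nu(\zeta)=Q(\zeta/\nu)$ gives $-\pa_\tau Q_\nu-\beta\zeta\pa_\zeta Q_\nu=(\nu_\tau/\nu-\beta)\zeta\pa_\zeta Q_\nu$, and since a direct computation from $T_0(r)=\tfrac18 r\pa_r Q(r)$ together with the definition \eqref{def:hatphi} at $n=0$ produces the clean identity $\zeta\pa_\zeta Q_\nu=8\nu^2\phi_{0,\nu}-8\nu^2\tilde{\phi}_{0,\nu}$, the scaling contribution already singles out the coefficient $(\nu_\tau/\nu-\beta)8\nu^2$ in front of $\phi_{0,\nu}$. Combining with $\As^\zeta P_\nu=a_1\alpha_1\phi_{1,\nu}-a_1\alpha_0\phi_{0,\nu}+\sum_{n=2}^N a_n\alpha_n\phi_{n,\nu}$ and with the explicit $a_{n,\tau}$-parts of $\pa_\tau P_\nu$ reproduces exactly the modulation coefficients $\textup{Mod}_n$ of \eqref{def:Mod0}--\eqref{def:Modn}; the nonlinear self-interaction $\pa_\zeta P_\nu^2/(2\zeta)$ is then isolated as in \eqref{exp:mE}.

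Everything else is placed in $\tilde m_E$: the secondary term $-8\nu^2(\nu_\tau/\nu-\beta)\tilde{\phi}_{0,\nu}$, together with the parameter-derivative pieces $-(\nu_\tau/\nu)\,\nu\pa_\nu P_\nu-(\beta_\tau/\beta)\,\beta\pa_\beta P_\nu$ that arise when $\pa_\tau$ acts on the eigenfunctions through their $(\nu,\beta)$-dependence. The $L^2_{\omega_\nu/\zeta}$ bound \eqref{expr:mEphi2} then follows by combining the pointwise estimates \eqref{pointwisephi0}, \eqref{est:poinwisephi02}, \eqref{est:poinwisephi021}, \eqref{est:phi1til}, \eqref{est:poinwisephi1}, \eqref{est:poinwisephi12} from Lemma~\ref{lemm:EigAzeta} with the a priori bounds $|\nu_\tau/\nu|\lesssim 1/|\ln\nu|$, $|a_1|\lesssim \nu^2$, $|a_n|\lesssim \nu^2/|\ln\nu|^2$, $|\beta_\tau|\lesssim 1/|\ln\nu|^3$. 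The coarser projection estimate $\langle\tilde m_E,\phi_{n,\nu}\rangle_{L^2_{\omega_\nu/\zeta}}=\Oc(\nu^2)$ for $n\geq 2$ then follows by Cauchy--Schwarz and the norms \eqref{est:PhinL2norm1}.

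The heart of the proof is the refined pair \eqref{expr:mEphi0}--\eqref{expr:mEphi1}: because $c_0\sim|\ln\nu|/8$ and $c_1\sim|\ln\nu|^2/4$, a naive Cauchy--Schwarz would lose one or two powers of $|\ln\nu|$, so the $\nu_\tau$- and $\beta_\tau$-contributions must be tracked explicitly rather than absorbed into error terms. The key device here is the identity $\beta\pa_\beta\phi_{1,\nu}=\phi_{1,\nu}-\phi_{0,\nu}+R_2$ from Lemma~\ref{lemm:EigAzeta}(ii), which forces $\beta\pa_\beta P_{1,\nu}=a_1(\phi_{1,\nu}-\phi_{0,\nu})+a_1(R_2-\beta\pa_\beta\phi_{0,\nu})$; the $\beta_\tau/\beta$-piece from $P_{1,\nu}$ is therefore itself proportional to $P_{1,\nu}$ up to a controlled remainder. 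Pairing this against $\phi_{0,\nu}$ and $\phi_{1,\nu}$ and invoking the orthogonality $\langle\phi_{n,\nu},\phi_{m,\nu}\rangle_{L^2_{\omega_\nu/\zeta}}=c_n\delta_{n,m}$ with the asymptotics of $c_0,c_1$ extracts the explicit coefficients $-(a_1/8)(\nu_\tau/\nu-(\beta_\tau/\beta)|\ln\nu|)$ and $(a_1/4)|\ln\nu|(\nu_\tau/\nu-(\beta_\tau/\beta)|\ln\nu|)$ announced in the lemma. The corresponding $\nu_\tau$-piece is treated in parallel via $\nu\pa_\nu\phi_{1,\nu}=-2\beta(r\pa_r T_1)(\zeta/\nu)+R_1$; the remainders $R_1,R_2$ as well as the contributions of $\nu\pa_\nu\phi_{0,\nu}$, $\beta\pa_\beta\phi_{0,\nu}$, $\tilde{\phi}_{0,\nu}$, and $P_{2,\nu}$ are then checked, via Lemma~\ref{lemm:EigAzeta}, to contribute only at the declared error levels $\Oc(\nu^2/|\ln\nu|^2)$ and $\Oc(\nu^2/|\ln\nu|)$. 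The main obstacle throughout is precisely this bookkeeping: one must avoid losing any logarithmic factor in the projections, which is made possible only by the structural identities of Lemma~\ref{lemm:EigAzeta}(ii) tying the $\beta$- and $\nu$-derivatives of $\phi_{1,\nu}$ to the combination $\phi_{1,\nu}-\phi_{0,\nu}$ already featured in $P_{1,\nu}$.
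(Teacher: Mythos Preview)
Your proposal is correct and follows essentially the same strategy as the paper: substitute $m_W=Q_\nu+P_\nu$ into \eqref{eq:mW}, split off $\textup{Mod}_n\phi_{n,\nu}$ and $\pa_\zeta P_\nu^2/(2\zeta)$, and place the parameter-derivative pieces $-(\nu_\tau/\nu)\nu\pa_\nu P_\nu-(\beta_\tau/\beta)\beta\pa_\beta P_\nu$ together with $-8\nu^2(\nu_\tau/\nu-\beta)\tilde\phi_{0,\nu}$ into $\tilde m_E$, then compute the two refined projections.

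The one genuine organizational difference is in the $\beta_\tau$-projection. You exploit the identity $\beta\pa_\beta\phi_{1,\nu}=\phi_{1,\nu}-\phi_{0,\nu}+R_2$ from Lemma~\ref{lemm:EigAzeta}(ii) and then read off $\langle\phi_{1,\nu}-\phi_{0,\nu},\phi_{0,\nu}\rangle=-c_0\sim-|\ln\nu|/8$ and $\langle\phi_{1,\nu}-\phi_{0,\nu},\phi_{1,\nu}\rangle=c_1\sim|\ln\nu|^2/4$ directly from \eqref{est:PhinL2norm1}; this is slightly slicker than the paper, which instead writes $\beta\pa_\beta(\phi_{1,\nu}-\phi_{0,\nu})=2\beta T_1(\zeta/\nu)+\Oc(\langle\zeta\rangle^4)$ and computes the two integrals against $\phi_{0,\nu},\phi_{1,\nu}$ by hand via the asymptotics of $T_1$. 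For the $\nu_\tau$-projection, however, no such orthogonality shortcut is available: both your approach and the paper's reduce to the pointwise identity $\nu\pa_\nu(\phi_{1,\nu}-\phi_{0,\nu})=-2\beta(r\pa_r T_1)(\zeta/\nu)+\Oc(|\ln\nu|^{-1})=\beta+\Oc(|\ln\nu|^{-1})$ and then an explicit evaluation of $\langle\beta,\phi_{0,\nu}\rangle=1/8+\Oc(|\ln\nu|^{-1})$ and $\langle\beta,\phi_{1,\nu}\rangle=-|\ln\nu|/4+\Oc(1)$, which you do not write out but which is unavoidable. Your wording slightly obscures this by listing the full combined coefficients before treating the $\nu_\tau$-piece, but the logic is sound.
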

\begin{proof} Plugging the expansion \eqref{exp:mW} into \eqref{eq:mW} yields 
$$m_E(\zeta, \tau) = \left(\frac{\nu_\tau}{\nu} - \beta\right) \zeta \pa_\zeta Q_\nu - \pa_\tau P_\nu + \As^\zeta P_\nu + \frac{\partial_\zeta P_\nu^2}{2\zeta}.$$
By noting that $\zeta \partial_\zeta Q_\nu \sim 8\nu^2 \phi_{0,\nu}$ and recalling from Proposition \ref{prop:SpecRad} that $\As^\zeta \phi_{n, \nu} = 2\beta(1 - n +\tilde{\alpha}_n) \phi_{n, \nu}$, we write 
\begin{align}\label{exp:mEpr}
m_E(\zeta, \tau)& = \left[\left(\frac{\nu_\tau}{\nu} - \beta \right)8\nu^2 + a_{1, \tau}  - a_1 2\beta(1 + \tilde{\alpha}_0)\right] \phi_{0, \nu}  - \sum_{n = 1}^N \big[ a_{n,\tau} - a_n 2\beta(1 - n +\tilde{\alpha}_n) \big] \; \phi_{n, \nu}  + \tilde{m}_E  + \frac{\partial_\zeta P_\nu^2}{2\zeta}, 
\end{align}
where the remainder is given by
\begin{eqnarray*}
\tilde{m}_E &  = &  -  a_1\Big[ \frac{\nu_\tau}{\nu} \nu \pa_\nu \big(\phi_{1, \nu} - \phi_{0, \nu}\big) + \frac{\beta_\tau}{\beta} \beta \pa_\beta \big(\phi_{1, \nu} - \phi_{0, \nu}\big) \Big]  -\sum_{n = 2}^N  \Big[ a_n \frac{\nu_\tau}{\nu} \nu \pa_\nu \phi_{n, \nu}  + \sum_{n= 2}^N a_n \frac{\beta_\tau}{\beta} \beta \pa_\beta   \phi_{n, \nu} \Big] \\
&&+ \left( \frac{\nu_\tau}{\nu} - \beta\right)\big(\zeta \partial_\zeta Q_\nu - 8\nu^2 \phi_{0, \nu}\big). 
\end{eqnarray*}
From the a priori assumptions, we remark that the leading order term in $\tilde{m}_E$ is $a_1 \frac{\nu_\tau}{\nu} \big[\nu \pa_\nu \big(\phi_{1, \nu} -  \phi_{0, \nu}\big) \big]$. In particular, we have from Lemma \ref{lemm:EigAzeta} the identity
$$\nu \pa_\nu \big(\phi_{1, \nu} -  \phi_{0, \nu}\big) = -2\beta (r\pa_r T_1)\big(\frac{\zeta}{\nu}\big) + \Oc\big( \frac{\langle \zeta \rangle}{|\ln \nu|}\big) = \beta + \Oc\big( \frac{\langle \zeta \ra}{|\ln \nu|}\big),$$
from which and the asymptotic behavior $T_1(r) = -\frac{1}{2}\ln r + \frac{1}{4} + \Oc\big( \frac{\ln r}{r^2} \big)$ for $r \gg 1$,  we compute asymptotically
\begin{align*}
& \la  \nu \pa_\nu \big(\phi_{1, \nu} -  \phi_{0, \nu}\big), \phi_{0, \nu} \ra _{L^2_{\frac{\omega_\nu}{\zeta}}} = \beta \int_0^{+\infty} \phi_{0, \nu}(\zeta) \zeta^{-1} \omega_\nu(\zeta) d\zeta + \Oc\left( \frac{1}{|\ln \nu|}\right)\\
& \qquad  \qquad  \qquad = \frac{\beta \nu^2}{8} \int_0^{+\infty}r e^{-\frac{\beta \nu^2 r^2}{2}}dr + \Oc\left( \frac{1}{|\ln \nu|}\right) = \frac{1}{8} + \Oc\left( \frac{1}{|\ln \nu|}\right),\\
& \la  \nu \pa_\nu \big(\phi_{1, \nu} - \phi_{0, \nu}\big), \phi_{1, \nu} \ra _{L^2_{\frac{\omega_\nu}{\zeta}}} = \beta \int_0^{+\infty}\phi_{1, \nu}(\zeta) \zeta^{-1} \omega_\nu(\zeta) d\zeta + \Oc(1)\\
 &\qquad  \qquad  \qquad  = -\frac{\beta^2 \nu^4 |\ln \nu|}{8} \int_0^{+\infty}r^3 e^{-\frac{\beta \nu^2 r^2}{2}}dr + \Oc(1) = -\frac{|\ln \nu|}{4} + \Oc(1),
\end{align*}
and for $n \geq 2$,
\begin{align*}
\la  \nu \pa_\nu \big(\phi_{1, \nu} -  \phi_{0, \nu}\big), \phi_{n, \nu} \ra _{L^2_{\frac{\omega_\nu}{\zeta}}} = \Oc(|\ln \nu|).
\end{align*}
From Lemma \ref{lemm:EigAzeta}, we have the relation
$$\beta\partial_\beta \big( \phi_{1, \nu} - \phi_{0, \nu}\big) = 2\beta T_1 \Big( \frac{\zeta}{\nu} \Big) + \Oc(\la \zeta \ra ^4),$$
from which and from the behavior of $T_1$, we compute 
\begin{align*}
& \la  \beta \pa_\beta \big(\phi_{1, \nu} -  \phi_{0, \nu}\big), \phi_{0, \nu} \ra _{L^2_{\frac{\omega_\nu}{\zeta}}} = 2\beta \int_0^{+\infty} T_1 \Big( \frac{\zeta}{\nu} \Big) \phi_{0, \nu}(\zeta) \zeta^{-1} \omega_\nu(\zeta) d\zeta + \Oc\left(1\right)\\
&\qquad \qquad = -\frac{\beta \nu^2}{8} \int_0^{+\infty} (\ln r)\, e^{-\frac{\beta \nu^2 r^2}{2}}rdr + \Oc\left(1\right) \ = \ -\frac{|\ln \nu|}{8} + \Oc\left(1\right),\\
&\la  \beta \pa_\beta \big(\phi_{1, \nu} -  \phi_{0, \nu}\big), \phi_{1, \nu} \ra _{L^2_{\frac{\omega_\nu}{\zeta}}} = 2\beta \int_0^{+\infty} T_1 \Big( \frac{\zeta}{\nu} \Big) \phi_{1, \nu}(\zeta) \zeta^{-1} \omega_\nu(\zeta) d\zeta + \Oc\left(1\right)\\
& \qquad \qquad  = \frac{\beta^2 \nu^4}{8} \int_0^{+\infty} (\ln r)^2 r^2 e^{-\frac{\beta \nu^2 r^2}{2}}rdr + \Oc\left(|\ln \nu|\right) \ = \ \frac{|\ln \nu|^2}{4} + \Oc\left(|\ln \nu|\right),
\end{align*}
and for $n \geq 2$,
$$\la  \beta \pa_\beta \big(\phi_{1, \nu} -  \phi_{0, \nu}\big),  \phi_{n, \nu} \ra _{L^2_{\frac{\omega_\nu}{\zeta}}} = \Oc(|\ln \nu|^2).$$
Using the a priori assumptions, \eqref{eq:pointwisephin-phi0} and \eqref{est:PhinPointEst}, we obtain the following rough estimates:
\begin{align*}
\left\|\sum_{n = 2}^N   a_n \frac{\nu_\tau}{\nu} \nu \pa_\nu \phi_{n, \nu} \right\|_{L^2_{\frac{\omega_\nu}{\zeta}}} \lesssim \sum_{n=2}^{N}\left|a_n \frac{\nu_\tau}{\nu} \right| |\ln \nu| \lesssim \frac{\nu^2}{|\ln \nu|^2},  \\
\left\|\sum_{n= 2}^N a_n \frac{\beta_\tau}{\beta} \beta \pa_\beta \phi_{n, \nu}  \right\|_{L^2_{\frac{ \omega_\nu}{\zeta}}} \lesssim \sum_{n= 2}^N \left|a_n \frac{\beta_\tau}{\beta}\right| |\ln \nu| \lesssim \frac{\nu^2}{|\ln \nu|^4},
\end{align*}
Using $\zeta\pa_\zeta Q_\nu(\zeta) = 8T_0(r)$ yields 
$$\left\|\big(\zeta \partial_\zeta Q_\nu - 8\nu^2 \phi_{0, \nu}\big) \right\|_{L^2_{\frac{\omega_\nu}{\zeta}}}  = \nu^2\| \tilde{\phi}_{0, \nu}\|_{L^2_{\frac{\omega_\nu}{\zeta}}} \lesssim  \frac{\nu^2}{|\ln \nu|}.$$
The collection of the above estimates yields the estimate for $\tilde{m}_E$ and closes the proof of Lemma \ref{lemm:appProf}.
\end{proof}

\subsection{Bootstrap regime}

We describe a regime in which the solution $w$ is close to the approximate solution $W$. The most important quantity is $\| m_\e \|_{L^2(\frac{\omega_\nu}{\zeta})}$ giving radial $L^2$ control in the parabolic zone $\zeta\lesssim 1$, from which we are able to close the leading dynamical system driving the law of blowup solutions in Lemma \ref{lemm:Mod}.

To close estimates at the nonlinear level, we use higher order regularity norms in the parabolic zone, that we decompose as the union of the inner zone $\zeta \leq \zeta_*$ and of the middle range zone $\zeta_*\leq \zeta \leq \zeta^*$, where we fix two numbers
$$
0<\zeta_*\ll 1 \ \ \mbox{ and } \ \ \zeta^*\gg 1.
$$
First, we use an inner norm for the radial part in modified blow-up variables \eqref{blowupvar}
\begin{equation}\label{def:norm mvin}
\| \tilde m_{v} \|_{\inn}^2 :=   \int_0^{\infty} \Big[ -\tnu^2\big(\chi_{\frac{\zeta_*}{\tnu}} \tilde m_{v}\big) + \As_0 \big(\chi_{\frac{\zeta_*}{\tnu}} \tilde m_{v}\big) \Big]  \As_0 \big(\chi_{\frac{\zeta_*}{\tnu}} \tilde m_{v}\big)\frac{\omega_0(\tr)}{\tr} d\tr 
\end{equation}
where  $\chi_{\frac{\zeta_*}{\tnu}}$ is defined as in \eqref{def:chiM} and 
$\omega_0(\tr) = ( U(\tr))^{-1}$. Second, we use standard Sobolev norms for the full remainder $\| \e \|_{H^2(\zeta_*\lesssim \zeta \lesssim \zeta^*)}$ in the zone $\zeta_*\leq |z|\leq \zeta^*$.

The nonradial part is controlled in the parabolic zone by the norm
$$
\| \e^\perp \|_0^2= \nu^2 \int_{\Rb^2} \e^\perp\sqrt{\rho_0}\Ms^z \big(\e^{\perp} \sqrt{\rho_0} \big)dz,
$$
$\Ms^z$ being defined by \eqref{def:L0Mz} and at a higher order regularity level in the inner zone
\begin{equation}\label{def:qperpnorminn}
\| q^\perp \|_{\inn}^2= -\int_{\Rb^2} \Ls_0 (\chi_{\frac{\zeta_*}{\nu}} q^\perp) \Ms  (\chi_{\frac{\zeta_*}{\nu}} q^\perp)dy = \int U| \nabla \big(\Ms  (\chi_{\frac{\zeta_*}{\nu}} q^\perp) \big)|^2 dy,
\end{equation}
where $\rho$, $\Ls_0$ and $\Ms$ are defined in \eqref{def:omegarho} and \eqref{def:Ls0Ms}  respectively. Note the equivalence thanks to the orthogonality \eqref{orthogonality} and the coercivity \eqref{bd:coercivite L2}:
\begin{equation}\label{eq:equive0}
\|\e^\perp\|_0 \sim \|\e^\perp\|_{L^2_{\omega_\nu}}.
\end{equation}
Finally, in the outer zone $\zeta \geq \zeta^*$, the full lower order perturbation is decomposed in radial and nonradial parts,
\begin{equation}\label{def:what}
\hat w = w - U_\nu - \Psi_{1,\nu} = \Psi_{2,\nu} + \e =: \hat w^0 + \hat w^\perp,
\end{equation}
and is controlled outside by an outer norm in the parabolic variables:
$$
\| \hat w \|_{\out}^{2p}=\int \left(1-\chi_{\frac{\zeta^*}{4}}\right) \left(||z|^{2-\frac 14} \hat w |+||z|^{2+\frac 34}\nabla \hat w | \right)^{2p}\frac{dz}{|z|^2} \quad \textup{for some}\;\; p\gg 1.
$$

\begin{definition}[Bootstrap Initiation] \label{def:ini}

Let $N\in \mathbb N$, $\kappa>0$, $\tau_0\gg 1$, $\mu_0=e^{-\tau_0/2}$. We say that $w_0$ satisfies the initial bootstrap conditions if there exists $x^*_0\in \mathbb R^2$, $\beta_0>0$ and $\nu_0>0$ such that the following holds true. In the variables \eqref{selfsimvar} one has the decomposition \eqref{decomposition} with the orthogonality conditions \eqref{orthogonality} and the following estimates:
\begin{itemize}
\item[(i)] \emph{(Compatibility condition for the initial renormalisation rate $\beta_0$)}
$$
\frac{a_1}{4\nu^2} = -1 + \frac{1}{2\ln \nu_0} + \frac{\ln 2 - \gamma - 1 - \ln \beta_0}{4|\ln \nu_0|^2}.
$$
\item[(ii)] \emph{(Initial modulation parameters)} For $ \bar c_0 =  \sqrt{\frac{2}{\beta_0}} e^{-\frac{2+\gamma}{2}}$:
\begin{align} \label{parametersinit}
&\bar c_0 e^{-\sqrt{\beta_0 \tau_0}}\left(1-\frac{1}{|\ln \nu_0|}\right) \leq  \nu_0\leq \bar c_0 e^{-\sqrt{\beta_0 \tau_0}}\left(1+\frac{1}{|\ln \nu_0|}\right), \\
\label{parametersinitan}
&\frac 12 -\frac{1}{|\ln \nu_0|}< \beta_0<\frac 12 +\frac{1}{|\ln \nu_0|}, \quad |a_n|< \frac{\nu^2_0}{|\ln \nu_0|^2} \ \mbox{ for } 2\leq n \leq N.
\end{align}
\item[(iii)] \emph{(Initial remainder)}
\begin{align} \label{mepsiloninitL2omega}
&\nu_0^2\|m_q(s_0) \|_{L^2_{\frac{\omega}{r}}} = \|m_\varepsilon(\tau_0) \|_{L^2_{\frac{\omega_\nu}{\zeta}}}< \frac{\nu^2_0}{|\ln \nu_0|},\\
&\| m_\e \|_{H^2(\frac{\zeta_*}{4}\leq \zeta \leq 4\zeta^*)} <\frac{\nu_0^2}{|\ln \nu_0|},\\
& \label{bd:initint}
\| \tilde m_v \|_{\inn}  < \frac{\nu_0^2}{|\ln \nu_0|},\\
\end{align}
and
\begin{align}
 &\label{bd:initout}
\| \hat w^{0} \|_{\out} < \frac{\nu_0^2}{|\ln \nu_0|},\\
&\| \e^\perp \|_0 + \| \e^\perp \|_{H^2(\frac{\zeta_*}{4}\leq |z|\leq 4\zeta^*)} < e^{-\kappa \tau_0},\\
 \label{bd:initintperp}
&\| q^\perp \|_{\inn}<\frac{e^{-\kappa \tau_0}}{\nu_0},\\
&\label{bd:initoutperp}
\| \hat w^\perp \|_{\out} < e^{-\kappa \tau_0}.
\end{align}
\end{itemize}
\end{definition}

Our goal is to prove that solutions satisfying the initial bootstrap conditions defined by the previous definition will stay close to the approximate solution forward in time, in the following sense.

\begin{definition}[Bootstrap]  \label{def:bootstrap}

Let $\kappa>0$, $\tau_0\gg 1$, and $K''\gg K'\gg K\gg 1$. A solution $w$ to \eqref{eq:KS} is said to be trapped on $[\tau_0,\tau^*]$ if it satisfies the initial bootstrap conditions in the sense of Definition \ref{def:ini} at time $\tau_0$ and the following conditions on $(\tau_0,\tau^*]$. There exists $\mu \in C^1([0,t^*],(0,\infty))$ such that the solution can be decomposed according to \eqref{decomposition}, \eqref{orthogonality} on $[\tau_0,\tau^*]$ with:
\begin{itemize}
\item[(i)] \emph{(Compatibility condition for the renormalisation rate $\beta$)}
\begin{equation} \label{bootstrap:condition}
\frac{a_1}{4\nu^2} = -1 + \frac{1}{2\ln \nu} + \frac{\ln 2 - \gamma - 1 - \ln \beta}{4|\ln \nu|^2}.
\end{equation}
\item[(ii)] \emph{(Modulation parameters)} For $\bar c =  \sqrt{\frac{2}{\beta}}e^{-\frac{2+\gamma}{2}}$:
\begin{align} \label{bootstrap:param1}
\bar c e^{-\sqrt{\beta_0 \tau+\int_{\tau_0}^\tau \beta}}\left(1-\frac{K'\ln| \ln \nu|}{|\ln \nu|}\right) \leq  \nu\leq \bar c e^{-\sqrt{\beta_0 \tau+\int_{\tau_0}^\tau \beta}}\left(1+\frac{K'\ln |\ln \nu|}{|\ln \nu|}\right),
\end{align}
\begin{align}
&\frac 12 -\frac{K'}{|\ln \nu|}< \beta <\frac 12 +\frac{K'}{|\ln \nu|},&\\
\label{bootstrap:param2}
&|a_n|< \frac{K\nu^2}{|\ln \nu|^2} \ \mbox{ for } 2\leq n \leq N.&
\end{align}
\item[(iii)] \emph{(Remainder)}
\begin{align} \label{bootstrap:L2omeofme}
&\nu^2\|m_q(s)\|_{L^2_{\frac{\omega}{r}}} = \|m_\varepsilon(\tau) \|_{L^2_{\frac{\omega_\nu}{\zeta}}}< \frac{K \nu^2}{|\ln \nu|},\\
 \label{bootstrap:mid}
&\| m_\e(\tau) \|_{H^2(\zeta_*\leq \zeta\leq \zeta^*)} <\frac{K' \nu^2}{|\ln \nu|},\\
 \label{bootstrap:in}
&\| \tilde m_{v} \|_{\inn}  < \frac{K''\nu^2}{|\ln \nu|},\\
 \label{bootstrap:out} 
&\| \hat w^{0} \|_{\out} < K'' \frac{\nu^2}{|\ln \nu|},
\end{align}
and
\begin{align}
 \label{bootstrap:perpenergy}
&\| \e^\perp \|_0 < K e^{-\kappa \tau},\\
 \label{bootstrap:perpmid}
&\| \e^\perp \|_{H^2(\zeta_*\leq |z|\leq \zeta^*)} < K' e^{-\kappa \tau},\\
 \label{bootstrap:inperp}
&\| q^\perp \|_{\inn}<K''\frac{e^{-\kappa \tau}}{\nu},\\
 \label{bootstrap:outnonradial} 
&\| \hat w^\perp \|_{\out} < K'' e^{-\kappa \tau}.
\end{align}

\end{itemize}

\end{definition}

The initial bootstrap conditions define an open set in which trajectories are trapped, in the sense of the following Proposition.

\begin{proposition}[Exitence of a solution trapped in the bootstrap regime] \label{pr:bootstrap}
There exists a choice of the constants $N\in \mathbb N$, $0<\kappa \ll 1$, $K\gg 1$, $K'\gg 1$, $K''\gg 1$ and $\tau_0\gg 1$ such that any solution which is initially in the bootstrap in the sense of Definition \ref{def:ini} will be trapped on $[\tau_0,\infty)$ in the sense of Definition \ref{def:bootstrap}.
\end{proposition}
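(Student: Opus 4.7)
The plan is a classical bootstrap/continuity argument: define the exit time
$$\tau^* = \sup\bigl\{\tau\geq \tau_0 : \text{conditions (i)--(iii) of Definition \ref{def:bootstrap} hold on }[\tau_0,\tau]\bigr\},$$
assume by contradiction $\tau^*<\infty$, and show that at $\tau^*$ every bootstrap bound can be strictly improved. Since $a_1$ is slaved to $(\nu,\beta)$ by the compatibility condition \eqref{bootstrap:condition} (which is propagated by the modulation system provided $\beta$ is modulated so that $\textup{Mod}_0\equiv 0$ or the corresponding identity), the argument is entirely deterministic and no topological shooting on finite-dimensional unstable modes is required. This matches the stability statement in Theorem \ref{theo:Stab}.

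First I would set up the modulation system. Using the orthogonality conditions \eqref{orthogonality}, the time derivative of $\langle m_\varepsilon, \phi_{n,\nu}\rangle_{L^2_{\omega_\nu/\zeta}}=0$ combined with \eqref{exp:mE}--\eqref{def:Modn} produces estimates of the form \eqref{est:ModIntro} for $\textup{Mod}_0,\ldots,\textup{Mod}_N$, and an analogous identity for $x^*_\tau/\mu$ gives \eqref{est:xtauIntro}. Imposing $\textup{Mod}_0=0$ (equivalently, the law \eqref{bootstrap:condition}) together with the refinement \eqref{est:nu0ntil1} yields the scalar ODE \eqref{eq:ODEnuIntro} for $\nu$; integration between $\tau_0$ and $\tau^*$ gives \eqref{bootstrap:param1} strictly inside its range (the $K'\ln|\ln\nu|/|\ln\nu|$ correction absorbs the $\Oc(|\ln\nu|^{-3})$ error), and the equation for $\beta$ (derived from $\textup{Mod}_1$ and the compatibility relation) improves \eqref{bootstrap:param1}. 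For $n\geq 2$ the equation $a_{n,\tau}=2\beta\alpha_n a_n+\Oc$ has strictly dissipative leading part (since $\alpha_n<0$ for $n\geq 2$ by \eqref{def:specAsb}), so Grönwall immediately improves the $a_n$ bound in \eqref{bootstrap:param2} by a constant factor.

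Next I would turn to the remainder estimates, whose hierarchy $K''\gg K'\gg K$ is designed so that each level is improved from the previous one. The central estimate is the $L^2_{\omega_\nu/\zeta}$ bound \eqref{bootstrap:L2omeofme} on $m_\varepsilon$. Because $\varepsilon$ is not small near the origin, one switches to the modified decomposition \eqref{eq:decmwt} based on $\tilde\nu$; this replaces $\mathscr{A}^\zeta$ by the averaged operator $\bar{\mathscr{A}}^\zeta$, which retains the spectral gap \eqref{est:SpecGap1} (this is Lemma \ref{lemm:SpecAbar}) while rendering the quadratic nonlinearity directly controllable thanks to the algebraic orthogonality to the resonance $T_0$ guaranteed by \eqref{eq:orthmwT0}. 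Combining with the modulation bounds and \eqref{exp:mEphi2} one derives the monotonicity formula
$$\frac{d}{d\tau}\|m_\varepsilon\|_{L^2_{\omega_\nu/\zeta}}^2 \leq -\delta\,\|m_\varepsilon\|_{L^2_{\omega_\nu/\zeta}}^2 + C\,\frac{\nu^4}{|\ln\nu|^2},$$
which integrates into a bound of the form $(C/\delta)^{1/2}\nu^2/|\ln\nu|$, strictly less than $K\nu^2/|\ln\nu|$ if $K$ is chosen large enough. For the nonradial part, Proposition \ref{pr:coercivitenonradial} applied to $\|\varepsilon^\perp\|_0$ (which is equivalent to $\|\varepsilon^\perp\|_{L^2_{\omega_\nu}}$ by \eqref{eq:equive0}) gives
$$\frac{d}{d\tau}\|\varepsilon^\perp\|_0^2 \leq -\delta'\|\varepsilon^\perp\|_0^2 + Ce^{-2\kappa\tau}$$
for $0<\kappa$ small enough, which upon integration improves \eqref{bootstrap:perpenergy} by a constant factor.

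Finally, the higher regularity norms are closed in a cascade. For the middle-zone $H^2$ bounds \eqref{bootstrap:mid} and \eqref{bootstrap:perpmid}, I use standard parabolic regularisation on $\{\zeta_*/2\leq\zeta\leq 2\zeta^*\}$: the $L^2_{\omega_\nu}$-smallness from the previous step upgrades to $H^2$ smallness with the constant improving from $K$ to $K'$. For the inner norm \eqref{bootstrap:in} I use the coercivity of $\mathscr{A}_0$ on the space orthogonal to $T_0$ ensured by \eqref{eq:orthmwT0}, together with the dissipation provided by $\mathscr{A}_0^2$ in \eqref{def:norm mvin}; the boundary contributions at $\zeta_*$ are already controlled by $K'$. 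Similarly, \eqref{bootstrap:inperp} follows from the coercivity \eqref{bd:coercivite H1}. The outer norms \eqref{bootstrap:out} and \eqref{bootstrap:outnonradial} are closed by pure parabolic energy estimates in the weighted space with polynomial weight $|z|^{2-1/4}$: in the zone $\zeta\gtrsim\zeta^*$, the solution is small and the equation is essentially a small perturbation of the heat equation with a drift, so a Grönwall argument in $L^{2p}$ yields the improvement.

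The main obstacle I anticipate is the $L^2_{\omega_\nu/\zeta}$ estimate on $m_\varepsilon$: the nonlinear term $\partial_\zeta((2P_\nu+m_\varepsilon)m_\varepsilon)/(2\zeta)$ has a coefficient $P_\nu/\zeta$ that does not vanish as $\nu\to 0$, so treating it perturbatively around $\mathscr{A}^\zeta$ is impossible. The resolution is the modified linearisation around the slightly different state $Q_{\tilde\nu}$, which absorbs the dangerous part of this nonlinearity into $\bar{\mathscr{A}}^\zeta$, and the verification that the spectrum of $\bar{\mathscr{A}}^\zeta$ still satisfies \eqref{est:SpecGap1}. All other difficulties (delicate logarithmic corrections in the modulation equations, loss of derivatives in the outer zone, matching between the $\nu$ and $\tilde\nu$ pictures via Lemma \ref{lemm:nutil}) are technical but do not require new conceptual input.
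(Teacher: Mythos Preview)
Your proposal is correct and follows essentially the same approach as the paper: a continuity argument on the exit time $\tau^*$, improvement of the modulation parameters via reintegration of the ODE system derived from Lemma \ref{lemm:Mod} and the compatibility condition \eqref{bootstrap:condition}, followed by time-integration of the Lyapunov functionals (Lemmas \ref{lemm:L2omeControl}, \ref{eq:H2controlme}, \ref{lemm:Innercontrol}, \ref{lemm:ebot0}, \ref{lemm:contolqbotinn} and the outer estimates \eqref{bd:lyapunovext0}--\eqref{bd:lyapunovextperp}) in the hierarchy $K\to K'\to K''$. Your identification of the main obstacle---that the nonlinearity cannot be treated perturbatively and forces the modified linearisation around $Q_{\tilde\nu}$ with operator $\bar{\mathscr{A}}^\zeta$---is exactly the point; one minor correction is that the dangerous contribution near the origin comes from $\partial_\zeta(m_\varepsilon^2)/(2\zeta)$ rather than from the $P_\nu m_\varepsilon$ cross term, but the resolution you describe is the one the paper uses.
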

\begin{proof} This is the heart of the present paper. The next Lemmas and Propositions prepare for its proof finally done in Subsection \ref{sec:ProofofProExist}.
\end{proof}

\begin{remark} \label{re:orderconstants}
The constants are determined in the following order. First, $N$ is chosen large enough and $\kappa$ small enough so that constants in time derivatives of Lyapunov functionals have correct signs. Then $K$ is chosen first, followed by $K'$ depending on $K$, and $K''$ depending on $K,K'$. Finally, $\tau_0$ is chosen last, so that this parameter is always adjusted throughout the proof to obtain various smallness.
\end{remark}

\subsection{Properties of the bootstrap regime}
The following lemma gives some properties of a solution trapped in the bootstrap regime.
\begin{lemma}[A priori control in the bootstrap]
Let $w$ be a solution in the bootstrap regime in the sense of Definition \ref{def:bootstrap}. Then for $\tau_0$ large enough depending on $\kappa,K,K',K''$ the following estimates hold on $[\tau_0,\tau^*]$:
\begin{itemize}
\item (Estimate on $\nu$)
\begin{equation} \label{bd:lnnu}
\frac{\sqrt{\tau}}{2}\leq |\ln \nu|\leq \sqrt{\tau}.
\end{equation}
\item (Estimate on the approximate perturbation) For $C$ independent on the bootstrap constants $\kappa,K,K',K''$ for $k=0,1,2$:
\begin{align} 
&|D^k P_{1,\nu} (\zeta)|\lesssim \nu^2  \left(\frac{\zeta}{\nu+\zeta}\right)^{2-k\wedge 2}  \frac{ \la \ln \la r\ra \ra}{(\nu+\zeta)^k}, \label{pointwisehatpsi}\\
& \quad |D^k P_{2,\nu} (\zeta)|\lesssim \frac{1}{|\ln \nu|^2}   \left(\frac{\zeta}{\nu+\zeta}\right)^{2-k\wedge 2}  \frac{\langle \zeta \rangle^{C(N)}\la \ln \la r\ra \ra}{(\nu+\zeta)^k}. \label{pointwisehatpsi2}
\end{align}
\item (Refined pointwise estimate in the parabolic zone) For $\zeta\leq \zeta^*/2$:
\begin{equation} \label{pointwisemerefined}
\big|\tilde m_w(\zeta)\big| + \big|\zeta \pa_\zeta\tilde m_w(\zeta)\big| \lesssim \frac{\zeta^2}{(\zeta+\nu)^2} \frac{\nu^2}{|\ln \nu|}  \sqrt{\la \ln \langle r \rangle \ra}.
\end{equation}

\item (Pointwise estimates in the outer zone) For $\zeta=|z|\geq \zeta^*/2$, for the full perturbation:
\begin{equation} \label{pointwiseaway}
 \big| \Psi_\nu + \e^0\big| \lesssim \nu^2|\ln \nu| |z|^{-2+\frac 14}, \quad \big|P_\nu+m_\e\big|+ \big|\zeta\pa_\zeta (P_\nu + m_\e)\big|\lesssim |\ln \nu|\nu^2\zeta^{\frac 14},
\end{equation}
and for the higher order part of the perturbation:
\begin{equation} \label{pointwiseaway2}
 \big| \Psi_{2,\nu} + \e^0\big| \lesssim \frac{\nu^2}{|\ln \nu|} |z|^{-2+\frac 14}, \; \big|P_{2,\nu} + m_\e\big|+ \big|\zeta\pa_\zeta (P_{2,\nu} + m_\e)\big|\lesssim \frac{\nu^2}{|\ln \nu|}\zeta^{\frac 14},
\end{equation}
and for the nonradial part:
\begin{equation} \label{pointwiseawaynonradial}
 |\e^\perp(z)| \lesssim e^{-\kappa \tau}(1+|z|)^{-2+\frac 14}.
\end{equation}
\item (Pointwise estimate on the nonradial Poisson field) For all $z \in \Rb^2$,
\begin{equation} \label{poissonLinfty}
|\Phi_{\e^\perp} (z)|+(1+|z|)|\nabla \Phi_{\e^\perp} (z)| \lesssim (1+|z|)^{\frac 12} \frac{e^{-\kappa \tau}}{\nu^3}.
\end{equation}
\end{itemize}

\end{lemma}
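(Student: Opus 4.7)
The plan is to prove each estimate in sequence, drawing on the spectral information from Lemma~\ref{lemm:EigAzeta} and Proposition~\ref{prop:SpecRad} together with the bootstrap bounds in Definition~\ref{def:bootstrap}, reserving the delicate part for the refined pointwise estimate in the inner zone.

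\textbf{Control of $\nu$ and of the approximate perturbation.} First, I would integrate the bootstrap constraint $|\beta-1/2|\lesssim K'/|\ln\nu|$ to obtain $\int_{\tau_0}^\tau \beta = \tau/2 + \mathcal O(\tau/|\ln\nu|)$ and insert this into \eqref{bootstrap:param1} to get $|\ln\nu|^2 \sim \tau/2$, from which \eqref{bd:lnnu} follows for $\tau_0$ large. For the pointwise estimate \eqref{pointwisehatpsi}, I would combine $|a_1|\lesssim \nu^2$ (a direct consequence of the compatibility relation \eqref{bootstrap:condition} and the bound just proved on $|\ln\nu|$) with the cancellation estimate \eqref{eq:pointwisephin-phi0} applied to $\phi_{1,\nu}-\phi_{0,\nu}$: the factor $\min(\nu^2\langle r\rangle^2,1/|\ln\nu|)$ against $\langle\zeta\rangle^4$ produces exactly the $\langle\ln\langle r\rangle\rangle$ profile. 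For \eqref{pointwisehatpsi2}, the bootstrap $|a_n|\leq K\nu^2/|\ln\nu|^2$ paired with the pointwise control \eqref{est:PhinPointEst} on $\phi_{n,\nu}$ suffices.

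\textbf{Refined inner pointwise estimate \eqref{pointwisemerefined}.} This is the central step. In the inner zone $\tilde r\lesssim \zeta_*/\tilde\nu$, the norm $\|\tilde m_v\|_{\inn}$ controls $\mathscr A_0(\chi_{\zeta_*/\tilde\nu}\tilde m_v)$ and a mass term in a weighted $L^2$ space. I would then invoke the explicit inverse $\mathscr A_0^{-1}$ from \eqref{def:invAs} together with the orthogonality \eqref{eq:orthmwT0}, which kills the resonance direction $T_0 = m_{\Lambda U}$ and yields a coercivity: $\chi \tilde m_v$ and $\tilde r\partial_{\tilde r}(\chi \tilde m_v)$ are controlled pointwise by $\|\tilde m_v\|_{\inn}$ modulo the weight profile. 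Scaling back through $\tilde m_w(\zeta)=\tilde m_v(\zeta/\tilde\nu)$ and using $|\tilde\nu-\nu|\lesssim\nu/|\ln\nu|$, the bootstrap \eqref{bootstrap:in} produces the required bound with factor $\nu^2/|\ln\nu|$ times $\zeta^2/(\zeta+\nu)^2$, and the weight $\sqrt{\langle\ln\langle r\rangle\rangle}$ emerges from the logarithmic growth at the edge of the inner region. In the middle range $\zeta_*\leq\zeta\leq \zeta^*/2$, the estimate follows from the Sobolev bootstrap \eqref{bootstrap:mid} via $H^2\hookrightarrow L^\infty$ in two dimensions, and the two controls can be matched where the zones overlap.

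\textbf{Outer pointwise estimates and Poisson field.} The norm $\|\hat w\|_{\out}$ is a weighted $W^{1,2p}$ quantity for $p\gg 1$, so a Morrey-type embedding gives $|\hat w(z)|\lesssim \|\hat w\|_{\out}\,|z|^{-2+1/4}$ in the outer region. Splitting $\hat w=\hat w^0+\hat w^\perp$ and reinstating $P_{1,\nu}$ via the previous pointwise bound delivers \eqref{pointwiseaway}--\eqref{pointwiseaway2}, and the partial mass bounds follow by integration in $\zeta$ using the identity $m_f=\frac{1}{2\pi}\int_{B(0,\zeta)}f$. Estimate \eqref{pointwiseawaynonradial} is obtained identically from $\|\hat w^\perp\|_{\out}$. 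For \eqref{poissonLinfty}, I would write $\Phi_{\e^\perp}(z)=-(2\pi)^{-1}\int \ln|z-z'|\,\e^\perp(z')\,dz'$ and split at $|z'|\sim 1$: the inner part is estimated by Cauchy--Schwarz against $\|\e^\perp\|_0\sim \|\e^\perp\|_{L^2_{\omega_\nu}}\lesssim e^{-\kappa\tau}$, the weight $\omega_\nu(z')\sim \nu^{-2}$ on compact sets being responsible for the $\nu^{-3}$ loss after accounting for the singularity of the kernel and gradient; the outer part is handled directly by \eqref{pointwiseawaynonradial}.

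The main obstacle is the refined bound \eqref{pointwisemerefined}: the $1/|\ln\nu|$ gain is sharp and relies both on the orthogonality \eqref{eq:orthmwT0} localised at the stationary-state scale and on the insertion of the correction $\tilde N_1$ in the decomposition \eqref{dec:vtr}, which precisely cancels the $\mathcal O(\nu^2)$ forcing that would otherwise prevent the source term driving $\tilde m_v$ from achieving size $\nu^2/|\ln\nu|$ in the inner norm.
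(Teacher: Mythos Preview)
Your outline for \eqref{bd:lnnu}, \eqref{pointwisehatpsi}--\eqref{pointwisehatpsi2}, the outer pointwise bounds \eqref{pointwiseaway}--\eqref{pointwiseawaynonradial}, and the inner estimate \eqref{pointwisemerefined} is essentially the paper's argument. One minor difference: for \eqref{pointwisemerefined} the paper does not match an inner inversion with a separate Sobolev argument in the mid-range; instead it controls $f=\As_0\tilde m_v$ in $L^2_{\omega_0/\tilde r}$ over the \emph{whole} range $\tilde r\leq \zeta^*/2\tilde\nu$ (using $\|\tilde m_v\|_{\inn}$ for $\tilde r\leq 2\zeta_*/\tilde\nu$ and $\|m_\e\|_{H^2(\zeta_*\leq\zeta\leq\zeta^*)}$ for $\tilde r\geq 2\zeta_*/\tilde\nu$), and then applies the single inversion formula \eqref{def:invAs} globally. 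The $\sqrt{\la\ln\la r\ra\ra}$ factor arises from Cauchy--Schwarz on $\tilde\psi_0(r)\int_0^r \zeta f\,d\zeta$, not from any edge matching.

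There is, however, a genuine gap in your treatment of \eqref{poissonLinfty}. First, the claim $\omega_\nu(z')\sim\nu^{-2}$ on compact sets is wrong: from \eqref{def:omeganu} one has $\omega_\nu(z')=(|z'|^2+\nu^2)^2/8\cdot e^{-\beta|z'|^2/2}$, which ranges from $\nu^4$ at the origin to order one for $|z'|\sim 1$. Second, and more seriously, direct Cauchy--Schwarz on the gradient kernel fails: $\int_{|z'|\leq 1}|z-z'|^{-2}\omega_\nu^{-1}(z')\,dz'$ diverges at $z'=z$, so you cannot bound $|\nabla\Phi_{\e^\perp}(z)|$ this way. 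The paper avoids this by first establishing the weighted $L^2$ bound
\[
\int_{\mathbb R^2}|\e^\perp(z)|^2(1+|z|)^{2\alpha}dz\lesssim \frac{e^{-2\kappa\tau}}{\nu^6}
\]
(combining the inner control $\int|\chi_{\zeta_*/\nu}q^\perp|^2 U^{-1}dy\lesssim e^{-2\kappa\tau}/\nu^2$ from \eqref{eq:equive0} with the outer pointwise bound \eqref{pointwiseawaynonradial}), and then invoking the estimate \eqref{bd:poisson1}, which is proved through the explicit spherical-harmonics representation \eqref{id:Phiki}--\eqref{id:nablaPhiki} of the Poisson field and crucially uses that $\e^\perp$ has no radial component. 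This harmonic decomposition is what circumvents the diagonal singularity; your splitting argument alone does not.
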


\begin{proof}

Note that \eqref{bd:lnnu} is a direct consequence of \eqref{bootstrap:param1}.\\

\noindent \textbf{Step 1} \emph{The approximate perturbation}. From \eqref{def:P1P2N}, the pointwise estimate \eqref{eq:pointwisephin-phi0} and the bootstrap bounds \eqref{bootstrap:condition}, \eqref{bootstrap:param1} and \eqref{bootstrap:param2}, there holds for a constant $C$ depending on $N$ only, 
\begin{eqnarray*}
\left| D^k P_{1, \nu}(\zeta)\right|  & \leq & |a_1||D^k(\phi_{1, \nu}(\zeta)-\phi_{0,\nu}(\zeta)) \leq  C\nu^2  \left(\frac{\zeta}{\nu+\zeta}\right)^{2-k\wedge 2}  \frac{1+\ln \langle \frac{|z|}{\nu}\rangle}{(\nu+\zeta)^k} ,\\
\left| D^k P_{2,\nu}(\zeta)\right|  & \leq & \sum_{n=2}^N |a_n| \big|D^k\phi_{n, \nu}(\zeta)\big| \leq  \frac{C}{|\ln \nu|^2} \left(\frac{\zeta}{\nu+\zeta}\right)^{2-k\wedge 2} \langle \zeta \rangle^C \frac{1+\ln \langle \frac{|z|}{\nu}\rangle}{(\nu+\zeta)^k}, 
\end{eqnarray*}
which are the pointwise estimates \eqref{pointwisehatpsi} and \eqref{pointwisehatpsi2}.

\noindent \textbf{Step 2} \emph{Pointwise estimates for $\tilde m_w$}. Let $f(r)=\As_0 \tilde m_v$ and $\tilde m_w=\As_0^{-1}f$ be defined by the formula \eqref{def:invAs}. Consider the zone $r\leq \zeta^*/2\nu$. Note that one has
$$
f(r)=\As_0 (\chi_{\frac{\zeta_*}{\nu}} \tilde m_v) \quad  \mbox{ for } \;\; r\leq \frac{2\zeta_*}{\nu},
$$
and for  $2\zeta_* \leq r \nu \leq \zeta^*$, 
$$
|f(r)|=|\As_0 \tilde m_v(r)|\lesssim |\pa_{rr} \tilde m_v|+\nu|\pa_r \tilde m_v|+\nu^2| \tilde m_v|=\nu^2(|\pa_{\zeta\zeta}\tilde m_w|+|\pa_{\zeta} \tilde m_w|+|\tilde m_w|).
$$
Hence, from the bootstrap bounds \eqref{bootstrap:mid}, \eqref{bootstrap:in} and the relation $\tilde m_w = Q_\nu - Q_{\tilde{\nu}} + P_{1, \tnu} - P_{1,\nu} +  P_{2,\nu} + m_\e$, we estimate
\begin{align*}
& \| f \|_{L^2_{\omega_0}(r\leq \frac{\zeta^*}{2\nu})}  \lesssim  \| \As_0 (\chi_{\frac{\zeta_*}{\nu}} \tilde m_v) \|_{L^2_{\omega_0}}+\nu^2\| |\pa_{\zeta\zeta} \tilde m_w|+|\pa_{\zeta} \tilde m_w|+|\tilde m_w| \|_{L^2_{\omega_0} ( \frac{2\zeta_*}{\nu} \leq r \leq \frac{\zeta^*}{2\nu})}\\
& \quad \lesssim \nu^2 \|\tilde m_v \|_{\inn}+ \|m_\e\|_{H^2(\zeta_*\leq \zeta \leq \zeta^*)} + \left\|Q_{\tnu} - Q_{\nu} + P_{1, \tnu} - P_{1,\nu} + P_{2,\nu}  - N_{1, \tnu}\right\|_{H^2(\zeta_*\leq \zeta \leq \zeta^*)}  \lesssim \frac{\nu^2}{|\ln \nu|}.
\end{align*}
Using the explicit inversion formula of $\As_0^{-1}$, we write
$$
\chi_{\frac{\zeta_*}{\nu}} \tilde m_v = \frac{1}{2} \psi_0(r) \int_r^{1} \frac{\zeta^4 + 4\zeta^2 \ln \zeta - 1}{\zeta} f(\zeta) d\zeta + \frac{1}{2}\tilde\psi_0(r)\int_0^r \zeta f(\zeta) d\zeta + c_0 \psi_0,
$$
where $\psi_0$ and $\tilde{\psi}_0$ are the two linearly independent solutions to $\As_0 \psi = 0$ given by
\begin{equation}\label{def:psi01}
\psi_0(r) = \frac{r^2}{\rj^4} \quad \textup{and} \quad \tilde{\psi}_0(r) = \frac{r^4 + 4r^2\ln r - 1}{\rj^4}.
\end{equation}
From the orthogonality condition \eqref{eq:orthmwT0}, we use the coercivity of $\As_0$ given in Lemma \ref{lemm:coerA0} to estimate the constant $|c_0| \lesssim \|\tilde{m}_v\|_\inn \lesssim \frac{\nu^2}{|\ln \nu|}$. We then estimate by Cauchy-Schwarz inequality and the decay of $\psi_0$,
\begin{align*}
k = 0, 1, \quad |(r \pa_r)^k\tilde m_v(r)| & \lesssim  \| f \|_{L^2_{\omega_0}(r\leq \frac{\zeta^*}{2\nu})}\frac{r^2}{1+r^2}   \sqrt{ \ln \langle r \rangle } + \frac{|c_0|}{1 + r^2}   \lesssim \; \frac{\nu^2}{|\ln \nu|} \frac{r^2}{1+r^2} \sqrt{ \ln \langle r \rangle } ,
\end{align*}
for $r\leq \zeta^*/2\nu$. This concludes the proof of the pointwise estimate \eqref{pointwisemerefined}.\\

\noindent \textbf{Step 3} Radial far away pointwise estimates. We first prove \eqref{pointwiseaway2}. We recall the Sobolev embedding:
\begin{equation} \label{weightedsobo}
\| |z|^{2-\frac 14}|\hat w^{0}|\|_{L^{\infty}(|z|\geq \frac{\zeta^*}{2})}\lesssim \| \hat w^{0}\|_{\out}.
\end{equation}
Due to the the bootstrap bound \eqref{bootstrap:out}, the above inequality proves \eqref{pointwiseaway2}. From the relation $\pa_\zeta m_{\hat w}=\zeta \hat w^0$ this in turn implies the inequality for $\zeta\geq \zeta^*/2$:
$$
|\pa_\zeta  m_{\hat w}|\lesssim \frac{\nu^2}{|\ln \nu|} \zeta^{\frac 14-1}.
$$
From \eqref{bootstrap:mid} and Sobolev embedding, we get 
$$
| m_{\hat w}(\frac{\zeta^*}{2})|\lesssim \frac{\nu^2}{|\ln \nu|}.
$$
The two above inequalities and the fundamental Theorem of Calculus then imply the second inequality in \eqref{pointwiseaway2}. Recall that
$$
 w-U_\nu=\Psi_\nu + \e, \ \  m_w-Q_\nu=P_\nu+m_{\e}.
$$
from which and the relation \eqref{bootstrap:condition}, the pointwise estimate \eqref{eq:pointwisephin-phi0} and the bound \eqref{pointwiseaway2}, we deduce \eqref{pointwiseaway}.\\

\noindent \textbf{Step 4} Estimate of the Poisson field. We first note that from \eqref{bootstrap:outnonradial} and \eqref{bootstrap:perpmid}, and the Sobolev embedding \eqref{weightedsobo} there holds for $|z|\geq \zeta_*/2$:
\begin{equation} \label{intersetp4pointiwse}
|\e^\perp(z)| \lesssim e^{-\kappa \tau} |z|^{-2+\frac 14}.
\end{equation}
From a change of variables, the coercivity given in Lemma \ref{lemm:coerML0} and the bootstrap bound \eqref{bootstrap:perpenergy}, we have the localized estimate
$$
\int_{\mathbb R^2} |\chi_{\frac{\zeta_*}{\nu}}q^\perp|^2 \frac{dy}{U(y)} \lesssim \frac{e^{-2\kappa \tau}}{\nu^2}.
$$
Using this estimate and \eqref{pointwiseawaynonradial} yield
$$
\int_{\mathbb R^2} |\e^{\perp}(z)|^2(1+|z|)^{2\alpha}dz\lesssim \frac{e^{-2\kappa \tau }}{\nu^6},
$$
for any $\alpha<3/4$. Applying \eqref{bd:poisson1} for $\alpha=1/2$ then implies \eqref{poissonLinfty}.
\end{proof}

\section{Control of the solution in the bootstrap regime} \label{sec:ControlStab}

This section is devoted to the proof of Proposition \ref{pr:bootstrap}. It will be showed through a series of Lemmas in which the dynamics of the parameters is controlled and the a priori estimates for the remainder are bootstrapped.

\subsection{Modulation equations}

Injecting the decomposition \eqref{decomposition} in the equation \eqref{partialmassselfsim}, we obtain the following equation of the remainder in the partial mass setting:
\begin{equation} \label{eq:me}
\pa_\tau m_{\e}=\As^\zeta m_\e+  \frac{\pa_\zeta\big[ (2P_\nu + m_\e) m_\e\big]}{2\zeta} + m_E + N_0(\e^\perp),
\end{equation}
where 
$P_\nu$ and $m_E$ are introduced in \eqref{exp:mW} and \eqref{eq:mW} respectively and
\begin{equation}\label{def:NL0}
N_0(\e^\perp) =  -\frac{1}{2\pi}\int_{S(0,\zeta)}\e^{\perp} \Big(\nabla \Phi_{\e^{\perp}} - \frac{x_\tau^*}{\mu}  \Big) \cdot \vec n dS.
\end{equation}
We write from \eqref{eq:wztau} and the decomposition \eqref{decomposition} the equation satisfied by $\e^\bot$: 
\begin{align}\label{eq:epsbot}
\partial_\tau \e^\bot = \Ls^z \e^\bot - \nabla \cdot \Gc(\e^\perp)  + \frac{x_\tau^*}{\mu}\cdot \nabla W+\frac{x_\tau}{\mu}.\nabla \e^0 + N^\perp(\e^\perp),
\end{align}
where $\Ls^z$ is the linearized operator defined in \eqref{def:Lsz} and 
\begin{align}\label{def:Geps}
\Gc(\e^\perp) &= \e^\perp \nabla \Phi_{\Psi_\nu + \e^0} + (\Psi_\nu + \e^0)\nabla \Phi_{\e^\perp}, \\
N^\perp(\e^\perp) &= - \left[\nabla \cdot \left(\e^\perp \Big( \nabla \Phi_{\e^\perp} - \frac{x_\tau^*}{\mu}\Big)\right) \right]^\perp. \label{def:Neps}
\end{align}
Projecting \eqref{eq:me} on the directions generated the orthogonality conditions gives the time evolution of the parameters below. The solution of the following equations gives the desired blowup laws as explained in the strategy of the proof. 
\begin{lemma}[Modulation equations] \label{lemm:Mod}
Let $w$ be a solution in the bootstrap regime in the sense of Definition \ref{def:bootstrap}. Then, the following estimates hold on $[\tau_0,\tau^*]$:
\begin{align} \label{eq:mod0}
8 \nu^2\left(\dfrac{\nu_\tau}{\nu} - \beta \right) + a_{1,\tau} - 2\beta a_1(1 + &\tilde{\alpha}_0)  + \dfrac{a_1}{\ln \nu}\dfrac{\nu_\tau}{\nu} + a_1 \frac{\beta_\tau}{\beta} =  \Oc\left(\frac{\Dc(\tau)}{|\ln \nu|} \|m_\e\|_{L^2_{\frac{\omega_\nu}{\zeta}}} \right) + \Oc\left(\frac{\nu^2}{|\ln \nu|^3}\right),
\end{align}
\begin{equation} \label{eq:mod1}
a_{1,\tau} - 2\beta a_1 \tilde{\alpha}_1 + \dfrac{a_1}{\ln \nu} \dfrac{\nu_\tau}{\nu}  + a_1 \frac{\beta_\tau}{\beta} = \Oc\left( \frac{\Dc(\tau)}{|\ln \nu|^2} \|m_\e\|_{L^2_{\frac{\omega_\nu}{\zeta}}} \right) + \Oc\left(\frac{\nu^2}{|\ln \nu|^3}\right),
\end{equation}
and for $n \in {2, \cdots, N}$,
\begin{equation} \label{eq:mod2}
a_{n,\tau} - 2\beta a_n(1 - n + \tilde{\alpha}_n) = \Oc\left( \frac{\Dc(\tau)}{|\ln \nu|} \|m_\e\|_{L^2_{\frac{\omega_\nu}{\zeta}}} \right) + \Oc\left(\frac{\nu^2}{|\ln \nu|^2}\right),
\end{equation}
where $\tilde{\alpha}_n$ is given in Proposition \ref{prop:SpecRad} and 
$$\Dc(\tau) = \left|\frac{\nu_\tau}{\nu}\right| + \left|\frac{\beta_\tau}{\beta}\right|.$$
\end{lemma}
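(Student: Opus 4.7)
The plan is to derive \eqref{eq:mod0}--\eqref{eq:mod2} by differentiating in time the orthogonality conditions \eqref{orthogonality}. Write $\langle f,g\rangle_\nu := \int_0^\infty fg\,\frac{\omega_\nu(\zeta)}{\zeta}d\zeta$. Since $\langle m_\e,\phi_{n,\nu}\rangle_\nu\equiv 0$, we get
$$
\langle \partial_\tau m_\e,\phi_{n,\nu}\rangle_\nu = -\langle m_\e,\partial_\tau \phi_{n,\nu}\rangle_\nu - \int_0^\infty m_\e\,\phi_{n,\nu}\,\frac{\partial_\tau \omega_\nu}{\zeta}d\zeta,
$$
where $\partial_\tau \phi_{n,\nu}=\frac{\nu_\tau}{\nu}\nu\partial_\nu \phi_{n,\nu}+\frac{\beta_\tau}{\beta}\beta\partial_\beta\phi_{n,\nu}$. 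Substituting equation \eqref{eq:me} in the left hand side and using the self-adjointness $\langle \As^\zeta m_\e,\phi_{n,\nu}\rangle_\nu = \alpha_n\langle m_\e,\phi_{n,\nu}\rangle_\nu = 0$ (Proposition \ref{prop:SpecRad}), we reduce to
$$
\langle m_E,\phi_{n,\nu}\rangle_\nu = -\langle m_\e,\partial_\tau\phi_{n,\nu}\rangle_\nu -\int m_\e\phi_{n,\nu}\tfrac{\partial_\tau\omega_\nu}{\zeta}d\zeta - \left\langle \tfrac{\partial_\zeta[(2P_\nu+m_\e)m_\e]}{2\zeta},\phi_{n,\nu}\right\rangle_\nu - \langle N_0(\e^\perp),\phi_{n,\nu}\rangle_\nu.
$$

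Next I would insert the decomposition \eqref{exp:mE} of $m_E$ from Lemma \ref{lemm:appProf}: by \eqref{est:PhinL2norm1} this yields $\langle m_E,\phi_{n,\nu}\rangle_\nu = c_n\,\textup{Mod}_n + \langle \tilde m_E,\phi_{n,\nu}\rangle_\nu + \langle \frac{\partial_\zeta P_\nu^2}{2\zeta},\phi_{n,\nu}\rangle_\nu$, with $c_0\sim |\ln\nu|/8$, $c_1\sim |\ln\nu|^2/4$, $c_n\sim |\ln\nu|^2$ for $n\geq 2$. Dividing through by $c_n$, the refined identities \eqref{expr:mEphi0}--\eqref{expr:mEphi1} produce exactly the algebraic correction terms $\frac{a_1}{\ln\nu}\frac{\nu_\tau}{\nu} + a_1\frac{\beta_\tau}{\beta}$ appearing in both \eqref{eq:mod0} and \eqref{eq:mod1}: for instance for $n=0$, $c_0^{-1}\langle \tilde m_E,\phi_{0,\nu}\rangle_\nu = -\frac{a_1}{|\ln\nu|}\frac{\nu_\tau}{\nu}+a_1\frac{\beta_\tau}{\beta}+O(\nu^2/|\ln\nu|^3)$, and this combines with $\textup{Mod}_0$ from \eqref{def:Mod0} to give the left hand side of \eqref{eq:mod0}. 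The contribution $\langle \partial_\zeta P_\nu^2/(2\zeta),\phi_{n,\nu}\rangle_\nu$ is handled by the pointwise bounds \eqref{pointwisehatpsi}--\eqref{pointwisehatpsi2}: since $P_\nu^2$ is quadratic in $\nu^2$ and its derivatives are concentrated at scale $\nu$, this inner product contributes at most $O(\nu^4/|\ln\nu|)$ for $n=0,1$ and $O(\nu^4)$ for $n\geq 2$, which is absorbed into the $O(\nu^2/|\ln\nu|^3)$ remainders.

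It remains to estimate the three error contributions. For the time derivative of the eigenfunctions, the pointwise bounds \eqref{est:PhinPointEst}, \eqref{est:poinwisephi02}, \eqref{est:poinwisephi021}, \eqref{est:poinwisephi1}, \eqref{est:poinwisephi12} give $\|\nu\partial_\nu \phi_{n,\nu}\|_\nu + \|\beta\partial_\beta\phi_{n,\nu}\|_\nu \lesssim |\ln\nu|$ for $n=0$ and $\lesssim |\ln\nu|^2$ for $n\geq 1$, so that by Cauchy--Schwarz and \eqref{bootstrap:L2omeofme}, $|\langle m_\e,\partial_\tau\phi_{n,\nu}\rangle_\nu| \lesssim \Dc(\tau)\,|\ln\nu|\cdot \|m_\e\|_{L^2_{\omega_\nu/\zeta}}$ (respectively $|\ln\nu|^2$). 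The weight derivative $\partial_\tau\omega_\nu = (\nu_\tau/\nu)\nu\partial_\nu\omega_\nu + (\beta_\tau/\beta)\beta\partial_\beta\omega_\nu$ is bounded similarly and yields the same order. For the quadratic term $\langle \partial_\zeta[(2P_\nu+m_\e)m_\e]/(2\zeta),\phi_{n,\nu}\rangle_\nu$, integration by parts (the boundary terms vanish by the structure of $\omega_\nu/\zeta$ and the asymptotics of $\phi_{n,\nu}$) and the pointwise bounds on $P_\nu$, $\phi_{n,\nu}$, combined with the refined interior bound \eqref{pointwisemerefined}, produce a bound of the required order. Finally, for $N_0(\e^\perp)$ in \eqref{def:NL0}, I would use the pointwise estimate \eqref{poissonLinfty} on $\nabla\Phi_{\e^\perp}$, the $L^2_{\omega_\nu}$ control \eqref{bootstrap:perpenergy} of $\e^\perp$, and the modulation bound \eqref{est:xtauIntro} on $x^*_\tau/\mu$, which is shown separately; after Cauchy--Schwarz this contribution is exponentially small in $\tau$ and absorbed into the $O(\nu^2/|\ln\nu|^3)$ remainders by taking $\kappa$ such that $e^{-2\kappa\tau}\leq \nu^2/|\ln\nu|^3$.

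Dividing by $c_n$ and collecting the estimates yields \eqref{eq:mod0}--\eqref{eq:mod2}. The main obstacle will be the bookkeeping of the $1/|\ln\nu|$-asymptotics: the quantities $\textup{Mod}_0,\textup{Mod}_1$ must be identified up to order $\nu^2/|\ln\nu|^3$, which requires carrying the next-order terms in \eqref{expr:mEphi0}--\eqref{expr:mEphi1} and in the expansion of $\langle m_\e,\partial_\tau\phi_{n,\nu}\rangle_\nu$, and exploiting cancellations between the projection of $\tilde m_E$ and the time derivative of the first eigenfunctions. This is precisely where the refined eigenvalue/eigenfunction information \eqref{est:nu0ntil1} and the structure of $\nu\partial_\nu(\phi_{1,\nu}-\phi_{0,\nu})$ from Lemma \ref{lemm:EigAzeta} are indispensable.
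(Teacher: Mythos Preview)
Your overall strategy is the same as the paper's: differentiate the orthogonality conditions, substitute \eqref{eq:me}, use the self-adjointness of $\As^\zeta$, insert the decomposition \eqref{exp:mE}, and estimate the remaining contributions term by term. The identification of the algebraic corrections $\frac{a_1}{\ln\nu}\frac{\nu_\tau}{\nu}+a_1\frac{\beta_\tau}{\beta}$ from \eqref{expr:mEphi0}--\eqref{expr:mEphi1} is also right.

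There is however a genuine gap in your treatment of the time-derivative term $\langle m_\e,\partial_\tau\phi_{n,\nu}\rangle_\nu$. A straight Cauchy--Schwarz using the $L^2_{\omega_\nu/\zeta}$ norms of $\nu\partial_\nu\phi_{n,\nu}$ and $\beta\partial_\beta\phi_{n,\nu}$ does not yield the stated bounds. For instance, for $n=1$ one has $\|\beta\partial_\beta\phi_{1,\nu}\|_{L^2_{\omega_\nu/\zeta}}\sim|\ln\nu|$ (coming from the $\phi_{1,\nu}$ component), so Cauchy--Schwarz gives $|\langle m_\e,\beta\partial_\beta\phi_{1,\nu}\rangle_\nu|\lesssim |\ln\nu|\,\|m_\e\|$; after dividing by $c_1\sim|\ln\nu|^2$ you get $\|m_\e\|/|\ln\nu|$, one power short of the required $\|m_\e\|/|\ln\nu|^2$ in \eqref{eq:mod1}. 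A similar deficit occurs for $n=0$. The mechanism that closes this is \emph{not} a cancellation with $\tilde m_E$ as you suggest in your last paragraph, but rather the orthogonality of $m_\e$ itself to the eigenfunctions. One uses the structural identities from Lemma~\ref{lemm:EigAzeta}, e.g.\ $\beta\partial_\beta\phi_{1,\nu}=\phi_{1,\nu}-\phi_{0,\nu}+R_2$ and $\nu\partial_\nu\phi_{1,\nu}=-2\beta(r\partial_r T_1)(\zeta/\nu)+R_1$, so that $\langle m_\e,\beta\partial_\beta\phi_{1,\nu}\rangle_\nu=\langle m_\e,R_2\rangle_\nu$ by \eqref{orthogonality}, and then $\|R_2\|_{L^2_{\omega_\nu/\zeta}}=\Oc(1)$, $\|(r\partial_r T_1)(\cdot/\nu)\|_{L^2_{\omega_\nu/\zeta}}=\Oc(1)$, $\|R_1\|_{L^2_{\omega_\nu/\zeta}}=\Oc(|\ln\nu|^{-1})$. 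This yields $|\langle m_\e,\partial_\tau\phi_{n,\nu}\rangle_\nu|\lesssim\Dc(\tau)\|m_\e\|$ for $n=0,1$ (no $|\ln\nu|$ factor), which after dividing by $c_n$ gives exactly the right-hand sides of \eqref{eq:mod0}--\eqref{eq:mod1}. For $n\geq 2$ the crude Cauchy--Schwarz does suffice, since the sharp bound $\|\nu\partial_\nu\phi_{n,\nu}\|_{L^2_{\omega_\nu/\zeta}}\lesssim|\ln\nu|$ (not $|\ln\nu|^2$) gives $\Dc(\tau)|\ln\nu|\|m_\e\|/c_n=\Oc(\Dc(\tau)\|m_\e\|/|\ln\nu|)$.

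A minor point: your claim that $\langle\partial_\zeta P_\nu^2/(2\zeta),\phi_{n,\nu}\rangle_\nu=\Oc(\nu^4)$ is too optimistic, since $P_{1,\nu}(\zeta)\sim\nu^2|\ln\nu|$ for $\zeta\sim 1$. The paper obtains $\Oc(\nu^2/|\ln\nu|)$ for $n\geq 1$ and $\Oc(\nu^2/|\ln\nu|^3)$ for $n=0$ by exploiting the cancellation $8T_0=r^2U$, which makes $\frac{\zeta}{\omega_\nu}\partial_\zeta(\nu^{-2}T_0\,\omega_\nu/\zeta^2)$ decay like $e^{-\beta\zeta^2/2}/\omega_\nu$ and gains the needed powers; this is still sufficient after dividing by $c_n$.
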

\begin{proof}

Taking the scalar product of \eqref{eq:me} with $\phi_{n, \nu}$ in $L^2_{\frac{\omega_\nu}{\zeta}}$ for $n=0,...,N$, using the orthogonality \eqref{orthogonality} and the self-adjointness of $\As^\zeta$ yields the identity
\begin{equation} \label{interm:modexpr}
\langle \pa_\tau m_{\e},\phi_{n,\nu}\rangle =\left \langle \frac{\pa_\zeta \big[(2P_\nu + m_\e ) m_\e\big]}{2\zeta} + m_E + N_0(\e^\perp),\phi_{n,\nu} \right\rangle,
\end{equation}
where we write $\la \cdot, \cdot \ra \equiv \la \cdot, \cdot \ra_{L^2_{\frac{\omega_\nu}{\zeta}}}$ for simplificity and $\omega_\nu$ is the weight introduced in \eqref{def:omeganu}. \\
We are going to estimate all terms appearing in the above equation. \\

\noindent \emph{- The time derivative term}. We start by differentiating the first orthogonality condition in \eqref{orthogonality} to get
\begin{eqnarray*}
0 &=&\frac{d}{d\tau}\langle m_\e,\phi_{n,\nu}\rangle = \langle \pa_\tau m_\e, \phi_{n,\nu} \rangle+\Big\langle m_\e, \frac{\nu_\tau}{\nu}(\nu \pa_\nu \phi_{n,\nu}+\frac{\nu\pa_\nu \omega_\nu}{\omega_\nu}\phi_{n,\nu})+\frac{\beta_\tau}{\beta}(\beta \pa_\beta \phi_{n,\nu}+ \frac{\beta \pa_\beta \omega_\nu}{\omega_\nu}\phi_{n,\nu})\Big\rangle,
\end{eqnarray*}
where we have the algebraic identities
$$
\frac{\nu \pa_\nu \omega_\nu}{\omega_\nu}= \frac{\frac{\nu^4}{2}(1+\frac{\zeta^2}{\nu^2})}{\nu^4(1+\frac{\zeta^2}{\nu^2})^2}=\frac{1}{(1+\frac{\zeta^2}{\nu^2})}, \qquad \frac{\beta \pa_\beta \omega_\nu}{\omega_\nu} = -\frac{\beta\zeta^2}{2}.
$$
Using Cauchy-Schwarz inequality, \eqref{est:poinwisephi1} or \eqref{est:poinwisephi02}, \eqref{est:poinwisephi021}  and \eqref{est:poinwisephi12} if $n=0,1$, the orthogonality condition \eqref{orthogonality}, and \eqref{est:PhinPointEst} for $n\geq 2$, one obtains
\begin{align*}
\left| \langle m_\e ,\frac{\beta \zeta^2}{2} \phi_{n,\nu}\rangle \right|+\left| \langle m_\e ,\beta \pa_\beta \phi_{n,\nu}\rangle \right|&+\left| \langle m_\e ,\nu \pa_\nu \phi_{n,\nu}\rangle \right|  \lesssim \| m_\e\|_{L^2_{\frac{\omega_\nu}\zeta}} \times \left\{ \begin{array}{cl} 1  \ \ &\mbox{ if }\;  n=0,1,\\ 
 |\ln \nu| \ \ &\mbox{ if } \; n\geq 2. \end{array} \right.
\end{align*}
From \eqref{est:poinwisephi1}, \eqref{est:poinwisephi02} and \eqref{est:PhinPointEst}, and Cauchy-Schwarz, we estimate for $n \in \{0, \cdots, N\}$,
$$
\left| \langle m_\e,\frac{\nu \pa_\nu \omega_\nu}{\omega_\nu}\phi_{n,\nu}\rangle \right| \lesssim \| m_\e\|_{L^2_{\frac{\omega_\nu}\zeta}}.
$$
Collecting the above estimates yields
\begin{equation} \label{interm:modpatau}
\left| \langle \pa_\tau m_\e, \phi_{n,\nu} \rangle \right| \lesssim \mathcal D(\tau)\| m_\e\|_{L^2_{\frac{\omega_\nu}\zeta}} \times \left\{ \begin{array}{cl} 1 \ \ &\mbox{ if }\; n=0,1,\\ 
|\ln \nu|  \ \ &\mbox{ if } \; n\geq 2. 
\end{array} \right.
\end{equation}

\noindent \emph{- The lower order linear term}. One first writes by using integration by parts 
\begin{align}\label{eq:idPme}
&\left\langle \frac{\pa_\zeta(P_\nu m_\e)}{\zeta}, \phi_{n,\nu} \right\rangle\\
& = -\Big\langle m_\e,\frac{\zeta}{\omega_\nu}P_\nu \pa_\zeta((\phi_{n,\nu} - \phi_{0, \nu})\frac{\omega_\nu}{\zeta^2}) \Big\rangle -\Big\langle m_\e,\frac{\zeta}{\omega_\nu}P_\nu \pa_\zeta(\phi_{0, \nu}\frac{\omega_\nu}{\zeta^2}) \Big\rangle.\nonumber
\end{align}
Using  \eqref{pointwisehatpsi}, \eqref{pointwisehatpsi2} and the degeneracy near the origin \eqref{eq:pointwisephin-phi0}, one obtains the rough bound
\begin{align*}
&\left|\frac{\zeta}{\omega_\nu}P_\nu \pa_\zeta\big((\phi_{n,\nu} - \phi_{0,\nu})\frac{\omega_\nu}{\zeta^2}\big)\right|\lesssim  \frac{1}{|\ln \nu|^2}  \langle \zeta \rangle^C \frac{ r^2 \ln \langle r \rangle}{\langle r \rangle^6},
\end{align*}
which yields the estimates
$$
\Big\| \frac{\zeta}{\omega_\nu}P_\nu \pa_\zeta\big((\phi_{n,\nu} - \phi_{0,\nu})\frac{\omega_\nu}{\zeta^2} \big) \Big\|_{L^2_\frac{\omega_\nu}{\zeta}}\lesssim \frac{1}{|\ln \nu|^2},
$$
$$
\left|\Big\langle m_\e,\frac{\zeta}{\omega_\nu}P_\nu \pa_\zeta((\phi_{n,\nu} - \phi_{0, \nu})\frac{\omega_\nu}{\zeta^2}) \Big\rangle  \right| \lesssim \frac{ \|m_\e\|_{L^2_{\frac{\omega_\nu}{\zeta}}}}{|\ln \nu|^2}.
$$
As for the last term in \eqref{eq:idPme}, we write $\phi_{0, \nu} = \frac{1}{\nu^2}T_0(r) + \tilde{\phi}_{0, \nu}$. Using the pointwise estimate \eqref{pointwisephi0}, we have the estimate
\begin{align*}
&\left|\frac{\zeta}{\omega_\nu}P_\nu \pa_\zeta\big(\tilde\phi_{0,\nu}\frac{\omega_\nu}{\zeta^2}\big)\right|\lesssim  \frac{1}{|\ln \nu|^2} \left(\frac{\zeta}{\nu+\zeta}\right)^2\frac{\ln \langle r \rangle}{(\nu+\zeta)^2} ,
\end{align*}
from which and the Cauchy-Schwarz inequality, we obtain
\begin{align}
\left|\Big\langle m_\e,\frac{\zeta}{\omega_\nu}P_\nu \pa_\zeta(\tilde \phi_{0, \nu}\frac{\omega_\nu}{\zeta^2}) \Big\rangle\right| \lesssim  \frac{1}{|\ln \nu|^2}  \|m_\e\|_{L^2_{\frac{\omega_\nu}{\zeta}}}.\label{est:proPhi0nu}
\end{align}
To estimate the contribution coming from $T_0$, we use the algebraic identity $8T_0 = r^2 U$ and write
\begin{align}
\frac{\zeta}{\omega_\nu}\pa_\zeta\left(\frac{1}{\nu^2}T_0(r)\frac{\omega_\nu}{\zeta^2}\right) = -\frac{\beta \zeta^2}{8 \omega_\nu} e^{-\frac{\beta \zeta^2}{2}},\label{eq:TOcom}
\end{align} 
and recall from  \eqref{pointwisehatpsi} and \eqref{pointwisehatpsi2} that $P_\nu = P_{1,\nu} + P_{2,\nu}$ satisfies
\begin{equation}\label{est:Prough}
|P_\nu(\zeta)|\lesssim \frac{1}{|\ln \nu|^2} \frac{\ln \langle r \rangle}{\langle r \rangle^{2}} \langle \zeta \rangle^C.
\end{equation}
Hence, we have
\begin{align*}
&\left|\Big\langle m_\e,\frac{\zeta}{\omega_\nu}P_\nu \pa_\zeta\left(\frac{1}{\nu^2}T_0(r)\frac{\omega_\nu}{\zeta^2}\right) \Big\rangle\right| = \left| \frac{\beta}{8} \int m_\e \zeta P_\nu e^{-\beta \zeta^2/2} d\zeta \right|  \lesssim \|m_\e\|_{L^2_{\frac{\omega_\nu}{\zeta}}} \left(\int \zeta^3 P_\nu^2 e^{-\beta \zeta^2} \omega_\nu^{-1} d\zeta \right)^\frac{1}{2}\\
& \quad \lesssim  \|m_\e\|_{L^2_{\frac{\omega_\nu}{\zeta}}}  \times \frac{K}{|\ln \nu|^2} \left(\int r^3  \frac{\langle \nu r \rangle^{2C} \ln^2 \langle r \rangle}{\langle r \rangle^4} e^{-\beta \nu^2 r^2/2} U(r) dr  \right)^\frac{1}{2} \lesssim \frac{1}{|\ln \nu|^2}  \times \|m_\e\|_{L^2_{\frac{\omega_\nu}{\zeta}}}. 
\end{align*}
Injecting these estimates into \eqref{eq:idPme} yields 
 \begin{equation}\label{est:SmallLinear}
 \left| \left\langle \frac{\pa_\zeta(P_\nu m_\e)}{\zeta}, \phi_{n,\nu} \right\rangle\right| \lesssim \frac{1}{|\ln \nu|^2} \|m_\e\|_{L^2_{\frac{\omega_\nu}{\zeta}}}. 
 \end{equation}

\noindent  \emph{ - The error term}. From the expression \eqref{exp:mE}, the orthogonality \eqref{est:PhinL2norm1}, we have 
\begin{equation} \label{interm:mod0}
\langle m_E,\phi_{n,\nu}\rangle = \| \phi_{n,\nu}\|_{L^2_{\frac{\hat\omega}{\zeta}}}^2 \times \textup{Mod}_n + \langle \tilde{m}_E, \phi_{n,\nu} \rangle + \left\langle \frac{\pa_\zeta P_\nu^2}{2\zeta}, \phi_{n,\nu} \right\rangle, 
\end{equation}
where the contribution of $\langle \tilde{m}_E, \phi_{n,\nu} \rangle$ is precisely given by \eqref{expr:mEphi0}, \eqref{expr:mEphi1} and \eqref{expr:mEphi2}. Applying \eqref{est:SmallLinear} with $m_\e = P_\nu$, we have the estimate 
\begin{align*}
\left|\left\langle \frac{\pa_\zeta (P_\nu^2)}{\zeta}, \phi_{n,\nu} \right\rangle \right| &\lesssim \frac{1}{|\ln \nu|^2} \big(\|P_{1, \nu}\|_{L^2_{\frac{\omega_\nu}{\zeta}}} + \|P_{2,\nu}\|_{L^2_{\frac{\omega_\nu}{\zeta}}} \big) \lesssim \frac{1}{|\ln \nu|^2}\left( \nu^2 |\ln \nu| + \frac{\nu^2}{|\ln \nu|} \right) \lesssim \frac{ \nu^2}{|\ln \nu|}. 
\end{align*}
As for the projection on $\phi_{0,\nu}$, we again use the relation $8T_0 = r^2 U$ to obtain a better estimate. By integration by parts and $\phi_{0, \nu} = \frac{1}{\nu^2}T_0(r) + \tilde{\phi}_{0,\nu}$, we write 
 $$\left\langle \frac{\pa_\zeta (P_\nu^2)}{2\zeta}, \phi_{0,\nu} \right\rangle = \left\langle P_\nu, \frac{\zeta}{\omega_\nu} P_\nu \pa_\zeta \Big((\frac{1}{\nu^2}T_0(r) + \tilde\phi_{0,\nu}) \frac{\omega_\nu}{2\zeta^2} \Big) \right\rangle,$$
Applying \eqref{est:proPhi0nu} with $m_\e = P_{2,\nu}$ yields 
$$\left|\left\langle P_{2,\nu}, \frac{\zeta}{\omega_\nu} P_{\nu} \pa_\zeta \Big(\tilde\phi_{0,\nu} \frac{\omega_\nu}{\zeta^2} \Big) \right\rangle \right| \lesssim \frac{1}{|\ln \nu|^2} \|P_{2,\nu}\|_{L^2_{\frac{\omega_\nu}{\zeta}}} \lesssim  \frac{\nu^2}{|\ln \nu|^3}.$$
A similar estimate for \eqref{est:proPhi0nu}, by using $|P_{1,\nu}(\zeta)| \lesssim \nu^2 \ln \la r\ra$, yields the estimate
$$\left|\left\langle P_{1,\nu}, \frac{\zeta}{\omega_\nu} P_{\nu} \pa_\zeta \Big(\tilde\phi_{0,\nu} \frac{\omega_\nu}{\zeta^2} \Big) \right\rangle \right| \lesssim \nu^4 |\ln \nu|^C \lesssim \frac{\nu^2}{|\ln \nu|^3}.$$
From \eqref{eq:TOcom} and \eqref{est:Prough}, we have 
\begin{align*}
\left|\left\langle P_\nu, \frac{\zeta}{\omega_\nu} P_\nu \pa_\zeta \Big(\frac{1}{\nu^2} T_0(r) \frac{\omega_\nu}{\zeta^2} \Big) \right\rangle \right|& = \left|\frac{\beta}{8}\int P_\nu^2 \zeta e^{-\beta \zeta^2/2}d\zeta \right|\\
&\lesssim \frac{\nu^2}{|\ln \nu|^4}\int \frac{\ln^2 \langle r \rangle}{\langle r \rangle^4}\langle \nu r\rangle^{2C}  r e^{-\beta \nu^2 r^2/ 2}dr \lesssim \frac{\nu^2}{|\ln \nu|^4}.
\end{align*}
We then conclude
\begin{equation}\label{est:Pnonlinear}
\left|\left\langle \frac{\pa_\zeta P_\nu^2}{2\zeta}, \phi_{n,\nu} \right\rangle\right| \lesssim \frac{\nu^2}{|\ln \nu|}, \quad \left|\left\langle \frac{\pa_\zeta P_\nu^2}{2\zeta}, \phi_{0,\nu} \right\rangle\right| \lesssim \frac{\nu^2}{|\ln \nu|^3}.
\end{equation}

\noindent \emph{- The nonlinear term}.  We write by integration by parts using $\phi_{0, \nu} = \frac{1}{\nu^2}T_0(r) + \tilde{\phi}_{0, \nu}$,
\begin{align*}
\Big \langle \frac{\pa_\zeta m_\e^2}{2\zeta},\phi_{n,\nu} \Big\rangle &= -\frac 12 \Big\langle m_\e^2, \frac{\zeta}{\omega_\nu} \pa_\zeta \Big(\frac{(\phi_{n,\nu} - \phi_{0, \nu})}{\zeta^2}\omega_\nu\Big)\Big\rangle  - \frac 12 \Big\langle m_\e^2, \frac{\zeta}{\omega_\nu} \pa_\zeta (\frac{\frac{1}{\nu^2}T_0(r) + \tilde{\phi}_{0, \nu}}{\zeta^2}\omega_\nu)\Big\rangle.
\end{align*}
From the degeneracy near the origin \eqref{eq:pointwisephin-phi0}, we have the rough bound
$$\left|\frac{\zeta}{\omega_\nu} \pa_\zeta \Big(\frac{(\phi_{n,\nu} - \phi_{0, \nu})}{\zeta^2}\omega_\nu\Big)\right| \lesssim \frac{1}{\nu^2} \frac{\ln \langle r \rangle}{\langle r \rangle^2} \la \zeta\ra^C,$$
and from the pointwise estimate \eqref{pointwisephi0}, 
$$\left|\frac{\zeta}{\omega_\nu} \pa_\zeta \Big(\frac{\tilde\phi_{0, \nu}}{\zeta^2}\omega_\nu\Big) \right| \lesssim \frac{1}{\nu^2}\frac{\ln \langle r \rangle}{\langle r \rangle^2}.$$
We directly get the bounds
$$\left|\Big\langle m_\e^2, \frac{\zeta}{\omega_\nu} \pa_\zeta \Big(\frac{(\phi_{n,\nu} - \phi_{0, \nu})}{\zeta^2}\omega_\nu\Big)\Big\rangle_{L^2_{\frac{\omega_\nu}{\zeta}}(\zeta \leq \zeta_*)} \right| + \left|\Big\langle m_\e^2, \frac{\zeta}{\omega_\nu} \pa_\zeta \Big(\frac{\tilde \phi_{0, \nu}}{\zeta^2}\omega_\nu\Big)\Big\rangle \right| \lesssim \frac{1}{\nu^2} \|m_\e\|^2_{L^2_{\frac{\omega_\nu}{\zeta}}}.$$
From the pointwise bound far away \eqref{pointwiseaway2} and the definition of $P_{2,\nu}$, we have $|m_\e(\zeta)| \lesssim \frac{\nu^2}{|\ln \nu|} \la \zeta \ra ^C$, from which we get 
$$\left|\Big\langle m_\e^2, \frac{\zeta}{\omega_\nu} \pa_\zeta \Big(\frac{(\phi_{n,\nu} - \phi_{0, \nu})}{\zeta^2}\omega_\nu\Big)\Big\rangle_{L^2_{\frac{\omega_\nu}{\zeta}}(\zeta \geq \zeta_*)} \right| \lesssim \frac{\nu^2}{|\ln \nu|^2}\int_{\zeta \geq \zeta_*} \zeta^C e^{-\beta \zeta^2/2} d\zeta \lesssim \frac{\nu^2}{|\ln \nu|^2}. $$
Using \eqref{eq:TOcom}, we compute
\begin{align*}
&\left|\Big\langle m_\e^2, \frac{\zeta}{\omega_\nu} \pa_\zeta (\frac{T_0(r)}{\nu^2 \zeta^2}\omega_\nu)\Big\rangle\right|= \left|\frac{\beta}{8\nu^4}\int m_\e^2 \zeta^2 U(r) \frac{\omega_\nu}{\zeta}d\zeta \right|= \left|\frac{\beta \nu^2}{8}\int m_q^2 r^2U(r) \frac{\omega}{r} dr \right| \lesssim \nu^2 \|m_q\|^2_{L^2_{\frac{\omega}{r}}} = \frac{1}{\nu^2}\|m_\e\|^2_{L^2_{\frac{\omega_\nu}{\zeta}}}.
\end{align*}
Collecting the above estimates yields the final bound for the nonlinear term
\begin{equation} \label{interm:modNL}
\left| \Big\langle \frac{\pa_\zeta m_\e^2}{2\zeta},\phi_{n,\nu}\Big\rangle \right| \lesssim \frac{1}{\nu^2}\|m_\e\|^2_{L^2_{\frac{\omega_\nu}{\zeta}}} + \frac{\nu^2}{|\ln \nu|^2} \lesssim \frac{K^2 \nu^2}{|\ln \nu|^2}.
\end{equation}

\noindent \emph{- The nonradial term}. We estimate from \eqref{poissonLinfty} and \eqref{est:PhinPointEst} and \eqref{est:xstar},
\begin{equation} \label{interm:modnonrad}
\left|\Big \langle N_0(\e^\perp),\phi_{n,\nu}\Big\rangle\right|\lesssim \frac{e^{- 2\kappa \tau}}{\nu^C} \lesssim \frac{\nu^2}{|\ln \nu|^2}.
\end{equation}
for some universal constant $C$ depending only on $N$, where we used in the second identity the bootstrap estimate \eqref{bootstrap:param1} and $\tau_0$ sufficiently large.\\

\noindent The conclusion follows by injecting \eqref{interm:modpatau}, \eqref{est:SmallLinear},  \eqref{interm:mod0}, \eqref{est:Pnonlinear}, \eqref{interm:modNL} and \eqref{interm:modnonrad} into the expression \eqref{interm:modexpr} and using \eqref{est:PhinL2norm1}. This ends the proof of Lemma \ref{lemm:Mod}.
\end{proof}

\subsection{Main energy estimate}\label{sec:mainEneEst}
This subsection is devoted to derive an energy estimate for the norm \eqref{bootstrap:L2omeofme} of $m_\e$ in $L^2_{\frac{\omega_\nu}{\zeta}}$. Taking into account the decomposition \eqref{eq:decmwt} and the smallness of the higher order part of the approximate perturbation $P_{2,\nu}$ in $L^2_{\frac{\omega_\nu}{\zeta}}$, i.e. $\|P_{2,\nu}\|_{L^2_{\frac{\omega_\nu}{\zeta}}} \lesssim \frac{\nu^2}{|\ln \nu|}$ (from \eqref{bootstrap:param2} and \eqref{est:PhinPointEst}), we will control instead of $m_\e$ the full higher order part of the perturbation:
$$\bar m_\e = m_\e + P_{2,\nu}.$$
Recall from \eqref{decomposition} and \eqref{eq:decmwt} the decomposition
$$m_w = Q_\nu + P_{1, \nu} + \bar m_\e \quad \textup{and} \quad  \bar m_\e = Q_{\tilde{\nu}} - Q_\nu +  P_{1, \tnu} - P_{1, \nu}  - N_{1, \tnu} + \tilde{m}_w,$$
and write from \eqref{partialmassselfsim} the equation satisfied by $\bar m_\e$,
\begin{equation}\label{eq:meMd}
\pa_\tau \bar m_\e = \bar \As^\zeta \bar m_\e + \frac{\pa_\zeta\big[(V + \tilde{m}_w) \bar m_\e\big]}{2\zeta} + \bar m_E + N_0(\e^\perp),
\end{equation}
where $N_0(\e^\perp)$ is defined as in \eqref{def:NL0} and 
$$V = P_{1, \nu} + P_{1, \tnu}  - N_{1, \tnu}.$$
Here, $\bar \As^\zeta$ is the modified operator  defined by
\begin{equation}\label{def:Asbar}
\bar \As^\zeta = \As^\zeta + \frac{\pa_\zeta \big((Q_{\tilde{\nu}} - Q_\nu) \cdot \big)}{2\zeta}  = \frac{1}{2} \big(\As^\zeta + \tilde{\As}^\zeta \big),
\end{equation}
where $\As^\zeta$ and $\tilde \As^\zeta$ are the linearized operator around $Q_\nu$ and $Q_{\tilde \nu}$ respectively, and the error $\bar m_E$ is:
\begin{equation}\label{dec:mEbar}
\bar m_E =  \sum_{n = 0}^1 \textup{Mod}_n \; \phi_{n, \nu}(\zeta) + \tilde{\bar m}_E + \frac{\pa_\zeta P_{1, \nu}^2}{2\zeta},
\end{equation}
(almost the same as \eqref{exp:mE} without taking into account the higher order approximate perturbation $P_{2,N}$) and the analogue of \eqref{expr:mEphi2} holds, namely
\begin{equation}\label{est:mEbarP1}
\| \tilde{\bar m}_E \|_{L^2_{\frac{\omega_\nu}{\zeta}}} + \left\|\frac{\pa_\zeta P_{1, \nu}^2}{2\zeta}\right\|_{L^2_{\frac{\omega_\nu}{\zeta}}} = \Oc\left( \frac{\nu^2}{|\ln \nu|}\right).
\end{equation}

The basic idea behind this modification is the ability of controlling the nonlinear term when performing the $L^2_{\frac{\omega_{\nu}}{\zeta}}$ energy estimate thanks to 
\begin{itemize}
\item[1.] The pointwise bound \eqref{pointwisemerefined} for $\tilde{m}_w$, which avoids the resonance $T_0$ of the operator $\Ls_0$ from the orthogonality condition \eqref{eq:orthmwT0}. Recall that $T_0$ is obtained by differentiating the rescaled stationary state $Q_\nu$ at $\nu = 1$, so it is natural to slightly modify the parameter function $\nu$ by $\tilde{\nu}$ to cancel out the component $T_0$. In particular, the orthogonality condition \eqref{eq:orthmwT0} allows us to derive the coercivity of $\As_0$ in Lemma \ref{lemm:coerA02}, which is a key ingredient in obtaining the control of $\tilde{m}_w$ as in \eqref{bootstrap:in}. 
\item[2.] The spectral gap of $\bar \As^\zeta$ still holds true under the orthogonality conditions \eqref{orthogonality} up to a sufficiently small error. The key feature is that this operator can be written as 
$$\bar \As^\zeta = \As_0^\zeta + \Pc - \beta \zeta \pa_\zeta \quad \textup{with} \quad \Pc = \frac{1}{2\zeta}\pa_\zeta \big(\Vc \cdot \big),\quad \Vc=(Q_\nu - Q_{\tilde{\nu}})$$
where the particular form of the perturbation $\Pc$ yields a cancellation as it is orthogonal to $T_0$ in $L^2(\omega_0/\zeta)$. Roughly speaking, the modified eigenvalues and eigenfunctions are the same up to some sufficiently small error, which is enough to obtain for $\bar \As^\zeta$ almost the same spectral gap as for $\As^\zeta$. 
\end{itemize}
We first claim the following spectral properties of $\bar \As^\zeta$.  

\begin{lemma}[Spectral gap for $\bar \As^\zeta$] \label{lemm:SpecAbar} There exists a universal $C'>0$ such that the following holds. Assume $1/2\leq \beta \leq 2$ and $|\nu-\tilde \nu|\leq C\nu/|\log \nu|$ for some $C>0$, and let $\bar \omega_\nu = \sqrt{\omega_\nu \omega_{\tilde{\nu}}}$. Fix any $N\in \mathbb N$ and assume the orthogonality condition \eqref{orthogonality}, i.e. $m_\e \perp \phi_{n,\nu}$ in $L^2_{\frac{\omega_\nu}{\zeta}}$ for $0 \leq n \leq N$. Then, for $\nu$ small enough:
\begin{equation} \label{bd:perturbativegap}
\int \bar m_\e \bar \As^\zeta \bar m_\e \frac{\bar \omega_\nu}{\zeta} d\zeta \leq -2\beta (N - C') \int \bar m_\e^2\frac{\bar \omega_\nu}{\zeta} d\zeta + C \sum_{n = 2}^N |a_n|^2 |\ln \nu|^2.
\end{equation}
\end{lemma}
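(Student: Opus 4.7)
The plan is to decompose $\bar m_\e = m_\e + P_{2,\nu}$ and to write
$$
\bar\As^\zeta = \As^\zeta + \Pc, \qquad \Pc := \frac{1}{2\zeta}\pa_\zeta\big((Q_{\tilde\nu}-Q_\nu)\,\cdot\,\big),
$$
together with the weight splitting $\bar\omega_\nu/\zeta = \omega_\nu/\zeta + (\bar\omega_\nu-\omega_\nu)/\zeta$. This reduces the estimate to the main quadratic form $I_0 := \int_0^\infty \bar m_\e\, \As^\zeta \bar m_\e\, \omega_\nu/\zeta\, d\zeta$ plus two error contributions coming from $\Pc$ and from the weight discrepancy, both of relative size $O(1/|\ln\nu|)$ thanks to the hypothesis $|\nu-\tilde\nu|\lesssim \nu/|\ln\nu|$.

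For the main term, the key observation is that the orthogonality \eqref{orthogonality} together with the formula $P_{2,\nu} = \sum_{n=2}^N a_n \phi_{n,\nu}$ and \eqref{est:PhinL2norm1} yield $\langle \bar m_\e, \phi_{n,\nu}\rangle_{L^2_{\omega_\nu/\zeta}} = 0$ for $n=0,1$ and $= a_n c_n$ for $2\leq n \leq N$. Expanding $\bar m_\e = \sum_{n\geq 2} b_n \phi_{n,\nu}$ in the eigenbasis of $\As^\zeta$ with $b_n = a_n$ for $2\leq n\leq N$, and using the spectral gap $\alpha_n \leq \alpha_{N+1}$ for $n>N$ provided by Proposition~\ref{prop:SpecRad}, I obtain
$$
I_0 \;\leq\; \alpha_{N+1}\,\| \bar m_\e\|^2_{L^2_{\omega_\nu/\zeta}} \;+\; \sum_{n=2}^N (\alpha_n - \alpha_{N+1})\, a_n^2\, c_n .
$$
Substituting $\alpha_{N+1} = -2\beta N + O(1/|\ln\nu|)$, $|\alpha_n - \alpha_{N+1}|\lesssim N\beta$ and $c_n \lesssim |\ln\nu|^2$ produces exactly the claimed bound, up to replacing $\omega_\nu$ by $\bar\omega_\nu$ in the leading norm.

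It then remains to handle the two perturbation errors. A direct computation from $\omega_\nu(z) = \nu^4 e^{-\beta|z|^2/2}/U(z/\nu)$ together with $|\nu-\tilde\nu|\lesssim \nu/|\ln\nu|$ gives $|\bar\omega_\nu/\omega_\nu - 1|\lesssim 1/|\ln\nu|$ uniformly in $\zeta$, which both makes the norms $\|\cdot\|_{L^2_{\omega_\nu/\zeta}}$ and $\|\cdot\|_{L^2_{\bar\omega_\nu/\zeta}}$ equivalent up to an $O(1/|\ln\nu|)$ relative error, and controls the weight-difference contribution to the quadratic form via the Dirichlet-form representation of $\As^\zeta$ (reducing it to a bilinear form in $\pa_\zeta \bar m_\e$ against which the weight discrepancy contributes only an $O(1/|\ln\nu|)$ multiplicative factor). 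The operator perturbation $\int \bar m_\e \Pc \bar m_\e\, \bar\omega_\nu/\zeta$ is treated by integration by parts using the key pointwise bound $|Q_\nu - Q_{\tilde\nu}|+(\nu+\zeta)|\pa_\zeta (Q_\nu - Q_{\tilde\nu})|\lesssim 1/|\ln\nu|$ that follows from the explicit formula $Q_\nu - Q_{\tilde\nu} = 4\zeta^2(\tilde\nu^2-\nu^2)/[(\zeta^2+\nu^2)(\zeta^2+\tilde\nu^2)]$. Combined with Hardy-type inequalities this gives an error of size $O(|\ln\nu|^{-1})\,\|\bar m_\e\|_{L^2_{\omega_\nu/\zeta}}^2 + O\!\left(\sum_n a_n^2 |\ln\nu|^2\right)$, which is absorbed into the leading $-2\beta(N-C')$ term upon choosing $C'$ large and $\nu$ small.

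The main obstacle is this operator-perturbation step: the $1/\zeta$ prefactor in $\Pc$ could a priori destroy the spectral gap near the origin, since one might lose at best a factor $1/\nu^2$ from naive estimates. The saving structural fact, highlighted in the Main Novelties comment of the introduction, is that the coefficient $Q_\nu - Q_{\tilde\nu}$ is shaped like the resonance $T_0(\zeta/\nu)$ of $\As_0$ (indeed $\nu\pa_\nu Q_\nu = -8T_0$), so the potentially singular part of $\Pc$ is essentially orthogonal in $L^2_{\omega_0/\zeta}$ to the dangerous direction and produces only a benign $O(1/|\ln\nu|)$ perturbation of the spectral data, rather than the naive $O(1/\nu^2)$ blow-up.
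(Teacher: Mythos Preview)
Your treatment of the main term $I_0$ is correct: $\bar m_\e = m_\e + P_{2,\nu}$ is orthogonal to $\phi_{0,\nu},\phi_{1,\nu}$ and has components $a_n c_n$ along $\phi_{n,\nu}$ for $2\leq n\leq N$, so the spectral gap \eqref{est:SpecGap1} for $\As^\zeta$ gives exactly the leading estimate you wrote.

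The gap is in the perturbation step, precisely where you yourself flag the ``main obstacle''. Writing $\Pc\bar m_\e = \tfrac{\Vc}{2\zeta}\pa_\zeta\bar m_\e + \tfrac12(U_{\tilde\nu}-U_\nu)\bar m_\e$ and integrating by parts, the resulting effective potential in $\int \bar m_\e\,\Pc\bar m_\e\,\bar\omega_\nu/\zeta$ is of order $(\tilde\nu^2-\nu^2)/\zeta$ for $\zeta\lesssim\nu$, which compared to $\bar\omega_\nu/\zeta\sim\nu^4/\zeta$ there gives a relative size $O(\nu^{-2}|\ln\nu|^{-1})$, not $O(|\ln\nu|^{-1})$. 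No generic Hardy inequality fixes this: one needs the specific orthogonality of $\Pc$ to the resonance $T_0$ near the origin (equivalently, that $\Vc$ is a multiple of $\nu\pa_\nu Q_\nu$), and turning that cancellation into an honest spectral-gap perturbation bound is exactly the nontrivial content of Proposition~\ref{pr:spectralbarAzeta}, which you invoke only informally. Your proof as written does not close this loop.

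The paper's argument is organized differently and avoids this difficulty. Instead of splitting $\bar\As^\zeta=\As^\zeta+\Pc$ and estimating the quadratic-form perturbation, it uses Proposition~\ref{pr:spectralbarAzeta} as a black box: $\bar\As^\zeta$ is self-adjoint in $L^2_{\bar\omega_\nu/\zeta}$ with eigenvalues $\bar\alpha_n$ and eigenfunctions $\bar\phi_{n,\nu}$ satisfying $|\bar\alpha_n-\alpha_n|\lesssim|\ln\nu|^{-2}$ and $\|\bar\phi_{n,\nu}-\phi_{n,\nu}\|\lesssim|\ln\nu|^{-1/2}\|\phi_{n,\nu}\|$. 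One then decomposes $\bar m_\e$ in the eigenbasis of $\bar\As^\zeta$ itself, applies the spectral theorem directly, and transfers the orthogonality $\langle m_\e,\phi_{n,\nu}\rangle_{\omega_\nu}=0$ to bounds on the $\bar\phi_{n,\nu}$-coefficients via $|\omega_\nu-\bar\omega_\nu|\lesssim\omega_\nu/|\ln\nu|$ and \eqref{bd:stabilityeigenmodes}. The hard analysis (the resonance cancellation you describe) is thereby encapsulated in the companion-paper result, and the remaining argument is a clean orthogonal decomposition.
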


Lemma \ref{lemm:SpecAbar} is a direct consequence of the following proposition whose proof is given in details in our previous work, and of the bound \eqref{est:nutildev} on $\nu-\tilde \nu$ whose proof is relegated at the end of this Subsection.
\begin{proposition}[Spectral properties of $\bar \As^\zeta$, \cite{CGNNarx19a}] \label{pr:spectralbarAzeta}
Assume the hypotheses of Proposition \ref{prop:SpecRad}, and that the function $\Vc$ in the operator $\Pc$ satisfies 
$$ |\Vc(\zeta)| + |\zeta\pa_\zeta \Vc(\zeta)| \lesssim \frac{\nu^2}{|\ln \nu|} \frac{\zeta^2 }{(\nu^2 + \zeta^2)^2},$$
Then, the operator $\bar \As^\zeta: H^2_{\frac{\bar \omega_\nu}{\zeta}} \rightarrow L^2_{\frac{\bar \omega_\nu}{\zeta}}$ is essentially self-adjoint with compact resolvant, where
$$\bar \omega_\nu(\zeta)=\omega_\nu(\zeta) \exp \left( \int_0^\zeta \frac{\Pc(\tilde \zeta)}{\tilde \zeta}d\tilde \zeta\right).$$ 
Its first $N+1$ eigenvalues $(\bar \alpha_n)_{0\leq n\leq N}$ satisfy
\begin{equation} \label{bd:stabilityalpha}
|\bar \alpha_n-\alpha_n|\leq \frac{C'}{|\log \nu|^2},
\end{equation}
and there exist associated renormalised eigenfunctions $(\bar \phi_{n,\nu})_{0\leq n\leq N}$ satisfying
\begin{equation} \label{bd:stabilityeigenmodes}
\frac{\| \bar \phi_{n,\nu}-\phi_{n,\nu} \|_{L^2(\frac{\omega_\nu}{\zeta})}}{\| \phi_{n,\nu} \|_{L^2(\frac{\omega_\nu}{\zeta})}}\leq C' \frac{\| \phi_{n,\nu}\|_{L^2(\frac{\omega_\nu}{\zeta})}}{\sqrt{|\log \nu|}}.
\end{equation}
\end{proposition}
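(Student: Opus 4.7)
The plan is to treat $\bar{\As}^\zeta$ as a structural twin of $\As^\zeta$ rather than a generic perturbation, then to run perturbation theory for the spectrum using the quantitative input from Proposition \ref{prop:SpecRad}. The starting observation is that since $\bar{\As}^\zeta = \tfrac{1}{2}(\As^\zeta + \tilde{\As}^\zeta)$, it has exactly the form
$$
\bar{\As}^\zeta f = \pa_\zeta^2 f - \frac{1}{\zeta}\pa_\zeta f + \frac{\pa_\zeta(\bar Q_\nu f)}{\zeta} - \beta \zeta \pa_\zeta f, \qquad \bar Q_\nu = \tfrac{1}{2}(Q_\nu + Q_{\tilde\nu}),
$$
only with $Q_\nu$ replaced by $\bar Q_\nu$. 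Computing the adjoint using the inner product $L^2_{\bar \omega_\nu/\zeta}$, I would verify that the claimed formula $\bar \omega_\nu(\zeta) = \omega_\nu(\zeta)\exp\!\bigl(\int_0^\zeta \Pc(\tilde\zeta)/\tilde\zeta\,d\tilde\zeta\bigr)$ is exactly what absorbs the drift piece of $\Pc = \tfrac{1}{2\zeta}\pa_\zeta(\Vc\cdot)$, making $\bar{\As}^\zeta$ symmetric. The assumption $|\Vc| + |\zeta\pa_\zeta\Vc| \lesssim \nu^2 \zeta^2/(|\ln\nu|(\nu^2+\zeta^2)^2)$ gives $|\bar\omega_\nu/\omega_\nu - 1| \lesssim 1/|\ln \nu|$ uniformly in $\zeta$, hence equivalence of the weighted $L^2$ and $H^2$ norms with $\As^\zeta$-adapted ones.

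Essential self-adjointness on $H^2_{\bar\omega_\nu/\zeta}$ and compactness of the resolvent then follow by replaying the arguments used for Proposition \ref{prop:SpecRad}: the principal symbol $\pa_\zeta^2$, the confining drift $-\beta\zeta\pa_\zeta$ (which drives the Gaussian decay of $\bar\omega_\nu$ at infinity and hence a Rellich-type compact embedding), the integrability of the zeroth-order term, and the boundary behavior at the origin are all preserved up to an $O(1/|\ln\nu|)$ perturbation, which does not affect these qualitative properties. For the spectral comparison, I would use $\phi_{n,\nu}$ as quasi-modes:
$$
\bar{\As}^\zeta \phi_{n,\nu} = \alpha_n \phi_{n,\nu} + \Pc\phi_{n,\nu},
$$
and estimate the residual in $L^2_{\bar\omega_\nu/\zeta}$ by combining the pointwise bound on $\Vc$ with the derivative bounds \eqref{est:PhinPointEst} on $\phi_{n,\nu}$. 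A direct integration should give $\|\Pc\phi_{n,\nu}\|_{L^2_{\bar\omega_\nu/\zeta}} \lesssim \|\phi_{n,\nu}\|_{L^2_{\omega_\nu/\zeta}}/\sqrt{|\ln\nu|}$, after which the Riesz projector / Davis–Kahan style formula together with the spectral gap $\alpha_{n+1} - \alpha_n \sim 2\beta$ between consecutive unperturbed eigenvalues yields both the eigenvalue estimate \eqref{bd:stabilityalpha} and the eigenfunction estimate \eqref{bd:stabilityeigenmodes}.

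The main obstacle is squeezing the sharp $1/|\ln\nu|^2$ rate in \eqref{bd:stabilityalpha} rather than the naive $1/|\ln\nu|$ that one-shot Rayleigh-quotient perturbation gives. The improvement must come from a cancellation at first order: writing $\bar\alpha_n - \alpha_n = \langle \Pc\phi_{n,\nu},\phi_{n,\nu}\rangle/\|\phi_{n,\nu}\|^2$ plus quadratic remainder, one integrates by parts to move the derivative in $\Pc$ off of $\Vc$, so that the leading contribution involves $\Vc$ paired against $\pa_\zeta(\phi_{n,\nu}^2 \bar\omega_\nu/\zeta)$. The dominant piece of $\phi_{n,\nu}$ near the origin is $T_0(\zeta/\nu)/\nu^2$ (for $n=0$) or the polynomially weighted versions via $T_j$ (for $n\geq 1$), and the pairing against $\Vc \sim \tfrac{1}{|\ln\nu|} T_0(\zeta/\nu)$ has to be computed to the next order in $1/|\ln\nu|$, using the precise asymptotics of the $T_j$ at infinity \eqref{est:Tj} to track the logarithmic contributions picked up by the Gaussian weight. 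This is the delicate computation that needs to be carried out explicitly, and it parallels the refined analysis of $\tilde\alpha_n$ itself in \eqref{est:nu0ntil1}.
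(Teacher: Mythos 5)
First, note that the paper itself does not prove Proposition \ref{pr:spectralbarAzeta}: it is imported verbatim from \cite{CGNNarx19a} ("whose proof is given in details in our previous work"), so your sketch has to stand on its own. Its structural part is fine — $\bar\As^\zeta$ is the linearized operator around $\bar Q_\nu=\tfrac12(Q_\nu+Q_{\tnu})$, the weight $\bar\omega_\nu=\tfrac{\zeta^2+\tnu^2}{\zeta^2+\nu^2}\,\omega_\nu$ symmetrizes it and satisfies $|\bar\omega_\nu/\omega_\nu-1|\lesssim 1/|\ln\nu|$, and self-adjointness/compact resolvent follow as for $\As^\zeta$. The gap is in the quantitative heart: the claimed quasi-mode bound $\|\Pc\phi_{n,\nu}\|_{L^2_{\bar\omega_\nu/\zeta}}\lesssim \|\phi_{n,\nu}\|_{L^2_{\omega_\nu/\zeta}}/\sqrt{|\ln\nu|}$ is false. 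Take $n=0$: with $\Vc\approx \frac{\tnu^2-\nu^2}{\nu^2}\,4T_0(\zeta/\nu)$ (magnitude $\sim T_0(r)/|\ln\nu|$, $r=\zeta/\nu$) and $\phi_{0,\nu}\approx\nu^{-2}T_0(r)$, one gets $\Pc\phi_{0,\nu}\approx \frac{c}{\nu^4|\ln\nu|}\,\frac{(T_0^2)'(r)}{r}$, and since $\frac{\omega_\nu}{\zeta}d\zeta=\nu^4\frac{\omega_0(r)}{r}e^{-\beta\nu^2r^2/2}dr$, its $L^2_{\omega_\nu/\zeta}$ norm is of order $\nu^{-2}|\ln\nu|^{-1}$, to be compared with $\|\phi_{0,\nu}\|\sim|\ln\nu|^{1/2}$ from \eqref{est:PhinL2norm1}. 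So $\Pc$ is \emph{not} small in norm near the core; a Riesz-projector/Davis--Kahan argument with this residual gives eigenvalue errors of size $\nu^{-2}|\ln\nu|^{-3/2}\gg 1$, not \eqref{bd:stabilityalpha}, and the "quadratic remainder" you invoke is equally uncontrolled. The issue is not upgrading a naive $1/|\ln\nu|$ rate to $1/|\ln\nu|^2$; without extra structure the naive rate is not even $o(1)$ as $\nu\to 0$.

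What saves the statement is precisely the cancellation the paper flags before Lemma \ref{lemm:SpecAbar}: because $8T_0=r^2U$, one has $T_0(\zeta/\nu)\,\omega_\nu(\zeta)/\zeta^2=\tfrac{\nu^2}{8}e^{-\beta\zeta^2/2}$, so when the divergence-form perturbation $\Pc g=\tfrac{1}{2\zeta}\pa_\zeta(\Vc g)$ is paired against the core part of any $\phi_{m,\nu}$ and one integrates by parts, the $\nu^{-2}$-singular derivative is traded for a factor $\beta\zeta e^{-\beta\zeta^2/2}$ (the same trick as in \eqref{eq:TOcom}); this is what produces matrix elements $\la\Pc\phi_{n,\nu},\phi_{m,\nu}\ra=\Oc(1/|\ln\nu|)$ and hence shifts of order $1/|\ln\nu|^2$ after dividing by $\|\phi_{n,\nu}\|^2$. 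Your last paragraph gestures at this, but only for the diagonal first-order term, and it misdiagnoses the problem as a sharpness issue; a correct proof must exploit the cancellation for \emph{all} relevant pairings and control the rest of the spectrum by quadratic-form/Lyapunov--Schmidt arguments adapted to the degenerate $1/|\ln\nu|$ expansion — in effect redoing the matched-asymptotics construction of the eigenpairs of $\bar\As^\zeta$ as in \cite{CGNNarx19a}, rather than treating $\Pc$ as a norm-small perturbation. Note also that \eqref{bd:stabilityeigenmodes}, with its unnormalized factors $\|\phi_{n,\nu}\|$, is not the output of a standard Davis--Kahan bound, which is another sign that the intended argument is not the one you outline.
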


\begin{remark} Using the fact that $\nu \sim \tilde{\nu}$ (see \eqref{est:nutildev}) and the explicit relation $\bar \omega_\nu = \left( \frac{\zeta^2 + \tilde{\nu}^2}{\zeta^2 + \nu^2}\right) \omega_\nu \sim \omega_\nu$, we have the following equivalence of weighted Lebesgue norms:
$$\| \cdot\|_{L^2_{\frac{\omega_\nu}{\zeta}}} \sim \| \cdot\|_{L^2_{\frac{\bar \omega_\nu}{\zeta}}}.$$
\end{remark}

\begin{proof}[Proof of Lemma \ref{lemm:SpecAbar}] The proof uses a standard orthogonal decomposition, and the spectral stability estimates provided by Proposition \ref{pr:spectralbarAzeta}. We recall that $\bar \As^\zeta$ is self-adjoint in $L^2_{\frac{\bar \omega_\nu}{\zeta}}$, with eigenvalues $\bar \alpha_n$ and eigenfunctions $\bar \phi_{n,\nu}$. We decompose $m_\e$ onto the first $N+1$ eigenmodes of $\bar \As^\zeta$:
\begin{equation}\label{dec:mebarphi}
\bar m_\e = \sum_{n = 0}^N b_n \bar \phi_{n,\nu} + \bar m_\e^\perp, \ \ \bar m_\e^\perp \perp \bar \phi_n \mbox{ in } L^2(\bar \omega_\nu/\zeta) \mbox{ for } 0\leq n\leq N,
\end{equation}
so that from Proposition \ref{pr:spectralbarAzeta} and the spectral Theorem, there holds the spectral gap:
$$\int \bar m_\e^\perp \bar \As^\zeta \bar m_\e^\perp \frac{\bar \omega_\nu}{\zeta}d\zeta \leq \bar \alpha_{N+1}\int \big|\bar m_\e^\perp\big|^2 \frac{\bar \omega_\nu}{\zeta}d\zeta \leq -2\beta (N-1) \int \big|\bar m_\e^\perp\big|^2 \frac{\bar \omega_\nu}{\zeta}d\zeta
 $$
where we used \eqref{bd:stabilityalpha} and took $\nu$ small enough in the last inequality. As the eigenfunctions $\bar \phi_{n, \nu}$ are mutually orthogonal in $L^2(\bar \omega_\nu/\zeta)$, we have from the above inequality:
\begin{align*}
\int \bar m_\e \bar \As^\zeta \bar m_\e \frac{\bar \omega_\nu}{\zeta} d\zeta &= \int \bar m_\e^\perp \bar \As^\zeta \bar m_\e^\perp \frac{\bar \omega_\nu}{\zeta}d\zeta + \sum_{n = 0}^N b_n^2 \bar \alpha_n \int |\bar \phi_{n, \nu}|^2 \frac{\bar \omega_\nu}{\zeta} d\zeta\\
&\quad \leq -2\beta (N-1) \int \bar m_\e^2 \frac{\bar \omega_\nu}{\zeta}d\zeta + \sum_{n = 0}^N b_n^2 | \alpha_n| \| \bar \phi_{n, \nu}\|^2_{L^2_{\frac{\bar \omega_\nu}{\zeta}}}.
\end{align*}
Above the parameters $b_n$ satisfy, from the orthogonality \eqref{orthogonality}, $|\omega_\nu-\bar \omega_\nu|\lesssim \omega_\nu/|\ln \nu|$ and \eqref{bd:stabilityeigenmodes}:
\begin{align*}
&b_n \|\bar \phi_{n, \nu} \|_{L^2_{\frac{\bar \omega_\nu}{\zeta}}}^{2}  = \int \bar m_\e \bar \phi_{n, \nu} \frac{\bar \omega_\nu}{\zeta} d\zeta\\
& \quad = \int \bar m_\e (\bar \phi_{n, \nu} - \phi_{n, \nu}) \frac{\bar \omega_\nu}{\zeta}d\zeta + \int \bar m_\e \phi_{n, \nu} \frac{\bar \omega_\nu - \omega_\nu}{\zeta} d\zeta + a_n \| \phi_{n, \nu}\|^2_{L^2_{\frac{\omega_\nu}{\zeta}}} \mathbf{1}_{\{2 \leq n \leq N\}}\\
& \quad \lesssim \|\bar m_\e\|_{L^2_{\frac{\omega_\nu}{\zeta}}} \left( \|\bar \phi_{n, \nu} - \phi_{n, \nu} \|_{L^2_{\frac{\omega_\nu}{\zeta}}} +  \left\| \phi_{n, \nu} \frac{\bar \omega_\nu - \omega_\nu}{\omega_\nu} \right\|_{L^2_{\frac{\omega_\nu}{\zeta}}}\right)+ |a_n| \| \phi_{n, \nu}\|^2_{L^2_{\frac{\omega_\nu}\zeta}} \mathbf{1}_{\{2 \leq n \leq N\}}\\
& \quad  \lesssim \frac{\|\bar m_\e\|_{L^2_{\frac{\omega_\nu}{\zeta}}}\| \phi_{n, \nu} \|_{L^2_{\frac{\omega_\nu}{\zeta}}}}{|\ln \nu|} + |a_n| \| \phi_{n, \nu}\|^2_{L^2_{\frac{\omega_\nu}\zeta}} \mathbf{1}_{\{2 \leq n \leq N\}}. 
\end{align*}
Estimate \eqref{bd:perturbativegap} for $\nu$ small enough then follows from the two above inequalities and \eqref{bd:stabilityeigenmodes}. This concludes the proof of Lemma \ref{lemm:SpecAbar} assuming Proposition \ref{pr:spectralbarAzeta}.
\end{proof}

We are now in the position to derive the main energy decay of $\bar m_\e$. 
\begin{lemma}[Monotonicity of $\bar m_\e$ in $L^2_{\frac{\bar \omega_\nu}{\zeta}}$] \label{lemm:L2omeControl} Let $w$ be a solution in the bootstrap regime in the sense of Definition \ref{def:bootstrap}. Then, for $\tau_0$ large enough the following estimate holds on $[\tau_0,\tau^*]$:
\begin{align}\label{eq:monoL2ome}
\frac{1}{2}\frac{d}{d\tau} \big\|\bar m_\e \big\|^2_{L^2_{\frac{\bar \omega_\nu}{\zeta}}} &\leq -2\beta\left(N - C \right)\big\|\bar m_\e \big\|^2_{L^2_{\frac{\bar \omega_\nu}{\zeta}}} + C\big\|\bar m_\e \big\|_{L^2_{\frac{\bar \omega_\nu}{\zeta}}}\left( \frac{|\textup{Mod}_0|}{\sqrt{|\ln \nu|}} +  |\textup{Mod}_1|\right) + C\left(\sum_{n = 2}^N |a_n|^2 |\ln \nu|^2 + \frac{\nu^4}{|\ln \nu|^2}\right),
\end{align}
where $C>0$ is a universal constant independent on the bootstrap constants $N$, $\kappa$, $K$, $K'$ and $K''$, and $\textup{Mod}_0$ and $\textup{Mod}_1$ are given as in \eqref{def:Mod0} and \eqref{def:Modn}.
\end{lemma}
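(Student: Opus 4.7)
The plan is to differentiate $\|\bar m_\e\|^2_{L^2_{\bar\omega_\nu/\zeta}}$ in time, substitute \eqref{eq:meMd}, and bound the four resulting contributions. The time derivative of the weight produces a correction of the form $(\nu_\tau/\nu + \tilde\nu_\tau/\tilde\nu)\int \bar m_\e^2 \cdot O(1)\,\bar\omega_\nu\zeta^{-1}\, d\zeta$; by \eqref{bootstrap:param1} together with the analogous control on $\tilde\nu_\tau/\tilde\nu$ coming from Lemma \ref{lemm:nutil}, this is of size $\|\bar m_\e\|^2/|\ln\nu|$ and is absorbed. The principal linear contribution $\langle \bar\As^\zeta \bar m_\e, \bar m_\e\rangle_{L^2_{\bar\omega_\nu/\zeta}}$ is then handled by the spectral gap Lemma \ref{lemm:SpecAbar}: since $m_\e \perp \phi_{n,\nu}$ in $L^2_{\omega_\nu/\zeta}$ by \eqref{orthogonality} and $\bar m_\e = m_\e + P_{2,\nu}$ with $P_{2,\nu}$ in the span of $\phi_{n,\nu}$ for $n\geq 2$, the hypotheses of that lemma are met, producing the main dissipative term $-2\beta(N-C')\|\bar m_\e\|^2$ as well as the remainder $C\sum_{n\geq 2}|a_n|^2|\ln\nu|^2$.

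For the lower-order term $\langle \pa_\zeta[(V+\tilde m_w)\bar m_\e]/(2\zeta), \bar m_\e\rangle$ I will integrate by parts and treat the contributions of $V$ and $\tilde m_w$ separately. The pointwise bounds \eqref{pointwisehatpsi} on $P_{1,\nu}$, together with the analogous bounds on $P_{1,\tilde\nu}$ and $N_{1,\tilde\nu}$ deduced from \eqref{def:N1til} and \eqref{def:hatphi}, yield coefficients of size $O(1/|\ln\nu|)$ and contribute at most $C\|\bar m_\e\|^2/|\ln\nu|$. The refined pointwise estimate \eqref{pointwisemerefined} in the parabolic zone, combined with the outer bound \eqref{pointwiseaway2}, gives $|\tilde m_w| + |\zeta\pa_\zeta \tilde m_w| \lesssim \nu^2/\sqrt{|\ln\nu|}$ after using $\ln\langle r\rangle \lesssim |\ln\nu|$, which is enough to absorb the corresponding quadratic contribution into the dissipation. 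The nonradial contribution $\langle N_0(\e^\perp), \bar m_\e\rangle$ is controlled via \eqref{bootstrap:perpenergy}, \eqref{poissonLinfty} and the bound on $x^*_\tau/\mu$ from \eqref{est:xtauIntro}, yielding an exponentially small term absorbed into $\nu^4/|\ln\nu|^2$ for $\tau_0$ large.

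The delicate step is the error term. From \eqref{dec:mEbar}, the piece $\tilde{\bar m}_E + \pa_\zeta P_{1,\nu}^2/(2\zeta)$ has $L^2_{\omega_\nu/\zeta}$-norm bounded by $\nu^2/|\ln\nu|$ via \eqref{est:mEbarP1}, contributing $\delta\|\bar m_\e\|^2 + C\nu^4/|\ln\nu|^2$ by Young. For the modulation projections $\textup{Mod}_n\langle \phi_{n,\nu}, \bar m_\e\rangle_{L^2_{\bar\omega_\nu/\zeta}}$ with $n=0,1$, the key observation is that the pairing in the natural weight $\omega_\nu$ vanishes identically: $m_\e\perp\phi_{n,\nu}$ in $L^2_{\omega_\nu/\zeta}$ by \eqref{orthogonality} and $P_{2,\nu}\perp \phi_{n,\nu}$ in that same space for $n\leq 1$. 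Hence the surviving pairing comes entirely from the weight discrepancy $\bar\omega_\nu - \omega_\nu$, which satisfies $|\bar\omega_\nu - \omega_\nu|/\omega_\nu \lesssim 1/|\ln\nu|$ thanks to $|\nu-\tilde\nu|\lesssim \nu/|\ln\nu|$. Combined with $\|\phi_{0,\nu}\|_{L^2_{\omega_\nu/\zeta}}\sim\sqrt{|\ln\nu|}$ and $\|\phi_{1,\nu}\|_{L^2_{\omega_\nu/\zeta}}\sim|\ln\nu|$ from Proposition \ref{prop:SpecRad}, Cauchy-Schwarz reproduces exactly the factors $1/\sqrt{|\ln\nu|}$ and $1$ in front of $|\textup{Mod}_0|$ and $|\textup{Mod}_1|$ stated in \eqref{eq:monoL2ome}. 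Capturing this logarithmic gain is the main obstacle of the proof: a crude Cauchy-Schwarz that ignores the weight cancellation would lose a full factor of $|\ln\nu|$ and, given $\textup{Mod}_0=O(\nu^2/|\ln\nu|^3)$ from \eqref{eq:mod0}, would preclude closure of the bootstrap at the required order $\nu^4/|\ln\nu|^2$.
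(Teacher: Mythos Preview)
Your proposal is correct and follows the paper's proof essentially step by step, including the crucial observation that the $\textup{Mod}_0$ and $\textup{Mod}_1$ pairings survive only through the weight discrepancy $\bar\omega_\nu-\omega_\nu=O(\omega_\nu/|\ln\nu|)$, which together with $\|\phi_{0,\nu}\|\sim\sqrt{|\ln\nu|}$ and $\|\phi_{1,\nu}\|\sim|\ln\nu|$ produces exactly the stated factors. Two minor corrections: (i) the time derivative of $\bar\omega_\nu$ also contains a term $-\frac{\beta_\tau}{2}\zeta^2$, which for $\zeta>1$ is unbounded and must be handled via the pointwise outer bound $|\bar m_\e(\zeta)|\lesssim\frac{\nu^2}{|\ln\nu|}\langle\zeta\rangle^C$ from \eqref{pointwiseaway2}, contributing $O(\nu^4/|\ln\nu|^2)$; (ii) the $V$-term coefficients after integration by parts are only $O(1)$ near the origin (since $|P_{1,\nu}/\zeta^2|\sim 1$ at $\zeta\sim\nu$), not $O(1/|\ln\nu|)$, but this $C\|\bar m_\e\|^2$ contribution is still absorbed into the constant $C$ in $-2\beta(N-C)$.
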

\begin{proof} We multiply the equation \eqref{eq:meMd} with $\bar m_\e \frac{\bar \omega_\nu}{\zeta}$ and integrate over $[0,+\infty)$,
\begin{align*}
\frac{1}{2}\frac{d}{d\tau} \int \bar m_\e^2 \frac{\bar \omega_\nu}{\zeta}d\zeta & = \frac{1}{2}\int \bar m_\e^2 \frac{\pa_\tau \bar \omega_\nu}{\zeta} d\zeta + \int \bar m_\e\bar \As^\zeta \bar m_\e\frac{\bar \omega_\nu}{\zeta}d\zeta + \int \bar m_\e \bar m_E\frac{\bar \omega_\nu}{\zeta}d\zeta \\
&+ \int \frac{\pa_\zeta(V + \tm_w) \bar m_\e)}{2\zeta} \bar m_\e \frac{\bar \omega_\nu}{\zeta}d\zeta + \int N_0(\e^\perp) \bar m_\e \frac{\bar \omega_\nu}{\zeta}d\zeta.
\end{align*}
In the following, we shall write $\la \cdot, \cdot \ra $ for $\la \cdot, \cdot \ra_{L^2_{\frac{\bar \omega_\nu}{\zeta}}}$ for simplicity.\\
\noindent \textit{- The time derivative term.} We first compute 
\begin{align*}
\frac{\partial_\tau \bar \omega_\nu}{\bar \omega_\nu}  = \pa_\tau \ln \bar \omega_\nu =  \frac{1}{2}\frac{\nu_\tau}{\nu} \frac{1}{1 + r^2} + \frac{1}{2}\frac{\tilde\nu_\tau}{\tilde \nu} \frac{1}{1 + \tilde r^2} - \beta_\tau \frac{\zeta^2}{2}. 
\end{align*}
We obviously have the bound 
$$\left|\Big\la \bar m_\e^2, \frac{\nu_\tau}{\nu} \frac{1}{1 + r^2} + \frac{\tilde \nu_\tau}{\tilde \nu} \frac{1}{1 + \tilde r^2} \Big \ra\right| \leq \left(\left|\frac{\nu_\tau}{\nu}\right| + \left|\frac{\tilde \nu_\tau}{\tilde \nu}\right| \right) \|\bar m_\e \big\|^2_{L^2_{\frac{\bar \omega_\nu}{\zeta}}},$$
and 
$$ \quad \left|\Big\la \bar m_\e^2, -\beta_\tau \frac{\zeta^2}{2}\Big \ra_{L^2_{\frac{\bar \omega_\nu}{\zeta}}(\zeta \leq 1)}\right| \leq |\beta_\tau| \|\bar m_\e \big\|^2_{L^2_{\frac{\bar \omega_\nu}{\zeta}}}.$$
For $\zeta > 1$, we use \eqref{pointwiseaway2} to get 
\begin{equation} \label{est:pointwiseawayme}
|\bar m_\e(\zeta)| \lesssim \frac{\nu^2}{|\ln \nu|} \la \zeta \ra^C \quad \textup{for} \quad \zeta \geq 1,
\end{equation} 
from which we obtain
$$ \left|\Big\la \bar m_\e^2, \beta_\tau \frac{\zeta^2}{2}\Big \ra_{L^2_{\frac{\omega_\nu}{\zeta}}(\zeta \geq 1)}\right| \lesssim |\beta_\tau| \frac{\nu^4}{|\ln \nu|^2} \int_{0}^{+\infty} \zeta ^C e^{-\frac{\beta \zeta^2}{2}} d\zeta \lesssim \frac{\nu^4}{|\ln \nu|^2}.$$
Hence, 
\begin{equation}\label{est:L2ome1}
\left|\int \bar m_\e^2 \frac{\pa_\tau \bar \omega_\nu}{\zeta} d\zeta\right| \lesssim \left( \left| \frac{\nu_\tau}{\nu}\right| + \left| \frac{\tilde \nu_\tau}{\tilde \nu}\right| + |\beta_\tau| \right)\|\bar m_\e \big\|^2_{L^2_{\frac{\omega_\nu}{\zeta}}} + \frac{\nu^4}{|\ln \nu|^2}.
\end{equation}

\noindent \textit{- The linear and error terms.} We have by Lemma \ref{lemm:SpecAbar},
$$\big \la \bar \As^\zeta \bar m_\e, \bar m_\e \big \ra \leq -\beta(N - C')\|\bar m_\e \big\|^2_{L^2_{\frac{\bar \omega_\nu}{\zeta}}} + C \sum_{n = 2}^N |a_n|^2 |\ln \nu|^2.$$
From the decomposition \eqref{dec:mEbar} and the fact that $\bar m_\e \perp \phi_{n, \nu}$ for $n = 0, 1$ in $L^2_{\frac{\omega_\nu}{\zeta}}$, we write
\begin{align*}
\big \la \bar m_\e, \bar m_E \big \ra =  \sum_{n = 0}^1 \textup{Mod}_n \Big \la \bar m_\e, \phi_{n, \nu} \frac{\omega_{\tilde{\nu}} - \omega_\nu}{\omega_{\tilde{\nu}}} \Big \ra  +  \big \la \bar m_\e, \tilde {\bar m}_E + \frac{P_1 \pa_\zeta P_1}{\zeta} \big \ra.
\end{align*}
From Lemma \ref{lemm:Mod} and \eqref{est:mEbarP1}, we have by Cauchy-Schwarz inequality,
\begin{align*}
\left| \big \la \bar m_\e, \bar m_E \big \ra\right| &\lesssim \|\bar m_\e \|_{L^2_{\frac{\bar \omega_\nu}{\zeta}}}\left(\sum_{n = 0}^1 \left|\textup{Mod}_n\right| \left\|\phi_{n, \nu} \frac{\omega_{\tilde{\nu}} - \omega_\nu}{\omega_{\tilde{\nu}}} \right\| _{L^2_{\frac{\bar \omega_\nu}{\zeta}}} + \left\|\tilde {\bar m}_E + \frac{P_1 \pa_\zeta P_1}{\zeta} \right\| _{L^2_{\frac{\bar \omega_\nu}{\zeta}}}\right)\\
& \quad \lesssim \|\bar m_\e \|_{L^2_{\frac{\bar \omega_\nu}{\zeta}}} \left( \frac{|\textup{Mod}_0|}{\sqrt{|\ln \nu|}} +  |\textup{Mod}_1| + \frac{\nu^2}{|\ln \nu|} \right). 
\end{align*}

\noindent \textit{- The small linear term.} From \eqref{pointwisehatpsi}, we have the bound
\begin{align*}
&\left|\frac{\pa_\zeta V}{2\zeta}\right| + \left| \frac{\zeta}{2 \bar \omega_\nu}\pa_\zeta \Big(\frac{V}{2\zeta^2}\bar \omega_\nu \Big) \right|\leq  C\left(\left| \frac{\pa_\zeta V}{\zeta} \right| + \left| \frac{V}{\zeta^2} \right|\right) \leq C \frac{\ln^2 \la r \ra}{1 + r^2} \leq C,
\end{align*}
from which and from an integration by parts, we obtain 
\begin{align*}
&\left|\Big \la \frac{\pa_\zeta \big(V \bar m_\e\big)}{2\zeta}, \bar m_\e  \Big \ra\right| \leq C\int \bar m_\e^2 \left(\left|\frac{\pa_\zeta V}{\zeta} \right| +  \left|\frac{\zeta}{\bar \omega_\nu} \pa_\zeta \left(\frac{V \bar \omega_\nu}{\zeta} \right)\right| \right) \frac{\bar \omega_\nu}{\zeta} d\zeta \leq C \|\bar m_\e\|^2_{L^2_\frac{\omega_\nu}{\zeta}}.
\end{align*}

\noindent \textit{- The nonlinear term.} We use integration by part and the pointwise estimates \eqref{pointwisemerefined} and \eqref{pointwiseaway} to get
\begin{align*}
&\left|\Big \la \frac{\pa_\zeta \big(\tilde{m}_w \bar m_\e\big)}{2\zeta}, \bar m_\e  \Big \ra\right| = \int \bar m_\e^2 \left(\left|\frac{\pa_\zeta (\tilde{m}_w)}{2\zeta} \right| +  \left|\frac{\zeta}{\bar \omega_\nu} \pa_\zeta \left(\frac{(\tilde{m}_w) \bar \omega_\nu}{\zeta} \right)\right| \right) \frac{\bar \omega_\nu}{\zeta} d\zeta\\
& \quad \lesssim \frac{1}{|\ln \nu|} \int \bar m_\e^2 \frac{\nu^2}{\zeta^2 + \nu^2}  \sqrt{\ln \la r \ra} \la \zeta \ra ^C \frac{\bar \omega_\nu}{\zeta} d\zeta \lesssim \frac{1}{|\ln \nu|}\int_{\zeta \leq 1} \bar m_\e^2 \frac{\bar \omega_\nu}{\zeta} d\zeta + \frac{\nu^6}{|\ln \nu|^2}\int_{\zeta \geq 1} \la \zeta \ra^{3C} e^{-\beta \zeta^2/2} d\zeta\\
& \quad \lesssim \frac{1}{|\ln \nu|}\|\bar m_\e\|^2_{L^2_\frac{\omega_\nu}{\zeta}} + \frac{\nu^6}{|\ln \nu|^2}.
\end{align*}

\noindent \emph{- The nonradial term}. Since the contribution from the nonradial term is exponentially small in $\tau$, we just need a rough estimate by splitting the integration  into two parts $\zeta \leq \zeta_*/2$ and $\zeta \leq \zeta_*/2$, then using the pointwise estimates  \eqref{poissonLinfty}, \eqref{pointwisemerefined}, \eqref{pointwiseaway} and \eqref{est:xstar} to get
$$
\left|\Big \langle N_0(\e^\perp),m_\e\Big\rangle\right|\lesssim e^{-\kappa \tau} \lesssim \frac{\nu^4}{|\ln 
\nu|^2}.
$$
The collection of the above estimates and the fact that $ \Big|\frac{\nu_\tau}{\nu}\Big| + \Big|\frac{\tilde \nu_\tau}{\tilde\nu}\Big| + |\beta_\tau| \lesssim \frac{1}{|\ln \nu|}$ from \eqref{eq:mod0}, \eqref{eq:mod1}, \eqref{bootstrap:condition} and \eqref{est:nutildev} yield the conclusion of Lemma \ref{lemm:L2omeControl}.
\end{proof}

In view of the monotonicity formula \eqref{eq:monoL2ome}, we need to estimate $\tnu$ and $\tnu_\tau$ in term of $\nu$. It is a consequence of the orthogonality condition \eqref{orthogonality} and of the rough estimate \eqref{bootstrap:L2omeofme}. 
\begin{lemma}[Estimate for $\tnu$] \label{lemm:nutil} We have 
\begin{equation}\label{est:nutildev}
|\tnu^2 - \nu^2| + |(\tnu^2 - \nu^2)_\tau| \lesssim \frac{K}{\sqrt{\ln M}} \frac{\nu^2}{|\ln \nu|}.
\end{equation}
\end{lemma}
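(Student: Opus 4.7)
Using $T_0(\tilde r)\omega_0(\tilde r)/\tilde r = \tilde r/8$, the orthogonality \eqref{eq:orthmwT0} simplifies to $\int_0^\infty \tilde m_v(\tilde r)\chi_M(\tilde r)\tilde r\,d\tilde r = 0$. The plan is to view this as an implicit equation in $\tilde\nu$ and extract the leading dependence on $\sigma := \tilde \nu^2 - \nu^2$. First, substituting the decomposition \eqref{def:mwtil} of $\tilde m_w$ and changing variables $\zeta = \tilde\nu \tilde r$, I expand the main term $Q_\nu - Q_{\tilde\nu}$: from $Q_\nu(\zeta) = 4\zeta^2/(\nu^2+\zeta^2)$ a direct Taylor computation gives at $\zeta = \tilde\nu\tilde r$
$$
(Q_\nu - Q_{\tilde\nu})(\tilde\nu \tilde r) = \frac{4\sigma}{\tilde\nu^2} T_0(\tilde r)\bigl(1 + O(|\sigma|/\nu^2)\bigr).
$$
Since $\int_0^\infty T_0(\tilde r)\chi_M(\tilde r)\tilde r\,d\tilde r = \tfrac{1}{2}\ln M + O(1)$, this yields the principal coefficient $\sim \sigma \ln M/\nu^2$ in the orthogonality relation.

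The remaining terms are treated as source terms. For $\bar m_\e = P_{2,\nu}+ m_\e$, I change variables back to $\zeta$ and apply Cauchy--Schwarz in $L^2_{\omega_{\tilde\nu}/\zeta}$:
$$
\left|\int \bar m_\e(\tilde\nu \tilde r)\chi_M(\tilde r)\tilde r\,d\tilde r\right| \lesssim \frac{1}{\tilde\nu^2}\|\bar m_\e\|_{L^2_{\omega_{\tilde\nu}/\zeta}}\|\chi_M T_0\|_{L^2(\omega_0/\tilde r)}.
$$
The pointwise bounds \eqref{est:PhinPointEst}, \eqref{bootstrap:param2} give $\|P_{2,\nu}\|_{L^2_{\omega_\nu/\zeta}}\lesssim K\nu^2/|\ln\nu|$, and \eqref{bootstrap:L2omeofme} gives the analogous bound for $m_\e$. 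Combined with $\|\chi_M T_0\|_{L^2(\omega_0/\tilde r)}^2 \sim \ln M$, this produces the source bound $\lesssim K\sqrt{\ln M}/|\ln\nu|$. The term $P_{1,\nu} - P_{1,\tilde\nu}$ is linear in $\sigma$ with coefficient controlled by the $\tilde\nu$-Lipschitz bounds from Lemma~\ref{lemm:EigAzeta} and $|a_1|\lesssim \nu^2$; since $|a_1|/\nu^2 \cdot M^2 \ll \ln M/\nu^2$ in our regime, it is absorbed into the principal coefficient on the left. Finally, $N_{1,\tilde\nu}$ is $O(\nu^2)$ by its definition \eqref{def:N1til} together with \eqref{pointwisephi0} and $|a_1|\lesssim \nu^2$, and contributes a negligible source.

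Solving the resulting implicit relation
$$
\frac{\sigma \ln M}{\nu^2}\bigl(C_0 + O(1/|\ln\nu|)\bigr) = O\bigl(K\sqrt{\ln M}/|\ln\nu|\bigr), \qquad C_0>0,
$$
then yields $|\sigma| \lesssim K\nu^2/(\sqrt{\ln M}\,|\ln\nu|)$, which is the first part of \eqref{est:nutildev}. For the time derivative, I differentiate $\int \tilde m_v\,\chi_M\tilde r\,d\tilde r = 0$ in $\tau$; this produces an algebraic equation where $(\tilde\nu^2)_\tau - (\nu^2)_\tau$ appears with the same principal coefficient $\sim \ln M/\nu^2$, and the source now involves $\partial_\tau \bar m_\e$, $a_{1,\tau}$, $\beta_\tau$ and $(\nu^2)_\tau$. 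The evolution equation \eqref{eq:meMd} for $\bar m_\e$, paired after an integration by parts against the smooth compactly supported test function $\chi_M \tilde r$, together with the modulation estimates \eqref{eq:mod0}--\eqref{eq:mod2} and the already-established bound on $\sigma$, controls these sources at the same order $K\sqrt{\ln M}/|\ln\nu|$, yielding the bound for $(\tilde\nu^2-\nu^2)_\tau$.

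The main technical difficulty is in the time-derivative step: ensuring that the diffusive term $\mathcal{A}^\zeta m_\e$ appearing in $\partial_\tau \bar m_\e$ does not produce a contribution of worse order than the one permitted by the radial energy bound. The saving feature is that, after the integration by parts against the test function $\chi_M T_0 \omega_0/\tilde r$, the resulting weighted pairing falls into the $L^2_{\omega_\nu/\zeta}$ setting controlled by \eqref{bootstrap:L2omeofme}, and all boundary contributions near $\tilde r \sim M$ are bounded by the pointwise estimate \eqref{pointwisemerefined} with the fixed (large but bounded) constant $M$.
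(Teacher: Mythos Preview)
Your proposal is correct and follows essentially the same approach as the paper: both simplify the orthogonality condition via $T_0\omega_0/\tilde r = \tilde r/8$, extract the leading coefficient $\sim \sigma\ln M/\nu^2$ from the $Q_\nu - Q_{\tilde\nu}$ contribution, bound the $\bar m_\e$ source by Cauchy--Schwarz against $\chi_M T_0$ in $L^2_{\omega_0/\tilde r}$ to get $K\sqrt{\ln M}/|\ln\nu|$, show the $P_{1,\nu}-P_{1,\tilde\nu}$ and $N_{1,\tilde\nu}$ contributions are absorbable, and differentiate the orthogonality in $\tau$ for the second estimate. The only cosmetic difference is that the paper parametrises by $h=\nu/\tilde\nu$ rather than $\sigma=\tilde\nu^2-\nu^2$; your description of the time-derivative step is in fact more explicit than the paper's, which simply says the argument is similar and omits details.
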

\begin{proof} We recall from the decomposition \eqref{eq:decmwt}, 
$$\bar m_\e =  Q_{\tnu} - Q_\nu + P_{1, \tnu} - P_{1, \nu} - N_{1, \tnu} + \tilde{m}_w,$$
subject to the orthogonality conditions from \eqref{orthogonality} and \eqref{eq:orthmwT0},
$$\bar m_\e \perp_{L^2_{\frac{\omega_\nu}{\zeta}}} \phi_{0, \nu}, \phi_{1, \nu}, \quad \int \tilde{m}_w( \tnu \tr) \chi_{_M}(\tr) T_0(\tr) \frac{\omega_0(\tr)}{\tr} d\tr = 0.$$
Recall that $8T_0(r) = r^2 U(r)$, so the last condition is then written as 
\begin{align*}
0 &= \int \tm_w(\nu r) \chi_{_M}(r) r dr= \int \Big[\bar m_q(r) + Q(r) - Q(rh) + P_{1}(r) - P_{1}(rh) + N_1(rh) \Big] \chi_{_M}(r) r dr,
\end{align*}
where $\bar m_q(r) = \bar m_\e(\nu r)$ satisfies the estimate $\|\bar m_q\|_{L^2_{\omega/r}} \lesssim \frac{1}{|\ln \nu|}$, $P_1(r) = P_{1,\nu}(\nu r)$, and we write for short
$$h = \frac{\nu}{\tnu}. $$
A direct calculation yields
\begin{align*}
\int\big(Q(r) - Q(rh)\big)\chi_{_M}(r) rdr &= (1 - h^2)\int \frac{r^2}{(1 + h^2 r^2)(1 + r^2)} \chi_{_M}(r) rdr =  (1 - h^2) |\ln M|\big(1 + o_{M \to +\infty}(1) \big). 
\end{align*}
From Lemma \ref{lemm:EigAzeta} and the asymptotic behavior of $T_1$ given in \eqref{est:Tj}, we estimate for $r \leq 2M$, 
\begin{align*}
\big|P_1(r) - P_1(rh)\big| &= \left|2\beta a_1 \big( T_1(r) - T_1(rh)\Big)\right| + \Oc\left(\frac{|a_1|}{|\ln \nu|} \right)  \lesssim \nu^2 \left(|\ln h|  + \Oc\left(\frac{\ln^2 r}{r^2} \right) + \frac{1}{|\ln \nu|} \right),
\end{align*}
which gives the estimate
$$ \left|\int\Big(P_{1}(r) - P_{1}(rh)\Big)\chi_{_M}(r) rdr  \right| \lesssim \left(|\ln h| + \nu^2 |\ln \nu|^3 + \frac{1}{|\ln \nu|}\right) \lesssim |1 - h^2| + \frac{1}{|\ln \nu|}.$$
We have by Cauchy-Schwarz, 
$$\left|\int \bar m_q \chi_{_M}(r) rdr  \right| \lesssim \|\bar m_q\|_{L^2_{\omega/r}} \left(\int_0^{2M} \frac{r^3 dr}{(1 + r^2)^2} dr \right)^{\frac{1}{2}} \lesssim \frac{K\sqrt{|\ln M|}}{|\ln \nu|}.$$
As for the correction term, we have the estimate
\begin{align*}
\left|\int N_1(rh)\chi_M r dr \right| &=  \left|\int\As_0^{-1}\left(\frac{\pa_r P_1^2(rh)}{r}  + 8\beta \tnu^2 \tilde{\phi}_0(rh)\right) \chi_M r dr\right| \lesssim \tnu^4 \int_{r\leq 2M} r \ln \la r\ra dr  \lesssim \tnu^4 M^2 \ln M. 
\end{align*}
Gathering these estimates yields $|1- h^2| \lesssim \frac{K}{\sqrt{|\ln M|} |\ln \nu| }$, which implies the first estimate in \eqref{est:nutildev}. 
The estimate for the time derivative is similar by using the identity
\begin{align*}
0 &= \pa_\tau \int \tm_w(\nu r) \chi_{_M}(r) r dr= \pa_\tau \int \Big[\bar m_q(r) + Q(r) - Q(rh) + P_{1}(r) - P_{1}(rh) + N_1(rh) \Big)  \Big] \chi_{_M}(r) r dr.
\end{align*}
and the equation satisfied by $\bar m_q$, so we omit it here. This concludes the proof of Lemma \ref{lemm:nutil}.
\end{proof}

\subsection{Estimates at higher order regularity in the middle range}

We are now using standard parabolic regularity techniques to derive the $H^2$ control of $m_\e$ in the middle range $\zeta_* \leq \zeta \leq \zeta^*$. In particular, we claim the following. 
\begin{lemma}[$H^2$ control of $m_\e$ in the middle range] \label{eq:H2controlme} Let $w$ be a solution in the bootstrap regime in the sense of Definition \ref{def:bootstrap}. Then, we have the following bounds for $ \tau \in [\tau_0,\tau^*]$:
\begin{equation}\label{est:H2me}
\|m_{\e}\|_{H^2(\zeta_* \leq \zeta \leq \zeta^*)} \leq C(K, \zeta_*, \zeta^*)\frac{\nu^2}{|\ln \nu|}.
\end{equation}
\end{lemma}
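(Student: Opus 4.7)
The plan is to deduce \eqref{est:H2me} from the global $L^2_{\omega_\nu/\zeta}$ bound \eqref{bootstrap:L2omeofme} together with standard interior parabolic regularity on the fixed annulus $\Omega = \{\zeta_*/4 \leq \zeta \leq 4\zeta^*\}$. First, I would observe that on $\Omega$ the weight $\omega_\nu(\zeta)/\zeta$ is comparable, uniformly in $\nu$ small, to a fixed smooth positive function; therefore \eqref{bootstrap:L2omeofme} yields $\|m_\e\|_{L^2(\Omega)} \leq C(\zeta_*,\zeta^*) K \nu^2/|\ln \nu|$. Moreover the far-field pointwise estimate \eqref{pointwiseaway2} provides $|m_\e|+|\zeta\partial_\zeta m_\e| \lesssim \nu^2/|\ln\nu|$ for $\zeta\geq \zeta^*/2$, and the bounds \eqref{pointwisehatpsi}--\eqref{pointwisehatpsi2} make the drift coefficient $P_\nu/\zeta$ and its derivative uniformly smooth on $\Omega$ with size $o(1)$ in $\nu$.

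Next I would apply interior parabolic regularity to the equation \eqref{eq:me} in the cylinder $\Omega \times [\tau_0,\tau]$. On $\Omega$ the operator $\mathscr{A}^\zeta$ is uniformly second-order elliptic with smooth, bounded coefficients, and I would rewrite \eqref{eq:me} as
$$
\partial_\tau m_\e - \partial_\zeta^2 m_\e + b(\zeta,\tau)\,\partial_\zeta m_\e + c(\zeta,\tau)\,m_\e = F,
$$
with $b,c$ smooth and uniformly bounded on $\Omega$, and forcing
$$
F = m_E + N_0(\e^\perp) + \frac{\partial_\zeta\!\big[(2P_\nu+m_\e)\,m_\e\big]}{2\zeta}.
$$
The source $m_E$ is controlled in $L^2_{\omega_\nu/\zeta}$ by $\nu^2/|\ln \nu|$ via the decomposition \eqref{exp:mE}, the estimates \eqref{expr:mEphi2} and the modulation bounds \eqref{eq:mod0}--\eqref{eq:mod2}; the nonradial piece $N_0(\e^\perp)$ is exponentially small in $\tau$ thanks to \eqref{poissonLinfty} and \eqref{est:xstar}, hence negligible relative to $\nu^2/|\ln \nu|$.

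The main obstacle is the quasilinear term $\partial_\zeta(m_\e^2)/(2\zeta)$, which contains a derivative of $m_\e$ and is thus not a priori in $L^2$. I would handle it by a standard three-step local parabolic bootstrap using nested cutoffs $\chi_1 \prec \chi_2 \prec \chi_3$ supported in $[\zeta_*/4,4\zeta^*]$ with $\chi_1=1$ on $[\zeta_*,\zeta^*]$. Step one: multiply \eqref{eq:me} by $\chi_3^2 m_\e$ and integrate, using $\|m_\e\|_{L^\infty(\Omega)} \lesssim \nu^2/|\ln\nu|$ (which follows from \eqref{pointwiseaway2} and Sobolev embedding on each time slice, or directly from iterating the coming estimate) to absorb the nonlinearity; this produces $L^\infty_\tau L^2_\zeta \cap L^2_\tau H^1_\zeta$ control on $\chi_3 m_\e$. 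Step two: differentiate the equation in $\zeta$ and test against $\chi_2^2 \partial_\zeta m_\e$; commutator terms with the cutoff are absorbed by the $L^2$ control on the larger annulus from step one, yielding $L^\infty_\tau H^1_\zeta \cap L^2_\tau H^2_\zeta$ bounds on $\chi_2 m_\e$. Step three: differentiate once more and test against $\chi_1^2 \partial_\zeta^2 m_\e$, or equivalently use the equation to trade $\partial_\tau m_\e$ for $\partial_\zeta^2 m_\e$ plus lower order, to close the $H^2$ estimate on $[\zeta_*,\zeta^*]$. Throughout, the initial value $\|m_\e(\tau_0)\|_{H^2(\zeta_*/4\leq \zeta\leq 4\zeta^*)}$ is controlled by the initial bootstrap condition, and every contribution to the right-hand side carries at least one factor of $\nu^2/|\ln\nu|$, so the logarithmic gain propagates and we obtain $\|m_\e\|_{H^2(\zeta_*\leq \zeta\leq \zeta^*)} \leq C(K,\zeta_*,\zeta^*)\,\nu^2/|\ln\nu|$, which is \eqref{est:H2me}.
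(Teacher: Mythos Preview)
Your proposal is correct and follows essentially the same strategy as the paper: start from the $L^2_{\omega_\nu/\zeta}$ bound \eqref{bootstrap:L2omeofme}, which on the fixed annulus is equivalent to a flat $L^2$ bound, and then run a standard local parabolic regularity bootstrap via nested cutoffs and energy estimates on successive $\zeta$-derivatives of \eqref{eq:me}. The paper implements this slightly differently by working in the weighted space $L^2_{\tilde\omega}$ with $\tilde\omega=\zeta^3 e^{-\beta\zeta^2/2}$ (so that the principal part of $\mathscr{A}^\zeta$ becomes $\tilde\omega^{-1}\partial_\zeta(\tilde\omega\partial_\zeta)$) and by treating the full forcing $\mathcal F=\tfrac{\partial_\zeta[(2P_\nu+m_\e)m_\e]}{2\zeta}+m_E+N_0(\e^\perp)$ directly in $L^2$ using the bootstrap pointwise bounds, but the substance is the same.
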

\begin{proof} From the $L^2_{\frac{\hat{\omega}}{\zeta}}$ control of $m_\e$, we already have the estimate 
\begin{equation*}
\|m_{\e}\|_{L^2_{\tilde{\omega}}( \frac{\zeta_*}4 \leq \zeta \leq 4\zeta^*)} \leq C \|m_\e\|_{L^2_{\frac{\omega_\nu}{\zeta}}} \leq CK \frac{\nu^2}{|\ln \nu|} \quad \textup{for some}\quad C = C(\zeta_*, \zeta^*) > 0,
\end{equation*}
where $\tilde{\omega}(\zeta) = \zeta^3 e^{-\beta\zeta^2/2}$.

We shall rely on this estimate to derive bounds for the higher derivatives. This regularity procedure is standard, but we give it for the sake of completeness. Let us consider $k= 0, 1, 2$ and $\chi_k(\zeta)$ be a smooth cut-off function defined as 
$$0 \leq \chi_k(\zeta) \leq 1, \quad \chi_k(\zeta) = \left\{\begin{array}{cl} 1 & \quad \textup{for} \quad   (k+2)\zeta^*/4 \leq \zeta \leq (6-k)\zeta^*/4,\\
0 & \quad \textup{for}\quad \zeta \in (0,(k+2)\zeta_*/8) \cap ((6-k)\zeta^*/2, +\infty).
\end{array} \right.
$$
We also write for simplicity,
$$m_{\e, k}(\zeta, \tau) = \pa_\zeta^k m_\e(\zeta, \tau) \chi_k(\zeta) \quad \textup{for} \quad k = 0,1,2.$$
From equation \eqref{eq:me}, we see that $m_{\e, k}$ satisfy the equations
\begin{align}
\pa_\tau m_{\e,0} &= \As^\zeta m_{\e,0} + [\chi_0, \As^\zeta] m_\e + \Fc \chi_0, \label{eq:me0}\\
\pa_\tau m_{\e,1} &= \As^\zeta m_{\e,1} + [\chi_1, \As^\zeta]\pa_\zeta m_\e  + \big[\pa_\zeta, \As^\zeta\big] m_\e \chi_1 + \pa_\zeta \Fc \chi_1, \label{eq:me1} 
\end{align}
\begin{align}\label{eq:me2}
\pa_\tau m_{\e,2} = \As^\zeta m_{\e,2}  &+ [\chi_2, \As^\zeta] \pa_\zeta^2 m_\e + \big[\pa_\zeta, \As^\zeta\big] \pa_\zeta m_\e \chi_2   +  \pa_\zeta \big[\pa_\zeta, \As^\zeta\big] m_\e \chi_2 +  \pa_\zeta^2 \Fc \chi_2, \nonumber
\end{align}
where $\As^\zeta$, introduced in \eqref{def:AsAs0zeta}, is rewritten as
$$\As^\zeta  = \frac{1}{\tilde{\omega}}\pa_\zeta\Big(\tilde{\omega}\pa_\zeta \Big) + \Pc_0  \quad \textup{with}  \quad \Pc_0 = -\frac{4\nu^2}{\zeta(\nu^2 + \zeta^2)}\pa_\zeta  + U_\nu, \quad  \tilde{\omega}(\zeta) = \zeta^3e^{-\frac{\beta \zeta^2}{2}}.$$
The commutators are defined as
\begin{align}
\big[ \chi_k, \As^\zeta \big] = -2\chi_k' \pa_\zeta - \chi_k'' + \left(\frac{1}{\zeta} - \frac{Q_\nu}{\zeta} +\beta \zeta \right)\chi_k',\\
\big[\pa_\zeta, \As^\zeta\big] = - \pa_\zeta\left(\frac{1 - Q_\nu + \beta \zeta^2}{\zeta} \right)\pa_\zeta + \pa_\zeta U_\nu ,
\end{align}
and the source term is given by
\begin{align*}
\Fc =  \frac{\pa_\zeta \big[ (2P_\nu + m_\e) m_\e\big]}{2\zeta} + m_E + N_0(\e^\perp).
\end{align*}
From the second estimate in \eqref{exp:mEpr} and the bootstrap estimates in Definition \ref{def:bootstrap}, we arrive at the following bound:
\begin{equation}
\big\| \Fc \chi_0\big\|_{L^2_{\tilde{\omega}}} \leq C\frac{\nu^2}{|\ln \nu|}.
\end{equation}
Integrating \eqref{eq:me0} against $m_{\e,0}\tilde{\omega}$ yields the energy identity
\begin{align*}
\frac{1}{2}\frac{d}{d\tau}\|m_{\e,0}\|^2_{L^2_{\tilde{\omega}}}& = -\|\pa_\zeta m_{\e,0}\| ^2_{L^2_{\tilde{\omega}}}+ \frac{\beta_\tau}{2}\|m_{\e,0} \zeta\|^2_{L^2_{\tilde{\omega}}} + \int_0^{+\infty} \Big(\Pc m_{\e,0} + [\chi_0, \As^\zeta] m_\e + \Fc \chi_0 \Big) m_{\e,0} \tilde{\omega} d\zeta.
\end{align*}
Using the fact that $|\beta_\tau \zeta| \lesssim \frac{\zeta^*}{|\ln \nu|^3}$, $\left|\frac{\nu^2}{\zeta(\zeta^2  +\nu^2)} \right| + |U_\nu(\zeta)| \lesssim \nu^2$ for $\zeta_*/4 \leq \zeta \leq 4\zeta^*$ and Young inequality yields
\begin{align*}
&\frac{|\beta_\tau|}{2}\|m_{\e,0} \zeta\|^2_{L^2_{\tilde{\omega}}} + \left|\int_0^{+\infty} \Pc_0 m_{\e,0} m_{\e,0} \tilde{\omega}d\zeta \right|+  \left|\int_0^{+\infty} \Fc \chi_0 m_{\e,0} \tilde{\omega} d\zeta\right|\\
& \qquad \lesssim \nu^2 \|\pa_\zeta m_{\e,0} \zeta\|^2_{L^2_{\tilde{\omega}}} + \|m_{\e} \chi_0\|_{L^2_{\tilde{\omega}}} + \|\Fc \chi_0\|^2_{L^2_{\tilde{\omega}} }\lesssim \nu^2 \|\pa_\zeta m_{\e,0}\|^2_{L^2_{\tilde{\omega}}} +  \frac{CK\nu^4}{|\ln \nu|^2}.
\end{align*}
Using integration by parts and Cauchy Schwarz inequality with $\epsilon$, we have 
$$\left|\int_0^{+\infty} [\chi_0, \As^\zeta] m_\e m_{\e,0} \tilde{\omega} d\zeta \right| \leq \frac{1}{4}\| \pa_\zeta m_{\e, 0}\|_{L^2_{\tilde{\omega}}}^2 + C\|m_\e \chi_0\|^2_{L^2_{\tilde{\omega}}}.$$
Gathering these above estimate yields 
\begin{equation}
\frac{d}{d\tau}\|m_{\e,0}\|^2_{L^2_{\tilde{\omega}}} \leq -\|\pa_\zeta m_{\e,0}\|^2_{L^2_{\tilde{\omega}}} + \frac{CK \nu^4}{|\ln \nu|^2}. \label{id:Em0}
\end{equation}
Similarly, we integrate equations \eqref{eq:me1} and \eqref{eq:me2} against $m_{\e,1} \tilde{\omega}$ and $m_{\e,2} \tilde{\omega}$ respectively, then use integration by parts and Cauchy-Schwarz inequality with $\epsilon$ and note that $\| m_{\e,1}\|^2_{L^2_{\tilde{\omega}}} \leq \|\pa_\zeta m_{\e,0}\|^2_{L^2_{\tilde{\omega}}} + \frac{CK \nu^4}{|\ln \nu|^2}$ and $\| m_{\e,2}\|^2_{L^2_{\tilde{\omega}}} \leq \|\pa_\zeta m_{\e,1}\|^2_{L^2_{\tilde{\omega}}} + \frac{CK \nu^4}{|\ln \nu|^2}$ by definition, to derive the following energy identities 
\begin{align}
\frac{d}{d\tau}\|m_{\e,1}\|^2_{L^2_{\tilde{\omega}}} & \leq -\|\pa_\zeta m_{\e,1}\|^2_{L^2_{\tilde{\omega}}} + C_1\|\pa_\zeta m_{\e,0}\|^2_{L^2_{\tilde{\omega}}} + \frac{CK \nu^4}{|\ln \nu|^2}, \label{id:Ee1}\\
\frac{d}{d\tau}\|m_{\e,2}\|^2_{L^2_{\tilde{\omega}}} & \leq -\|\pa_\zeta m_{\e,2}\|^2_{L^2_{\tilde{\omega}}} + C_2\|\pa_\zeta m_{\e,1}\|^2_{L^2_{\tilde{\omega}}} + \frac{CK \nu^4}{|\ln \nu|^2}. \label{id:Ee2}
\end{align}
By summing up \eqref{id:Em0}, \eqref{id:Ee1} and \eqref{id:Ee2}, we arrive at
\begin{align*}
&\frac{d}{d\tau} \left(\|m_{\e,0}\|^2_{L^2_{\tilde{\omega}}} + \frac{1}{2C_1}\|m_{\e,1}\|^2_{L^2_{\tilde{\omega}}} + \frac{1}{4C_1C_2}\|m_{\e,2}\|^2_{L^2_{\tilde{\omega}}} \right)\\
& \qquad  \leq -\frac{1}{2}\|\pa_\zeta m_{\e,0}\|^2_{L^2_{\tilde{\omega}}} -\frac{1}{4C_1}\|\pa_\zeta m_{\e,1}\|^2_{L^2_{\tilde{\omega}}} -\frac{1}{4C_1C_2}\|\pa_\zeta m_{\e,2}\|^2_{L^2_{\tilde{\omega}}}  + \frac{CK \nu^4}{|\ln \nu|^2}.
\end{align*}
Using the above differential inequality and Poincar\'e inequality, we integrate in time to obtain the desire conclusion.  This ends the proof of Lemma \ref{eq:H2controlme}.
\end{proof}

\begin{lemma}[$H^2$ control of $\e^\perp$ in the middle range] \label{eq:H2controleperp} Let $w$ be a solution in the bootstrap regime in the sense of Definition \ref{def:bootstrap}. Then, we have the following bounds for $ \tau \in [\tau_0,\tau^*]$:
\begin{equation}\label{est:H2eperp}
\|\e^\perp\|_{H^2(\zeta_* \leq \zeta \leq \zeta^*)} \leq C(K, \zeta_*, \zeta^*)e^{-\kappa \tau}.
\end{equation}
\end{lemma}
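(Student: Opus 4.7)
The plan is to adapt the parabolic regularity cascade used in Lemma \ref{eq:H2controlme} to the nonradial equation \eqref{eq:epsbot}. The starting point is $L^2$ smallness on the enlarged annulus $\zeta_*/4\leq |z|\leq 4\zeta^*$: since $\omega_\nu$ is bounded above and below by positive constants (depending on $\zeta_*,\zeta^*$) on this set, the bootstrap bound \eqref{bootstrap:perpenergy} together with the equivalence \eqref{eq:equive0} yields
$$
\|\varepsilon^\perp\|_{L^2(\zeta_*/4\leq |z|\leq 4\zeta^*)} \lesssim \|\varepsilon^\perp\|_{L^2_{\omega_\nu}} \lesssim Ke^{-\kappa \tau},
$$
and one adds to this the intermediate bootstrap bound \eqref{bootstrap:perpmid}.

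First, I would introduce a nested family of smooth cutoffs $\tilde\chi_0,\tilde\chi_1,\tilde\chi_2$, each supported in $\zeta_*/8\leq |z|\leq 4\zeta^*$ with $\tilde\chi_k\equiv 1$ on an annulus still containing $[\zeta_*,\zeta^*]$, and derive from \eqref{eq:epsbot} evolution equations for $\tilde\chi_k \nabla^k \varepsilon^\perp$, $k=0,1,2$. On the support of these cutoffs one has $U_\nu\lesssim \nu^2$ and the Gaussian weight $\rho$ is bounded; the leading part of $\mathscr L^z$ is then the flat heat operator perturbed by coefficient-bounded first-order terms coming from $-\beta \Lambda$. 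Standard weighted $L^2$ energy identities of the type performed in Lemma \ref{eq:H2controlme} therefore apply, and the commutators $[\tilde \chi_k,\mathscr L^z]$ and $[\partial^\alpha,\mathscr L^z]$ generate lower-order terms supported in the transition regions, which will be absorbed using the preceding step of the cascade.

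Second, I would show that the forcing terms on the right-hand side of \eqref{eq:epsbot} contribute, after multiplication by $\nabla^k\tilde\chi_k$ and integration, at most $C(K,\zeta_*,\zeta^*) e^{-2\kappa \tau}$ to each energy identity. The modulation contribution $\frac{x^*_\tau}{\mu}\cdot \nabla(W+\varepsilon^0)$ is controlled by $|x^*_\tau/\mu|\lesssim \|\varepsilon^\perp\|_{L^2_{\omega_\nu}}$ (from \eqref{est:xtauIntro} and Lemma \ref{lemm:xstar}) times uniformly bounded derivatives of $W$ on the annulus and the previously bounded $\nabla^k\varepsilon^0$ coming from \eqref{bootstrap:mid}; the small linear term $\nabla\cdot\mathcal G(\varepsilon^\perp)$ is estimated by combining the pointwise bounds \eqref{pointwisehatpsi}--\eqref{pointwisehatpsi2} and \eqref{pointwiseaway2}--\eqref{pointwiseawaynonradial} for $\Psi_\nu + \varepsilon^0$ with the global pointwise bound \eqref{poissonLinfty} for $\nabla \Phi_{\varepsilon^\perp}$; and the quadratic $N^\perp(\varepsilon^\perp)$ is trivially of order $e^{-2\kappa \tau}$. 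Assembling a cascade of energy inequalities of the form
$$
\frac{d}{d\tau}\|\tilde\chi_k \nabla^k \varepsilon^\perp\|_{L^2}^2 \leq -\|\nabla(\tilde \chi_k \nabla^k \varepsilon^\perp)\|_{L^2}^2 + \tilde C_k \|\tilde \chi_{k-1}\nabla^{k-1}\varepsilon^\perp\|_{L^2}^2 + C(K,\zeta_*,\zeta^*) e^{-2\kappa \tau},
$$
and combining them with appropriate positive weights produces a Lyapunov functional whose time derivative controls the $H^2$ norm of $\varepsilon^\perp$ on $[\zeta_*,\zeta^*]$. Integrating in time from $\tau_0$ using the initial bootstrap conditions (the $H^2$ middle-range bound in Definition \ref{def:ini} together with \eqref{bd:initintperp}) delivers the claimed estimate.

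The main technical subtlety is the nonlocal character of $\nabla\Phi_{\varepsilon^\perp}$, which depends on $\varepsilon^\perp$ on all of $\mathbb R^2$ and not only on its restriction to the middle annulus. The only tool available globally is the pointwise bound \eqref{poissonLinfty}, which loses a polynomial factor of $\nu^{-3}$; since $\nu\sim e^{-\sqrt{\tau/2}}$ this factor satisfies $e^{-\kappa\tau}/\nu^3 \sim e^{-\kappa\tau+3\sqrt{\tau/2}}\leq e^{-\kappa\tau/2}$ for $\tau\geq \tau_0$ large, so choosing $\kappa$ small in accord with Remark \ref{re:orderconstants} and enlarging $\tau_0$ absorbs the loss into the final constant $C(K,\zeta_*,\zeta^*)$.
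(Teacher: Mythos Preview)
Your overall strategy matches the paper's: start from the $L^2$ bound coming from $\|\e^\perp\|_0$ and run the same parabolic regularity cascade as in Lemma~\ref{eq:H2controlme}. The paper's proof is in fact nothing more than this one sentence, so your elaboration of the cascade is reasonable and on target.

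There is, however, a genuine gap in your treatment of the nonlocal term $(\Psi_\nu+\e^0)\nabla\Phi_{\e^\perp}$ in $\Gc(\e^\perp)$. You invoke the global pointwise bound \eqref{poissonLinfty}, which costs a factor $\nu^{-3}$, and then assert that $e^{-\kappa\tau}/\nu^3\leq e^{-\kappa\tau/2}$ ``absorbs the loss into the final constant $C(K,\zeta_*,\zeta^*)$''. This is not correct: after multiplying by $|\Psi_\nu+\e^0|\lesssim\nu^2|\ln\nu|$ on the annulus, squaring, and integrating the resulting energy inequality, you end up with $\|\e^\perp\|_{H^2}\lesssim |\ln\nu|\,e^{-\kappa\tau}/\nu$, and the factor $|\ln\nu|/\nu\sim e^{\sqrt{\tau/2}}$ is $\tau$-dependent and cannot be absorbed into any constant. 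The lemma claims $Ce^{-\kappa\tau}$, so this does not close.

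The fix is to estimate $\nabla\Phi_{\e^\perp}$ on the middle annulus more carefully, by splitting the source as in the proof of Lemma~\ref{lemm:ebot0}. For the inner piece $|z'|\leq\zeta_*/2$, Cauchy--Schwarz against $\omega_\nu^{-1}$ gives $\int_{|z'|\leq\zeta_*/2}|\e^\perp|\,dz'\lesssim \|\e^\perp\|_{L^2_{\omega_\nu}}/\nu\lesssim Ke^{-\kappa\tau}/\nu$, and since $|z-z'|\geq\zeta_*/2$ on the annulus this yields $|\nabla\Phi_{\e^\perp,\text{inner}}|\lesssim Ke^{-\kappa\tau}/\nu$. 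For the outer piece $|z'|>\zeta_*/2$, the pointwise bound \eqref{pointwiseawaynonradial} gives an integrable decay and hence $|\nabla\Phi_{\e^\perp,\text{outer}}|\lesssim C(K',K'')e^{-\kappa\tau}$. Now the $\nu^2|\ln\nu|$ prefactor from $\Psi_\nu+\e^0$ kills the single $\nu^{-1}$ in the inner contribution, and the forcing squared is genuinely $\lesssim C(K,K',K'',\zeta_*,\zeta^*)e^{-2\kappa\tau}$, which closes the cascade at the claimed rate.
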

\begin{proof} From the bootstrap bound \eqref{bootstrap:perpenergy} of $\e^\perp$ and the equivalence of the norm \eqref{eq:equive0}, we already have the estimate 
\begin{equation*}
\|\e^\perp\|_{L^2( \frac{\zeta_*}4 \leq \zeta \leq 4\zeta^*)} \leq C \|\e^\perp\|_0 \leq CK e^{-\kappa \tau} \quad \textup{for some}\quad C = C(\zeta_*, \zeta^*) > 0.
\end{equation*}
from which and a standard parabolic regularity argument as for the proof of Lemma \ref{eq:H2controlme} yields the conclusion of Lemma \ref{eq:H2controleperp}.
\end{proof}

\subsection{Higher order regularity energy estimates in the inner zone}
This section is devoted to the control of $\|\tm_w\|_{\inn}$. The basic idea is that the scaling term $\beta \zeta \pa_\zeta \tm_w$ is regarded as a small perturbation of $\As^\zeta$ in the blowup zone $\zeta \leq \zeta_*$ for some fixed small $\zeta_* \ll 1$, namely that the dynamics resembles $\pa_\tau \tm_w = \As_0^\zeta \tm_w$. Since $T_0$ spans the kernel of $\As_0$, we need to rule it out by imposing the local orthogonality condition \eqref{eq:orthmwT0}. It allows us to use the key ingredient, the coercivity of $\As_0$ (see Lemma \ref{lemm:coerA0} below), for establishing the monotonicity formula of $\|\tm_w\|_{\inn}$.

From the equation \eqref{eq:pamblowup} and  the decomposition \eqref{dec:vtr}, $\tm_v$ satisfies the equation 
\begin{equation}\label{eq:mvtil}
\pa_s \tilde{m}_v =  \As_0 \tilde{m}_v + \tilde \eta \; \tr \pa_{\tr} \tm_v + \frac{\pa_{\tr} \big[(\tilde P_1 - \tilde{N}_1) \tilde{m}_v\big]}{\tr} + \frac{\pa_{\tr}\big(\tilde{m}_v^2\big)}{2\tr} + \Ec_1(\tr, s),
\end{equation}
where $\As_0$ is defined as in \eqref{def:AsAs0} but here acting on the variable $\tr$ instead of $r$ (the reader should bear in mind that $\tm_v$ is a function of the $\tr$ variable), and we write for simplicity
$$\tilde\eta = \frac{\tnu_s}{\tnu} - \beta \tnu^2 = \Oc(\nu^2),$$
and the error is given by 
\begin{align}
\Ec_1 &= \big(-\pa_s + \As_0\big) \big(\tilde P_1 - \tilde{N}_1\big) +  \tilde\eta \; \tr \pa_{\tr} \Big(Q + \tilde P_1 - \tilde{N}_1\Big) + \frac{\pa_{\tr} (\tilde{P}_1 - \tilde{N}_1)^2}{2\tr} + \tilde N_0(v^\perp).\label{def:errorEc1}
\end{align}
For a fixed small constant $\zeta_*$ with $0 < \zeta_* \ll 1$, we introduce
\begin{equation}
\tilde{m}_v^*(\tr, s) = \chi_{\frac{\zeta_*}{\tnu}}(\tr) \tilde{m}_v(\tr, s),
\end{equation}
where $\chi_{\frac{\zeta_*}{\tnu}}$ is defined as in \eqref{def:chiM}. We write from \eqref{eq:mvtil} the equation for  $\tilde{m}_v^*$,
\begin{equation} \label{eq:mwtilstar}
\pa_s \tilde{m}_v^* =  \As_0 \tilde{m}_v^* +  \tilde\eta \; \tr \pa_{\tr} \tm_v^*   + \frac{\pa_{\tr} \big((\tilde P_1 - \tilde{N}_1) \tilde{m}^*_v\big)}{\tr} + \frac{\pa_{\tr}\big(\tilde{m}^*_v \tilde{m}_v\big)}{2\tr} + \chi_{\frac{\zeta_*}{\tnu}}\Ec_1 + T(\tm_v),\nonumber
\end{equation}
where 
\begin{align} \label{def:Tmv}
T(\tm_v) &=  - \Big(\pa_{\tr}^2  + \pa_s \big)\chi_{\frac{\zeta_*}{\tnu}} \tilde{m}_v  + \pa_{\tr}\chi_{\frac{\zeta_*}{\tnu}} \left[ -2\pa_{\tr} + \frac{1}{\tr} -  \frac{\big[2Q + 2\tilde P_1 - 2\tilde{N}_1 + \tm_v\big]}{2\tr}  + \tilde \eta \; \tr   \right] \tilde{m}_v.
\end{align}
Thanks to the orthogonality condition \eqref{eq:orthmwT0}, we can use the coercivity estimate (see Lemma \ref{lemm:coerA0})
\begin{equation}\label{est:coer1}
-\int \tm_v^* \As_0 \tm_v^* \frac{\omega_0}{\tr}d\tr \geq \delta_1 \int \left(|\pa_{\tr}\tm_{v}^*|^2  + \frac{|\tm_{v}^*|^2}{1 + \tr^2}\right) \frac{\omega_0}{\tr} d\tr ,
\end{equation}
and (see Lemma \ref{lemm:coerA02})
\begin{equation}\label{est:coer2}
\int |\As_0 \tm_{v}^*|^2 \frac{\omega_0}{\tr}d\tr \geq \delta_0 \int \left(|\pa_{\tr}^2\tm_{v}^*|^2 +  \frac{|\pa_{\tr}\tm_{v}^*|^2}{ \langle \tr \rangle^2} + \frac{|\tm_v^*|^2}{ \langle \tr \rangle^4(1 + \ln^2 \la \tr \ra )} \right) \frac{\omega_0}{\tr} d\tr.
\end{equation}
Since the support of $\tm_v^*$ is $\tr \leq \frac{2\zeta_*}{\tnu}$, we have by \eqref{est:coer2} the control
\begin{equation}
\int |\As_0 \tm_{v}^*|^2 \frac{\omega_0}{\tr}d\tr \geq \frac{C\tnu^2}{\zeta_*^2}\left(-\int \tm_v^* \As_0 \tm_v^* \frac{\omega_0}{\tr}d\tr \right).
\end{equation}
Thanks to these coercivity estimates, we are able to establish the following monotonicity formula to control the inner norm. 

\begin{lemma}[Inner energy estimate] \label{lemm:Innercontrol} We have for $\tau_0$ large enough
\begin{equation}\label{eq:2ndEne}
\frac{d}{d\tau}\| \As_0 \tm_v^*\|_{L^2_\frac{\omega_0}{\tr}}^2 \leq C \left(\| \As_0 \tm_v^*\|_{L^2_\frac{\omega_0}{\tr}}^2  + \frac{1}{\zeta_*^2}\| m_\e\|^2_{H^2 (\zeta_* \leq \zeta \leq \zeta^*)} + \frac{\nu^4}{\zeta_*^2|\ln \nu|^2}\right),
\end{equation}
and 
\begin{equation}\label{eq:1stEne}
\frac{1}{2}\frac{d}{d\tau} \left[- \tnu^2\int \tm_v^* \As_0 \tm_v^* \frac{\omega_0}{\tr} \right]   \leq -\frac{1}{2} \|\As_0 \tm_v^*\|_{L^2_\frac{\omega_0}{\tr}} +   \frac{C'}{\zeta_*^2}\| m_\e\|^2_{H^2 (\zeta_* \leq \zeta \leq \zeta^*)} + \frac{C'\nu^4}{ \zeta_*^2|\ln \nu|^2},
\end{equation}
where $C$ and $C'$ are independent from the constants $\kappa,K, K', K''$ and $\zeta_*$. 
\end{lemma}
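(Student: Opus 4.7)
The plan is to establish both inequalities by energy estimates on the equation \eqref{eq:mwtilstar} satisfied by the localised remainder $\tilde m_v^*$, using the coercivity estimates \eqref{est:coer1}, \eqref{est:coer2}, the pointwise bounds \eqref{pointwisemerefined}, \eqref{pointwisehatpsi}, \eqref{pointwisephi0}, and the smallness of $\tilde\eta = \tnu_s/\tnu - \beta\tnu^2$ coming from the modulation equations and Lemma \ref{lemm:nutil}. Throughout I will work in the $s$-variable (and convert at the end via $d\tau/ds = \tnu^2$) so that the time derivatives fall naturally on $\tilde m_v^*$, and I will exploit the self-adjointness of $\As_0$ in $L^2_{\omega_0/\tr}$ to freely integrate by parts.

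For \eqref{eq:1stEne}, I will differentiate the quantity $-\tnu^2\int \tilde m_v^* \As_0 \tilde m_v^* \,\omega_0/\tr\, d\tr$ in $\tau$. Using $\tnu^2\, \partial_\tau = \partial_s$, the product rule produces a leading term $-2\int \As_0(\partial_s\tilde m_v^*)\,\tilde m_v^* \,\omega_0/\tr$ which, after substituting the equation for $\partial_s \tilde m_v^*$ and using self-adjointness, yields the principal coercive contribution $-2\|\As_0 \tilde m_v^*\|^2_{L^2_{\omega_0/\tr}}$ (the time-dependence of the $\tnu^2$ prefactor contributes a lower-order error since $|\tnu_s/\tnu|\lesssim \tnu^2/|\ln\nu|$). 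The remaining contributions fall into five groups, and I will bound each one separately. First, the scaling term $\tilde\eta\,\tr\partial_{\tr}\tilde m_v^*$ is absorbed using Cauchy--Schwarz together with \eqref{est:coer2} (using $\tilde\eta =\Oc(\nu^2)$). Second, the lower-order linear contribution from $(\tilde P_1 - \tilde N_1)$ is handled using the pointwise bounds \eqref{pointwisehatpsi}, \eqref{pointwisephi0} and the decay of $T_j$ for large arguments; on the support of $\tilde m_v^*$ (where $\tilde r\leq 2\zeta_*/\tnu$) this yields a gain of $\zeta_*^2$ or of $1/|\ln\nu|$. Third, the nonlinear term $\partial_{\tr}(\tilde m_v^* \tilde m_v)/(2\tr)$ is controlled via the pointwise estimate \eqref{pointwisemerefined}, which after integration by parts gives a factor $\nu^2/|\ln\nu|$ and can be absorbed into $\|\As_0\tilde m_v^*\|^2$ for $\nu$ small. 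Fourth, the source $\chi\Ec_1$ must be decomposed using \eqref{def:errorEc1}: the key point is that $\tilde N_1$ was defined precisely so that $(-\partial_s + \As_0)\tilde P_1 + 8\beta\tnu^2\tilde\phi_0$ plus the quadratic piece of $\tilde P_1$ and the scaling correction combine up to an $\Oc(\nu^4/|\ln\nu|)$ remainder in $L^2_{\omega_0/\tr}$ on the support of $\chi$, and the nonradial $\tilde N_0(v^\perp)$ piece is exponentially small by \eqref{poissonLinfty}. Fifth, the transition term $T(\tilde m_v)$ defined in \eqref{def:Tmv} is supported in the annulus $\zeta_*\leq \tnu\tr\leq 2\zeta_*$, so after a change of variables it is bounded by $C(\zeta_*)\|m_\e\|_{H^2(\zeta_*\leq \zeta\leq \zeta^*)}/\zeta_*^2$. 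Assembling these bounds with Young's inequality yields \eqref{eq:1stEne}.

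For \eqref{eq:2ndEne}, I compute $\frac{1}{2}\frac{d}{ds}\|\As_0\tilde m_v^*\|^2_{L^2_{\omega_0/\tr}} = \int \As_0(\partial_s\tilde m_v^*)\,\As_0\tilde m_v^*\,\omega_0/\tr$ and substitute the equation \eqref{eq:mwtilstar}. The principal term $\int (\As_0^2\tilde m_v^*)(\As_0 \tilde m_v^*)\omega_0/\tr$ does not have a definite sign, but by integration by parts it is bounded by $\|\As_0\tilde m_v^*\|^2_{L^2_{\omega_0/\tr}}$ times a generic constant (plus commutator terms with the weight $\omega_0/\tr$ that are controlled by \eqref{est:coer2}); the remaining contributions of the scaling, small linear, nonlinear, source, and transition terms are treated exactly as above after first applying $\As_0$ and using Cauchy--Schwarz, with the transition contribution again bounded by $\|m_\e\|_{H^2(\zeta_*\leq \zeta\leq \zeta^*)}/\zeta_*^2$. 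After converting back to $\tau$-time via $d\tau = ds/\tnu^2$, this produces \eqref{eq:2ndEne} with a generic positive constant multiplying $\|\As_0\tilde m_v^*\|^2$.

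The main obstacle will be the source term $\chi\Ec_1$: one must verify that the construction of $\tilde N_1$ in \eqref{def:N1til} indeed cancels the $\Oc(\nu^2)$ pieces of the error generated by $\tilde P_1$, leaving only the advertised $\Oc(\nu^4/|\ln\nu|)$ remainder in $L^2_{\omega_0/\tr}$, and that the slow decay of $T_1$ at infinity (which makes $\tilde P_1$ grow logarithmically) does not ruin this bound on the support of the cutoff. A secondary subtlety lies in the commutators between $\As_0$, the cutoff $\chi_{\zeta_*/\tnu}$, and the weight $\omega_0/\tr$ when computing $\frac{d}{ds}\|\As_0\tilde m_v^*\|^2$: the fact that $\As_0$ contains the singular factor $\pa_{\tr}(Q\cdot)/\tr$ means these commutators must be carefully compared with the $H^2$-type norm controlled by \eqref{est:coer2} rather than discarded.
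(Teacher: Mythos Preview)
Your overall strategy matches the paper's proof closely: the same equation \eqref{eq:mwtilstar}, the same decomposition into scaling, small linear, nonlinear, source, and transition pieces, the same use of the coercivity estimates \eqref{est:coer1}--\eqref{est:coer2}, and the same recognition that $\tilde N_1$ was built to cancel the $O(\nu^2)$ portion of the source so that $\|\As_0(\chi_{\zeta_*/\tnu}\Ec_1)\|_{L^2_{\omega_0/\tr}}\lesssim \nu^4/|\ln\nu|$.

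There is, however, a genuine gap in your treatment of \eqref{eq:2ndEne}. You assert that the principal term
\[
\int \big(\As_0^2 \tilde m_v^*\big)\big(\As_0 \tilde m_v^*\big)\,\frac{\omega_0}{\tr}\,d\tr
\]
``does not have a definite sign'' and propose to bound it by a generic constant times $\|\As_0\tilde m_v^*\|^2_{L^2_{\omega_0/\tr}}$ via integration by parts. Integration by parts (cf.\ \eqref{coerciviteA0:inter4}) indeed gives such a bound in $s$-time, but you have the time conversion backwards: since $ds/d\tau = \tnu^{-2}$, one has $\frac{d}{d\tau}=\tnu^{-2}\frac{d}{ds}$, so an $O(1)\,\|\As_0\tilde m_v^*\|^2$ contribution in $s$-time becomes $O(\tnu^{-2})\,\|\As_0\tilde m_v^*\|^2$ in $\tau$-time, and \eqref{eq:2ndEne} (hence the Gronwall argument in Subsection \ref{sec:ProofofProExist}) would fail.

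The paper closes this precisely because the principal term \emph{does} have a sign: writing $g=\As_0\tilde m_v^*$, the positive function $T_0$ lies in the kernel of the self-adjoint operator $\As_0$, so a standard Sturm--Liouville argument gives $-\As_0\geq 0$ on $L^2_{\omega_0/\tr}$ and hence
\[
\int (\As_0 g)\,g\,\frac{\omega_0}{\tr}\,d\tr \ \leq\ 0
\]
for every $g$. The paper therefore drops this term outright in \eqref{id:Ener1}, leaving only contributions that already carry an extra factor $\nu^2$ (coming from $\tilde\eta=O(\nu^2)$, from the smallness of $\tilde P_1-\tilde N_1$ and $\tilde m_v$ via \eqref{pointwisemerefined}, or from the localisation of $T(\tilde m_v)$), and these survive the division by $\tnu^2$ to yield \eqref{eq:2ndEne}. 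Once you make this single observation, the rest of your plan is essentially the paper's proof.
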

\begin{proof}

\noindent \textbf{Step 1} \emph{Energy estimate for the second order derivative}. We first prove \eqref{eq:2ndEne}. We integrate equation \eqref{eq:mvtil} against $\As_0^2 \tm_v^*$ in $L^2_{\frac{\omega_0}{\tr}}$,  and using the self-adjointness of $\As_0$ in $L^2_{\frac{\omega_0}{\tr}}$ and the fact that $\int f\As_0 f \frac{\omega_0}{\tr}d\tr  \leq 0$ (note that $T_0$ is in the kernel of $\As_0$ which is strictly positive on $(0, +\infty)$, so a standard Sturm-Liouville argument yields the non-negativity of $-\As_0$) to write the energy identity 
\begin{align}
&\frac{1}{2}\frac{d}{ds} \int |\As_0 \tm_v^*|^2 \frac{\omega_0}{\tr}d\tr \leq \int \As_0\big[\chi_{\frac{\zeta_*}{\tnu}} \Ec_1   + \Fc(\tm_v) + T(\tm_v)\big]\As_0 \tm_v^* \frac{\omega_0}{\tr} d\tr, \label{id:Ener1}
\end{align}
where 
$$\Fc(\tm_v) =  \tilde \eta \; \tr \pa_{\tr} \tm_v^*  + \frac{\pa_{\tr} \big((\tilde P_1 - \tilde{N}_1) \tilde{m}^*_v\big)}{\tr} + \frac{\pa_{\tr}\big(\tilde{m}^*_v \tilde{m}_v\big)}{2\tr}.$$

\noindent \underline{The error term:} We claim the following estimate for the error term
\begin{equation}\label{est:Ec1}
\int \left|\As_0 \big(  \Ec_1(\tr,s) \chi_{\frac{\zeta_*}{\tnu}}(\tr)\big) \right|^2\frac{\omega_0(\tr)}{\tr}d\tr\leq C \frac{\nu^8}{|\ln \nu|^2},
\end{equation}
for some positive constant $C$ which is independent from $K$, $K'$ and $K''$. Since the proof of \eqref{est:Ec1} is technical and a bit lengthy, we postpone it to the end of this section and continue the proof of the lemma. We have by Cauchy-Schwarz inequality, and 
\begin{align*}
\left|\int \As_0\big(\chi_{\frac{\zeta_*}{\tnu}} \Ec_1\big) \As_0 \tm_v^* \frac{\omega_0}{\tr} d\tr  \right| \lesssim \frac{\nu^4}{|\ln \nu|} \|\As_0 \tm_v^* \|_{L^2_\frac{\omega_0}{\tr}} \lesssim \nu^2\|\As_0 \tm_v^* \|_{L^2_\frac{\omega_0}{\tr}}^2 + \frac{\nu^6}{|\ln \nu|^2}. 
\end{align*}

\noindent \underline{The scaling term:} From the definition of $\As_0$, we compute 
\begin{equation}\label{eq:commutatorA0rdr}
\As_0 (\tr \pa_{\tr} f) = \tr \pa_{\tr} \As_0 f + [\As_0, \tr \pa_{\tr}] f, 
 \end{equation}
 with 
 $$
 \big[\As_0, r\pa_r\big] = 2\As_0  -r U\pa_r - \frac{2(r^2 - 1)}{r^2 + 1} U.
 $$
We then write by integration by parts and the coercivity estimate \eqref{est:coer2}, 
\begin{align*}
&\left|\tilde\eta\int \As_0\big(\tr \pa_{\tr} \tm_v^*\big) \As_0 \tm_v^* \frac{\omega_0}{\tr}d\tr\right|= \left|\tilde\eta \int \Big[\tr \pa_{\tr} \As_0 \tm_v^* + 2\As_0 \tm_v^* - rU \pa_{\tr} \tm_v^* - \frac{2(\tr^2 - 1)}{ \langle \tr \rangle ^2 } U \tm_v^* \Big]\As_0 \tm_v^* \frac{\omega_0}{\tr}d\tr \right|\\
& \lesssim \nu^2 \left[ \int |\As_0 \tm_v^*|^2 \pa_{\tr}(\omega_0) d\tr +  \|\As_0 \tm_v^* \|_{L^2_\frac{\omega_0}{\tr}} + \int \left(\frac{|\pa_{\tr} \tm_v^*|^2}{\langle \tr \rangle^2} + \frac{|\tm_v^*|^2}{\langle \tr \rangle^8}\right) \frac{\omega_0}{\tr}d\tr \right] \lesssim \nu^2\|\As_0 \tm_v^* \|_{L^2_\frac{\omega_0}{\tr}}.
\end{align*}

\noindent \underline{The small linear and nonlinear term:} By definition, we have
\begin{equation}\label{def:commAsfg}
\As_0 (fg) = g\As_0 f + [\As_0, g]f \quad \textup{with} \quad [\As_0, g] =  2\pa_{\tr}g \pa_{\tr} - \Big[1 - Q \Big]\frac{\pa_{\tr}g}{r}, 
\end{equation}
and
\begin{equation}\label{def:commAsdr}
\As_0(\pa_{\tr} f) = \pa_{\tr} \As_0 f + [\As_0, \pa_{\tr}] f, 
\end{equation}
with 
$$ [\As_0, \pa_{\tr}] = \left(\frac{3\tr^4 - 6\tr^2 - 1}{8\tr^2}\right)U \pa_{\tr} + \frac{4\tr}{(1  +\tr^2)}U.$$
This gives the formula
\begin{align*}
\As_0 \Big(\frac{\pa_{\tr} \big(\tm_v^* F \big)}{\tr}\Big) &= \As_0 \left( \frac{F}{\tr} \pa_{\tr} \tm_v^* + \frac{\pa_{\tr} F}{\tr} \tm_v^*  \right) = \As_0 (\pa_{\tr} \tm_v^*) \frac{F}{\tr} + \big[\As_0, \frac{F}{\tr} \big] \pa_{\tr} \tm_v^* + \As_0 \tm_v^* \frac{\pa_{\tr} F}{\tr} + \big[ \As_0, \frac{\pa_{\tr} F}{\tr} \big] \tm_v^*\nonumber \\
& = \Big[ \pa_{\tr} \As_0 \tm_v^* + \big[\As_0, \pa_{\tr}\big] \tm_v^* \Big] \frac{F}{\tr}+ \big[\As_0, \frac{F}{\tr} \big] \pa_{\tr} \tm_v^* + \As_0 \tm_v^* \frac{\pa_{\tr} F}{\tr} + \big[ \As_0, \frac{\pa_{\tr} F}{\tr} \big] \tm_v^*. \label{eq:comFmv}
\end{align*}
We then write by integration by parts
\begin{align*}
\int \As_0 \Big(\frac{\pa_{\tr} \big(\tm_v^* F \big)}{\tr}\Big) \As_0 \tm_v^* \frac{\omega_0}{\tr}d\tr &= \int |\As_0 \tm_v^*|^2 \left[-\frac{1}{2} \frac{\tr}{\omega_0} \pa_{\tr}\left(\frac{F \omega_0}{\tr^2} \right) + \frac{\pa_{\tr} F}{\tr}\right] \frac{\omega_0}{\tr}d\tr \\
& \quad + \int \left[\big[\As_0, \pa_{\tr}\big] \tm_v^* \frac{F}{\tr}+ \big[\As_0, \frac{F}{\tr} \big] \pa_{\tr} \tm_v^* + \big[ \As_0, \frac{\pa_{\tr} F}{\tr} \big] \tm_v^*\right] \As_0 \tm_v^* \frac{\omega_0}{\tr}d\tr.
\end{align*}
By the definition of the commutators, we observe 
$$\left|-\frac{1}{2} \frac{\tr}{\omega_0} \pa_{\tr}\left(\frac{F \omega_0}{\tr^2} \right) + \frac{\pa_{\tr} F}{\tr}\right| \lesssim \frac{|\pa_{\tr} F|}{\tr} + \frac{|F|}{\tr^2},$$
and 
\begin{align*}
&\left|\big[\As_0, \pa_{\tr}\big] \tm_v^* \frac{F}{\tr}+ \big[\As_0, \frac{F}{\tr} \big] \pa_{\tr} \tm_v^* + \big[ \As_0, \frac{\pa_{\tr} F}{\tr} \big] \tm_v^*\right|\\
&  \quad \lesssim \sum_{i = 0}^1 \frac{|\pa_{\tr}^i F|}{\tr^{2 - i}} |\pa_{\tr}^2 \tm_v^*| +\sum_{i = 0}^2 \frac{|\pa_{\tr}^i F|}{\tr^{3 - i}}  |\pa_{\tr} \tm_v^*|  + \left(\sum_{i = 1}^2 \frac{|\pa_{\tr}^i F|}{\tr^{4 - i}}  + \frac{|F|}{\tr \la\tr\ra^5}\right) | \tm_v^*|.
\end{align*}
Now, consider $F = 2\tilde{P}_1 - 2\tilde{N}_1 + \tm_v$, we estimate from the definition of $\tilde{P}_1$ and $\tilde{N}_1$ and the pointwise bound \eqref{pointwisemerefined} for $k = 0, 1$ and $\tr \leq \frac{2\zeta_*}{\tnu}$,
\begin{align*}
\frac{|\pa_{\tr} F|}{\tr} + \frac{|F|}{\tr^2} \lesssim \nu^2, \quad \big|(\tr \pa_{\tr})^k F\big| \lesssim \nu^2 |\ln \la \tr \ra |.
\end{align*}
Hence, we have by Cauchy-Schwarz and \eqref{est:coer2}, 
\begin{align*}
\left|\int \As_0 \Big(\frac{\pa_{\tr} \big(\tm_v^* F \big)}{\tr}\Big) \As_0 \tm_v^* \frac{\omega_0}{\tr}d\tr \right| \lesssim \nu^2\|\As_0 \tm_v^*\|^2_{L^2_\frac{\omega_0}{\tr}}.  
\end{align*}

\noindent \underline{The cut-off term:} From the expression \eqref{def:Tmv} of $T(\tm_v)$, we compute
\begin{align*}
\As_0 T(\tm_v) = -2\As_0\big( \pa_{\tr}\chi_{\frac{\zeta_*}{\tnu}} \pa_{\tr} \tm_v \big) + \As_0 \big(F \tm_v \big)&= -2\pa_{\tr}\chi_{\frac{\zeta_*}{\tnu}} \pa_{\tr} \As_0 \tm_v + F \As_0 \tm_v -2\pa_{\tr}\chi_{\frac{\zeta_*}{\tnu}} \big[\As_0, \pa_{\tr}\big] \tm_v \\
 & \qquad + \big[\As_0, \pa_{\tr}\chi_{\frac{\zeta_*}{\tnu}}\big] \pa_{\tr} \tm_v + \big[\As_0, F \big] \tm_v,
\end{align*}
where 
$$F =  - \Big(\pa_{\tr}^2  + \pa_s \big)\chi_{\frac{\zeta_*}{\tnu}}  + \pa_{\tr}\chi_{\frac{\zeta_*}{\tnu}} \left[ \frac{1}{\tr} -  \frac{\big[2Q + 2\tilde P_1 - 2\tilde{N}_1 + \tm_v\big]}{2\tr}  + \tilde \eta \; \tr   \right].$$
By definition, we have 
\begin{equation}\label{est:chisr}
\left|(\tr\pa_{\tr} + \tr^2 \pa_{\tr}^2)\chi_{\frac{\zeta_*}{\tnu}} \right| \lesssim \mathbf{1}_{\{\frac{\zeta_*}{\tnu} \leq \tr \leq \frac{2\zeta_*}{\tnu}\}}, \quad \left|\pa_s\chi_{\frac{\zeta_*}{\tnu}} \right| \lesssim \frac{\nu^2}{|\ln \nu|} \mathbf{1}_{\{\frac{\zeta_*}{\tnu} \leq \tr \leq \frac{2\zeta_*}{\tnu}\}},
\end{equation}
which gives the estimate 
\begin{equation}\label{est:Fcut}
|\pa_{\tr}^k F| \lesssim \left(\frac{1}{\tr^{2 + k}} + \left|\frac{\tnu_s}{\tnu} \right| \frac{1}{\tr^k}\right)\mathbf{1}_{\{\frac{\zeta_*}{\tnu} \leq \tr \leq \frac{2\zeta_*}{\tnu}\}} \lesssim \frac{1}{\tr^{2 + k}}\mathbf{1}_{\{\frac{\zeta_*}{\tnu} \leq \tr \leq \frac{2\zeta_*}{\tnu}\}}.
\end{equation}
Since the cut-off term is localized in the zone $\tr \sim \frac{\zeta_*}{\tnu}$, we can use the mid-range control \eqref{bootstrap:in} to obtain a better estimate. More precisely, we use the decomposition \ref{decomposition}, i.e.
$$\tm_v = Q(\tr) - Q(r) + \tilde{P}_1(\tr) - P_1(r) - \tilde{N}_1(\tr) - P_2(r) +  m_q.$$
A direct computation and $\nu \sim \tnu$ ( see Lemma \ref{lemm:nutil}) yield 
\begin{align*}
k = 0,1,2, \quad \big|(\tr \pa_{\tr})^k\big[Q(\tr) - Q(r)\big]\big| \lesssim \frac{1}{|\ln \nu|  \langle \tr \rangle^2 }.
\end{align*}
By the definition of $P_1$ and $\tilde{P}_1$ and $P_2$, we have the rough bound for $\frac{\zeta_*}{\tnu} \leq \tr \leq \frac{2\zeta_*}{\tnu}$,
\begin{align*}
k = 0,1,2, \quad \big|(\tr \pa_{\tr})^k\big[\tilde{P}_1(\tr) - P_1(r) - \tilde{N}_1(\tr) - P_2(r)\big]\big| \lesssim \frac{\nu^2}{|\ln \nu|}.
\end{align*}
Combining with  the mid-range estimate \eqref{bootstrap:mid} yields
\begin{equation}\label{est:As0mvmidrange}
\int |\As_0 \tm_v|^2 \mathbf{1}_{\{\frac{\zeta_*}{\tnu} \leq \tr \leq \frac{2\zeta_*}{\tnu}\}} \frac{\omega_0}{\tr}d\tr \lesssim \frac{\nu^4}{|\ln \nu|^2} + \|m_\e\|_{H^2 (\zeta_* \leq \zeta \leq \zeta^*)}^2.
\end{equation}
Using this estimate and integration by parts, we estimate 
\begin{align*}
&\left|\int \left(-2\pa_{\tr}\chi_{\frac{\zeta_*}{\tnu}} \pa_{\tr} \As_0 \tm_v + F \As_0 \tm_v \right) \As_0 \tm_v^* \frac{\omega_0}{\tr}d\tr \right|= \left| \int |\As_0 \tm_v|^2 \left[\frac{\tr}{\omega_0} \pa_{\tr} \Big(\pa_{\tr}\chi_{\frac{\zeta_*}{\tnu}} \frac{\omega_0}{\tr} \Big) +   F \right]  \frac{\omega_0}{\tr}d\tr \right|\\
& \qquad  \lesssim \int |\As_0 \tm_v|^2 \left(\frac{1}{\tr^2} + \left| \frac{\tnu_s}{\tnu}\right| \right) \mathbf{1}_{\{\frac{\zeta_*}{\tnu} \leq \tr \leq \frac{2\zeta_*}{\tnu}\}} \frac{\omega_0}{\tr}d\tr   \lesssim \frac{\nu^6}{ \zeta_*^2 |\ln \nu|^2} + \frac{\nu^2}{\zeta_*^2}\|m_\e\|_{H^2 (\zeta_* \leq \zeta \leq \zeta^*)}^2.
\end{align*}
We also have by the commutator formulas \eqref{def:commAsfg} and \eqref{def:commAsdr},
\begin{align*}
&\left|-2\pa_{\tr}\chi_{\frac{\zeta_*}{\tnu}} \big[\As_0, \pa_{\tr}\big] \tm_v + \big[\As_0, \pa_{\tr}\chi_{\frac{\zeta_*}{\tnu}}\big] \pa_{\tr} \tm_v + \big[\As_0, F \big] \tm_v \right| \lesssim  \sum_{i = 0}^2 \frac{|\pa_{\tr}^i \tm_v|}{\tr^{4 - i}} \mathbf{1}_{\{\frac{\zeta_*}{\tnu} \leq \tr \leq \frac{2\zeta_*}{\tnu}\}}. 
\end{align*}
from which and Cauchy-Schwarz and \eqref{est:As0mvmidrange}, we obtain 
\begin{align*}
&\left|\int \left( -2\pa_{\tr}\chi_{\frac{\zeta_*}{\tnu}} \big[\As_0, \pa_{\tr}\big] \tm_v + \big[\As_0, \pa_{\tr}\chi_{\frac{\zeta_*}{\tnu}}\big] \pa_{\tr} \tm_v + \big[\As_0, F \big] \tm_v  \right) \As_0 \tm_v \frac{\omega_0}{\tr}d\tr\right|\\
& \lesssim  \left(\int |\As_0 \tm_v|^2 \mathbf{1}_{\{\frac{\zeta_*}{\tnu} \leq \tr \leq \frac{2\zeta_*}{\tnu}\}} \frac{\omega_0}{\tr}d\tr \right)^\frac{1}{2} \left( \int \frac{1}{\tr^4} \Big[ |\pa_{\tr}^2 \tm_v|^2 + \frac{|\pa_{\tr} \tm_v|^2}{\tr^2} + \frac{|\tm_v|^2}{\tr^4}  \Big] \frac{\omega_0}{\tr} d\tr \right)^\frac{1}{2}\\
&\quad  \lesssim \frac{\nu^2}{\zeta_*^2} \left( \frac{\nu^4}{|\ln \nu|^2} +  \| m_\e\|_{H^2(\zeta_* \leq \zeta \leq \zeta^*)} \right).
\end{align*}
Injecting the above estimates into \eqref{id:Ener1} and using $\frac{ds}{d\tau} = \frac{1}{\tnu^2}$ and $\nu \sim \tnu$ yields \eqref{eq:2ndEne}.\\

\noindent \textbf{Step 2} \emph{Energy estimate for the first order derivative}. We now prove \eqref{eq:1stEne}. In order to handle the term $\|\As_0 \tm_v^*\|_{L^2_\frac{\omega_0}{\tr}}^2$ appearing in \eqref{eq:2ndEne}, we use the second energy identity by integrating equation \eqref{eq:mvtil} against $-\tnu^2 \As_0 \tm_v^*$ in $L^2_{\frac{\omega_0}{\tr}}$ to write
\begin{align*}
\frac{1}{2}\frac{d}{ds}\left[ -\tnu^2\int \tm_v^* \As_0 \tm_v^* \frac{\omega_0}{\tr} d\tr\right] & = -\tnu^2\int |\As_0 \tm_v^*|^2 \frac{\omega_0}{\tr}d\tr  - \tnu^2 \int \As_0 \big(\chi_{\frac{\zeta_*}{\tnu}} \Ec_1 \big) \tm_v^* \frac{\omega_0}{\tr}d\tr \\
& \quad - \tnu^2\int\Big[\Fc(\tm_v) + T(\tm_v) +\frac{\tnu_s}{\tnu} \tm_v^*  \Big]\As_0 \tm_v^* \frac{\omega_0}{\tr} d\tr.
\end{align*}
Since the support of $\tm_v^*$ is in the interval $0 \leq \tr \leq \frac{2\zeta_*}{\tnu}$, we have by Hardy inequality \eqref{coerciviteA0:hardy} and \eqref{est:coer2},
\begin{equation*}
\int \frac{|\tm_v^*|^2}{1 + \tr^2} \frac{\omega_0}{\tr}d\tr \lesssim \int|\pa_{\tr}\tm_v^*|^2 \frac{\omega_0}{\tr}d\tr \lesssim \frac{\zeta_*^2}{\tnu^2}\int \frac{|\pa_{\tr}\tm_v^*|^2}{1 + \tr^2} \frac{\omega_0}{\tr}d\tr \lesssim \frac{\zeta_*^2}{\tnu^2}\int |\As_0 \tm_v^*|^2 \frac{\omega_0}{\tr}d\tr,
\end{equation*}
from which and Cauchy-Schwarz and \eqref{est:Ec1}, we estimate
\begin{align*}
& \left|\tnu^2\int\As_0\big( \chi_{\frac{\zeta_*}{\tnu}} \Ec_1 \big)\tm_v^* \frac{\omega_0}{\tr} d\tr    \right| \lesssim \tnu^4\int \frac{|\tm_v^*|^2}{1 + \tr^2} \frac{\omega_0}{\tr} d\tr + \frac{\zeta_*^2}{\tnu^2} \int |\As_0 \big(\chi_{\frac{\zeta_*}{\tnu}} \Ec_1 \big)|^2\frac{\omega_0}{\tr} d\tr\\
& \quad  \lesssim  \zeta_*^2 \tnu^2  \int \frac{|\pa_{\tr} \tm_v^*|^2}{1 + \tr^2} \frac{\omega_0}{\tr} d\tr + \frac{\zeta_*^2}{\tnu^2} \frac{\nu^8}{|\ln \nu|^2}  \lesssim \zeta_*^2\tnu^2 \|\As_0 \tm_v^*\|^2_{L^2_\frac{\omega_0}{\tr}} + \frac{\zeta_*^2 \nu^6}{|\ln \nu|^2}.
\end{align*}
As for the scaling term, we simply estimate by using Cauchy-Schwarz and \eqref{est:coer2}, 
\begin{align*}
\left|\tnu^2 \tilde \eta \int \tr \pa_{\tr} \tm_v^* \As_0 \tm_v^*  \frac{\omega_0}{\tr} d\tr \right| &\lesssim \tnu^2 |\tilde \eta|\|\As_0 \tm_v^* \|^2_{L^2_\frac{\omega_0}{\tr}} \left( \int |\tr \pa_{\tr} \tm_v^*|^2  \frac{\omega_0}{\tr} d\tr \right)^{\frac{1}{2}} \lesssim \tnu^2 \zeta_*^2 \|\As_0 \tm_v^* \|^2_{L^2_\frac{\omega_0}{\tr}}. 
\end{align*}
By the definition of $\tilde{P}_1$, $\tilde{N}_1$ and \eqref{pointwisemerefined}, we have the rough bound for $\tr \leq \frac{2\zeta_*}{\tnu}$ and $k = 0, 1$,
$$ \big|(\tr \pa_{\tr})^k ((2\tilde{P}_1 - 2\tilde{N}_1 + \tm_v) \big| \lesssim \nu^2 |\ln \la \tr \ra|,$$
from which and \eqref{est:coer2}, we obtain
\begin{align*}
&\left|\tnu^2\int \frac{\pa_{\tr} \big[(2\tilde{P}_1 - 2\tilde{N}_1 + \tm_v) \tm_v^* \big]}{2\tr} \As_0 \tm_v^* \frac{\omega_0}{\tr} d\tr  \right| \lesssim \tnu^4 |\ln \nu|^C \|\As_0 \tm_v^* \|^2_{L^2_\frac{\omega_0}{\tr}}.
\end{align*}
To estimate for the cut-off term, we use \eqref{est:chisr} and \eqref{est:Fcut} to bound 
$$|T(\tm_v)| = \big|-2\pa_{\tr} \chi_{\frac{\zeta_*}{\tnu}} \pa_{\tr} \tm_v  + F \tm_v\big| \lesssim \left[\frac{1}{\tr} |\pa_{\tr} \tm_v| + \left(\frac{1}{\tr^2} + \frac{\tnu_s}{\tnu}\right)|\tm_v|  \right] \mathbf{1}_{\{ \frac{\zeta_*}{\tnu} \leq \tr \leq \frac{2\zeta_*}{\tnu}\}}.$$
We then use Cauchy-Schwarz and \eqref{est:As0mvmidrange} and $|\tnu_s/\tnu| \lesssim \nu^2/|\ln \nu|$ to estimate 
\begin{align*}
& \left|\tnu^2\int T(\tm_v) \As_0\tm_v^* \frac{\omega_0}{\tr}d\tr \right| \lesssim \tnu^2 \left( \int |\As_0  \tm_v|^2 \mathbf{1}_{\{ \frac{\zeta_8}{\tnu} \leq \tr\leq \frac{2 \zeta_*}{\tnu} \}} \frac{\omega_0}{\tr} d\tr \right)^\frac{1}{2} \left(\int \frac{1}{\tr^2} \Big[|\pa_{\tr} \tm_v|^2 + \frac{|\tm_v|^2}{\tr^2} \Big] \mathbf{1}_{\{ \frac{\zeta_*}{\tnu} \leq \tr \leq \frac{2\zeta_*}{\tnu}\}} \frac{\omega_0}{\tr}d\tr   \right)^\frac{1}{2} \\
&\lesssim \frac{\nu^2}{\zeta_*^2} \left( \frac{\nu^4}{|\ln \nu|^2} + \| m_\e\|_{H^2(\zeta_* \leq \zeta \leq \zeta^*)} \right). 
\end{align*}
Since $|\tnu_s/\tnu| \lesssim \nu^2/|\ln \nu|^2$, we use the bootstrap estimate \eqref{bootstrap:in} to bound the last term 
$$\left|\tnu \tnu_s \int \tm_v^* \As_0 \tm_v^* \frac{\omega_0}{\tr}d\tr  \right| \lesssim \frac{\nu^6}{|\ln \nu|^2}.$$
Summing up these estimates and taking $\zeta_*$ small enough and using $\frac{ds}{d\tau} = \frac{1}{\tnu^2}$ and $\tnu \sim \nu$ yield the desired formula \eqref{eq:1stEne}. This concludes  the proof of Lemma \ref{lemm:Innercontrol} assuming \eqref{est:Ec1}.\\

\noindent \textbf{Step 3} \emph{Bound for the error}. We finally prove \eqref{est:Ec1}. We recall from Proposition \ref{prop:SpecRad} the identity
$$(\As_0 - \beta \tnu^2 \tr \pa_{\tr}) \phi_{n}(\tr) = 2\beta \tnu^2\big(1 -n +\tilde{\alpha}_n(\tnu)\big) \phi_{n}(\tr). $$
We then rewrite the error term by using the relation $\tr\pa_{\tr} Q=8T_0 \sim 8\phi_0$ and $ds/d\tau=1/\tnu^2$, 
\begin{align*}
\Ec_1(\tr,s) & = \Big[-a_{1,\tau}+ 2\beta \tilde{\alpha}_1(\tnu) a_1 \Big]\phi_1+\Big[a_{1,\tau}-2\beta (1 + \tilde{\alpha}_0(\tnu))a_1+8\eta\Big] \phi_0\\
& \quad -  \frac{8 \tnu_s}{\tnu}\tilde \phi_0 +\frac{a_1}{\tnu^2}\left(\frac{\tnu_s}{\tnu} \tr\pa_{\tr} (\phi_1-\phi_0)-\pa_s(\phi_1-\phi_0)\right) + \tilde{\Ec}_1(\tr,s),
\end{align*}
where  we used the definition of $\tilde{N}_1$, i.e, $\As_0 \tilde{N}_1 = \frac{\pa_{\tr}\tilde{P}_1^2}{2\tr} + 8\tnu^2\tilde \phi_0(\tr)$ with $\tilde{\phi}_0 = \phi_0 - T_0$ to write 
$$\tilde{\Ec}_1(\tr,s) =  \big(\pa_s - \eta \tr \pa_{\tr}\big) \tilde{N}_1  + \frac{\pa_{\tr} (2\tilde{P}_1 \tilde N_1 - \tilde N_1^2 )}{2\tr}+ \tilde N_0(v^\perp).$$
We recall from Lemma \ref{lemm:Mod} the modulation equations
\begin{align*}
&a_{1, \tau} - 2\beta a_1 \tilde{\alpha}_1(\nu) = \Oc\left(\frac{\nu^2}{|\ln \nu|^2} \right),\qquad \left(a_{1,\tau}- 2\beta(1 + \tilde\alpha_0(\nu))a_1 + 8\nu^2(\frac{\nu_\tau}{\nu}-\beta)\right) = \Oc\left(\frac{\nu^2}{|\ln \nu|^2} \right),
\end{align*} 
from which and from the estimate \eqref{est:nutildev}, we obtain 
\begin{align*}
&|a_{1,\tau}-2\beta a_1\tilde \alpha_1(\tnu)| = \Oc\left(\frac{\nu^2}{|\ln \nu|^2} \right),\qquad \left| \left(a_{1,\tau}+ 2\beta(1 + \tilde\alpha_0(\tnu))a_1 - 8\tnu^2(\frac{\tnu_\tau}{\tnu}-\beta)\right) \right|= \Oc\left(\frac{\nu^2}{|\ln \nu|}\right). 
\end{align*}
From \eqref{pointwisephi0} and \eqref{est:poinwisephi1}, the identities $\As_0 T_0=0$ and $\As_0 T_1=-T_0$ and $\tnu \sim \nu$, one has the bound for $r\leq \zeta^*/\tnu$,
$$
|\As_0 \phi_0|+|\As_0 \phi_1|\lesssim \tnu^2 \frac{\tr^2}{ \langle \tr\rangle ^4},
$$
and hence, we have the estimate
\begin{align*}
&\int_0^{\frac{\zeta^*}{\tnu}} |\As_0 \left(\Big[-a_{1,\tau}+ 2\beta \tilde{\alpha}_1(\tnu) a_1 \Big]\phi_1+\Big[a_{1,\tau}-2\beta (1 + \tilde{\alpha}_0(\tnu))a_1+8\eta\Big] \phi_0\right) |^2 \frac{\omega_0}{\tr}d\tr  \lesssim \frac{\nu^8}{|\ln \nu|^4} \int_0^{\frac{\zeta^*}{\tnu}} \frac{1}{1+\tr}d\tr\lesssim \frac{\nu^8}{|\ln \nu|^3}.
\end{align*}
From \eqref{pointwisephi0} and $|\tnu_s/ \tnu| \lesssim \nu^2/|\ln \nu|$, we have the estimate
$$
\left|\frac{\tnu_s}{\tnu} \right|^2\int_0^{\frac{\zeta^*}{\tnu}} |\As_0 \tilde \phi_0|^2\frac{\omega_0}{\tr}d\tr\lesssim \frac{\nu^{8}}{|\ln \nu|^2}.
$$
By writing 
$$
\pa_s (\phi_1-\phi_0)=\frac{\tnu_s}{\tnu}(\tnu \pa_{\tnu} (\phi_1-\phi_0)+\frac{\beta_s}{\beta}(\beta\pa_\beta (\phi_1-\phi_0))
$$
and using \eqref{est:poinwisephi02} and \eqref{est:poinwisephi1},  we have the bound
$$
|\As_0 (\pa_s (\phi_1-\phi_0))|\lesssim \frac{\nu^4}{|\ln \nu|}\frac{\tr^2}{\langle \tr \rangle^4}.
$$
We also have from \eqref{pointwisephi0} and \eqref{est:poinwisephi1} and $|\tnu_s/\tnu|\lesssim 1/|\ln \nu|$ the bound
$$
\left| \frac{\tnu_s}{\tnu}\As_0 (\tr\pa_{\tr} (\phi_1-\phi_0))\right|\lesssim \frac{\nu^2}{|\ln \nu|} \frac{\tr^2}{\langle \tr \rangle^4}.
$$
Hence, there holds the estimate
\begin{align*}
& \int_0^{\frac{\zeta^*}{\tnu}} |\As_0 \frac{a_1}{\tnu^2}\left(\frac{\tnu_s}{\tnu}\tr\pa_{\tr} (\phi_1-\phi_0)-\pa_s(\phi_1-\phi_0)\right)  |^2\frac{\omega_0}{\tr}d\tr \lesssim \frac{\nu^{8}}{|\ln \nu|^2}\int_0^{\frac{\zeta^*}{\tnu}} \left(\frac{1}{(1+\tr)^3}+\frac{1}{|\ln \nu|^2(1+\tr)}\right)\lesssim \frac{\nu^8}{|\ln \nu|^2}.
\end{align*}
As for the term $\tilde{\Ec}_1$, we use the definitions of $\tilde{N}_1$ and $\tilde P_1$ and $\tnu \sim \nu$ and $|\tnu_s/\tnu| \lesssim \nu^2 /|\ln \nu|$ to bound for $\tr \leq \frac{2\zeta_*}{\tnu}$,
$$|\As_0 \tilde{\Ec}_1(\tr, s)| \lesssim \tnu^4 \left(\left|\frac{\tnu_s}{\tnu} \right| + \tnu^2  \right)\frac{\ln \la \tr \ra}{\la \tr \ra^2} \lesssim \nu^6 \frac{\ln \la \tr \ra}{\la \tr \ra^2},$$
which gives 
$$\int_0^{\frac{2\zeta_*}{\tnu}} |\As_0 \tilde{\Ec}_1|^2 \frac{\omega_0}{\tr}d\tr \lesssim \nu^{12} |\ln \nu|^3 \lesssim \frac{\nu^8}{|\ln \nu|^2}.$$
The collection of the above estimates concludes the proof of \eqref{est:Ec1}.
\end{proof}

\subsection{Nonradial energy estimates}
We begin with estimating the parameter $x^*$. We claim the following. 
\begin{lemma}[Estimate on $x^*$] \label{lemm:xstar} We have for $\tau_0$ large enough
\begin{equation} \label{est:xstarrefined}
\left|\frac{x^*_\tau}{\mu} \right|^2 \leq \frac{C}{\nu^2} \int_{|y| \leq \frac{2\zeta_*}{\nu}} \frac{|\nabla q^\perp|^2}{U} dy  + Ce^{-2\kappa\tau},
\end{equation}
where $C > 0$ is independent from the bootstrap constants $\kappa,N,K, K', K''$. In particular, we have the estimate
\begin{equation}\label{est:xstar}
\left|\frac{x^*_\tau}{\mu} \right| \lesssim \frac{e^{-\kappa \tau}}{\nu^2}.
\end{equation}
\end{lemma}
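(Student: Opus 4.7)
To prove the lemma, I would start by differentiating the orthogonality conditions from \eqref{orthogonality}, which in $z$-variables read $\int_{\Rb^2} \e^\perp \pa_i U_\nu \sqrt{\rho_0}\, dz = 0$ for $i=1,2$. Using the evolution equation \eqref{eq:epsbot} for $\pa_\tau \e^\perp$, this yields a linear system
$$
\sum_{j=1}^2 A_{ij}\,\frac{(x^*_\tau)_j}{\mu} = \Rc_i, \quad i=1,2,
$$
where $A_{ij} = \int \pa_j (W+\e^0)\,\pa_i U_\nu \sqrt{\rho_0}\, dz$ (plus a small perturbation from the $x^*_\tau/\mu$ dependence inside $N^\perp$) and $\Rc_i$ collects the contributions from $\Ls^z \e^\perp$, from $-\nabla\cdot\Gc(\e^\perp)$, from the genuine quadratic part of $N^\perp(\e^\perp)$ and from the time derivative of the test function $\pa_i U_\nu \sqrt{\rho_0}$.

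The first step is matrix invertibility. By the scaling $\pa_i U_\nu(z) = \nu^{-3}(\pa_i U)(z/\nu)$, isotropy of $U$, and the approximation $\sqrt{\rho_0}\approx 1$ on the support of $U_\nu$, one gets
$$
\int \pa_j U_\nu\, \pa_i U_\nu \sqrt{\rho_0}\, dz = c_0\, \nu^{-4}\delta_{ij}\bigl(1+o_{\nu\to 0}(1)\bigr), \qquad c_0 = \tfrac12 \|\nabla U\|^2_{L^2(\Rb^2)}.
$$
The contributions of $\pa_j\Psi_{1,\nu}$, $\pa_j\Psi_{2,\nu}$ and $\pa_j\e^0$ to $A_{ij}$ are perturbative of relative size $\Oc(1/|\ln \nu|)$ thanks to the bootstrap bounds \eqref{bootstrap:param2}, the pointwise bounds \eqref{pointwisehatpsi}--\eqref{pointwisehatpsi2}, and \eqref{pointwiseaway2}. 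Hence $A$ is invertible with $\|A^{-1}\|\lesssim \nu^4$.

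The second step is to bound each piece of $\Rc_i$ by $\Oc\bigl(\nu^{-4}\|\nabla \e^\perp\|_{L^2_{1/U_\nu}(|z|\leq 2\zeta_*)} + \nu^{-4}e^{-\kappa\tau}\bigr)$. The dominant piece is the linear one: writing $\Ls^z = \Ls_0^z -\beta\Lambda$ with $\Ls_0^z = \nabla\cdot(U_\nu \nabla \Ms^z\cdot)$, integration by parts gives
$$
\int \Ls_0^z \e^\perp\, \pa_i U_\nu \sqrt{\rho_0}\, dz = -\int U_\nu\, \nabla \Ms^z \e^\perp \cdot \nabla(\pa_i U_\nu \sqrt{\rho_0})\, dz,
$$
and Cauchy--Schwarz produces a factor $\|\sqrt{U_\nu}\nabla(\pa_i U_\nu \sqrt{\rho_0})\|_{L^2}\lesssim \nu^{-4}$ (by scaling) and a factor $\|\sqrt{U_\nu}\nabla\Ms^z\e^\perp\|_{L^2(|z|\leq 2\zeta_*)}$ which, writing $\Ms^z\e^\perp = \e^\perp/U_\nu - \Phi_{\e^\perp}$, is bounded by $\|\nabla\e^\perp\|_{L^2_{1/U_\nu}(|z|\leq 2\zeta_*)} + C\|\e^\perp\|_0$ (the nonlocal $\nabla\Phi_{\e^\perp}$ contribution being controlled via \eqref{poissonLinfty} and the fast decay of $U_\nu$). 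Outside $|z|\leq 2\zeta_*$ the integrand is exponentially small by \eqref{pointwiseawaynonradial} and the fast decay of $U_\nu$. The scaling piece $-\beta\Lambda\e^\perp$, the small linear term $-\nabla\cdot\Gc(\e^\perp)$, the genuine quadratic part of $N^\perp(\e^\perp)$, and the time derivative $\pa_\tau[\pa_i U_\nu \sqrt{\rho_0}]$ each contribute at most $\Oc(\nu^{-4}e^{-\kappa\tau})$, using \eqref{bootstrap:perpenergy}, \eqref{pointwisehatpsi}--\eqref{pointwisehatpsi2}, \eqref{poissonLinfty}, \eqref{pointwiseawaynonradial}, together with the bootstrap control on $|\nu_\tau/\nu|$ and $|\beta_\tau|$.

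Inverting the system and squaring yields \eqref{est:xstarrefined} after the change of variables $y=z/\nu$, which converts $\int_{|z|\leq 2\zeta_*}|\nabla\e^\perp|^2/U_\nu\, dz$ into $\nu^{-2}\int_{|y|\leq 2\zeta_*/\nu}|\nabla q^\perp|^2/U\, dy$. The cruder bound \eqref{est:xstar} then follows from \eqref{est:xstarrefined} combined with the bootstrap estimate \eqref{bootstrap:inperp} and the coercivity \eqref{bd:coercivite H1}, which bound the localized $L^2_{1/U}$-norm of $\nabla q^\perp$ by $\|q^\perp\|_\inn \lesssim K'' e^{-\kappa\tau}/\nu$. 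The main difficulty in this argument is handling the nonlocal term $\nabla\Phi_{\e^\perp}$ hidden inside $\Ms^z\e^\perp$ during the integration by parts: it must be simultaneously localized using the fast decay of $U_\nu$ and bounded via \eqref{poissonLinfty}, which is precisely why the cutoff radius $2\zeta_*$ appears in the final statement.
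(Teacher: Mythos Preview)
Your proposal is correct and follows essentially the same route as the paper: project \eqref{eq:epsbot} onto $\pa_i U_\nu$ (against a Gaussian weight), invert the resulting $2\times 2$ matrix of size $\sim \nu^{-4}$, and bound the right-hand side by writing $\Ls_0^z = \nabla\cdot(U_\nu\nabla\Ms^z\cdot)$, integrating by parts, and using Cauchy--Schwarz together with the pointwise bounds \eqref{pointwiseawaynonradial}, \eqref{poissonLinfty}. The only cosmetic difference is that you test against $\pa_i U_\nu\sqrt{\rho_0}$ (consistent with the orthogonality \eqref{orthogonality}) and therefore pick up the harmless $\pa_\tau$-of-test-function term, whereas the paper tests against $\pa_i U_\nu\rho_0$; since both weights equal $1+O(\nu^2|y|^2)$ where $\pa_i U$ concentrates, the two computations are interchangeable.
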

\begin{proof} We multiply equation \eqref{eq:epsbot} by $\pa_{z_i} U_\nu \rho_0$ with $i =1,2$ and $\rho_0 = e^{-\beta |z|^2/2}$, and integrate over $\Rb^2$ and use the orthogonality condition \eqref{orthogonality} to write
\begin{align*}
&\frac{x^*_{i,\tau}}{\mu} \int\pa_{z_i} \big( U_\nu + \Psi_\nu \big) \pa_{z_i} U_\nu \rho_0 dz   = \int \left[- \Ls^z \e^\perp +\nabla \cdot \Big(\Gc(\e)   - \frac{x^*_\tau}{\mu} \e^0\Big) - N^\perp(\e^\perp)\right] \pa_{z_i} U_\nu \rho_0 dz.
\end{align*}
Using the definition of $U_\nu$ and $\Psi_\nu$, we have 
$$\int|\pa_{z_i} U_\nu|^2 e^{-\beta\frac{|z|^2}{4}} dz = \frac{c_0}{\nu^4}\int \frac{r^3}{(1 + r^2)^6}e^{-\beta \nu^2 \frac{r^2}4} dr \sim \frac{c_0'}{\nu^4},$$
and 
$$\left|\int \pa_{z_i}\Psi_\nu \pa_{z_i} U_\nu e^{-\beta\frac{|z|^2}{4}} dz\right| = \Oc(\nu^{-2}).$$
We write the linear operator as $\Ls_0 q^\perp = \nabla \cdot (U \nabla \Ms q^\perp)$, then by integration by parts and Cauchy-Schwarz and the decay of $U$ we estimate 
\begin{align*}
&\left|\int \Ls^z \e^\perp \pa_{z_i} U_\nu \rho_0 dz \right| = \left|\frac{1}{\nu^5}\int \nabla \cdot \big(U \nabla \Ms q^\perp \big) \pa_{y_i} U \rho dy - \frac{\beta}{\nu} \int \nabla \cdot(y q^\perp) \pa_{y_i}U \rho dy \right|\\
& =\left| \frac{1}{\nu^5}\int U \nabla \Ms q^\perp \cdot \nabla (\pa_{y_i} U \rho) dy + \frac{\beta}{\nu}\int q^\perp y \cdot \nabla (\pa_{y_i} U \rho) dy  \right| \\
& \quad \lesssim \frac{1}{\nu^5} \left(\int_{|y| \leq \frac{\zeta_*}{2\nu}} U |\nabla \Ms q^\perp|^2 \rho dy   \right)^\frac{1}{2} \left(\int_{|y| \leq \frac{\zeta_*}{2\nu}} U |\nabla (\pa_{y_i} U \rho)|^2 \rho^{-1}dy \right)^\frac{1}{2} \\
& \qquad + \frac{1}{\nu^6}\left(\nu^2 \int_{|y| \geq \frac{\zeta_*}{2\nu}} \frac{|q^\perp|^2}{U} \rho dy \right)^\frac{1}{2} \left(\int_{|y| \geq \frac{\zeta_*}{2\nu}} \left|\Ms \Big[\nabla\cdot (U \nabla (\pa_{y_i} U \rho)) \Big]\right|^2 \rho^{-1} dy \right)^\frac{1}{2}  + e^{-\kappa \tau} \\
& \qquad  + \frac{1}{\nu^2}\left(\nu^2 \int \frac{|q^\perp|^2}{U} \rho dy \right)^\frac{1}{2} \left(\int \big|y \cdot \nabla (\pa_{y_i} U \rho)\big|^2 \rho^{-1} dy \right)^\frac{1}{2} \\
& \qquad \lesssim \frac{1}{\nu^5}\left(\int_{|y| \leq \frac{2\zeta_*}{\nu}} U|\nabla \Ms q^\perp|^2 \rho \right)^\frac{1}{2} + \frac{1}{\nu^6}\|\e^\perp\|_0 \, \nu^{10} + \frac{1}{\nu^2}\|\e^\perp\|_0  \lesssim \frac{1}{\nu^5}\left(\int_{|y| \leq \frac{2\zeta_*}{\nu}} \frac{|\nabla q^\perp|^2}{U} \rho \right)^\frac{1}{2} + \frac{e^{-\kappa \tau}}{\nu^2}.
\end{align*}
We write by the definition $\e^0 = \frac{\pa_\zeta m_\e}{\zeta}$ and use \eqref{pointwiseaway2},  \eqref{pointwiseawaynonradial} and the decay of $U$, 
\begin{align*}
&\left|\frac{x_{i,\tau}^*}{\mu} \int \pa_{z_i} \e \pa_{z_i}U_\nu \rho_0 dz \right| = \left|\frac{x_{i,\tau}^*}{\mu} \int \left(\frac{\pa_\zeta m_\e}{\zeta} + \e^\perp \right) \pa_{z_i}\big( \pa_{z_i}U_\nu \rho_0\big) dz \right| \\
& \quad \lesssim \left|\frac{x_{i,\tau}^*}{\mu} \right| \frac{1}{\nu^4} \left(\int |\pa_\zeta m_\e|^2 \frac{1 }{\zeta U_\nu} d\zeta \right)^\frac{1}{2} +  \left|\frac{x_{i,\tau}^*}{\mu} \right|\frac{e^{-\kappa\tau}}{\nu^3}\\
& \quad \quad \lesssim \left|\frac{x_{i,\tau}^*}{\mu} \right| \left(\frac{1}{\nu^4} \|\tilde{m}_v\|_\inn + \frac{1}{\nu^2|\ln \nu|}\int_{\zeta \geq \frac{\zeta_*}{2}} \zeta^C \rho_0 d\zeta + \frac{e^{-\kappa\tau}}{\nu^3} \right) \lesssim \left|\frac{x_{i,\tau}^*}{\mu} \right| \frac{1}{\nu^2 |\ln \nu|}. 
\end{align*}
We have by Cauchy-Schwarz,
\begin{align*}
&\left |\int \nabla \cdot \Gc(\e^\perp) \pa_{z_i}U_\nu \rho_0 dz\right| = \frac{1}{\nu^5} \left|\int \nabla \cdot \Gc(q^\perp) \pa_{y_i}U \rho dy \right|\\
  & \lesssim \left(\int \Big| \nabla \cdot \Gc(q^\perp)\Big|^2 \rho dy \right)^\frac{1}{2}\left(\int |\pa_{y_i}U|^2 \rho dy  \right)^\frac{1}{2} \lesssim \frac{1}{\nu^5}\left(\int \Big|\nabla \cdot \Gc(q^\perp) \Big|^2 \rho_0 dz \right)^\frac{1}{2},
\end{align*}
where we recall from the definition of $\Gc(\e^\perp)$, 
$$\nabla \cdot \Gc(q^\perp) = \nabla q^\perp \cdot \nabla \Phi_{\Phi + q^0}  - 2q^\perp (\Psi + q^0) + \nabla (\Psi + q^0)\cdot \nabla \Phi_{q^\perp},$$
and $\Psi$ is given by
$$\Psi = \frac{a_1}{\nu^2} \frac{\pa_r}{r}(\phi_1 - \phi_0) + \sum_{n = 2}^N \frac{a_n}{\nu^2} \frac{\pa_r}{r}\phi_n.$$
By the definition of $\phi_n$, we have the estimate 
\begin{equation}\label{est:Psir}
|\Psi(r)| + |r \pa_r \Psi(r)| \lesssim \nu^2 \la r\ra^{-2} + \frac{\la r \ra ^{-4}}{|\ln \nu|^2}\mathbf{1}_{\{r \leq \frac{\zeta_*}{2\nu}\}} + \frac{\nu^2}{|\ln \nu|^2} \la \nu r \ra^C \mathbf{1}_{\{r \geq \frac{\zeta_*}{2\nu}\}},
\end{equation}
from which and \eqref{pointwiseawaynonradial} and \eqref{poissonLinfty}, we obtain the bound 
\begin{equation*}
\int \Big|\nabla \cdot \Gc(q^\perp)\Big|^2 \rho dy \lesssim \int_{|y| \leq \frac{2\zeta_*}{\nu}} \left(\frac{|\nabla q^\perp|^2}{U} + \frac{|q^\perp|^2}{\sqrt{U}} \right) dy + e^{-2\kappa \tau}.
\end{equation*}
By using \eqref{bootstrap:perpenergy}, \eqref{pointwiseawaynonradial} and \eqref{poissonLinfty}, we estimate the contribution from $N^\perp(\e^\perp)$, 
\begin{equation*}
\int \Big|N^\perp (\e^\perp)\Big|^2 \rho_0 dz \lesssim e^{-2\kappa \tau}.
\end{equation*}
The collection of the above estimates yields \eqref{est:xstarrefined} and concludes the proof of Lemma \ref{lemm:xstar}.
\end{proof}

We are now in the position to derive the energy estimate for the nonradial part. Let us begin with $\|\e^\perp\|_0$ by using the coercivity given in Proposition \ref{pr:coercivitenonradial}.  
\begin{lemma}[Control of $\|\e^\perp\|_0$]\label{lemm:ebot0}  We have
\begin{equation} \label{ene:ebot0}
\frac{1}{2}\frac{d}{d\tau} \| \e^\perp\|_0^2 \leq - \delta' \| \e^\perp\|_0^2   + Ce^{-2\kappa \tau},
\end{equation}
where $\delta', C > 0$ are independent from the bootstrap constants $K, K', K''$. 
\end{lemma}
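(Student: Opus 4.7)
The strategy is to differentiate $\|\e^\perp\|_0^2$ in time, use the evolution equation \eqref{eq:epsbot} for $\e^\perp$, and extract coercivity from the linear part via Proposition \ref{pr:coercivitenonradial}. To avoid parameter-dependent weights, it will be convenient to pass to blow-up variables where $\|\e^\perp\|_0^2 = \langle q^\perp, q^\perp\rangle_\ast$ (up to the rescaling factors), so that the scalar product \eqref{def:quadform2} is fixed in time modulo the slow drift of $b = \beta\nu^2$. The orthogonality conditions \eqref{orthogonality}, namely $\int q^\perp\partial_i U\sqrt{\rho}\,dy = 0$ for $i=1,2$, together with the fact that $q^\perp$ has no radial part, put us exactly in the hypotheses of Proposition \ref{pr:coercivitenonradial}.

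First I would compute
\begin{equation*}
\tfrac{1}{2}\tfrac{d}{d\tau}\|\e^\perp\|_0^2 = \langle \pa_\tau \e^\perp, \e^\perp\rangle_{\ast,z} + \text{(parameter drift terms)},
\end{equation*}
where $\langle\cdot,\cdot\rangle_{\ast,z}$ denotes the natural rescaling of \eqref{def:quadform2} to the $z$-variable. The drift terms come from $\pa_\tau(\sqrt{\rho_0})$ and $\pa_\tau(\nu^2)$ through $\beta_\tau, \nu_\tau/\nu$; since $|\beta_\tau|\lesssim 1/|\ln\nu|^3$ and $|\nu_\tau/\nu|\lesssim 1/|\ln\nu|$ by \eqref{eq:mod0}--\eqref{eq:mod1}, these are harmless, contributing at most $o(1)\|\e^\perp\|_0^2$ after splitting $\{\zeta\lesssim 1\}$ vs.\ $\{\zeta\gtrsim 1\}$ and using the far-field bound \eqref{pointwiseawaynonradial}. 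Substituting \eqref{eq:epsbot}, the principal contribution is $\langle \Ls^z\e^\perp, \e^\perp\rangle_{\ast,z}$, which I would compare to $\langle \tilde{\Ls}^z\e^\perp, \e^\perp\rangle_{\ast,z}$ via the relation $(\Ls-\tilde{\Ls})u = \nabla U\cdot\nabla(\tilde\Phi_u - \Phi_u)$. Thanks to the fast Gaussian decay of $\nabla U$ in the rescaled operator and the pointwise bound \eqref{poissonLinfty} on $\Phi_{\e^\perp}-\tilde\Phi_{\e^\perp}$ (controlled by the outer norm plus \eqref{bootstrap:perpenergy}), this difference is exponentially small and absorbable. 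Proposition \ref{pr:coercivitenonradial} then yields
\begin{equation*}
\langle \tilde{\Ls}^z\e^\perp, \e^\perp\rangle_{\ast,z} \leq -\delta_0'\,\nu^2\|\nabla q^\perp\|_{L^2_\omega}^2,
\end{equation*}
since the orthogonality conditions make the correction terms in \eqref{est:coerLnuy} vanish. By the equivalence \eqref{eq:equive0} and a Poincar\'e-type argument in the weight $\omega$, this dissipation dominates $\delta'\|\e^\perp\|_0^2$.

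For the remaining forcing terms I would estimate them one by one, using the quantitative control already collected in the bootstrap. The contribution of $\nabla\cdot\Gc(\e^\perp)$ is bounded via the pointwise bounds \eqref{pointwisehatpsi}, \eqref{pointwisehatpsi2}, \eqref{pointwiseaway2}, \eqref{poissonLinfty}, and is either absorbable into the coercivity (terms involving $\nabla q^\perp$) or exponentially small; the higher-order nonlinear terms $N^\perp(\e^\perp)$ are quadratic and controlled by $\|\e^\perp\|_0^2$ times a small factor after using the pointwise estimates \eqref{pointwiseawaynonradial}--\eqref{poissonLinfty}. The translation contributions $\frac{x^*_\tau}{\mu}\cdot\nabla W$ and $\frac{x^*_\tau}{\mu}\cdot\nabla\e^0$ are estimated using \eqref{est:xstarrefined}: the sharp form bounds $|x^*_\tau/\mu|^2$ by $\nu^{-2}\int_{|y|\leq 2\zeta_*/\nu}|\nabla q^\perp|^2/U\,dy + e^{-2\kappa\tau}$, so by Cauchy--Schwarz against $\nabla W$ and $\nabla\e^0$ these pair with the dissipation term and can be absorbed provided the implicit constant is small — which is ensured by choosing $\tau_0$ large. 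Summing everything yields \eqref{ene:ebot0}.

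The main obstacle will be the translation terms: the coupling through $x^*_\tau/\mu$ is a priori of order $e^{-\kappa\tau}/\nu^2$, and naively pairing it with $\nabla W$ (which is of order $\nu^{-3}$) would blow up. The refined identity \eqref{est:xstarrefined} is the key, letting one trade $|x^*_\tau/\mu|^2$ for a share of the coercive dissipation $\delta_0'\nu^2\|\nabla q^\perp\|_{L^2_\omega}^2$, with the remainder $e^{-2\kappa\tau}$ forming the source in \eqref{ene:ebot0}. Once this absorption is carried out rigorously, the other error terms are straightforward to close with the bootstrap smallness and a sufficiently small choice of $\kappa$.
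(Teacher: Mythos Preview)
Your overall strategy is correct and matches the paper's: differentiate $\|\e^\perp\|_0^2$, invoke the coercivity of $\tilde\Ls$ from Proposition~\ref{pr:coercivitenonradial} under the orthogonality \eqref{orthogonality}, control $(\Ls-\tilde\Ls)\e^\perp$ and the lower-order forcing terms via the bootstrap, and feed $|x^*_\tau/\mu|$ back into the dissipation via \eqref{est:xstarrefined}. Two points deserve comment.

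First, a genuine gap in the translation term. You write that pairing $\frac{x^*_\tau}{\mu}\cdot\nabla W$ against $\e^\perp$ can be absorbed ``provided the implicit constant is small --- which is ensured by choosing $\tau_0$ large''. This is not true as stated. In $y$-variables the term is $\frac{x^*_\tau}{\mu}\,\nu\int \nabla U\,\sqrt{\rho}\,\Ms(q^\perp\sqrt{\rho})\,dy$ plus lower order, and a naive Cauchy--Schwarz gives $|x^*_\tau/\mu|\,\nu\,\|\e^\perp\|_0$ times an $O(1)$ constant. Young's inequality then produces $\epsilon\,\nu^2|x^*_\tau/\mu|^2 + \epsilon^{-1}\|\e^\perp\|_0^2$; the second piece is \emph{not} absorbable by the coercivity (which only gives $\delta'\|\e^\perp\|_0^2$), and no choice of $\tau_0$ fixes this. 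The missing ingredient is the algebraic cancellation $\Ms(\nabla U)=0$ (equivalently, $\nabla U/U=\nabla\Phi_U$, a restatement of the stationary equation). This kills the leading contribution and leaves only $\Ms\big(\nabla U(\sqrt{\rho}-1)\big)$, which carries an extra factor $\nu^2|y|^2$ in the inner zone and makes the bound close. The paper uses exactly this identity; you should invoke it explicitly.

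Second, a minor oversimplification: the difference $(\Ls-\tilde\Ls)\e^\perp$ is not ``exponentially small''. The paper's careful splitting shows it contributes $C\zeta_*^2\int|\nabla q^\perp|^2U^{-1}\rho\,dy + Ce^{-2\kappa\tau}$, so the first piece is absorbed by taking $\zeta_*$ small, not $\tau_0$ large. The outer norm enters through the far-field piece of $\Phi_{q^\perp}-\tilde\Phi_{q^\perp}$. Your conclusion is right but the mechanism is slightly different from what you describe.
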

\begin{proof} We multiply equation \eqref{eq:epsbot} by $\nu^2 \sqrt{\rho_0}\Ms^z (\e^\perp \sqrt{\rho_0})$ with $\rho_0(z) = e^{-\beta |z|^2/2}$ and integrate over $\Rb^2$ to write 
\begin{align*}
\frac{1}{2} \frac{d}{d\tau} \| \e^\perp(\tau)\|_0^2  &=\nu^2\int \Ls^z \e^\perp \sqrt{\rho_0} \Ms^z (\e^\perp \sqrt{\rho_0}) dz  + \frac{1}{2}\int_{\Rb^2} \frac{d}{d\tau}(\nu^2U_\nu^{-1}) |\e^\perp|^2 \rho_0  dz \\
&  - \nu^2 \int \big[\nabla \cdot \Gc(\e^\perp) - \frac{x^*_\tau}{\mu} \cdot \nabla (W + \e^0) - N^\perp(\e^\perp)  \big] \sqrt{\rho_0}\Ms^z (\e^\perp \sqrt{\rho_0}) dz.
\end{align*}
We have the coercivity estimate \eqref{est:coerLnuy} for the linear part, which using \eqref{est:HaPo1} with $\alpha=1$ gives:
\begin{align}
\label{est:dissp} \nu^2\int \tilde \Ls^z \e^\perp \sqrt{\rho_0} \Ms^z (\e^\perp \sqrt{\rho_0}) dz &= \int \tilde \Ls q^\perp \sqrt{\rho} \Ms (q^\perp \sqrt{\rho}) dy \leq - \delta_0\int \frac{|\nabla q^\perp|^2}{U} \rho dy
\end{align}
and 
\begin{align} 
\label{eq:controle0bynq} -\int \frac{|\nabla q^\perp|^2}{U} \rho dy\leq -C_0 \nu^2\int \frac{|q^\perp|^2}{U}\rho dy=-C_0\nu^2\int \frac{|\e^\perp|^2}{U_\nu}\rho_0 dz =-C_0 \|\e\|^2_0 .
\end{align}

\noindent \underline{\textit{The time derivative term:}} We write by the definition of $U_\nu$, 
$$\frac{d}{d\tau}(\nu^2U_\nu^{-1}) = \frac{\nu_\tau}{\nu} \nu \pa_\nu[( |z^2| + \nu^2)^2] = \frac{\nu_\tau}{\nu} \frac{4\nu^2}{U_\nu} \frac{\nu^2}{|z|^2 + \nu^2},$$
from which and $\left|\frac{\nu_\tau}{\nu} \right| \lesssim \frac{1}{|\ln \nu|}$  and \eqref{eq:controle0bynq}, we have the estimate
\begin{align*}
\left|\int_{\Rb^2} \frac{d}{d\tau}(\nu^2U_\nu^{-1}) |\e^\perp|^2 \rho_0  dz\right| & \lesssim \frac{\nu^2}{|\ln \nu|}\int \frac{|\e^\perp|^2}{U_\nu}\rho_0 dz \lesssim \frac{1}{|\ln \nu|}\int \frac{|\nabla q^\perp|^2}{U}\rho dy.
\end{align*}

\noindent \underline{\textit{The small linear and nonlinear terms}:} We write by integration by parts and  Cauchy-Schwarz,
\begin{align*}
&\left|\nu^2\int \nabla \cdot \Gc(\e^\perp) \sqrt{\rho_0} \Ms^z (\e^\perp \sqrt{\rho_0}) dz\right| = \left|\nu^2 \int \Gc(\e) \cdot \Big[ \nabla \Ms^z(\e^\perp \sqrt{\rho_0}) - \frac \beta 2 z  \Ms^z(\e^\perp \sqrt{\rho_0}) \Big]\sqrt{\rho_0}dz \right|\\
&\quad \lesssim \frac{\nu^2}{\sqrt{|\ln \nu|}}\int \left(U_\nu |\nabla \Ms^z(\e^\perp \sqrt{\rho_0})|^2 + U_\nu|\Ms^z(\e^\perp \sqrt{\rho_0})|^2 \right) dz  + \nu^2 \sqrt{|\ln \nu|} \int \frac{(1 + |z|^2)|\Gc(\e)|^2}{U_\nu} \rho_0 dz.
\end{align*}
Making a change of variables and using Hardy inequality \eqref{bd:hardyL2rho} yields
\begin{align*}
\nu^2\int U_\nu |\nabla \Ms^z(\e^\perp \sqrt{\rho_0})|^2 dz = \int U|\nabla \Ms(q^\perp \sqrt{\rho})|^2 dy \lesssim \int \frac{|\nabla q^\perp|^2}{U} \rho dy.
\end{align*}
We also have by \eqref{eq:controle0bynq},
\begin{align}\label{est:UMeperp}
\nu^2\int U_\nu |\Ms^z(\e^\perp \sqrt{\rho_0})|^2 dz \lesssim \nu^2 \int \frac{|\e^\perp|^2}{U_\nu} \rho_0 dz \lesssim \int \frac{|\nabla q^\perp|^2}{U}\rho dy.
\end{align}
We recall
$$\Psi_\nu + \e^0 = \frac{\pa_\zeta}{\zeta}(P_\nu + m_\e), \quad \nabla \Phi_{\Psi_\nu + \e^0} = -z \frac{(P_\nu + m_\e)}{\zeta^2},$$
where we have from the definition of $P_\nu$ and \eqref{pointwisemerefined},  \eqref{pointwiseaway} and \eqref{pointwiseaway2} the bound
$$|\zeta \pa_\zeta(P_\nu + m_\e)| \lesssim  \frac{1}{\sqrt{|\ln \nu|}} \mathbf{1}_{\{\zeta \leq \frac{\zeta_*}{2}\}} +  \nu^2 |\ln \nu| \zeta^{\frac{1}{4}} \mathbf{1}_{\{\zeta \geq \frac{\zeta_*}{2}\}}. $$
We then estimate by using \eqref{est:HaPo1},
\begin{align*}
& \nu^2 \sqrt{|\ln \nu|}\int \frac{(1 + |z|^2)|\e^\perp \nabla \Phi_{\Psi_\nu + \e^0} |^2}{U_\nu} \rho_0 dz\lesssim  \frac{\nu^2}{\sqrt{|\ln \nu|}} \int_{|z| \leq \frac{\zeta_*}{2}} \frac{|\e^\perp|^2}{U_\nu} \rho_0 dz + \nu^6|\ln \nu|^\frac{5}{2} \int_{|z| \geq \frac{\zeta_*}{2}} \frac{|\e^\perp|^2}{U_\nu}|z|^\frac{5}{2} \rho_0 dz\\
& \quad \lesssim \frac{\nu^2}{\sqrt{|\ln \nu|}} \int  \frac{|q^\perp|^2}{U} \rho dy + \nu^6|\ln \nu|^\frac{5}{2} \nu^{\frac{5}{2}}\int \frac{|q^\perp|^2}{U} (1 + |y|^\frac{5}{2}) \rho dy  \lesssim \frac{1}{\sqrt{|\ln \nu|}} \int \frac{|\nabla q^\perp|^2}{U}\rho dy. 
\end{align*} 
We recall $P_{1, \nu} = \frac{a_1}{\nu^2}(\phi_1(r) - \phi_0(r))$ and the bound 
$$\left|\Psi_{1, \nu} \right| = \left|\frac{\zeta \pa_\zeta P_{1, \nu}}{\zeta^2} \right| \lesssim \frac{|y|^2}{1 + |y| ^4}, \quad \forall y \in \Rb^2,$$
so that we write
\begin{align*}
&\nu^2 \sqrt{|\ln \nu|}\int \frac{(1 + |z|^2)|\Psi_{1,\nu}\nabla \Phi_{\e^\perp}|^2}{U_\nu} \rho_0 dz \lesssim 
\nu^4 \sqrt{|\ln \nu|} \int (1 + \nu^2 |y|^2) \big( | \nabla \Phi_{q^\perp \chi_{\frac{\zeta_*}{2\nu}}}|^2 + | \nabla \Phi_{q^\perp ( 1 - \chi_{\frac{\zeta_*}{2\nu}} )}|^2 \big)\rho  dy.
\end{align*}
We estimate by using \eqref{bd:poisson1} and \eqref{est:HaPo1}, 
\begin{align*}
&\nu^4 \sqrt{|\ln \nu|} \int (1 + \nu^2 |y|^2) | \nabla \Phi_{q^\perp \chi_{\frac{\zeta_*}{2\nu}}}|^2 \rho dy\\
& \quad  \lesssim \nu^4 \sqrt{|\ln \nu|} \left[\int_{|y| \leq \frac{\zeta_*}{\nu}} |q^\perp|^2 \langle y \rangle^4 dy + \nu^2\int_{|y| \leq \frac{\zeta_*}{\nu}} |q^\perp|^2 \langle y \rangle ^6dy   \right]  \int \frac{1}{1 + |y|^4} \rho dy \lesssim \nu^2 \sqrt{|\ln \nu|} \int \frac{|\nabla q^\perp|^2}{U} \rho dy.
\end{align*}
Using \eqref{estimationpoissonexte} and noting that $w^\perp = \e^\perp$, we estimate 
\begin{align*}
&\nu^4 \sqrt{|\ln \nu|} \int (1 + \nu^2 |y|^2) | \nabla \Phi_{q^\perp ( 1 - \chi_{\frac{\zeta_*}{2\nu}})}(y)|^2 \rho  dy  = \nu^4 \sqrt{|\ln \nu|} \int (1 + |z|^2) |\nabla \Phi_{\e^\perp (1 - \chi_{\frac{\zeta_*}{2}}) }(z)|^2 \rho_0 dz\\
& \quad \lesssim \nu^4 \sqrt{|\ln \nu|} \int_{|z| \leq 1}|\nabla \Phi_{\e^\perp (1 - \chi_{\frac{\zeta_*}{2}}) }(z)|^2 dz + \nu^4 \sqrt{|\ln \nu|} \int_{|z| \geq 1} \| \e^\perp\|_\out^2 |z|^\frac{3}{2} \rho_0 dz \lesssim e^{-2\kappa \tau}.
\end{align*}
We have by the definition of $P_{2, \nu}$ and \eqref{pointwisemerefined} and \eqref{pointwiseaway},  
\begin{align*}
|\Psi_{2, \nu} + \e^0| &= \left| \frac{\zeta\pa_\zeta (P_{2, \nu} + m_\e)}{\zeta} \right| \lesssim \frac{1}{\nu^2 |\ln \nu|} \frac{|y|^2 \ln \la |y| \ra}{1 + |y|^4} \mathbf{1}_{\{|z| \leq \frac{\zeta_*}{2}\}} + \frac{1}{|\ln \nu|} |z|^{-2 + \frac{1}{4}}\mathbf{1}_{\{|z| \geq \frac{\zeta_*}{2}\}}, 
\end{align*}
from which and a similar estimate as given above, we obtain
\begin{align*}
\nu^2 \sqrt{|\ln \nu|}\int \frac{(1 + |z|^2)|(\Psi_{2,\nu} + \e^0)\nabla \Phi_{\e^\perp}|^2}{U_\nu} \rho_0 dz \lesssim \frac{1}{\sqrt{|\ln \nu|}} \int \frac{|\nabla q^\perp|^2}{U}\rho dy  + e^{-2\kappa \tau}. 
\end{align*}
We also have by \eqref{pointwiseawaynonradial} and \eqref{poissonLinfty} and \eqref{est:xstar} the rough estimate 
\begin{align*}
&\nu^2 \sqrt{|\ln \nu|}\int \frac{\left|\e^\perp \nabla \Phi_{\e^\perp}\right|^2 +\left| \frac{x^*_\tau}{\mu} \e^\perp \right|^2 }{U_\nu} \rho_0 dz  \lesssim \frac{e^{-4\kappa \tau}}{\nu^{12}} \sqrt{|\ln \nu|} \int (1 + |z|^C)\rho_0 dz + \left|\frac{x^*_\tau}{\mu} \right|^2\sqrt{|\ln \nu|} \| \e^\perp\|_0^2\lesssim e^{-2\kappa \tau},
\end{align*}
and from \eqref{pointwiseaway2} and \eqref{bootstrap:in}, 
\begin{align*}
&\nu^2 \sqrt{|\ln \nu|} \left|\frac{x^*_\tau}{\mu} \right|^2 \int \frac{|\e^0|^2 }{U_\nu} \rho_0 dz = \sqrt{|\ln \nu|} \left|\frac{x^*_\tau}{\mu} \right|^2 \int_0^{+\infty} |\pa_\zeta m_\e|^2 \frac{\omega_\nu}{\zeta} d\zeta\\
& \quad \lesssim \sqrt{|\ln \nu|} \left|\frac{x^*_\tau}{\mu}\right| \left[\int_{\zeta \leq \frac{\zeta_*}{2}} |\pa_\zeta m_\e|^2 \frac{\omega_\nu}{\zeta} d\zeta +  \int_{\zeta \geq \frac{\zeta_*}{2}} |\pa_\zeta m_\e|^2 \frac{\omega_\nu}{\zeta} d\zeta \right]\\
& \qquad \lesssim \sqrt{|\ln \nu|} \left|\frac{x^*_\tau}{\mu}\right| \left[ \| \tilde m_v\|^2_\inn + \frac{\nu^2}{|\ln \nu|}\int_{\zeta \geq \frac{\zeta_*}{2}} \zeta^C e^{-\beta \zeta^2/2}d\zeta  \right] \lesssim e^{-2\kappa \tau}.
\end{align*}

\noindent \underline{\textit{The difference $(\Ls^z - \tilde{\Ls}^z)\e^\perp$}:} By the definition $\tilde{\Phi}_{q^\perp} = \frac{1}{\sqrt{\rho}} \Phi_{q^\perp \sqrt{\rho}}$, we have
\begin{align*}
&\Delta (\Phi_{q^\perp} - \tilde{\Phi}_{q^\perp}) = \beta \nu^2\left(1 + \frac{\beta \nu^2 |y|^2}{4} \right)\tilde{\Phi}_{q^\perp} -b\nu^2 y \cdot \nabla \tilde{\Phi}_{q^\perp},
\end{align*}
from which and an integration by parts and Cauchy-Schwarz inequality, we write
\begin{align*}
&\left|\nu^2 \int_{\Rb^2} \big(\Ls^z - \tilde \Ls^z \big)\e^\perp \sqrt{\rho_0} \Ms^z(\e^\perp \sqrt{\rho_0}) dz\right| \ = \ \left|\nu^2 \int_{\Rb^2} \nabla U_\nu \cdot \nabla (\Phi_{\e^\perp} - \tilde\Phi_{\e^\perp}) \sqrt{\rho_0} \Ms^z(\e^\perp \sqrt{\rho_0}) dz\right| \\
& \quad = \left|\int_{\Rb^2} \nabla U \cdot \nabla (\Phi_{q^\perp} - \tilde\Phi_{q^\perp}) \sqrt{\rho} \Ms(q^\perp \sqrt{\rho}) dy\right| \ \leq \ \left(\int U|\nabla \Ms(q^\perp \sqrt{\rho})|^2 dy\right)^\frac{1}{2} \left(\int U|\nabla (\Phi_{q^\perp} - \tilde{\Phi}_{q^\perp})|^2 \rho dy \right)^\frac{1}{2}\\
& \qquad + \nu^2\left(\int U|\Ms(q^\perp \sqrt{\rho})|^2 dy\right)^\frac{1}{2} \left(\int U \Big(|y \cdot \nabla (\Phi_{q^\perp} - \tilde{\Phi}_{q^\perp})|^2 + |\langle \nu y \rangle^ 2 \Phi_{q^\perp \sqrt{\rho}}|^2 \Big) \rho dy \right)^\frac{1}{2}\\
&\quad \lesssim \frac{1}{50}\int \frac{|\nabla q^\perp|^2}{U}\rho dy + C \int U \langle \nu y \rangle^ 2 |\nabla (\Phi_{q^\perp} - \tilde{\Phi}_{q^\perp})|^2 \rho dy + C\nu^2 \int U\langle \nu y \rangle^4|\Phi_{q^\perp \sqrt{\rho}}|^2 dy.
\end{align*}
We bound the last term by using \eqref{bd:poisson1} with $\alpha = 1, 3$ and \eqref{est:HaPo1},
\begin{equation*}
\nu^2 \int U\langle \nu y \rangle^ 4 |\Phi_{q^\perp \sqrt{\rho}}|^2 dy \lesssim \nu^2 \int |q^\perp|^2 \langle y \rangle^ 2\rho dy + \nu^6 \int |q^\perp|^2 \langle y \rangle^ 6\rho dy \lesssim \nu^2 \int \frac{|\nabla q^\perp|^2}{U}\rho dy.
\end{equation*}
For $|y| \geq \frac{\zeta_*}{2\nu}$, we use \eqref{bd:poisson1}, \eqref{est:HaPo1} and  \eqref{estimationpoissonexte} to estimate 
\begin{align*}
&\int_{|y| \geq \frac{\zeta_*}{2\nu}} U\langle \nu y \rangle^ 2|\nabla(\Phi_{q^\perp} - \tilde\Phi_{q^\perp})|^2 \rho dy\\
& \lesssim \int_{|y| \geq \frac{\zeta_*}{2\nu}} U\langle \nu y \rangle^ 2 \left(| \nabla \Phi_{q^\perp \chi_{\frac{\zeta_*}{2\nu}}}|^2\rho  + | \nabla \Phi_{q^\perp ( 1 - \chi_{\frac{\zeta_*}{2\nu}})}|^2\rho + \nu^4|y|^2 |\Phi_{q^\perp \sqrt{\rho}}|^2  + |\nabla \Phi_{q^\perp \sqrt{\rho}}|^2 \right) dy\\
& \qquad \lesssim \nu^2 \int \frac{|\nabla q^\perp|^2}{U} \rho dy + \nu^2\|\e^\perp\|_\out^2. 
\end{align*}
For $|y| \leq \frac{\zeta_*}{2\nu}$, we write by the definition $\tilde{\Phi}_{q^\perp} = \frac{1}{\sqrt{\rho}} \Phi_{q^\perp \sqrt{\rho}}$,
\begin{align*}
&\int_{|y| \leq \frac{\zeta_*}{2\nu}} U(1 + \nu^2 |y|^2)|\nabla(\Phi_{q^\perp} - \tilde\Phi_{q^\perp})|^2 \rho dy  \lesssim \int_{|y| \leq \frac{\zeta_*}{2\nu}}\frac{| \nabla \Phi_{q^\perp (1 - \sqrt{\rho})}|^2 + \nu^4 ( |y\Phi_{q^\perp \sqrt{\rho}}|^2 + ||y|^2 \nabla \Phi_{q^\perp \sqrt{\rho}}|^2}{1 + |y|^4} dy.
\end{align*}
The last two terms are estimated by using \eqref{bd:poisson1} with $\alpha = 1$ and \eqref{est:HaPo1}, 
\begin{align*}
&\int_{|y| \leq \frac{\zeta_*}{2\nu}} \frac{\nu^4(| y \Phi_{q^\perp \sqrt{\rho}}|^2 + ||y|^2\nabla \Phi_{q^\perp \sqrt{\rho}})|^2}{\langle y \rangle^4} dy \lesssim \nu^2 \left(\int |q^\perp|^2\langle y \rangle^2 \rho dy\right) \int_{|y| \leq \frac{\zeta_*}{2\nu}} \frac{1}{\langle y \rangle^4} dy   \lesssim \nu^2 \int \frac{|\nabla q^\perp|^2}{U} \rho dy. 
\end{align*}
For the remaining term, we split into two parts 
\begin{align*}
&\int_{|y| \leq \frac{\zeta_*}{2\nu}} U(1 + \nu^2|y|^2)|\nabla \Phi_{q^\perp (1 - \sqrt{\rho})}|^2 dy \lesssim \int_{|y| \leq \frac{\zeta_*}{2\nu}}  \frac{|\nabla \Phi_{q^\perp (1 - \sqrt{\rho}) \chi_{\frac{\zeta_*}{\nu}}}|^2 + |\nabla \Phi_{q^\perp (1 - \sqrt{\rho}) ( 1 - \chi_{\frac{\zeta_*}{\nu}})}|^2}{1 + |y|^4} dy.
\end{align*}
Since $(1 - \sqrt{\rho}) \chi_{\frac{\zeta_*}{\nu}}(x) \lesssim \zeta_*^2\mathbf{1}_{|x| \leq \frac{2\zeta_*}{\nu}} $, we estimate by using \eqref{bd:poisson1} with $\alpha = 1$ and \eqref{est:HaPo1},
\begin{align*}
&\int_{|y| \leq \frac{\zeta_*}{2\nu}}  \frac{|\nabla \Phi_{q^\perp (1 - \sqrt{\rho}) \chi_{\frac{\zeta_*}{\nu}}}(y)|^2}{1 + |y|^4} dy  \lesssim \zeta^2_* \int_{|y|\leq \frac{\zeta_*}{\nu}} |q^\perp|^2 (1 + |y|^2) dy \lesssim \zeta^2_* \int \frac{|\nabla q^\perp|^2}{U}\rho dy.
\end{align*}
Since $(1 - \sqrt{\rho})(1 - \chi_{\frac{\zeta_*}{\nu}}(x)) \leq \mathbf{1}_{|x| \geq \frac{\zeta_*}{\nu}}$,  and $\frac{1}{|x - y|} \lesssim \frac{\nu}{\zeta_*}$ for $|x| \geq \frac{\zeta^*}{\nu}$ and $|y| \leq \frac{\zeta_*}{2\nu}$, we estimate by using the outer norm \eqref{bootstrap:outnonradial}, 
\begin{align*}
&\int_{|y| \leq \frac{\zeta_*}{2\nu}}  \frac{|\nabla \Phi_{q^\perp (1 - \sqrt{\rho})(1 - \chi_{\frac{\zeta_*}{\nu}})}(y)|^2}{1 + |y|^4} dy \lesssim \int_{|y| \leq \frac{\zeta_*}{2\nu}} \frac{dy}{1 + |y|^4} \left(\int_{|x| \geq \frac{\zeta_*}{\nu}} \frac{|q^\perp|}{|x - y|}dx \right)^2\\
& \qquad  \lesssim \nu^2 \left(\int_{|z| \geq \zeta_*} \frac{|\e^\perp|}{|z|} dz  \right)^2 \lesssim \nu^2 \|\e^\perp\|^2_\out \left(\int_{|z| \geq \zeta_*} \frac{dz}{|z|^{(3 - \frac{1}{4} + \frac{1}{p})p'}} \right)^\frac{2}{p'} \lesssim \frac{\nu^2}{\sqrt{\zeta_*}} \|\e^\perp\|^2_\out.
\end{align*}
Gathering these obtained estimates and using the bootstrap bound \eqref{bootstrap:outnonradial} yields 
\begin{align*}
\left|\nu^2 \int_{\Rb^2} \big(\Ls^z - \tilde \Ls^z \big)\e^\perp \sqrt{\rho_0} \Ms^z(\e^\perp \sqrt{\rho_0}) dz\right| \lesssim \zeta_*^2 \int \frac{|\nabla q^\perp|^2}{U}\rho dy + e^{-2\kappa \tau}.
\end{align*}

\noindent \underline{\textit{The error transport term}:} It remains to estimate the error term by writing 
\begin{align*}
\frac{x_\tau^*}{\mu}\nu^2 \cdot \int \nabla W \sqrt{\rho_0} \Ms^z(\e^\perp \sqrt{\rho_0}) dz = \frac{x_\tau^*}{\mu} \nu \cdot  \int \nabla V  \sqrt{\rho} \Ms(q^\perp \sqrt{\rho}) dy,
\end{align*}
where $V(r, \tau) = \frac{1}{\nu^2} W(\zeta, \tau)$ is the approximate profile in the blowup variables defined from \eqref{def:WnuaPro}, i.e,
$$V = U + \Psi, \quad \Psi = \frac{a_1}{\nu^2} \frac{\pa_r}{r}(\phi_1 - \phi_0) + \sum_{n = 2}^N \frac{a_n}{\nu^2} \frac{\pa_r}{r}\phi_n.$$
By the definition of $\phi_n$, we have the estimate 
\begin{equation*}
|\Psi(r)| + |r \pa_r \Psi(r)| \lesssim \nu^2 \la r\ra^{-2} + \frac{\la r \ra ^{-4}}{|\ln \nu|^2}\mathbf{1}_{\{r \leq \frac{2\zeta_*}{\nu}\}} + \frac{\nu^2}{|\ln \nu|^2} \la \nu r \ra^C \mathbf{1}_{\{r \geq \frac{2\zeta_*}{\nu}\}}.
\end{equation*}
We use $\Ms \nabla U = 0$ and \eqref{est:xstarrefined} to estimate the leading term
\begin{align*}
&\left|\frac{x_\tau^*}{\mu} \nu \int \nabla U \sqrt{\rho}\Ms (q^\perp \sqrt{\rho}) dy\right| = \left|\int \Ms \big(\nabla U (\sqrt{\rho} -1) \big) q^\perp \sqrt{\rho} dy\right|\\
&  \lesssim \left|\frac{x_\tau^*}{\mu}\right| \nu \left[ \int_{|y| \leq \frac{\zeta_*}{2\nu}} \nu^2 |y| |q^\perp| \sqrt{\rho} dy + \int_{|y| \geq \frac{\zeta_*}{2\nu}} |\Ms \big(\nabla U (\sqrt{\rho} -1) \big) q^\perp \sqrt{\rho}| dy   \right]\\
& \lesssim \left|\frac{x_\tau^*}{\mu}\right| \nu^2 \left(\int_{|y| \leq \frac{\zeta_*}{2\nu}} |q^\perp|^2 (1 + |y|^2)\rho dy \right)^\frac{1}{2} \\
& \quad +  \left|\frac{x_\tau^*}{\mu} \right|\nu \left(\int_{|y| \geq \frac{\zeta_*}{2\nu}} |q^\perp|^2 (1 + |y|^2)\rho dy \right)^\frac{1}{2} \left(\int_{|y| \geq \frac{\zeta_*}{2\nu}} \frac{|\Ms (\nabla U (\sqrt{\rho} - 1))|^2}{1 + |y|^2} dy \right)^\frac{1}{2}\\
& \lesssim \left|\frac{x_\tau^*}{\mu}\right| \nu^2 \left(\int |q^\perp|^2(1 + |y|^2)\rho dy\right)^\frac{1}{2} \lesssim \nu \int \frac{|\nabla q^\perp|^2}{U}\rho dy + Ce^{-2\kappa \tau}.
\end{align*}
Using \eqref{est:Psir} and \eqref{est:xstarrefined} and \eqref{pointwiseawaynonradial}, we estimate 
\begin{align*}
&\left|\frac{x_\tau^*}{\mu} \nu \int \nabla \Psi \sqrt{\rho}\Ms (q^\perp \sqrt{\rho}) dy  \right|\\
& \lesssim \left|\frac{x_\tau^*}{\mu} \nu \right|\left(\int_{|y| \leq \frac{\zeta_*}{2}} |\nabla \Psi|^2 (1 + |y|^6) \rho \right)^\frac{1}{2} \left(\int_{|y|\leq \frac{\zeta_*}{2\nu}}|q^\perp|^2 (1  +|y|^2) \rho \right)^\frac{1}{2} + \frac{e^{-2\kappa \tau}}{|\ln \nu|^2}\\
& \lesssim \frac{1}{|\ln \nu|^2}\left(\int\frac{|\nabla q^\perp|^2}{U} \rho dy + \int |q^\perp|^2(1 +|y|^2) \rho dy \right) + \frac{e^{-2\kappa \tau}}{|\ln \nu|^2} \leq \frac{1}{|\ln \nu|^2} \int \frac{|\nabla q^\perp|^2}{U}\rho dy + Ce^{-2\kappa \tau}. 
\end{align*}
The collection of all the above estimates, using the control \eqref{eq:controle0bynq} and the coercivity \eqref{est:dissp}, yields \eqref{ene:ebot0}. This concludes the proof of Lemma \ref{lemm:ebot0}.
\end{proof}

\bigskip

We are now going to establish the monotonicity formula to control the inner norm \eqref{bootstrap:inperp}. The basic idea is that inside the blowup zone $|y| \leq \frac{\zeta_*}{2\nu}$ for $\zeta_* \ll 1$, the dynamic of \eqref{eq:qbotstar} resembles $\pa_s q^\perp = \Ls_0 q^\perp$ which allows us to use the special structure of $\Ls_0$, namely that $\Ls_0 f = \nabla \cdot(U \nabla \Ms f)$. We recall from \eqref{eq:epsbot}, the equation satisfied by $q^\perp$, 
\begin{equation}\label{eq:qperp}
\pa_s q^\perp = \Ls_0 q^\perp + \eta \Lambda q^\perp - \nabla \cdot \Gc(q^\perp) + \frac{\nu x^*_\tau}{\mu} \cdot \nabla (V + q^0) + N^\perp(q^\perp),
\end{equation}
where $\Gc(q^\perp)$ and $N^\perp(q^\perp)$ are defined as in \eqref{def:Geps}, and $V$ is the approximate solution in the blowup variables given by \eqref{def:WnuaPro}, namely that
\begin{equation*}
V = U +  \Psi, 
\end{equation*}
where we recall
$$\Psi =  \Psi_1 + \Psi_2, \quad \Psi_1 = \frac{a_1}{\nu^2}(\varphi_1(r) - \varphi_0(r)), \quad \Psi_2 = \sum_{n = 2}^N \frac{a_n}{\nu^2}\varphi_n(r),$$
and $\varphi_n$ is the radial eigenfunction of $\Ls_0$, namely that 
$$\varphi_n(r) = - \frac{\pa_r \phi_n}{r}, \quad \pa_r \varphi_n = -\frac{\phi_n}{r},$$
with $\phi_n$'s are the eigenfunctions of $\As$, and we write for short
$$\eta = \frac{\nu_s}{\nu} - \beta \nu^2 = \Oc(\nu^2), \quad \Lambda f = \nabla \cdot(yf).$$
We define
$$q^\perp_* = \chi_{\frac{\zeta_*}{\nu}}q^\perp,$$
where $\chi_{\frac{\zeta_*}{\nu}}$ is defined as in \eqref{def:chiM} and $q^\perp_*$ solves the equation 
\begin{align}\label{eq:qbotstar}
\pa_s q^\perp_* &= \Ls_0 q^\perp_* + \chi_{\frac{\zeta_*}{\nu}} \Big[\eta \nabla \cdot(y q^\perp)  - \nabla \cdot \Gc(q) + \frac{\nu x^*_\tau}{\mu} \cdot \nabla (V + q^0) + N^\perp(q^\perp) \Big] - [\Ls_0, \chi_{\frac{\zeta_*}{\nu}}] q^\perp + \pa_s \chi_{\frac{\zeta_*}{\nu}} q^\perp,
\end{align}
Consider the decomposition
\begin{equation}\label{dec:qstarci}
q_*^\bot = c_1 \pa_1 U_1 + c_2 \pa_2 U_2 + \tilde{q}_*^\bot \quad \textup{with} \quad c_i = \frac{\int q_*^\bot \pa_i U dy}{\int |\pa_i U|^2  dy } = \Oc(\|\e^\perp\|_0),
\end{equation}
which produces the orthogonality condition 
$$\int \tilde{q}_*^\perp \pa_1 U dy = \int \tilde{q}_*^\perp \pa_0 U dy = 0.$$
From Lemma \ref{lemm:coerML0}, we have the coercivity
\begin{equation}\label{est:coerMqtil}
-\int \Ls_0 \tilde{q}_*^\perp \Ms \tilde{q}_*^\perp  dy = \int U|\nabla \Ms \tilde{q}_*^\perp|^2 dy \geq \delta_2 \int \frac{|\nabla \tilde{q}_*^\perp|}{U} dy,
\end{equation}
and the positivity since $\int \Ls_0 \tq_*^\perp dy = 0$, 
\begin{equation}\label{eq:posMLqtil}
\int \Ls_0 \tq_*^\perp \Ms \Ls_0 \tq_*^\perp dy \geq 0.
\end{equation}
Recall that $\Ms \pa_1 U = \Ms \pa_2 U = 0$, hence, we have
\begin{equation}\label{id:MsLsqqtil}
\Ms q_*^\perp = \Ms \tilde{q}_*^\perp, \quad  \Ls_0 q_*^\perp = \Ls_0 \tilde{q}_*^\perp.
\end{equation}
Thanks to this coercivity, we are able to establish the monontonicity formula to control $\| q^\perp\|_\inn$.
\begin{lemma}[Control of $\|q^\perp\|_\inn$] \label{lemm:contolqbotinn}
\begin{align*}
\frac{d}{d\tau} \left[\int \tilde{q}^\perp_* \Ms \tilde{q}_*^\perp  dy - \int \Ls_0 \tilde q^\perp_* \Ms \tilde{q}_*^\perp dy  \right] & \leq  -\delta_2'\left[ \frac{1}{\zeta_*^2}\int \tilde{q}^\perp_* \Ms \tilde{q}_*^\perp  dy - \int \Ls_0 \tilde q^\perp_* \Ms \tilde{q}_*^\perp dy  \right] \nonumber\\
 & \qquad \qquad + \frac{C}{\nu^2} \Big(\|\e^\perp\|^2_0 +  \|\e^\perp\|^2_{H^1(\zeta_* \leq |z| \leq \zeta^*)} +  e^{-2\kappa \tau} \Big).
\end{align*}
\end{lemma}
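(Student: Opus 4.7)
The plan is to derive a combined energy estimate at two orders, using the operator $\Ms$ as a weight defining a positive quadratic form on $\tilde q_*^\perp$ (thanks to the orthogonality to $\pa_1 U,\pa_2 U$ inherent in the decomposition \eqref{dec:qstarci}) and exploiting the self-adjointness of $\Ls_0$ with respect to $\Ms$. First, I would derive the equation for $\tilde q_*^\perp$ by subtracting from \eqref{eq:qbotstar} the contributions produced by differentiating $c_1\pa_1U+c_2\pa_2 U$; thanks to $\Ms\pa_iU=0$, these contributions disappear when tested against $\Ms \tilde q_*^\perp$ or $\Ls_0 \Ms \tilde q_*^\perp$, which is the reason for working with $\tilde q_*^\perp$ rather than $q_*^\perp$. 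Then I will pass from $s$ to $\tau$ using $ds/d\tau=1/\nu^2$, which is the source of the $1/\nu^2$ prefactor on the right-hand side.

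The first energy identity is obtained by testing the equation for $\tilde q_*^\perp$ against $\Ms\tilde q_*^\perp$, giving
\[
\tfrac12\tfrac{d}{ds}\!\int\!\tilde q_*^\perp\Ms\tilde q_*^\perp\,dy=\int \Ls_0\tilde q_*^\perp\,\Ms\tilde q_*^\perp\,dy+\mbox{commutator, nonlinear, transport and cut-off terms}.
\]
The leading linear term is controlled using the coercivity \eqref{est:coerMqtil}, which yields $-\delta_2\int |\nabla\tilde q_*^\perp|^2/U\,dy$. The second energy identity comes from testing instead against $\Ls_0\Ms\tilde q_*^\perp$ (equivalently, using the self-adjointness of $\Ls_0$ with respect to $\Ms$), producing $\frac{d}{ds}[-\int\Ls_0\tilde q_*^\perp\,\Ms\tilde q_*^\perp\,dy]$ on the left and the dissipation $-\int \Ls_0\tilde q_*^\perp\,\Ms \Ls_0\tilde q_*^\perp\,dy\leq 0$ on the right by \eqref{eq:posMLqtil}. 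Because $\tilde q_*^\perp$ is compactly supported in $|y|\leq 2\zeta_*/\nu$, the Hardy–Poincaré-type control comparing $\int U|\nabla\Ms\tilde q_*^\perp|^2$ to $\int\tilde q_*^\perp\Ms\tilde q_*^\perp$ on this support is gained only up to a factor $(\zeta_*/\nu)^2$; after switching to $\tau$ variables this is the origin of the $1/\zeta_*^2$ weight in the statement. Adding the two identities with a small relative weight (and using that $-\int \Ls_0\tilde q_*^\perp\,\Ms\tilde q_*^\perp\geq 0$ dominates $\int\tilde q_*^\perp\Ms\tilde q_*^\perp/\zeta_*^2$ up to the same $1/\zeta_*^2$ factor) gives the desired left-hand side together with a negative dissipation to be used for absorption.

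For the perturbative terms I would argue as follows. The scaling term $\eta\,\nabla\cdot(yq^\perp)$ contributes $\Oc(\nu^2)$ after integration by parts, absorbed by the dissipation. The small linear piece $\nabla\cdot\Gc(q^\perp)$ is split into a local part (controlled by $\nu$-small weights and the Hardy inequality \eqref{est:HaPo1}, using \eqref{est:Psir} and \eqref{poissonLinfty}) and a far-away part handled with the bootstrap bounds \eqref{bootstrap:outnonradial}, \eqref{pointwiseawaynonradial}. The modulation transport $(\nu x^*_\tau/\mu)\nabla(V+q^0)$ is split: the $\nabla U$ piece, killed to leading order by $\Ms\nabla U=0$, uses the refined bound \eqref{est:xstarrefined} on $x^*_\tau/\mu$ to close via $\|\e^\perp\|_0^2$ plus dissipation absorption; the $\nabla \Psi$ and $\nabla q^0$ parts gain factors $|\ln\nu|^{-1}$ or $\nu^2$ thanks to \eqref{est:Psir}, \eqref{pointwisemerefined} and \eqref{pointwiseaway}. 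The cut-off commutator $[\Ls_0,\chi_{\zeta_*/\nu}]q^\perp$ and $\pa_s\chi_{\zeta_*/\nu}q^\perp$ are supported in $\zeta_*\leq|z|\leq 2\zeta_*$ where by interior parabolic regularity the $H^1$ control is precisely $\|\e^\perp\|_{H^1(\zeta_*\leq|z|\leq\zeta^*)}$; this is the origin of the corresponding term in the right-hand side. The quadratic $N^\perp(q^\perp)$ is treated via \eqref{poissonLinfty} and \eqref{pointwiseawaynonradial}, contributing $Ce^{-2\kappa\tau}$.

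The hardest step will be treating the nonlocal Poisson field contributions at the higher order level, since testing $\nabla\cdot\Gc(q^\perp)$ against $\Ls_0\Ms\tilde q_*^\perp$ produces terms in which two derivatives fall on $\Phi_{q^\perp}$ and in which the weight $U^{-1}$ acts; the cut-off $\chi_{\zeta_*/\nu}$ does not localize $\Phi_{q^\perp}$, so one must split $q^\perp=q^\perp\chi_{\zeta_*/(2\nu)}+q^\perp(1-\chi_{\zeta_*/(2\nu)})$, apply \eqref{bd:poisson1}, \eqref{est:HaPo1}, \eqref{estimationpoissonexte} to the two parts, and use smallness of $\nu$ together with the exterior norm and the outer bootstrap to ensure all contributions are either absorbed by $\delta'$ times the dissipation (thanks to a power of $\zeta_*$ or $1/|\ln\nu|$) or stand on the right-hand side of the claim. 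Once all perturbative terms are bounded this way, summing the two energy estimates with a small relative weight and multiplying by $\nu^2$ (from the time reparametrisation) yields the announced monotonicity formula with a constant $\delta_2'$ depending only on $\delta_2,\delta_1$ of Lemmas \ref{lemm:coerML0}–\ref{lemm:coerA0}.
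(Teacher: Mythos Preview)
Your proposal is correct and follows essentially the same approach as the paper: combine the two energy identities obtained by testing \eqref{eq:qbotstar} against $\Ms\tilde q_*^\perp$ and $-\Ms\Ls_0\tilde q_*^\perp$, use the coercivity \eqref{est:coerMqtil} at first order and the sign \eqref{eq:posMLqtil} at second order, then convert via $ds/d\tau=\nu^{-2}$ and the compact-support comparison $\int U|\nabla\Ms\tilde q_*^\perp|^2\gtrsim(\nu^2/\zeta_*^2)\int\tilde q_*^\perp\Ms\tilde q_*^\perp$. Two minor remarks: the paper adds the two identities with equal weight (not a small relative weight), since the $\nu^{-2}$ gain from the time change already makes the first-order dissipation dominate both pieces of the bracket; and for the higher-order treatment of the scaling and $\Gc$ terms the paper proceeds via the commutator identities $[\Ms,\Lambda]f=(y\cdot\nabla U/U^2)f+2\Phi_f$, $[\Ms,\nabla]f=(\nabla U/U^2)f$, $[\Ms,g]f=g\Phi_f-\Phi_{gf}$, which systematically reduce $\int(\cdot)\Ms\Ls_0\tilde q_*^\perp$ to quantities controlled by $\int U|\nabla\Ms\tilde q_*^\perp|^2$ plus lower order --- this is a cleaner organisation of what you call ``the hardest step'' than a direct inner/outer Poisson splitting, though both close.
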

\begin{proof} We multiply equation \eqref{eq:qbotstar} by $\Ms \tilde{q}_*^\perp$ and $-\Ms \Ls \tilde{q}_*^\perp$ and integrate over $\Rb^2$ and  use the relation \eqref{id:MsLsqqtil} and \eqref{est:coerMqtil} and \eqref{eq:posMLqtil} to write the energy identity
\begin{align*}
&\frac{1}{2}\frac{d}{ds}\left[\int \tilde{q}^\perp_* \Ms \tilde{q}_*^\perp  dy  - \int \Ms \tilde{q}_*^\perp \Ls_0\tilde{q}_*^\perp dy \right] \\
&\leq  - \delta_2 \int U |\nabla \Ms \tilde{q}_*^\perp|^2 dy  + \int \left(\chi_{\frac{\zeta_*}{\nu}}\Big[\eta \nabla \cdot(y q^\perp)  - \nabla \cdot \Gc(q^\perp) + \frac{\nu x^*_\tau}{\mu} \cdot \nabla (V+q^0) + N^\perp(q^\perp) \Big] \right) \\
& \qquad  \times  \Ms\Big(\tilde{q}_*^\perp +\Ls_0 \tilde{q}_*^\perp\Big)dy  - \int \left( [\Ls_0, \chi_{\frac{\zeta_*}{\nu}}] q^\perp - \pa_s \chi_{\frac{\zeta_*}{\nu}} q^\perp  \right) \Ms\Big(\tilde{q}_*^\perp +\Ls_0 \tilde{q}_*^\perp\Big)dy.
\end{align*}
Since we only use the positivity of $\Ms$, all the terms involving $\Ms \Ls_0 \tq_*^\perp$ are treated by using commutator formulas with $\Ms$ in order to reduce the order of derivatives so that they can be controlled by $-\delta_2\int U |\nabla \Ms \tilde{q}_*^\perp|^2 dy$. In particular, we shall use the following formula: for any well localized function $f$ without radial component,
\begin{equation}\label{def:comMLam}
[\Ms, \Lambda] f =  \frac{y \cdot\nabla U}{U^2}f + 2 \Phi_f \quad \textup{with} \quad \Lambda f = \nabla \cdot (y f),
\end{equation}
where we used the identity $y\cdot \nabla \Phi_f = \Phi_{y \cdot \nabla f} + 2\Phi_f$, and 
\begin{equation}\label{eq:comMfgdev}
[\Ms, \nabla] f = \frac{\nabla U}{U^2}f, \quad [\Ms, g] f = g\Phi_f - \Phi_{gf}.
\end{equation}

\noindent \underline{The scaling term:} We write $\chi_{\frac{\zeta_*}{\nu}} \nabla \cdot (y q^\perp) = \nabla \cdot (y q^\perp_*) - y\cdot \nabla \chi_{\frac{\zeta_*}{\nu}} q^\perp$, and use \eqref{def:comMLam} and the relation $\Ms q_*^\perp = \Ms \tq_*^\perp$ and the structure $\Ls_0 f = \nabla \cdot(U \nabla \Ms f)$ and integration by parts, to estimate
\begin{align*}
&\left|\eta\int\nabla \cdot(y q_*^\perp)\Ms\Ls_0\tq_*^\perp  dy\right|  \lesssim  \nu^2 \left|\int \nabla \cdot(y \Ms \tq_*^\perp) \Ls_0\tq_*^\perp  dy \right| + \nu^2 \left|\int \Big(\frac{y \cdot\nabla U}{U^2}q_*^\perp + 2 \Phi_{q_*^\perp}\Big) \Ls_0 \tq_*^\perp dy  \right|\\
& \lesssim \nu^2\int U|\nabla \Ms \tq_*^\perp|^2 \left(1 + \frac{|\nabla \cdot (y U)|}{U} + \frac{|\nabla U|}{U} \right)dy + \nu^2 \int U \left|\nabla\Big(\frac{y \cdot\nabla U}{U^2}q_*^\perp + 2 \Phi_{q_*^\perp}\Big)  \right|^2 dy\\
& \lesssim \nu^2\int U|\nabla \Ms \tq_*^\perp|^2 dy + \nu^2 \int \left(\frac{|\nabla q_*^\perp|^2}{U} + \frac{|\nabla U|^2}{U} |q_*^\perp|^2 + U|\nabla \Phi_{q_*^\perp}|^2 \right)dy\\
& \lesssim \nu^2 \int U|\nabla \Ms \tq_*^\perp|^2 dy + \nu^2  \int \frac{|\nabla \tq_*^\perp|^2}{U}dy + \nu^2 (|c_1|^2 + |c_2|^2) \lesssim \nu^2 \int U|\nabla \Ms \tq_*^\perp|^2 dy + \nu^2 \|\e^\perp\|_0^2.
\end{align*}
where we used in the last line the decomposition \eqref{dec:qstarci} and the coercivity \eqref{est:coerMqtil}.\\
Since $y\cdot \nabla \chi_{\frac{\zeta_*}{\nu}} \lesssim \mathbf{1}_{\{ \frac{\zeta_*}{\nu} \leq |y| \leq \frac{2\zeta_*}{\nu} \}}$, we use the midrange bootstrap \eqref{bootstrap:perpmid} and integration by parts to estimate
\begin{align*}
\left|\eta \int y\cdot \nabla \chi_{\frac{\zeta_*}{\nu}} q^\perp \Ms \Ls_0 \tq_*^\perp dy  \right| &\lesssim \nu^2 \int U|\nabla \Ms \tq_*^\perp|^2 dy + \nu^2 \int |\nabla \Ms \big( y\cdot \nabla \chi_{\frac{\zeta_*}{\nu}} q^\perp \big) | ^2 \\
& \quad \lesssim \nu^2 \int U|\nabla \Ms \tq_*^\perp|^2 dy + \nu^2 \|\e^\perp\|_{H^1(\zeta_* \leq |z| \leq \zeta^*)}^2.
\end{align*}
We also estimate by integration by parts and Cauchy-Schwarz and $\big|y\cdot \nabla \chi_{\frac{\zeta_*}{\nu} }\big| \leq \mathbf{1}_{\{\frac{\zeta_*}{\nu} \leq |y| \leq \frac{2\zeta_*}{\nu}\}}$,
\begin{align*}
&\left|\eta\int\chi_{\frac{\zeta_*}{\nu}} \nabla \cdot(y q^\perp)\Ms\tq_*^\perp  dy\right| =  \left|\eta\int \Big(\nabla \cdot(y q_*^\perp) - y\cdot \nabla \chi_{\frac{\zeta_*}{\nu} }q^\perp \Big)\Ms\tq_*^\perp  dy\right| \\
&\quad \leq \frac{\delta_2}{100}\int U|\nabla \Ms \tq_*^\perp|^2dy + \nu^4\int \frac{|y q_*^\perp|^2}{U}dy +  \nu^2\int_{\frac{\zeta_8}{\nu} \leq |y| \leq \frac{2\zeta_*}{\nu}} \big|q^\perp\Ms\tq_*^\perp\big| dy\\
& \quad \lesssim \frac{\delta_2}{100}\int U|\nabla \Ms \tq_*^\perp|^2dy  + \| \e^\perp\|_0^2 +  \|\e^\perp\|_{H^1(\zeta_* \leq |z| \leq \zeta^*)}^2. 
\end{align*}
Summing up these estimates yields 
\begin{align*}
&\left|\eta\int\chi_{\frac{\zeta_*}{\nu}} \nabla \cdot(y q^\perp)\Ms \big( \tq_*^\perp + \Ls_0\tq_*^\perp  \big)dy\right|   \leq \frac{\delta_2}{50} \int U|\nabla \Ms \tq_*^\perp|^2 dy + \|\e^\perp\|_{H^1(\zeta_* \leq |z| \leq \zeta^*)}^2 + \|\e^\perp\|_0^2.
\end{align*}
\noindent \underline{The $\Gc(q^\perp)$ and $N^\perp$ terms:} We write by integration by parts and Cauchy-Schwarz
\begin{align*}
&\left|\int \Big( \nabla \cdot \Gc(q^\perp) - N^\perp(q^\perp) \Big) \Ms \tq_*^\perp dy \right| \leq \frac{\delta_2}{100}\int U|\Ms \nabla \tq_*^\perp|^2 dy + C\int_{|y| \leq \frac{2\zeta_*}{\nu}} \frac{|\Gc(q^\perp)|^2 + |N^\perp(q^\perp)|^2}{U} dy\\
& \leq \left(\frac{\delta_2}{100} + \frac{1}{|\ln \nu|^2}\right)\int U|\Ms \nabla \tq_*^\perp|^2 dy + C e^{-2\kappa \tau},
\end{align*} 
where we use \eqref{est:Psir} and the definition \eqref{def:Geps} of $\Gc(q^\perp)$ and $N^\perp(q^\perp)$ and \eqref{pointwiseawaynonradial}, \eqref{poissonLinfty}  to estimate 
\begin{align*}
&\int_{|y| \leq \frac{2\zeta_*}{\nu}} \frac{|\Gc(q^\perp)|^2 + |N^\perp(q^\perp)|^2}{U} dy \lesssim \int_{|y| \leq \frac{2\zeta_*}{\nu}} \left(\frac{|q^\perp|^2}{U} \big|\Phi_{\Psi + q^0}\big|^2 + |\nabla \Phi_{q^\perp}|^2 \frac{|\Psi + q^0|^2}{U}  + |q^\perp \nabla q^\perp|^2\right) dy\\
 & \quad \lesssim \frac{1}{|\ln \nu|^2} \int_{|y| \leq \frac{2\zeta_*}{\nu}} |q^\perp|^2 (1 + |y|^2) dy + e^{-2\kappa \tau}  \lesssim  \frac{1}{|\ln \nu|^2}\int \frac{|\nabla \tq_*^\perp|^2}{U}dy + \frac{1}{|\ln \nu|^2}\|\e^\perp\|_{H^1(\zeta_* \leq |z| \leq \zeta^*)} + e^{-2\kappa \tau}\\
 & \quad  \lesssim \frac{1}{|\ln \nu|^2}\int U|\Ms \nabla \tq_*^\perp|^2 dy + C e^{-2\kappa \tau}.
\end{align*}
Similarly, we write by integration by parts and the definition of $\Gc$ and $N^\perp$ and Cauchy-Schwarz inequality,
\begin{align*}
&\left|\int \Ms \Big(\nabla \cdot \Gc(q^\perp) - N^\perp(q^\perp) \Big)\Ls_0 \tq_*^\perp dy\right| = \left|\int U \nabla \Big(\Ms \nabla \cdot \Gc(q^\perp) - \Ms N^\perp(q^\perp)\Big) \cdot \nabla  \Ms \tq_*^\perp dy  \right|\\
&\leq \frac{\delta_2}{100}\int U |\nabla \Ms \tq_*^\perp|^2 dy + \left|\int U \nabla \Ms \Big(\nabla q^\perp \cdot \nabla \Phi_{\Psi + q^0}  - 2 q^\perp(\Psi + q^0)\Big) \cdot \nabla \Ms \tq_*^\perp dy\right|\\
&  \quad  + C\int_{|y| \leq \frac{2\zeta_*}{\nu}} U |\nabla \Ms \nabla(\Psi + q^0) \cdot \nabla \Phi_{q^\perp})|^2 + C \int_{|y| \leq \frac{2\zeta_*}{\nu}} U | \nabla \Ms \nabla \cdot(q^\perp \nabla \Phi_{q^\perp})|^2 dy.  
\end{align*}
Using the commutator formula \eqref{eq:comMfgdev} and \eqref{est:Psir} and the relation \eqref{id:MsLsqqtil}, we obtain the estimate 
\begin{align*}
&\left|\int U \nabla \Ms \Big(\nabla q^\perp \cdot \nabla \Phi_{\Psi + q^0}  - 2 q^\perp(\Psi + q^0)\Big) \cdot \nabla \Ms \tq_*^\perp dy\right|\\
& \quad \leq  \int U |\nabla \Ms \tq_*^\perp|^2 \left( |\Psi + q_0| + \frac{1}{2}\left|\frac{\nabla \cdot (U \nabla \Phi_{\Psi + q^0})}{U} \right| \right)dy \\
& \qquad + \left|\int U\nabla \left( [\Ms, \Psi + q^0] q^\perp + [\Ms, \nabla \Phi_{\Psi + q^0}]\cdot \nabla q^\perp + [\Ms, \nabla]q^\perp \cdot \nabla \Phi_{\Psi + q^0} \right) \cdot \nabla \Ms \tq_*^\perp dy \right|\\
&  \leq \left( \frac{1}{|\ln \nu|^2} + \frac{\delta_2}{100}\right) \int U|\nabla \Ms \tq_*^\perp|^2\\
& \qquad  + C\int_{|y| \leq \frac{2\zeta_*}{\nu}} U \Big|\nabla\left( [\Ms, \Psi + q^0] q^\perp + [\Ms, \nabla \Phi_{\Psi + q^0}]\cdot \nabla q^\perp + [\Ms, \nabla]q^\perp \cdot \nabla \Phi_{\Psi + q^0} \right)  \Big|^2 dy \\
&\qquad \qquad \quad  \leq  \frac{\delta_2}{50}\int U|\nabla \Ms \tq_*^\perp|^2 + \frac{C}{|\ln \nu^2|} \left(\int|\nabla \tq_*^\perp|^2(1 + |y|^4)  + \|\e^\perp\|^2_{H^1 (\zeta_* \leq |z| \leq \zeta^*)}\right).
\end{align*}
From \eqref{pointwiseawaynonradial}, \eqref{poissonLinfty} and \eqref{est:Psir} and \eqref{bd:poisson1} with $\alpha = 1/2$,  we estimate 
\begin{align*}
&\int_{|y| \leq \frac{2\zeta_*}{\nu}} U |\nabla \Ms \nabla(\Psi + q^0) \cdot \nabla \Phi_{q^\perp})|^2 + \int_{|y| \leq \frac{2\zeta_*}{\nu}} U | \nabla \Ms \nabla \cdot(q^\perp \nabla \Phi_{q^\perp})|^2 dy\\
& \quad  \lesssim \frac{1}{|\ln \nu|^2} \int |\tq_*^\perp|^2(1 + |y|^2) dy + \frac{1}{|\ln \nu|^2} \|\e^\perp\|^2_{H^1 (\zeta_* \leq |z| \leq \zeta^*)} +   e^{-2\kappa \tau}.
\end{align*}
Summing up these estimates yields 
\begin{align*}
&\left|\int \Big[\nabla \cdot \Gc(q^\perp) - N^\perp(q^\perp) \Big]\Ms \big( \tq_*^\perp + \Ls_0\tq_*^\perp  \big)dy\right|  \leq \frac{\delta_2}{50} \int U|\nabla \Ms \tq_*^\perp|^2 dy + \|\e^\perp\|_{H^1(\zeta_* \leq |z| \leq \zeta^*)}^2 + e^{-2\kappa \tau}.
\end{align*}
\noindent \underline{The error terms:} From \eqref{est:Psir} and \eqref{pointwisemerefined}, we note 
$$|\Psi + q^0| + |r\pa_r (\Psi + q^0)| \lesssim \frac{1}{|\ln \nu|^2} \frac{1}{1 + r^4}, \quad \textup{for} \quad  r \leq \frac{2\zeta_*}{\nu},$$
from which and the fact that $\Ms \nabla U = 0$ and \eqref{est:xstarrefined}, we obtain the estimate
\begin{align*}
& \left| \frac{\nu x_\tau^*}{\mu} \cdot \int \nabla (V + q^0) \Ms \big( \tq_*^\perp + \Ls_0\tq_*^\perp  \big)dy\right| = \left| \frac{\nu x_\tau^*}{\mu} \cdot \int \Ms \nabla (\Psi + q^0) \big( \tq_*^\perp + \Ls_0\tq_*^\perp  \big)dy\right|\\
& \qquad \quad \lesssim \frac{1}{|\ln \nu|^2}\int \frac{|\nabla \tq_*^\perp|^2}{U}dy + \frac{C}{|\ln \nu|^2} \|\e^\perp\|^2_{H^1(\zeta_* \leq |z| \leq \zeta^*)} + Ce^{-2\kappa \tau}.
\end{align*}
We also have the estimate from \eqref{est:xstar} and Cauchy-Schwarz, 
\begin{align*}
&\left| \frac{\nu x_\tau^*}{\mu} \cdot \int \nabla q^\perp \Ms\tq_*^\perp dy\right| \leq \left| \frac{\nu x_\tau^*}{\mu} \right|\left(\int_{|y| \leq \frac{2\zeta_*}{\nu}} |q^\perp|^2 U dy\right)^\frac{1}{2}\left(\int U| \nabla \Ms \tq_*^\perp|^2 dy \right)^\frac{1}{2}\\
& \quad \leq \frac{\delta_2}{50} \int U| \nabla \Ms \tq_*^\perp|^2 dy + C\left| \frac{\nu x_\tau^*}{\mu} \right|^2 \frac{\|\e^\perp\|_0^2}{\nu^2} \leq \frac{\delta_2}{50} \int U| \nabla \Ms \tq_*^\perp|^2 dy + e^{-2\kappa\tau}.
\end{align*}
Similarly, we use the commutator formula \eqref{eq:comMfgdev} to obtain the estimate
\begin{align*}
&\left| \frac{\nu x_\tau^*}{\mu} \cdot \int \nabla q^\perp \Ms\Ls_0\tq_*^\perp dy\right|\leq \frac{\delta_2}{50} \int U| \nabla \Ms \tq_*^\perp|^2 dy + e^{-2\kappa\tau}.
\end{align*}

\noindent \underline{The commutator terms:} Since the support of $[\Ls_0, \chi_\frac{\zeta_*}{\nu}] q^\perp$ and $\pa_s \chi_\frac{\zeta_*}{\nu}$ is on $\frac{\zeta_*}{\nu} \leq |y| \leq \frac{2\zeta_*}{\nu}$, we use the midrange bootstrap \eqref{bootstrap:perpmid} and Cauchy-Schwarz to obtain the estimate
\begin{align*}
&\left|\int \left([\Ls_0, \chi_\frac{\zeta_*}{\nu}] q^\perp -  \pa_s \chi_\frac{\zeta_*}{\nu} q^\perp\right)\Ms \big( \tq_*^\perp + \Ls_0\tq_*^\perp  \big)dy\right|  \leq \frac{\delta_2}{50} \int U|\nabla \Ms \tq_*^\perp|^2 dy + \|\e^\perp\|_{H^1(\zeta_* \leq |z| \leq \zeta^*)}^2.
\end{align*}
A collection of all the estimates and using the fact that 
$$-\int \frac{|\nabla \tq_*^\perp|^2}{U}dy \lesssim - \int |\tq_*^\perp|^2(1  + |y|^2) dy \lesssim -\frac{\nu^2}{\zeta_*^2}\int \tq_*^\perp \Ms \tq_*^\perp dy,$$
and $\frac{ds}{d\tau} = \frac{1}{\nu^2}$ yield the conclusion of Lemma \ref{lemm:contolqbotinn}.
\end{proof}

\subsection{Analysis in the exterior zone} \label{sec:ext}

In the exterior zone, we derive an energy estimate for the $\| \hat w \|_{\out}$ norm of the full higher order perturbation \eqref{def:what}. From \eqref{def:what} and \eqref{eq:wztau}, the radial and non radial parts $\hat w^0$ and $\hat w^\perp$ of $\hat w$ solve the following equations:
\begin{equation} \label{id:eqhatw}
\partial_\tau \hat w^0+\beta \nabla.(z\hat w^0)-\Delta \hat w^0=e.\nabla \hat w^0 +f\hat w^0+g.\nabla \Phi_{(1-\chi_{\frac{\zeta^*}{2}})\hat w^0} +h^0+ N^0(\hat w^\perp),
\end{equation}
\begin{equation} \label{id:eqhatwperp}
\partial_\tau \hat w^\perp+\beta \nabla.(z\hat w^\perp)-\Delta \hat w^\perp=e.\nabla \hat w^\perp +f\hat w^\perp+g.\nabla \Phi_{(1-\chi_{\frac{\zeta^*}{2}})\hat w^\perp} +h^\perp+ N^\perp(\hat w^\perp),
\end{equation}
where
$$
e=-\nabla\left(\Phi_{U_\nu}+\Phi_{\Psi_{\nu,1}}\right), \quad f=2(U_\nu+\Psi_{\nu,1}), \quad g=-\nabla (U_\nu+\Psi_{\nu,1}),
$$
and from \eqref{def:Qmua}, \eqref{pointwisephi0}, \eqref{est:phi1til} and \eqref{est:nu0ntil1},
\begin{align*}
h^0&= \frac{1}{2}\Big(-\pa_\tau f + \Delta f - \nabla \cdot (f \nabla \Phi_f) - \beta \nabla \cdot(z f)\Big) \\
& = \frac{\nu_\tau}{\nu}\nabla .(zU_\nu)-\frac{\nu_\tau}{\nu}\nu\partial_\nu \Psi_{\nu,1}-\frac{\beta_\tau}{\beta}\beta \pa_\beta \Psi_{\nu,1}  +(\alpha_1 a_1-a_{1,\tau})(\varphi_1-\varphi_0) \\
& \quad +2\beta\left((\tilde \alpha_1-\tilde \alpha_0)a_1-a_1-4\nu^2\right)\varphi_0 +\beta\left(8\nu^2\varphi_0- \nabla .(zU_\nu)\right) -\nabla (\Psi_{\nu,1}).\nabla \Phi_{\Psi_{\nu,1}}+\Psi_{\nu,1}^2+g.\nabla \Phi_{\chi_{\frac{\zeta^*}{2}}\hat w^0},
\end{align*}
and 
\begin{eqnarray*}
h^\perp &=&\frac{x^*_\tau}{\mu}.\nabla \left(U_\nu+\Psi_{\nu,1}\right)+g.\nabla \Phi_{\chi_{\frac{\zeta^*}{2}}\hat w^\perp}.
\end{eqnarray*}
From the pointwise estimates \eqref{def:Qmua}, \eqref{pointwisephi0}, \eqref{est:phi1til} on $U$, $\varphi_0$ and $\varphi_1$, the Poisson formula for radial functions, we have the following estimates for $|z|\geq 1$ and $k=0,1,2$:
\begin{equation} \label{bd:estimationefgexterior}
|\partial_\zeta^k e(\zeta)|\lesssim \zeta^{-1+\delta-k}, \quad|\partial_\zeta^k f(\zeta)|\lesssim \nu^2|\log \nu|\zeta^{-2+\delta-k}, \quad |\partial_\zeta^k g(\zeta)|\lesssim \nu^2\zeta^{-3+\delta-k}.
\end{equation}
We recall that $|\nu_\tau /\nu|\lesssim |\log \nu |^{-1}$ and $|\beta_\tau/\beta|\lesssim |\log \nu|^{-3}$ from \eqref{eq:mod0}, \eqref{eq:mod1} and \eqref{bootstrap:condition}. Hence from \eqref{est:phi1til}, \eqref{est:nu0ntil1} and \eqref{est:xstar} we infer that for the forcing term for $|z| \geq 1$ and $k=0,1,2$:
\begin{equation} \label{bd:estimationhexterior}
|\partial_\zeta^k h^0(\zeta)|\lesssim \frac{\nu^2}{|\log \nu|} \zeta^{-2+\delta-k}, \quad |\nabla^k h^\perp(\zeta)|\lesssim  e^{-\kappa \tau} \nu \zeta^{-2+\delta-k}.
\end{equation}

\begin{lemma}
There exists a universal constant $C>0$ independent of the constants $N,\kappa,K,K',K'',\zeta^*$ such that for $\tau_0$ large enough:
\begin{equation} \label{bd:lyapunovext0}
\frac{d}{d\tau} \| \hat w^0 \|_{\out}^{2p} \leq 2p\left(-\frac{\beta}{4}+\frac{C}{\zeta^*}\right) \| \hat w^0\|_{\out}^{2p}+C\frac{K^{'2p}\nu^{4p}}{|\log \nu|^{2p}}+ C\| \hat w^0 \|_{\out}^{2p-1}\frac{K\nu^2}{|\log \nu|}.
\end{equation}
\begin{equation} \label{bd:lyapunovextperp}
\frac{d}{d\tau} \| \hat w^\perp \|_{\out}^{2p} \leq 2p\left(-\frac{\beta}{4}+\frac{C}{\zeta^*}\right) \| \hat w^\perp \|_{\out}^{2p}+CK^{'2p}e^{-2p\kappa \tau}+ C\| \hat w^\perp \|_{\out}^{2p-1}e^{-\kappa \tau}.
\end{equation}

\end{lemma}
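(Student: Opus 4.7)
The plan is to derive both inequalities by a weighted $L^{2p}$-energy estimate adapted to the combined exterior norm. I will focus on \eqref{bd:lyapunovext0}; the nonradial case \eqref{bd:lyapunovextperp} is analogous and actually easier, since $h^\perp$ carries an extra $e^{-\kappa\tau}$ factor. The norm $\| \hat w^0\|_\out^{2p}$ has the form $\int (1-\chi_{\zeta^*/4})(|z|^{2-\frac14}|\hat w^0|+|z|^{2+\frac34}|\nabla\hat w^0|)^{2p}|z|^{-2}dz$, and I plan to differentiate it in $\tau$ and simultaneously produce evolution equations for $\hat w^0$ from \eqref{id:eqhatw} and for $\nabla \hat w^0$ by differentiating \eqref{id:eqhatw}, yielding identities for each of the two contributions inside the $2p$-power. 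Using convexity/Minkowski, one can then bound the time derivative by a sum of the two individual contributions weighted by $(|z|^{2-\frac14}|\hat w^0|+|z|^{2+\frac34}|\nabla\hat w^0|)^{2p-1}$.

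The dissipative mechanism comes from integrating by parts the scaling term $\beta\nabla\cdot(z\hat w^0)$. For the weight $|z|^\alpha$ with $\alpha=(4-\tfrac12)p-2$ (for the $\hat w^0$ part) or $\alpha=(4+\tfrac32)p-2$ (for the $\nabla\hat w^0$ part), the computation
$$-2p\int\beta\nabla\cdot(z\hat w^0)|\hat w^0|^{2p-2}\hat w^0 |z|^\alpha dz=\beta(\alpha-4p+2)\int|\hat w^0|^{2p}|z|^\alpha dz$$
and its analogue for $\nabla\hat w^0$ yield in both cases a coefficient $-\beta p/2$, exactly matching the stated $2p\cdot(-\beta/4)$. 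The Laplacian $-\Delta\hat w^0$ produces a non-negative quadratic form up to a lower-order term of the form $C\zeta^{*-1}\|\hat w^0\|_\out^{2p}$ coming from the cutoff in the definition of the norm (derivatives of $1-\chi_{\zeta^*/4}$ are supported on $|z|\sim \zeta^*$). The cutoff commutator and the boundary region $|z|\sim \zeta^*$ are where the $C/\zeta^*$ term is produced.

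The lower order linear terms $e\cdot\nabla\hat w^0+f\hat w^0+g\cdot\nabla\Phi_{(1-\chi_{\zeta^*/2})\hat w^0}$ are perturbative. Thanks to \eqref{bd:estimationefgexterior}, $e,f,g$ decay at least as $|z|^{-1+\delta}$ on the support of the weight $|z|\gtrsim \zeta^*$, so after multiplication with $|\hat w^0|^{2p-1}$ (weighted) and Cauchy–Schwarz/Hölder, each is bounded by $C\zeta^{*-1+\delta}\|\hat w^0\|_\out^{2p}$ and absorbed into the $C/\zeta^*$ term. The nonlocal Poisson contribution $g\cdot\nabla\Phi_{(1-\chi_{\zeta^*/2})\hat w^0}$ is handled using the pointwise estimate \eqref{estimationpoissonexte} for the Poisson field of an exteriorly supported function together with the decay of $g$, giving again a small factor in $\zeta^*$. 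The forcing $h^0$, bounded by \eqref{bd:estimationhexterior} as $\nu^2|\log\nu|^{-1}|z|^{-2+\delta-k}$, contributes (after Hölder with exponents $2p$ and $2p/(2p-1)$) the term $C\|\hat w^0\|_\out^{2p-1}\cdot K\nu^2/|\log\nu|$. The interaction $N^0(\hat w^\perp)$ is quadratic in $\hat w^\perp$, hence of order $e^{-2\kappa\tau}$ by \eqref{bootstrap:outnonradial} and \eqref{poissonLinfty}, and is absorbed into the $K'^{2p}\nu^{4p}/|\log\nu|^{2p}$ term given that $e^{-2\kappa\tau}\ll \nu^4/|\log\nu|^2$ for $\kappa$ small enough.

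The main obstacle is the fact that $\|\cdot\|_\out$ couples $\hat w^0$ and $\nabla \hat w^0$ inside a $2p$-power and is not a clean Sobolev norm; one must carefully propagate both pieces simultaneously and track that the commutator of the Laplacian with the gradient does not spoil the dissipation. A secondary technical issue is the nonlocal Poisson term, where one needs that the truncation by $1-\chi_{\zeta^*/2}$ keeps the Poisson field evaluated in the exterior, so that the pointwise bound of \eqref{estimationpoissonexte} applies and the coupling remains controllable by the same outer norm. Once these are set, assembling the contributions in the order \emph{transport $+$ Laplacian $\Rightarrow$ dissipation}, \emph{linear perturbation $\Rightarrow$ $C/\zeta^*$}, \emph{forcing $\Rightarrow$ $K'\nu^2/|\log\nu|$}, \emph{nonradial feedback $\Rightarrow$ $e^{-2\kappa\tau}$} directly yields \eqref{bd:lyapunovext0}, and the same scheme applied to \eqref{id:eqhatwperp} gives \eqref{bd:lyapunovextperp}.
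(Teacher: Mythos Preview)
Your overall strategy matches the paper's: the decay $-\beta/4$ comes from the scaling term, the lower-order linear pieces are $O(\zeta^{*-1})$, the forcing $h^0$ contributes $\nu^2/|\log\nu|$, and the quadratic nonradial coupling is negligible. Two points, however, are not handled correctly.

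\textbf{The combined norm.} Differentiating $(|z|^{2-\frac14}|\hat w^0|+|z|^{2+\frac34}|\nabla\hat w^0|)^{2p}$ directly and then invoking ``convexity/Minkowski'' does not work for the transport term: extracting the exact coefficient $-\beta/4$ requires an integration by parts that is blocked by the mixed $(a+b)^{2p-1}$ factor. The paper avoids this by passing to the \emph{equivalent} Lyapunov functional consisting of the sum of the separate $L^{2p}$ pieces, one for $\hat w^0$ and one for each $\hat w^i:=\zeta\partial_{z_i}\hat w^0$, using the commutator identities $[\zeta\partial_{z_i},z\cdot\nabla]=0$ and $[\zeta\partial_{z_i},\Delta]=\zeta^{-2}\zeta\partial_{z_i}-2\zeta^{-2}z\cdot\nabla(\zeta\partial_{z_i})$ to write a genuine evolution equation for $\hat w^i$. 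Each piece is then estimated by a common linear energy identity (Step~1 in the paper), and the sum gives \eqref{bd:lyapunovext0}.

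\textbf{Source of the $K'^{2p}\nu^{4p}/|\log\nu|^{2p}$ term.} You attribute the transition region $|z|\sim\zeta^*$ to the $C/\zeta^*$ contribution. In fact, the derivatives of $\chi_{\zeta^*/4}$ produce boundary terms of the form $\|\hat w^0\|_{L^{2p}(\zeta^*/4\le|z|\le\zeta^*)}^{2p}$, which are \emph{not} small in $\zeta^*$; they are controlled by the middle-range bootstrap bound \eqref{bootstrap:mid}, yielding the $CK'^{2p}\nu^{4p}/|\log\nu|^{2p}$ term. This is the primary origin of that term in the statement, not the nonradial feedback $N^0(\hat w^\perp)$.

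A smaller point: for the gradient piece, $\zeta\partial_{z_i}N^0(\hat w^\perp)$ contains one more derivative than the outer norm directly controls. The paper integrates by parts to move $\partial_{z_i}$ onto $(\hat w^i)^{2p-1}$ and absorbs the resulting $|\nabla\hat w^i|^2(\hat w^i)^{2p-2}$ using the Laplacian's dissipation $-(2p-1)\int(1-\chi)\zeta^{(2-\frac14)2p}|\nabla\hat w^i|^2(\hat w^i)^{2p-2}\,dz/\zeta^2$, with prefactor $Ce^{-\kappa\tau}/\nu\ll 1$. Your claim that $N^0$ is simply ``of order $e^{-2\kappa\tau}$'' is not sufficient at this regularity level.
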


\begin{proof}

\textbf{Step 1} \emph{(Linear energy estimate)}. We claim that for any function $\bar w$, for any vector field $\bar e$ and potential $\bar f$ satisfying $|\bar f|\lesssim \zeta^{*-1}$ and $|\bar e|+\zeta|\nabla \bar e|\lesssim 1$ a constant $C>0$ exists such that
\begin{align}\label{id:linearenergyout} 
 & \int \left(1-\chi_{\frac{\zeta^*}{4}} \right) \zeta^{(2-\frac 14)2p}\bar w^{2p-1}\left(-\beta \nabla.(z\bar w)+\Delta \bar w+\bar e.\nabla \bar w +\bar f\bar w \right)\frac{dz}{\zeta^2}\\
\nonumber  & \quad \leq -(2p-1)\int \left(1-\chi_{\frac{\zeta^*}{4}} \right) \zeta^{(2-\frac 14)2p}\bar w^{2p-2} |\nabla \bar w|^2 \frac{dz}{\zeta^2} \\
& \quad +\left(-\frac{\beta}{4}+\frac{C}{\zeta^*}\right)\int \left(1-\chi_{\frac{\zeta^*}{4}} \right) \zeta^{(2-\frac 14)2p}\bar w^{2p}\frac{dz}{\zeta^2}+C \| \bar w\|_{L^{2p}(\frac{\zeta^*}{4}\leq |z|\leq \zeta^*)}^{2p}.\nonumber
\end{align}
We now prove this estimate. Integrating by parts, as derivatives of $\chi_{\frac{\zeta^*}{4}}$ have support in $\{\frac{\zeta^*}{4}\leq |z|\leq \zeta^*\}$, we obtain
\begin{align*}
\int \left(1-\chi_{\frac{\zeta^*}{4}} \right) &\zeta^{(2-\frac 14)2p}\bar w^{2p-1}(-\beta \nabla.(z\bar w))\frac{dz}{\zeta^2}  =-\frac{\beta}{4}\int \left(1-\chi_{\frac{\zeta^*}{4}} \right) \zeta^{(2-\frac 14)2p}\bar w^{2p}\frac{dz}{\zeta^2}-\beta \int \partial_\zeta \chi_{\frac{\zeta^*}{4}} \zeta^{(2-\frac 14)2p}\bar w^{2p}\frac{dz}{\zeta^2},
\end{align*}
and
\begin{align*}
&\int \left(1-\chi_{\frac{\zeta^*}{4}} \right) \zeta^{(2-\frac 14)2p}\bar w^{2p-1} \Delta \bar w\frac{dz}{\zeta^2} \\
&= -(2p-1)\int \left(1-\chi_{\frac{\zeta^*}{4}} \right) \zeta^{(2-\frac 14)2p}\bar w^{2p-2} |\nabla \bar w|^2 \frac{dz}{\zeta^2}   +\frac{1}{2p} \int \bar w^{2p} \Delta \left(\left(1-\chi_{\frac{\zeta^*}{4}} \right) \zeta^{(2-\frac 14)2p-2 } \right)dz\\
& \quad\leq -(2p-1)\int \left(1-\chi_{\frac{\zeta^*}{4}} \right) \zeta^{(2-\frac 14)2p}\bar w^{2p-2} |\nabla \bar w|^2 \frac{dz}{\zeta^2} \\
& \qquad +\frac{1}{2p} \int \bar w^{2p} \left(\left(1-\chi_{\frac{\zeta^*}{4}} \right) \Delta \zeta^{(2-\frac 14)2p-2 }-2\nabla \chi_{\frac{\zeta^*}{4}}.\nabla \zeta^{(2-\frac 14)2p-2 }-\Delta \chi_{\frac{\zeta^*}{4}}\zeta^{(2-\frac 14)2p-2 } \right)dz\\
& \quad \leq  -(2p-1)\int \left(1-\chi_{\frac{\zeta^*}{4}} \right) \zeta^{(2-\frac 14)2p}\bar w^{2p-2} |\nabla \bar w|^2 \frac{dz}{\zeta^2} +  \frac{C}{|\zeta^*|^2} \int \left(1-\chi_{\frac{\zeta^*}{4}} \right) \zeta^{(2-\frac 14)2p}\bar w^{2p}\frac{dz}{\zeta^2}+\Oc\left(\| \bar w\|_{L^{2p}(\frac{\zeta^*}{4}\leq |z|\leq \zeta^*)}^{2p}\right).
\end{align*}
Using the bounds $|\bar f|\lesssim \zeta^{*-1}$ and $|\bar e|+\zeta|\nabla \bar e|\lesssim 1$ yield
\begin{align*}
&\int \left(1-\chi_{\frac{\zeta^*}{4}} \right) \zeta^{(2-\frac 14)2p}\bar w^{2p-1} e.\nabla \bar w\frac{dz}{\zeta^2} = -\frac{1}{2p}\int \bar w^{2p} \left(\left(1-\chi_{\frac{\zeta^*}{4}} \right) \nabla.\left(\zeta^{(2-\frac 14)2p-2} e\right)-\nabla \chi_{\frac{\zeta^*}{4}}.e \zeta^{(2-\frac 14)2p-2} \right)dz\\
& \qquad  \leq \frac{C}{\zeta^*}\int \left(1-\chi_{\frac{\zeta^*}{4}} \right) \zeta^{(2-\frac 14)2p}\bar w^{2p}\frac{dz}{\zeta^2}+\Oc\left(\| \bar w\|_{L^{2p}(\frac{\zeta^*}{4}\leq |z|\leq \zeta^*)}^{2p}\right),
\end{align*}
and 
$$
\int \left(1-\chi_{\frac{\zeta^*}{4}} \right) \zeta^{(2-\frac 14)2p}\bar w^{2p-1} f\bar w\frac{dz}{\zeta^2}\leq  \frac{C}{\zeta^*}\int \left(1-\chi_{\frac{\zeta^*}{4}} \right) \zeta^{(2-\frac 14)2p}\bar w^{2p} \frac{dz}{\zeta^2}.
$$
Summing the four identities above proves the linear estimate \eqref{id:linearenergyout}.\\

\noindent \textbf{Step 2} \emph{(Preliminary estimates)}. From the bound \eqref{bd:estimationhexterior}, we have the estimate
\begin{align} 
&\left( \int \left(1-\chi_{\frac{\zeta^*}{4}} \right) \zeta^{(2-\frac 14)2p} \left(|h^0|+\zeta|\nabla h^0|\right)^{2p}\frac{dz}{\zeta^2}\right)^{\frac{1}{2p}}   \lesssim \frac{\nu^2}{|\log \nu|}\left( \int \left(1-\chi_{\frac{\zeta^*}{4}} \right) \zeta^{(\delta-\frac 14)2p}\frac{dz}{\zeta^2}\right)^{\frac{1}{2p}}\lesssim \frac{\nu^2}{|\log \nu|},\label{bd:interexth0}
\end{align}
for $\delta $ small enough. Similarly, using the bound \eqref{bd:estimationefgexterior}, the estimate \eqref{estimationpoissonexte}, and the bootstrap bounds \eqref{bootstrap:out}, we obtain
\begin{align} 
&\left( \int \left(1-\chi_{\frac{\zeta^*}{4}} \right) \zeta^{(2-\frac 14)2p}\left(|g.\nabla \Phi_{(1-\chi_{\frac{\zeta^*}{2}})(\hat w^0)}|+\zeta |\nabla (g.\nabla \Phi_{(1-\chi_{\frac{\zeta^*}{2}})(\hat w^0)})|\right)^{2p} \frac{dz}{\zeta^2}\right)^{\frac{1}{2p}}  \leq C\| \hat w^0 \|_{\out} \nu^2 \leq \frac{\nu^2}{|\log \nu|}, \label{bd:interextpoisson0}
\end{align}
for $\tau_0$ large enough. Similarly, we estimate  from \eqref{bd:estimationhexterior} for the nonradial part,
\begin{equation} \label{bd:interexthperp}
\left( \int \left(1-\chi_{\frac{\zeta^*}{4}} \right) \zeta^{(2-\frac 14)2p} \left(|h^\perp|+\zeta|\nabla h^\perp|\right)^{2p}\frac{dz}{\zeta^2}\right)^{\frac{1}{2p}}\lesssim \nu^2 \left| \frac{x^*_\tau}{\mu} \right|\lesssim e^{-\kappa \tau},
\end{equation}
and using the bound \eqref{bd:estimationefgexterior}, the estimate \eqref{estimationpoissonexte}, and the bootstrap bound \eqref{bootstrap:outnonradial}:
\begin{align} 
&\left( \int \left(1-\chi_{\frac{\zeta^*}{4}} \right) \zeta^{(2-\frac 14)2p}\left(|g.\nabla \Phi_{(1-\chi_{\frac{\zeta^*}{2}})(\hat w^\perp)}|+\zeta |\nabla (g.\nabla \Phi_{(1-\chi_{\frac{\zeta^*}{2}})(\hat w^\perp)})|\right)^{2p} \frac{dz}{\zeta^2}\right)^{\frac{1}{2p}} \lesssim \| \hat w^\perp \|_{\out} \nu^2\leq e^{-\kappa \tau},\label{bd:interextpoissonperp}
\end{align}
for $\tau_0$ large enough.\\

\noindent \textbf{Step 3} \emph{(The energy estimate for $\hat w^0$)}. We claim that there holds the energy estimate:
\begin{align}\label{id:lyapunovout1}
 \frac{d}{d\tau} \left(\frac{1}{2p}\int \left(1-\chi_{\frac{\zeta^*}{4}} \right) \zeta^{(2-\frac 14)2p}(\hat w^0)^{2p}\frac{dz}{\zeta^2}\right)  &\leq \left(-\frac{\beta}{4}+\frac{C}{\zeta^*}\right) \int \left(1-\chi_{\frac{\zeta^*}{4}} \right) \zeta^{(2-\frac 14)2p}(\hat w^0)^{2p}\frac{dz}{\zeta^2}\\
&   +C\frac{K^{'2p}\nu^{4p}}{|\log \nu|^{2p}}+ C\left(\int \left(1-\chi_{\frac{\zeta^*}{4}} \right) \zeta^{(2-\frac 14)2p}(\hat w^0)^{2p} \frac{dz}{\zeta^2}\right)^{\frac{2p-1}{2p}} \frac{\nu^2}{|\log \nu|}.\nonumber
\end{align}
We compute from \eqref{id:eqhatw},
\begin{align}\label{id:energyestimatehatwout}
 &\frac{d}{d\tau} \left(\frac{1}{2p}\int \left(1-\chi_{\frac{\zeta^*}{4}} \right) \zeta^{(2-\frac 14)2p}(\hat w^0)^{2p}\frac{dz}{\zeta^2}\right)\\
 & = \int \left(1-\chi_{\frac{\zeta^*}{4}} \right) \zeta^{(2-\frac 14)2p}(\hat w^0)^{2p-1}  \left(-\beta \nabla.(z \hat w^0)+\Delta \hat w^0+e.\nabla \hat w^0 +f\hat w^0+g.\nabla \Phi_{(1-\chi_{\frac{\zeta^*}{4}})\hat w^0} +h^0+ N^0(\hat w^\perp) \right)\frac{dz}{\zeta^2}. \nonumber
\end{align}
For the last term there holds from \eqref{est:xstar} and \eqref{bootstrap:outnonradial},
$$
\left( \int \left(1-\chi_{\frac{\zeta^*}{4}} \right) \zeta^{(2-\frac 14)2p}\left|N^0(\hat w^\perp)\right|^{2p} \frac{dz}{\zeta^2}\right)^{\frac{1}{2p}}\lesssim \frac{e^{-\kappa\tau}}{\nu^C} \| \hat w^\perp \|_{\out}\leq \frac{\nu^2}{|\log \nu|},
$$
for $\tau_0$ large enough. Hence, from H\"older, the above bound and the estimates \eqref{bd:interextpoisson0}, \eqref{bd:interexth0}, we get
\begin{align*}
&\int \left(1-\chi_{\frac{\zeta^*}{4}} \right) \zeta^{(2-\frac 14)2p}(\hat w^0)^{2p-1}\left(g.\nabla \Phi_{(1-\chi_{\frac{\zeta^*}{4}})\hat w^0} +h^0+ N^0(\hat w^\perp) \right)\frac{dz}{\zeta^2}\\
& \quad \leq \left(\int \left(1-\chi_{\frac{\zeta^*}{4}} \right) \zeta^{(2-\frac 14)2p}(\hat w^0)^{2p} \frac{dz}{\zeta^2}\right)^{\frac{2p-1}{2p}}  \times \left( \int \left(1-\chi_{\frac{\zeta^*}{4}} \right) \zeta^{(2-\frac 14)2p} \left|g.\nabla \Phi_{(1-\chi_{\frac{\zeta^*}{4}})\hat w^0} +h^0+N^0(\hat w^\perp)\right|^{2p}\frac{dz}{\zeta^2}\right)^{\frac{1}{2p}} \\
&\quad\lesssim  \left(\int \left(1-\chi_{\frac{\zeta^*}{4}} \right) \zeta^{(2-\frac 14)2p}(\hat w^0)^{2p} \frac{dz}{\zeta^2}\right)^{\frac{2p-1}{2p}} \frac{\nu^2}{|\log \nu|},
\end{align*}
for $\tau_0$ large enough. We inject the above identity in \eqref{id:energyestimatehatwout}, and use the linear estimate \eqref{id:linearenergyout} with the bounds \eqref{bootstrap:mid} and \eqref{bootstrap:perpmid} for the boundary terms and \eqref{bd:estimationefgexterior} for the potential and vector field, which yields \eqref{id:lyapunovout1}.\\

\noindent \textbf{Step 4} \emph{(The energy estimate for $\nabla (\hat w^0)$)}. Let $\hat w^i=\zeta \partial_{z_i}\hat w$ for $i=1,2$. We claim the energy estimate for $i=1,2$:
\begin{align}\label{id:lyapunovout2}
 &\frac{d}{d\tau} \left(\frac{1}{2p}\int \left(1-\chi_{\frac{\zeta^*}{4}} \right) \zeta^{(2-\frac 14)2p}(\hat w^i)^{2p}\frac{dz}{\zeta^2}\right)  \leq  \left(-\frac{\beta}{4}+\frac{C}{\zeta^*}\right) \int \left(1-\chi_{\frac{\zeta^*}{4}} \right) \zeta^{(2-\frac 14)2p}(\hat w^i)^{2p}\frac{dz}{\zeta^2}+\frac{C}{\zeta^*}\| \hat w^0 \|_{\out}^{2p}\\
& \qquad \qquad   \qquad  \qquad   \qquad  \qquad   \qquad   \qquad  \quad +C\frac{K^{'2p}\nu^{4p}}{|\log \nu|^{2p}}+ C\left(\int \left(1-\chi_{\frac{\zeta^*}{4}} \right) \zeta^{(2-\frac 14)2p}(\hat w^i)^{2p} \frac{dz}{\zeta^2}\right)^{\frac{2p-1}{2p}} \frac{K\nu^2}{|\log \nu|},\nonumber
\end{align}
which we now prove. From \eqref{id:eqhatw} and the commutator relations $[\zeta \partial_{z_i},z.\nabla]=0$ and $[\zeta \partial_{z_i},\Delta]=\zeta^{-2}\zeta\partial_{z_i}-2\zeta^{-2}z.\nabla (\zeta \partial_{z_i})$, we infer the evolution equation for $\hat w^i$:
\begin{align*}
& \partial_{\tau} \hat w^i = -\beta \nabla .(z\hat w^i)+\Delta \hat w^i +\left( e-\frac{2z}{\zeta^2}\right).\nabla \hat w^i +\left(f+\zeta^{-2}-\frac{z.e}{\zeta^2}\right) \hat w^i  + \Fc,
\end{align*}
where
\begin{align*}
\Fc = \zeta \partial_{z_i}e.\nabla \hat w^0 +\zeta\partial_{z_i}f\hat w^0+\zeta\partial_{z_i}(g.\nabla \Phi_{(1-\chi_{\frac{\zeta^*}{2}})\hat w^0})+\zeta\partial_{z_i}h^0+\zeta \partial_{z_i}N^0(\hat w^\perp),
\end{align*}
giving the energy estimate with $e'= e- 2\zeta^{-2}z$ and $f'=f+\zeta^{-2}-\zeta^{-2} z.e$:
\begin{align} \label{id:energyestimatehatwiout} 
 &\frac{d}{d\tau} \left(\frac{1}{2p}\int \left(1-\chi_{\frac{\zeta^*}{4}} \right) \zeta^{(2-\frac 14)2p}(\hat w^i)^{2p}\frac{dz}{\zeta^2}\right)\\
 &= \int \left(1-\chi_{\frac{\zeta^*}{4}} \right) \zeta^{(2-\frac 14)2p}(\hat w^i)^{2p-1}
\Big(-\beta \nabla.(z\hat w^i)+\Delta \hat w^i+e'.\nabla \hat w^i +f'\hat w^i+\Fc \Big)\frac{dz}{\zeta^2}.\nonumber
\end{align}
We apply for the linear part the estimate \eqref{id:linearenergyout} with the bounds \eqref{bootstrap:mid} and \eqref{bootstrap:perpmid} for the boundary terms and \eqref{bd:estimationefgexterior}  for the vector field $e'$ and the potential $f'$:
\begin{align*}
&\int \left(1-\chi_{\frac{\zeta^*}{4}} \right) \zeta^{(2-\frac 14)2p}(\hat w^i)^{2p-1} \left(-\beta \nabla.(z\hat w^i)+\Delta \hat w^i+e'.\nabla \hat w^i +f'\hat w^i\right)\frac{dz}{\zeta^2}\\
& \qquad \leq  -(2p-1)\int \left(1-\chi_{\frac{\zeta^*}{4}} \right) \zeta^{(2-\frac 14)2p}\bar w^{2p-2} |\nabla \bar w|^2 \frac{dz}{\zeta^2} \\
& \qquad \quad + \left(-\frac{\beta}{4}+\frac{C}{\zeta^*}\right)\int \left(1-\chi_{\frac{\zeta^*}{4}} \right) \zeta^{(2-\frac 14)2p}(\hat w^i)^{2p}\frac{dz}{\zeta^2}+C\frac{K^{'2p}\nu^{4p}}{|\log \nu|^{2p}}.
\end{align*}
We compute now the remaining terms. From H\"older, the bounds $|\zeta \nabla e|\lesssim 1$ and $|\nabla f|\lesssim \zeta^{-2}$ from \eqref{bd:estimationefgexterior}, for any function $\bar w$:
\begin{equation} \label{bd:estimationcommutatorinterext}
\left( \int \left(1-\chi_{\frac{\zeta^*}{4}} \right) \zeta^{(2-\frac 14)2p} \left(| \partial_{z_i}e||\zeta \nabla \bar w|+| \zeta \bar w \partial_{z_i}f|\right)^{2p} \frac{dz}{\zeta^2}\right)^{\frac{1}{2p}}\lesssim \frac{1}{\zeta^*}\| \bar w \|_{\out}.
\end{equation}
Hence, using the above bound for $\bar w=\hat w^0$ and H\"older:
$$
 \int \left(1-\chi_{\frac{\zeta^*}{4}} \right) \zeta^{(2-\frac 14)2p}(\hat w^i)^{2p-1} \left( \zeta \partial_{z_i}e.\nabla \hat w^0+\zeta\partial_{z_i}f\hat w^0 \right)\frac{dz}{\zeta^2}\leq \frac{C}{\zeta^*} \| \hat w^0 \|_{\out}. 
$$
Again, from H\"older, the bounds \eqref{bd:interextpoisson0} and \eqref{bd:interexth0}
\begin{align*}
& \int \left(1-\chi_{\frac{\zeta^*}{4}} \right) \zeta^{(2-\frac 14)2p}(\hat w^i)^{2p-1} \left(\zeta\partial_{z_i}(g.\nabla \Phi_{(1-\chi_{\frac{\zeta^*}{4}})\hat w^0})+\zeta\partial_{z_i}h^0 \right)\frac{dz}{\zeta^2}\\
& \leq  \left( \int \left(1-\chi_{\frac{\zeta^*}{4}} \right) \zeta^{(2-\frac 14)2p}(\hat w^i)^{2p} \frac{dz}{\zeta^2}\right)^{\frac{2p-1}{2p}} \left( \int \left(1-\chi_{\frac{\zeta^*}{4}} \right) \zeta^{(2-\frac 14)2p}\left| \zeta\partial_{z_i}(g.\nabla \Phi_{(1-\chi_{\frac{\zeta^*}{2}})\hat w^0})+ |\zeta\partial_{z_i}h^0| \right|^{2p} \frac{dz}{\zeta^2}\right)^{\frac{1}{2p}} \\
&\leq   C \| \hat w^0\|_{\out}^{2p-1} \frac{\nu^2}{|\log \nu|} ,
\end{align*}
for $\nu$ small enough, where we used \eqref{estimationpoissonexte} and \eqref{bootstrap:out}. For the last term we integrate by parts, use H\"older and the bootstrap bounds in Definition \ref{def:bootstrap}:
\begin{align*}
& \left|\int \left(1-\chi_{\frac{\zeta^*}{4}} \right) \zeta^{(2-\frac 14)2p}(\hat w^i)^{2p-1}\zeta \partial_{z_i}N^0(\hat w^\perp)\frac{dz}{\zeta^2}\right|\\
& \quad \leq C \frac{e^{-\kappa \tau}}{\nu} \left(\left(\frac{K'\nu^2}{|\log \nu|}\right)^{2p}+\| \hat w^0\|_{\out}^{2p-1}\| \hat w^\perp\|_{\out}^{2p-1}\right)\\
&\quad  +C \frac{e^{-\kappa \tau}}{\nu} \int \left(1-\chi_{\frac{\zeta^*}{4}} \right) \zeta^{(2-\frac 14)2p} |\nabla \hat w^i|^2(\hat w^i)^{2p-2}\frac{dz}{\zeta^2}+C \frac{e^{-\kappa \tau}}{\nu} \| \hat w^0\|_{\out}^{2p-1}\| \hat w^\perp\|_{\out}^{2p-1}\\
& \quad \leq  C \frac{e^{-\kappa \tau}}{\nu} \int \left(1-\chi_{\frac{\zeta^*}{4}} \right) \zeta^{(2-\frac 14)2p} |\nabla \hat w^i|^2(\hat w^i)^{2p-2}\frac{dz}{\zeta^2}+C\left(\frac{K'\nu^2}{|\log \nu|}\right)^{2p},
\end{align*}
%\\
%& \quad = -\int (\hat w^i)^{2p-1} \left(\frac{x^*_\tau}{\mu}.\nabla \hat w^\perp\right)^0\left(\partial_{z_i} \left(1-\chi_{\frac{\zeta^*}{4}} \right) \zeta^{(2-\frac 14)2p-1}+\left(1-\chi_{\frac{\zeta^*}{4}} \right) \partial_{z_i} \zeta^{(2-\frac 14)2p-1}\right)dz\\
%& \qquad \quad -(2p-1)\int \left(1-\chi_{\frac{\zeta^*}{4}} \right) \zeta^{(2-\frac 14)2p} \partial_{z_i}\hat w^i(\hat w^i)^{2p-2}\zeta \left(\frac{x^*_\tau}{\mu}.\nabla \hat w^\perp\right)^0\frac{dz}{\zeta^2}\\
%& \quad \leq C \left|\frac{x_\tau}{\mu}\right| \| \nabla \hat w\|_{L^{2p}(\frac{\zeta^*}{4}\leq |z|\leq \frac{\zeta^*}{2})}\\
%& \qquad \quad + C\left|\frac{x_\tau}{\mu}\right| \left(\int (\hat w^i)^{2p}\left(1-\chi_{\frac{\zeta^*}{4}} \right) \zeta^{(2-\frac 14)2p-1} dz\right)^{\frac{2p-1}{2p}}\left(\int |\nabla \hat w^\perp|^{2p}\left(1-\chi_{\frac{\zeta^*}{4}} \right) \zeta^{(2-\frac 14)2p-1} dz\right)^{\frac{1}{2p}}\\
%&\qquad \quad +C\left|\frac{x_\tau}{\mu}\right| \left(\int \left(1-\chi_{\frac{\zeta^*}{4}} \right) \zeta^{(2-\frac 14)2p} |\nabla \hat w^i|^2(\hat w^i)^{2p-2}\frac{dz}{\zeta^2}\right)^{\frac 12}\left(\int \left(1-\chi_{\frac{\zeta^*}{4}} \right) \zeta^{(2-\frac 14)2p} (\hat w^i)^{2p-2} |\zeta \nabla \hat w^\perp|^2\frac{dz}{\zeta^2}\right)^{\frac 12}\\
where we used Young inequality on the last line and took $\tau_0$ large enough. The collection of above inequalities yields \eqref{id:lyapunovout2}.\\

\noindent \textbf{Step 5} \emph{(End of the proof for $\hat w^0$)}. We sum the identities \eqref{id:lyapunovout1} and \eqref{id:lyapunovout2} for $i=1,2$, concluding the proof of \eqref{bd:lyapunovext0}.\\

\noindent \textbf{Step 6} \emph{(The energy estimate for $\hat w^\perp$)}. This step is very similar to Step 3 so we shall give less details. We claim that there holds the energy estimate
\begin{align}\label{id:lyapunovout1perp}
&\frac{d}{d\tau} \left(\frac{1}{2p}\int \left(1-\chi_{\frac{\zeta^*}{4}} \right) \zeta^{(2-\frac 14)2p}(\hat w^\perp)^{2p}\frac{dz}{\zeta^2}\right)\\
\nonumber & \leq  \left(-\frac{\beta}{4}+\frac{C}{\zeta^*}\right) \int \left(1-\chi_{\frac{\zeta^*}{4}} \right) \zeta^{(2-\frac 14)2p}(\hat w^\perp)^{2p}\frac{dz}{\zeta^2} +C\frac{K^{'2p}\nu^{4p}}{|\log \nu|^{2p}}+ C\left(\int \left(1-\chi_{\frac{\zeta^*}{4}} \right) \zeta^{(2-\frac 14)2p}(\hat w^0)^{\perp} \frac{dz}{\zeta^2}\right)^{\frac{2p-1}{2p}} \frac{\nu^2}{|\log \nu|}.
\end{align}
We compute from \eqref{id:eqhatwperp},
\begin{align} \label{id:energyestimatehatwoutperp}
 &\frac{d}{d\tau} \left(\frac{1}{2p}\int \left(1-\chi_{\frac{\zeta^*}{4}} \right) \zeta^{(2-\frac 14)2p}(\hat w^\perp)^{2p}\frac{dz}{\zeta^2}\right)\\
\nonumber & =\int \left(1-\chi_{\frac{\zeta^*}{4}} \right) \zeta^{(2-\frac 14)2p}(\hat w^\perp)^{2p-1}  \left(-\beta \nabla.(z \hat w^\perp)+\Delta \hat w^\perp+e.\nabla \hat w^\perp +f\hat w^\perp+g.\nabla \Phi_{(1-\chi_{\frac{\zeta^*}{4}})\hat w^\perp} +h^\perp+ N^\perp(\hat w^\perp) \right)\frac{dz}{\zeta^2}.\nonumber
\end{align}
For the last term there holds from \eqref{est:xstar}, \eqref{pointwiseawaynonradial} and \eqref{poissonLinfty}, 
$$
\left( \int \left(1-\chi_{\frac{\zeta^*}{4}} \right) \zeta^{(2-\frac 14)2p}\left| N^\perp(\hat w^\perp) \right|^{2p} \frac{dz}{\zeta^2}\right)^{\frac{1}{2p}}\lesssim \frac{e^{-\kappa \tau}}{\nu^C} \| \hat w^\perp \|_{\out}  \leq \frac{1}{\zeta^*}\| \hat w^\perp \|_{\out},
$$
for $\tau_0$ large enough. Therefore from H\"older, the above bound and the bounds \eqref{bd:interextpoissonperp}, \eqref{bd:interexthperp}:
\begin{align*}
&\int \left(1-\chi_{\frac{\zeta^*}{4}} \right) \zeta^{(2-\frac 14)2p}(\hat w^\perp)^{2p-1}\left(g.\nabla \Phi_{(1-\chi_{\frac{\zeta^*}{4}})\hat w^\perp} +h^\perp+ N^\perp(\hat w^\perp) \right)\frac{dz}{\zeta^2}\\
& \qquad \lesssim \left(\int \left(1-\chi_{\frac{\zeta^*}{4}} \right) \zeta^{(2-\frac 14)2p}(\hat w^\perp)^{2p} \frac{dz}{\zeta^2}\right)^{\frac{2p-1}{2p}} e^{-\kappa \tau}.
\end{align*}
We inject the above identity into \eqref{id:energyestimatehatwoutperp} and use the linear estimate \eqref{id:linearenergyout} with the bounds \eqref{bootstrap:perpmid} for the boundary terms and \eqref{bd:estimationefgexterior} for the potential and vector field to obtain \eqref{id:lyapunovout1perp}.\\

\noindent \textbf{Step 7} \emph{(The energy estimate for $\nabla (\hat w^\perp)$)}. This step is very similar to Step 4. Let $\hat w^i=\zeta \partial_{z_i}\hat w^\perp$ for $i=1,2$. We claim the energy estimate for $i=1,2$:
\begin{align}\label{id:lyapunovout2perp}
 \frac{d}{d\tau} \left(\frac{1}{2p}\int \left(1-\chi_{\frac{\zeta^*}{4}} \right) \zeta^{(2-\frac 14)2p}(\hat w^i)^{2p}\frac{dz}{\zeta^2}\right) &\leq \left(-\frac{\beta}{4}+\frac{C}{\zeta^*}\right) \int \left(1-\chi_{\frac{\zeta^*}{4}} \right) \zeta^{(2-\frac 14)2p}(\hat w^i)^{2p}\frac{dz}{\zeta^2}+\frac{C}{\zeta^*}\| \hat w^0 \|_{\out}^{2p}\\
& +C\frac{K^{'2p}\nu^{4p}}{|\log \nu|^{2p}}+ C\left(\int \left(1-\chi_{\frac{\zeta^*}{4}} \right) \zeta^{(2-\frac 14)2p}(\hat w^i)^{2p} \frac{dz}{\zeta^2}\right)^{\frac{2p-1}{2p}} \frac{K\nu^2}{|\log \nu|}.\nonumber
\end{align}
The evolution equation for $\hat w^i$ is
\begin{align*}
\partial_{\tau} \hat w^i & = -\beta \nabla .(z\hat w^i)+ \Delta \hat w^i + \left( e-\frac{2z}{\zeta^2}\right).\nabla \hat w^i +\left(f+\zeta^{-2}-\frac{z.e}{\zeta^2}\right) \hat w^i  + H,
\end{align*}
where
\begin{align*}
H = \zeta \partial_{z_i}e.\nabla \hat w^0 +\zeta\partial_{z_i}f\hat w^0+\zeta\partial_{z_i}(g.\nabla \Phi_{(1-\chi_{\frac{\zeta^*}{2}})\hat w^\perp})+\zeta\partial_{z_i}\big[h^\perp + N^\perp(\hat w^\perp)\big].
\end{align*}
Giving the energy estimate with $e'= e- 2\zeta^{-2}z$ and $f'=f+\zeta^{-2}-\zeta^{-2} z.e$, we write
\begin{align*}
&\frac{d}{d\tau} \left(\frac{1}{2p}\int \left(1-\chi_{\frac{\zeta^*}{4}} \right) \zeta^{(2-\frac 14)2p}(\hat w^i)^{2p}\frac{dz}{\zeta^2}\right)\\
&  = \int \left(1-\chi_{\frac{\zeta^*}{4}} \right) \zeta^{(2-\frac 14)2p}(\hat w^i)^{2p-1}\times \left(-\beta \nabla.(z\hat w^i)+\Delta \hat w^i+e'.\nabla \hat w^i +f'\hat w^i+H \right)\frac{dz}{\zeta^2}.
\end{align*}
We apply for the linear part the estimate \eqref{id:linearenergyout} with the bounds \eqref{bootstrap:mid} and \eqref{bootstrap:perpmid} for the boundary terms and \eqref{bd:estimationefgexterior}  for the vector field $e'$ and the potential $f'$, and the bound \eqref{bd:estimationcommutatorinterext} with $\bar w =w^\perp$:
\begin{align*}
&\int \left(1-\chi_{\frac{\zeta^*}{4}} \right) \zeta^{(2-\frac 14)2p}(\hat w^i)^{2p-1} \left(-\beta \nabla.(z\hat w^i)+\Delta \hat w^i+e'.\nabla \hat w^i +f'\hat w^i+ \zeta \partial_{z_i}e.\nabla \hat w^\perp+\zeta\partial_{z_i}f\hat w^\perp \right)\frac{dz}{\zeta^2}\\
& \qquad \leq  -(2p-1)\int \left(1-\chi_{\frac{\zeta^*}{4}} \right) \zeta^{(2-\frac 14)2p}\bar w^{2p-2} |\nabla w^\perp|^2 \frac{dz}{\zeta^2} \\
& \qquad \quad + \left(-\frac{\beta}{4}+\frac{C}{\zeta^*}\right)\int \left(1-\chi_{\frac{\zeta^*}{4}} \right) \zeta^{(2-\frac 14)2p}(\hat w^i)^{2p}\frac{dz}{\zeta^2}+CK^{'2p}e^{-\kappa\tau}.
\end{align*}
From H\"older and the bounds \eqref{bd:interextpoisson0} and \eqref{bd:interexth0}:
\begin{align*}
\int \left(1-\chi_{\frac{\zeta^*}{4}} \right) \zeta^{(2-\frac 14)2p}(\hat w^i)^{2p-1} &\left(\zeta\partial_{z_i}(g.\nabla \Phi_{(1-\chi_{\frac{\zeta^*}{4}})\hat w^\perp})+\zeta\partial_{z_i}h^\perp \right)\frac{dz}{\zeta^2}  \leq C \| \hat w^\perp \|_{\out}^{2p-1} e^{-\kappa \tau}.
\end{align*}
We integrate by parts, use H\"older and the bootstrap bounds \eqref{bootstrap:mid}, \eqref{bootstrap:perpmid}, \eqref{pointwiseawaynonradial}, \eqref{poissonLinfty},
\begin{align*}
& \int \left(1-\chi_{\frac{\zeta^*}{4}} \right) \zeta^{(2-\frac 14)2p}(\hat w^i)^{2p-1}\zeta \partial_{z_i}N^\perp(\hat w^\perp) \frac{dz}{\zeta^2}\\
& \quad = -\int (\hat w^i)^{2p-1} N^\perp(\hat w^\perp) \left(\partial_{z_i} \left(1-\chi_{\frac{\zeta^*}{4}} \right) \zeta^{(2-\frac 14)2p-1}+\left(1-\chi_{\frac{\zeta^*}{4}} \right) \partial_{z_i} \zeta^{(2-\frac 14)2p-1}\right)dz\\
& \qquad -(2p-1)\int \left(1-\chi_{\frac{\zeta^*}{4}} \right) \zeta^{(2-\frac 14)2p} \partial_{z_i}\hat w^i(\hat w^i)^{2p-2}\zeta N^\perp(\hat w^\perp) \frac{dz}{\zeta^2}\\
%& \quad \leq C \left(\left|\frac{x_\tau}{\mu}\right| + e^{-\kappa\tau} \right) \left(\| \nabla \hat w \|_{L^{2p}(\frac{\zeta^*}{4}\leq |z|\leq \frac{\zeta^*}{2})}\| \nabla \hat w^\perp \|_{L^{2p}(\frac{\zeta^*}{4}\leq |z|\leq \frac{\zeta^*}{2})}^{2p-1}+\| \hat w \|_{\out}\| \hat w^\perp \|_{\out}^{2p-1}  \right)\\
%& \qquad +C\left(\left|\frac{x_\tau}{\mu}\right| + e^{-\kappa \tau} \right) \left(\int \left(1-\chi_{\frac{\zeta^*}{4}} \right) \zeta^{(2-\frac 14)2p} |\nabla \hat w^i|^2(\hat w^i)^{2p-2}\frac{dz}{\zeta^2}\right)^{\frac 12}\left(\int \left(1-\chi_{\frac{\zeta^*}{4}} \right) \zeta^{(2-\frac 14)2p} (\hat w^i)^{2p-2} |\zeta \nabla \hat w|^2\frac{dz}{\zeta^2}\right)^{\frac 12}\\
& \quad \leq C \frac{e^{-\kappa \tau}}{\nu}\left( \frac{K'\nu^2}{|\log \nu|} \left(K'e^{-\kappa \tau}\right)^{2p}+\frac{K''\nu^2}{|\log \nu|} \| \hat w \|_{\out}^{2p-1}\right)\\
& \qquad  + \int \left(1-\chi_{\frac{\zeta^*}{4}} \right) \zeta^{(2-\frac 14)2p} |\nabla \hat w^i|^2(\hat w^i)^{2p-2}\frac{dz}{\zeta^2}+C \frac{e^{-\kappa \tau}}{\nu}\| \hat w \|_{\out}^{2}\| \hat w^\perp\|_{\out}^{2p-2}\\
& \quad \leq   \int \left(1-\chi_{\frac{\zeta^*}{4}} \right) \zeta^{(2-\frac 14)2p} |\nabla \hat w^i|^2(\hat w^i)^{2p-2}\frac{dz}{\zeta^2}+\left(K'e^{-\kappa \tau}\right)^{2p},
\end{align*}
for any where we used Young inequality on the last line and took $\tau_0$ large enough. The collection of above inequalities yields \eqref{id:lyapunovout2perp}. We sum the identities \eqref{id:lyapunovout1perp} and \eqref{id:lyapunovout2perp} for $i=1,2$, concluding the proof of \eqref{bd:lyapunovextperp}.
\end{proof}

\subsection{Proof of Proposition \ref{pr:bootstrap}}\label{sec:ProofofProExist}
We give the proof of Proposition \eqref{pr:bootstrap} from which directly implies the conclusion of Theorem \ref{theo:Stab}. Assume that the solution is initially trapped in the sense of Definition \ref{def:ini}. We then define
$$
\tau^*=\sup \left\{ \tau_1\geq \tau_0, \mbox{ such that the solution is trapped on } [\tau_0,\tau_1] \right\}.
$$
We assume by contradiction that $\tau_1<\infty$ and will show that this is impossible by integrating in time the various modulation equations and Lyapunov functionals. Throughout the proof, $C$ denotes a universal constant that is independent of the bootstrap constants $\kappa,N,K,K',K''$ and the dependence on $K,K',K''$ in the various $\Oc$'s is explicitly mentioned. Recall Remark \ref{re:orderconstants} regarding the order in which the constants are chosen. \\

\noindent \textbf{Step 1:} \emph{Improved modulation for $\nu$ and $\beta$}. We inject the identity \eqref{eq:mod1} and the bootstrap bound \eqref{bootstrap:L2omeofme} in \eqref{eq:mod0}, then use the eigenvalue expansion \eqref{def:specAsb} and the compatibility condition \eqref{bootstrap:condition} to get
\begin{align}\label{bd:reintegrationinternu}
 \frac{\nu_\tau}{\nu}&=\beta-\beta \frac{a_1}{4\nu^2}(\tilde \alpha_1-1-\tilde \alpha_0)+\Oc\left(\frac{K\mathcal D(\tau)}{|\ln \nu|^2}\right)+\Oc\left(\frac{1}{|\ln \nu|^3} \right)\\
\nonumber &=\beta-\beta \left(\left(-1+\frac{1}{2\ln \nu}+\frac{\ln 2-\gamma-1-\ln \beta}{4|\ln \nu|^2}\right)\left(-1-\frac{1}{4|\ln \nu|^2}\right)\right)+\Oc\left(\frac{K\mathcal D(\tau)}{|\ln \nu|^2}\right)+\Oc\left(\frac{1}{|\ln \nu|^3} \right) \nonumber \\
&=\beta \left(\frac{1}{2\ln \nu}+\frac{\ln 2 -\gamma-2-\ln \beta}{4|\ln \nu|^2}\right)+\Oc\left(\frac{K\mathcal D(\tau)}{|\ln \nu|^2} \right)+\Oc\left(\frac{1}{|\ln \nu|^3}\right).\nonumber
\end{align}
We then inject this identity, the bootstrap bound \eqref{bootstrap:L2omeofme} and the eigenvalue expansion \eqref{def:specAsb} into \eqref{eq:mod1} to obtain
\begin{eqnarray*}
a_{1,\tau}&=&2\beta a_1\tilde \alpha_1-a_1\frac{\beta}{2|\ln \nu|^2}+\frac{a_1\beta_\tau}{\beta}+\Oc\left(\frac{1}{|\ln \nu|^3}\right)+\Oc\left(\frac{K\mathcal D(\tau)}{|\ln \nu|^2}\right)\\
&=& \beta a_1 \left(\frac{1}{\ln \nu}+\frac{\ln 2 -\gamma-2-\ln \beta}{2|\ln \nu|^2}\right)+a_1\frac{\beta_\tau}{\beta}+\Oc\left(\frac{1}{|\ln \nu|^3}\right)+\Oc\left(\frac{K\mathcal D(\tau)}{|\ln \nu|^2}\right).
\end{eqnarray*}
We differentiate the compatibility condition $-1+\frac{1}{2\ln \nu}+\frac{(\ln 2-\gamma-1-\ln \beta)}{4|\ln \nu|^2}=\frac{a_1}{4\nu^2}$ and inject the two identities above to arrive at
\begin{align*}
&\left(\frac{\beta_\tau}{\beta}+\Oc\left(\frac{1}{|\ln \nu|^3}+\frac{K\mathcal D(\tau)}{|\ln \nu|^2}\right)\right)\frac{a_1}{4\nu^2}= \left(\frac{a_{1,\tau}}{a_1}-2\frac{\nu_\tau}{\nu} \right)\frac{a_1}{4\nu^2}\\
&=\frac{\nu_\tau}{\nu}\left(-\frac{1}{2|\ln \nu|^2}-\frac{\ln 2 -\gamma-1-\ln \beta}{2|\ln \nu|^3}\right)=\Oc\left(\frac{1}{|\ln \nu|^3}+\frac{K\mathcal D(\tau)}{|\ln \nu|^4} \right).
\end{align*}
Since $\frac{a_1}{4\nu^2}=-1+\Oc(|\ln \nu|^{-1})$ and $\beta \sim 1/2$ in the bootstrap, we obtain $\beta_\tau=O\left(\frac{1}{|\ln \nu|^3}+\frac{K\mathcal D(\tau)}{|\ln \nu|^2}\right)$, from which and \eqref{bd:reintegrationinternu}, we arrive at the system
\begin{equation} \label{bd:betatau}
\left\{ \begin{array}{l l} \frac{\nu_\tau}{\nu}=\beta \left(\frac{1}{2\ln \nu}+\frac{\ln 2 -\gamma-2-\ln \beta}{4|\ln \nu|^2}\right)+\Oc\left(\frac{K}{|\ln \nu|^3}\right),\\ \beta_\tau=\Oc\left(\frac{K}{|\ln \nu|^3}\right).\end{array}\right.
\end{equation}

\noindent \textbf{Step 2:} \emph{Reintegrating the modulation equations}. We introduce the renormalised time $\tilde \tau =2\beta_0\tau_0+2\int_{\tau_0}^\tau \beta$ and write $\nu=\sqrt{2\beta^{-1}} e^{-\frac{2+\gamma}{2}}e^{-\sqrt{\tilde \tau/2}}\left(1+ \nu'\right)$. From \eqref{bd:betatau}, we have $\tilde \tau=2\beta \tau+O(\tau^{-1/2})$ and 
$$
\frac{\nu_\tau}{\nu}=\frac{\nu'_\tau}{1+\nu'}-\frac{2\sqrt 2 \beta}{\sqrt{\tilde \tau}}+\Oc(|\beta_{\tau}|)=\frac{\nu'_\tau}{1+\nu'}-\frac{ \beta }{\sqrt{ 2 \tilde \tau}}+\Oc(\tau^{-3/2}).
$$
From \eqref{bootstrap:param1} and \eqref{parametersinit}, one has $|\nu'|\lesssim (\ln |\ln \nu|)|\ln \nu|^{-1}$ and the linearisation provides
\begin{align*}
\frac{1}{2\ln \nu}+\frac{\ln 2 -\gamma-2-\ln \beta}{4|\ln \nu|^2}&=\frac{1}{2\left(\ln (\nu'+1)+\frac{\ln 2-\gamma-2-\ln \beta}{2}-\sqrt{\frac{\tilde \tau}{2}}\right)}  +\frac{\ln 2 -\gamma-2-\ln \beta}{2\tilde \tau +\Oc(\sqrt{\tau})}\\
&=-\frac{1}{\sqrt{2\tilde \tau}}-\frac{\nu'}{\tilde \tau}+\Oc(\tau^{-\frac 32}).
\end{align*}
Equation \eqref{bd:betatau} is then transformed into
$$
\nu'_\tau =-\frac{\beta \nu'}{\tilde \tau}+\Oc(K\tau^{-\frac 32}) \quad \mbox{ so that } \quad \nu'_{\tilde \tau} =-\frac{ \nu'}{2\tilde \tau}+\Oc(K\tilde \tau^{-\frac 32}) 
$$
which reintegrated in time by  using \eqref{parametersinit} gives 
\begin{equation} \label{bd:reintegrationnu}
|\nu'(\tau_1)|=|\nu'(\tilde \tau_1)|\leq \frac{\tilde \tau_0}{\tilde \tau_1}|\nu'(\tau_0)|+\frac{CK\ln \tilde \tau_1}{\sqrt{\tilde \tau_1}}\leq \frac{K'}{2}\frac{\ln |\ln \nu|}{|\ln \nu|}.
\end{equation}
As $\beta_\tau =\Oc(K|\ln \nu|^{-3})=\Oc(K\tau^{-3/2})$, we use \eqref{parametersinit} to obtain
\begin{equation} \label{bd:reintegrationbeta}
\left|\beta (\tau_1)-\frac 12\right| =\left|\beta (\tau_0)-\frac 12 +\Oc(K\tau_0^{-1/2})\right|\leq CK\tau_0^{-\frac 12}\leq \frac{K'}{2|\ln \nu_0|}.
\end{equation}
Finally, we reintegrate the modulation equations \eqref{eq:mod2} for the other parameters $a_i$ for $i=2,...,n$. Using \eqref{bootstrap:L2omeofme} and the fact that $\beta \geq 1/4$ and $|\tilde \alpha_i|\leq 1/2$ yield
\begin{align*}
\frac{d}{d\tau}(a_i^2)&=4\beta (1 - n + \tilde{\alpha}_n)a_i^2+2a_i \left( \Oc\left( \frac{\Dc(\tau)}{|\ln \nu|} \|m_\e\|_{L^2_{\frac{\omega_\nu}{\zeta}}} \right) + \Oc\left(\frac{\nu^2}{|\ln \nu|^2}\right) \right) \leq -\frac{a_i^2}{2}+K\tau^{-2}e^{-2\sqrt{\beta \tau}}.
\end{align*}
Reintegrating in time, using \eqref{bd:betatau} and \eqref{parametersinitan} yields that for some universal constant $C>0$ the estimate
\begin{align}\label{bd:intermodaiigeq2}
 a_i^2(\tau_1)& \leq e^{-\frac{\tau_1-\tau_0}{2}}a_i^2(\tau_0)+e^{-\frac{\tau_1}{2}}\int_{\tau_0}^{\tau_1}e^{\frac{\tau}{2}}K\tau^{-2}e^{-2\sqrt{\beta \tau}}d\tau \leq C \tau_1^{-2}e^{-2\sqrt{\beta \tau_1}}+CK\tau_1^{-2}e^{-2\sqrt{\beta \tau_1}} \\
 &\leq CK\frac{\nu^4}{|\log \nu|^4}\leq \frac{K^2}{2}\frac{\nu^4}{|\log \nu|^4}.\nonumber
\end{align}

\noindent \textbf{Step 3:} \emph{The Lyapunov functionals}. We inject the estimate $|\mathcal D(\tau)|\leq C |\ln \nu|^{-1}$ into \eqref{eq:mod0} and \eqref{eq:mod1} to obtain the estimate
$$
|\mbox{Mod}_0|+|\mbox{Mod}_1|\leq C\frac{\nu^2}{|\ln \nu|^{2}}.
$$
We inject this estimate and the bootstrap bound \eqref{bootstrap:L2omeofme} and the last estimate in \eqref{bd:intermodaiigeq2} into \eqref{eq:monoL2ome} to get
\begin{align*}
\frac{1}{2}\frac{d}{d\tau} \big\|\bar m_\e \big\|^2_{L^2_{\frac{\bar \omega_\nu}{\zeta}}} &\leq -2\beta\left(N - C \right)\big\|\bar m_\e \big\|^2_{L^2_{\frac{\bar \omega_\nu}{\zeta}}}+K\frac{\nu^2}{|\log \nu|^3}+CK\frac{\nu^4}{|\log \nu|^2}+C\frac{\nu^4}{|\log \nu|^2} \\
& \quad \leq -\big\|\bar m_\e \big\|^2_{L^2_{\frac{\bar \omega_\nu}{\zeta}}}+CK\frac{e^{-4\sqrt{\beta \tau}}}{\tau}
\end{align*}
for $N$ large enough. We integrate in time this identity  and use the initial condition \eqref{mepsiloninitL2omega},
\begin{align} \label{bd:reintegrationL2omega}
& \big\|\bar m_\e(\tau_1) \big\|^2_{L^2_{\frac{\bar \omega_\nu}{\zeta}}} \leq  e^{-2(\tau_1-\tau_0)}\big\|\bar m_\e(\tau_0) \big\|^2_{L^2_{\frac{\bar \omega_\nu}{\zeta}}}+CKe^{-2\tau_1}\int_{\tau_0}^{\tau_1} e^{2\tau}\frac{e^{-4\sqrt{\beta \tau}}}{\tau}d\tau  \leq CK\frac{e^{-4\sqrt{\beta \tau}}}{\tau}\leq \frac{K^2}{2}\frac{\nu^4}{|\log \nu|^2},
\end{align}
(where we used $|\beta_\tau|\lesssim K|\log \nu|^3$ to integrate by parts in time). Next, we directly have from \eqref{est:H2me} the bound
\begin{equation} \label{bd:reintegrationmid}
\| m_\e (\tau_1) \|_{H^2(\zeta_* \leq \zeta \leq \zeta^*)}\leq \frac{K'}{2} \frac{\nu^2}{|\ln \nu|}.
\end{equation}
We now turn to the inner estimates. Let $M\gg 1$ be a large constant. Then for $\zeta_*$ small enough, we obtain from \eqref{eq:1stEne} and the bootstrap bounds \eqref{bootstrap:mid}:
\begin{align*}
&\frac{d}{d\tau} \left[- e^{M\tau}\tnu^2\int \tm_v^* \As_0 \tm_v^* \frac{\omega_0}{\tr} \right] +e^{M\tau} \|\As_0 \tm_v^*\|_{L^2_\frac{\omega_0}{\tr}}   \leq  CK^{'2}e^{M\tau}\frac{\nu^4}{|\log \nu|^2}\leq CK^{'2}e^{M\tau}\frac{e^{-4\sqrt{\beta \tau}}}{\tau},
\end{align*}
from which and a reintegration in time  and the dissipation estimate from \eqref{bd:initint}, we derive
\begin{align*}
&-\tnu^2(\tau_1)\int \tm_v^*(\tau_1) \As_0 \tm_v^* (\tau_1)\frac{\omega_0}{\tr}+e^{-M\tau_1}\int_{\tau_0}^{\tau_1}e^{M\tau} \|\As_0 \tm_v^*\|_{L^2_\frac{\omega_0}{\tr}}d\tau\\
 & \quad \leq  -e^{-M(\tau_1-\tau_0)} \tnu^2(\tau_0)\int \tm_v^*(\tau_0) \As_0 \tm_v^* (\tau_0)\frac{\omega_0}{\tr}+e^{-M\tau_1}CK^{'2}\int_{\tau_0}^{\tau_1}e^{M\tau}\frac{e^{-4\sqrt{\beta \tau}}}{\tau}d\tau \\
&\quad \leq  \frac{CK^{'2}}{M} \frac{e^{-4\sqrt{\beta \tau}}}{\tau}\leq \frac{K^{''2}}{2M}\frac{\nu^4}{|\log \nu|^2}.
\end{align*}
We inject the bound \eqref{bootstrap:mid} into \eqref{eq:2ndEne} to obtain
$$
\frac{d}{d\tau}\left(e^{M\tau}\| \As_0 \tm_v^*\|_{L^2_\frac{\omega_0}{\tr}}^2\right) \leq C \left(Me^{M\tau}\| \As_0 \tm_v^*\|_{L^2_\frac{\omega_0}{\tr}}^2  + \frac{K'\nu^4}{|\ln \nu|^2}\right).
$$
Reintegrating in time and using the previous dissipation estimate for $\| \As_0 \tm_v^*\|_{L^2_\frac{\omega_0}{\tr}}^2$ and \eqref{bd:initint} yield
\begin{align}\label{bd:reintegrationint}
 & \| \As_0 \tm_v^*(\tau_1)\|_{L^2_\frac{\omega_0}{\tr}}^2\leq e^{-M(\tau_1-\tau_0)}\| \As_0 \tm_v^*(\tau_0)\|_{L^2_\frac{\omega_0}{\tr}}^2 +CMe^{-M\tau_1}\int_{\tau_0}^{\tau_1}e^{M\tau}\| \As_0 \tm_v^*\|_{L^2_\frac{\omega_0}{\tr}}^2d\tau+CK^{'2}\int_{\tau_0}^{\tau_1} e^{M\tau}\frac{e^{-4\sqrt{\beta \tau}}}{\tau}d\tau \\
 & \qquad \quad \leq  \frac{CK^{'2}e^{-4\sqrt{\beta \tau}}}{\tau}\leq \frac{K^{''2}}{2}\frac{\nu^4}{|\log \nu|^2}.\nonumber
\end{align}
We turn to the estimates for the nonradial part. Reintegrating in time \eqref{ene:ebot0} directly gives the bound
\begin{equation} \label{bd:reintegrationL2omegaperp}
\| \e^\perp \|_{0}\leq Ce^{-\kappa \tau}\leq \frac{K}{2}e^{-\kappa \tau}.
\end{equation}
From the bootstrap bound \eqref{bootstrap:perpmid}, we have
\begin{align*}
&\frac{d}{d\tau} \left[ \int \tilde{q}^\perp_* \Ms \tilde{q}_*^\perp  dy - \int \Ls_0 \tilde q^\perp_* \Ms \tilde{q}_*^\perp dy \right] \leq  -\delta_2'\left[ \frac{1}{\zeta_*^2}\int \tilde{q}^\perp_* \Ms \tilde{q}_*^\perp  dy - \int \Ls_0 \tilde q^\perp_* \Ms \tilde{q}_*^\perp dy  \right] + \frac{C(K^{2}+K^{'2})}{\nu^2}e^{-2\kappa \tau}.
\end{align*}
For $\kappa$ small enough depending on $\delta_2'$, we have from \eqref{bd:initintperp},
\begin{align} \label{bd:reintegrationintperp}
 &\left[ \int \tilde{q}^\perp_*(\tau_1) \Ms \tilde{q}_*^\perp (\tau_1)  dy - \int \Ls_0 \tilde q^\perp_*(\tau_1) \Ms \tilde{q}_*^\perp (\tau_1) dy \right] \leq C(K^2+K^{'2})\frac{e^{-2\kappa \tau}}{\nu^2}\leq \frac{K^{''2}}{2\nu^2}e^{-2\kappa \tau}.
\end{align}
Finally,\eqref{bd:lyapunovext0} and \eqref{bd:lyapunovextperp} are directly reintegrated in time for $\zeta^*$ large enough and $\kappa$ small enough, using the initial bounds \eqref{bd:initout} and \eqref{bd:initoutperp},
\begin{equation} \label{bd:reintegrationout}
\| \hat w^0\|_{\out}\leq \frac{K''\nu^2}{2|\ln \nu|}, \quad \| \hat w^\perp \|_{\out}\leq \frac{K''}{2}e^{-\kappa \tau}.
\end{equation}
\noindent \textbf{Step 4:} \emph{End of the proof of Proposition \ref{pr:bootstrap}}. In Step 1 and Step 2, all the bounds involved in the Definition \ref{def:bootstrap} have been improved by a factor $1/2$ at time $\tau_1$, from \eqref{bd:reintegrationnu}, \eqref{bd:reintegrationbeta}, \eqref{bd:intermodaiigeq2}, \eqref{bd:reintegrationL2omega}, \eqref{bd:reintegrationmid}, \eqref{bd:reintegrationint}, \eqref{bd:reintegrationL2omegaperp}, \eqref{bd:reintegrationintperp}, \eqref{bd:reintegrationout}. Hence, by a continuity argument, these bounds also hold true on some time interval $[\tau_1,\tau_1+\delta]$ for some small $\delta>0$. This contradicts the definition of $\tau_1$. Hence $\tau_1=\infty$ and the solution is trapped in the regime \ref{def:bootstrap} for $\tau \in [\tau_0, +\infty)$. Knowing the solution is global in time $\tau$, reintegrating \eqref{bd:betatau} yields
$$
\beta(\tau)=\beta_\infty+\Oc(\tau^{-1/2}), \ \ \beta_\infty=\beta(\tau_0)+\int_{\tau_0}^\infty \beta_\tau d\tau.
$$
Recall the renormalised time $\tilde \tau$ of Step 2, we obtain from the above identity that
\begin{equation} \label{reinteproxtautildetau}
\tau=\tilde \tau/2\beta_{\infty} +\Oc(\sqrt{\tilde \tau}),
\end{equation}
and since by definition $\mu_\tau/\mu=\beta$ that $\mu(\tilde \tau)=e^{-\tilde \tau/2}$. To go back to the original time variable, we integrate
$$
\frac{d\tilde \tau}{dt}=\frac{d\tau}{dt}\frac{d\tilde \tau}{d\tau}=\frac{2}{\mu^2}\beta=2e^{\tilde \tau} \beta_\infty (1+\Oc(\tilde \tau^{-1/2})).
$$
Solving the above equation, there exists $T>0$ such that:
\begin{equation} \label{reinte:exprs}
\tilde \tau=-\log \left(2\beta_\infty (T-t)\right)+\Oc\left(\frac{1}{|\log \left((T-t)\right)|^{\frac 12}}\right).
\end{equation}
Hence, we obtain the following expression for the parabolic scale $\mu$,
\begin{equation} \label{reinte:exprmu}
\mu= e^{-\frac{\tilde \tau}{2}} =\sqrt{2\beta_\infty}\sqrt{T-t}\left(1+\Oc\left(\frac{1}{|\log \left((T-t)\right)|^{\frac 12}}\right)\right).
\end{equation}
We get from $\beta=\beta_\infty+\Oc(\tau^{-1/2})=\beta_{\infty}+\Oc(|\ln(T-t)|^{-1/2})$ and \eqref{parametersinit} that
\begin{align*}
\nu &=\sqrt{\frac{2}{\beta}}e^{-\frac{2+\gamma}{2}}e^{-\sqrt{\frac{\tilde \tau}{2}}}\left(1+\Oc\left(\frac{1}{|\ln \nu|^{\frac 12}}\right)\right)=\sqrt{\frac{2}{\beta_{\infty}}}e^{-\frac{2+\gamma}{2}}e^{-\sqrt{\frac{|\ln (T-t)|}{2}}}\left(1+\Oc\left(\frac{1}{|\ln (T-t)|^{\frac{1}{4}}}\right)\right)
\end{align*}
and hence, we get the desired blowup speed
$$
\lambda =\mu \nu =2e^{-\frac{2+\gamma}{2}}\sqrt{T-t}e^{-\sqrt{\frac{|\ln (T-t)|}{2}}}\left(1+\Oc\left(\frac{1}{|\ln (T-t)|^{\frac{1}{4}}}\right)\right).
$$
From \eqref{est:xstar} (the right hand side being less than $1$) and \eqref{reinte:exprmu}, we get the rough bound:
$$
|x^*_t|=\frac{1}{\mu} \frac{|x^*_\tau |}{\mu}\lesssim \frac{1}{\sqrt{T-t}}.
$$
This implies that $x^*(t)$ converges to some $X\in \mathbb R^2$ as $t\rightarrow T$. We now turn to the proof of the continuity of the blowup time and blowup point with respect to the initial datum. Fix $u_0\in \mathcal O$ with blowup time $T(u_0)$, blowup point $x^*_u$ with limit $X(u_0)$, and renormalised time $\tau_u$, and $\delta >0$. From \eqref{reinte:exprs}, \eqref{reinteproxtautildetau} and the above inequality, for any $\tau_1$ large enough, there exists $T(u_0)-\delta/2\leq T_1<T(u_0)$ such that $\tau_u(T_1)\geq \tau_1+1$ and $|x^*_u(T)-X(u_0)|\leq \delta/3$. By continuity we then obtain that for another solution $v$, $v$ and all $v$-related parameters converge to $u$ and $u$-related parameters on $[0,T_1]$ as $v_0\rightarrow u_0$ in $\mathcal E$. In particular, $\tau_v(T_1)\geq \tau_1$ and $|x^*_v(T_1)-X(u_0)|\leq \delta/2$. By \eqref{reinte:exprs}, \eqref{reinteproxtautildetau} and the above inequality, we get that for $\tau_1$ large enough, $|X(v)-x^*_v(T_1)|\leq \delta/2$ and $|T(v_0)-T_1|\leq \delta/2$. Hence by summing we obtain the desired continuity $|X(u_0)-X(v_0)|\leq \delta$ and $|T(u_0)-T(v_0)|\leq \delta$. This concludes the proof of Proposition \ref{pr:bootstrap} as well as Theorem \ref{theo:Stab}. \hfill $\square$

\section{Unstable blowup dynamics} \label{sec:UnstabTheo2}
In this section, we sketch the idea to establish the existence of unstable blowup solutions to system \eqref{eq:KS} in the radial setting. The strategy of the proof of Theorem \ref{theo:Stab} has to be modified the following way. Fix $\ell \in \mathbb{N}$ with $\ell \geq 2$ and $N \in \mathbb{N}$ with $N \gg 1$ and consider the approximate solution of the form
\begin{equation}\label{def:mW7}
m_W(\zeta, \tau) = Q_\nu(\zeta) + a_\ell(\tau) \big(\phi_{\ell, \nu}(\zeta) - \phi_{0, \nu}(\zeta)\big) + \sum_{n = 1, n\ne \ell}^{N} a_n(\tau)\phi_{n, \nu}(\zeta) = Q_\nu + P_\nu,
\end{equation} 
where the approximate perturbation is $P_\nu = P_{\ell,\nu} + P_{+, \nu} + P_{N, \nu}$ with
$$ P_{\ell, \nu} = a_\ell(\tau) \big(\phi_{\ell, \nu}(\zeta) - \phi_{0, \nu}(\zeta)\big), \;\; P_{+,\nu} = \sum_{i = 1}^{\ell - 1} a_i(\tau)\phi_{i, \nu}(\zeta),\;\; P_{N,\nu} = \sum_{i = \ell + 1}^{N} a_i(\tau)\phi_{i, \nu}(\zeta)$$
and $\mathbf{a}(\tau) = (a_1, \cdots, a_N)(\tau)$ and $\nu(\tau)$ are unknown functions to be determined, 
and $(\phi_{n, \nu})_{0\leq n\leq N}$ are the eigenfunctions  described in Proposition \ref{prop:SpecRad}, i.e, 
\begin{equation}\label{eq:eigeninETA}
\As^\zeta \phi_{n,\nu}(\zeta) = 2\beta\Big(1 - n + \frac{1}{2 \ln \nu} + \bar{\alpha}_n(\nu) \Big)\phi_{n, \nu}(\zeta)
\end{equation} 
with $ |\bar{\alpha}(\nu)| \lesssim \frac{1}{|\ln \nu |^2}$ and $\phi_{n,\nu}$ is given by \eqref{def:hatphi}. Here, the leading order term in the approximate perturbation is $P_{\ell, \nu}$ which drives the law of blowup law. The first higher order term $P_{+, \nu}$ contains $\ell - 1$ "unstable" directions for $\ell \geq 2$, that can be controlled by tuning the initial datum in a suitable way (see Definition \ref{def:bootstrapUnell} below) via a classical topological argument. The second higher order term $P_{N,\nu}$ added to \eqref{exp:mW} is to get a large constant in the spectral gap \eqref{est:SpecGap1} which is only used for the control of the solution and does not affect the leading dynamic of blowup
 
\paragraph*{The generated error and derivation of unstable blowup rates.}  The following lemma is similar to Lemma \ref{lemm:appProf}, from which we can redesign the bootstrap regime \ref{def:bootstrap} adapted for this case.
\begin{lemma} \label{lemm:appWell} Assume that $(\beta,\nu, \mathbf{a})$ are $\Cc^1$ maps $(\beta,\nu, \mathbf{a}): [\tau_0, \tau_1) \mapsto [1/2,2]\times (0,\nu^*] \times (0, a^*]^{\ell + 1}$, for  $0 < \nu^*, a^* \ll 1$ and $1 \ll \tau_0 <\tau_1 \leq +\infty$, with a priori bounds: 
\begin{equation} \label{bd:aprioriinstable}
|\beta_\tau|\lesssim |\ln \nu|^{\frac 32}, \quad \left|\frac{\nu_\tau}{\nu} \right| \lesssim 1, \quad  |a_\ell| \lesssim \nu^2, \quad |a_n| \lesssim \frac{\nu^2}{|\ln \nu|} \;\; \textup{for}\;\; 1 \leq n \ne \ell \leq N.
\end{equation}
Then the error generated by \eqref{def:mW7} to the flow \eqref{partialmassselfsim} can be decomposed as
\begin{align}\label{exp:mEtilee}
m_E &= -\pa_\tau m_W + \pa_\zeta^2 m_W - \frac{\pa_\zeta m_W}{\zeta} + \frac{\pa_\zeta(m_W^2)}{2\zeta} - \beta \zeta\pa_\zeta m_W = \sum_{j = 0}^\ell \textup{Mod}_j  \times  \phi_{j, \nu} + \tilde{m}_E + \frac{\pa_\zeta P_\nu^2}{2\zeta},
\end{align}
where
\begin{align*}
\textup{Mod}_0 &= \left(\frac{\nu_\tau}{\nu} - \beta \right)8\nu^2 +  a_{\ell, \tau} - 2\beta a_\ell \big(1 + \frac{1}{2\ln \nu} + \bar \alpha_0 \big),\\
\textup{Mod}_j &= - a_{j, \tau}  + 2 \beta a_j \big(1 - j + \frac{1}{2\ln \nu} + \bar \alpha_j \big) \quad \textup{for} \quad 1 \leq j \leq N,
\end{align*}
 and
\begin{align} \label{bd:tildemeinstable2}
&\big \la \tm_E,\phi_{\ell,\nu} \big\ra_{L^2_\frac{\omega_\nu}{\zeta}} =  -a_\ell \| \phi_{\ell, \nu}\|^2_{L^2_\frac{\omega_\nu}{\zeta}} \left(\frac{\nu_\tau}{
\nu} \frac{1}{\ln \nu}+\frac{\beta_\tau}{\beta} \ell  \right)+O(|\ln \nu|^{\frac 12}),\\
\label{bd:tildemeinstable3} & \big \la \tm_E,\phi_{0,\nu} \big\ra_{L^2_\frac{\omega_\nu}{\zeta}} \sim  -a_\ell \frac{\nu_\tau}{8\nu}+O(|\ln \nu|^{-\frac 12}),
\end{align}
and 
\begin{equation} \label{bd:tildemeinstable}
\sum_{n = 1, n \ne \ell}^{N}\Big|\big \la \tm_E,\phi_{n,\nu} \big\ra_{L^2_\frac{\omega_\nu}{\zeta}}\Big| \lesssim |\ln \nu | \nu^2, \quad  \|\tm_E\|_{L^2_\frac{\omega_\nu}{\zeta}} \lesssim \nu^2.
\end{equation}

\end{lemma}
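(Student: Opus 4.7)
The proof will follow closely the structure of Lemma \ref{lemm:appProf}, which is the special case $\ell=1$. The plan is to first plug the ansatz \eqref{def:mW7} into the definition of $m_E$ in \eqref{exp:mEtilee}, use the eigenvalue identity $\As^\zeta \phi_{n,\nu}=2\beta(1-n+\frac{1}{2\ln \nu}+\bar \alpha_n)\phi_{n,\nu}$ from \eqref{eq:eigeninETA}, and exploit the algebraic identity $\zeta\pa_\zeta Q_\nu=8T_0=8\nu^2\phi_{0,\nu}+8\nu^2\tilde \phi_{0,\nu}$ to extract the $\mathrm{Mod}_n$ coefficients. What is left is the remainder
\begin{align*}
\tilde m_E &= -a_\ell\Big[\tfrac{\nu_\tau}{\nu}\nu\pa_\nu(\phi_{\ell,\nu}-\phi_{0,\nu})+\tfrac{\beta_\tau}{\beta}\beta\pa_\beta(\phi_{\ell,\nu}-\phi_{0,\nu})\Big] \\
&\quad -\sum_{n=1,n\neq \ell}^N a_n\Big[\tfrac{\nu_\tau}{\nu}\nu\pa_\nu\phi_{n,\nu}+\tfrac{\beta_\tau}{\beta}\beta\pa_\beta\phi_{n,\nu}\Big]+\left(\tfrac{\nu_\tau}{\nu}-\beta\right)\big(\zeta\pa_\zeta Q_\nu-8\nu^2\phi_{0,\nu}\big),
\end{align*}
exactly as in \eqref{exp:mEpr}, and the nonlinear correction $\pa_\zeta P_\nu^2/(2\zeta)$.

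Second, I would compute $\langle \tilde m_E,\phi_{j,\nu}\rangle_{L^2_{\omega_\nu/\zeta}}$ for $j=0$ and $j=\ell$ by asymptotic evaluation of the dominant contributions, mirroring the explicit calculations after \eqref{interm:modpatau} but now with $T_\ell$ replacing $T_1$. The relevant asymptotic input is given by \eqref{est:Tj}, namely $T_\ell(r)\sim \hat d_\ell r^{2\ell-2}\ln r$ as $r\to\infty$, combined with the relations from Lemma \ref{lemm:EigAzeta} extended to index $\ell$: schematically $\nu\pa_\nu\phi_{\ell,\nu}\approx -2\ell\beta (r\pa_r T_\ell)(\zeta/\nu)\nu^{2\ell-2}+O(|\ln \nu|^{-1})$ at leading order, and $\beta\pa_\beta\phi_{\ell,\nu}\approx \ell\phi_{\ell,\nu}+O(|\ln \nu|^{-1})$ on the relevant scale. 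Since $\phi_{\ell,\nu}$ has $L^2_{\omega_\nu/\zeta}$ norm squared comparable to $|\ln \nu|^2$, the Gaussian integrals $\int \zeta^{2\ell-1}(\ln \zeta)^k e^{-\beta\zeta^2/2}\,d\zeta$ will produce the prefactors $|\ln \nu|^2$ appearing on the right of \eqref{bd:tildemeinstable2} (and powers of $\nu$ cancel after rescaling $r=\zeta/\nu$). The leading behaviour of $\langle \tilde m_E,\phi_{0,\nu}\rangle$ in \eqref{bd:tildemeinstable3} comes instead from the last term $(\tfrac{\nu_\tau}{\nu}-\beta)(\zeta\pa_\zeta Q_\nu-8\nu^2\phi_{0,\nu})$ paired with $\phi_{0,\nu}$: using $\nu_\tau/\nu-\beta\sim -a_\ell/(8\nu^2)$ from the vanishing of $\mathrm{Mod}_0$ at leading order (an a posteriori consistency, or equivalently from \eqref{bd:aprioriinstable}) yields the stated expression $-a_\ell\nu_\tau/(8\nu)$ up to $O(|\ln \nu|^{-1/2})$ errors from the cross terms estimated via Cauchy–Schwarz and the pointwise bounds \eqref{est:PhinPointEst}, \eqref{pointwisephi0}.

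Third, I would deal with the rough bounds \eqref{bd:tildemeinstable}. Here no cancellation is needed, and Cauchy–Schwarz using the pointwise estimate \eqref{est:PhinPointEst}, the eigenfunction norms $\|\phi_{n,\nu}\|_{L^2_{\omega_\nu/\zeta}}\lesssim |\ln \nu|$ from \eqref{est:PhinL2norm1}, and the a priori bounds \eqref{bd:aprioriinstable} $|\nu_\tau/\nu|\lesssim 1$, $|\beta_\tau|\lesssim |\ln \nu|^{3/2}$, $|a_n|\lesssim \nu^2/|\ln \nu|$ directly gives $|\langle \tilde m_E,\phi_{n,\nu}\rangle|\lesssim \nu^2|\ln \nu|$ for $n\neq 0,\ell$ and the global bound $\|\tilde m_E\|_{L^2_{\omega_\nu/\zeta}}\lesssim \nu^2$ (the worst term being $|\beta_\tau|\cdot|a_\ell|\cdot\|\beta\pa_\beta\phi_{\ell,\nu}\|\lesssim |\ln \nu|^{3/2}\cdot\nu^2\cdot|\ln \nu|^{1/2}\sim \nu^2|\ln \nu|^2$, which one absorbs after refining the $\beta_\tau$ bound or by using the sharper bound on $\beta\pa_\beta\phi_{\ell,\nu}$).

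The main obstacle will be the extension of the explicit asymptotic identities for $\nu\pa_\nu\phi_{1,\nu}$ and $\beta\pa_\beta\phi_{1,\nu}$ provided in Lemma \ref{lemm:EigAzeta}(ii) to the general eigenfunction $\phi_{\ell,\nu}$. This requires either revisiting the construction of the $\phi_{n,\nu}$ from \cite{CGNNarx19a} to differentiate the inner/outer matched expansions with respect to $\nu$ and $\beta$, or — more likely sufficient here — observing that since we only need the leading $T_\ell$ contribution, one can differentiate the explicit leading part $c_{\ell,\ell}\beta^\ell \nu^{2\ell-2}T_\ell(\zeta/\nu)$ in \eqref{def:hatphi} directly and control the remainder using the pointwise bounds on $\tilde \phi_{\ell,\nu}$ analogous to \eqref{est:phi1til}. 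All remaining estimates are then routine Gaussian-weight computations.
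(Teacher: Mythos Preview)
Your overall scheme (plug in the ansatz, extract $\mathrm{Mod}_j$, identify $\tilde m_E$, project) is correct and matches the paper, but there are two concrete gaps in how you handle the projections and the $L^2$ bound.

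\textbf{The source of the $1/8$ in \eqref{bd:tildemeinstable3}.} Your claim that the leading term $-a_\ell\nu_\tau/(8\nu)$ comes from the last piece $(\tfrac{\nu_\tau}{\nu}-\beta)(\zeta\pa_\zeta Q_\nu-8\nu^2\phi_{0,\nu})$ is wrong. That piece equals $-8\nu^2(\tfrac{\nu_\tau}{\nu}-\beta)\tilde\phi_{0,\nu}$, and its pairing with $\phi_{0,\nu}$ is of lower order; moreover, invoking ``$\nu_\tau/\nu-\beta\sim -a_\ell/(8\nu^2)$ from the vanishing of $\mathrm{Mod}_0$'' is circular, since this lemma is precisely what feeds the modulation equations. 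In the paper the coefficient $1/8$ comes from the \emph{first} term, namely $-a_\ell\tfrac{\nu_\tau}{\nu}\la\nu\pa_\nu(\phi_{\ell,\nu}-\phi_{0,\nu}),\phi_{0,\nu}\ra$. One differentiates \emph{all} the $T_j$ in \eqref{def:hatphi}, not just $T_\ell$: writing $\Theta_j=(2j-2)T_j-r\pa_r T_j$, one has $\nu\pa_\nu(\phi_{\ell,\nu}-\phi_{0,\nu})=\sum_{j=1}^{\ell}c_{\ell,j}\beta^j\nu^{2j-2}\Theta_j(\zeta/\nu)+\text{(remainder)}$. Since $\Theta_j(r)\sim -\hat d_j r^{2j-2}$ at infinity and $\phi_{0,\nu}\sim \nu^{-2}T_0$, each pairing produces a Gamma integral $\int_0^\infty \xi^{j-1}e^{-\xi}d\xi=(j-1)!$, and the sum collapses via the binomial identity $\sum_{j=1}^\ell(-1)^{j+1}\binom{\ell}{j}=1$ to the universal value $1/8$. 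The analogous computation for $\la\beta\pa_\beta(\phi_{\ell,\nu}-\phi_{0,\nu}),\phi_{0,\nu}\ra$ uses $\sum_{j=1}^\ell(-1)^j\tfrac{\ell!}{(\ell-j)!(j-1)!}=0$ to get $O(|\ln\nu|^{1/2})$ rather than $O(|\ln\nu|)$.

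\textbf{The log cancellation in $\Theta_j$.} Your plan to use only the top $T_\ell$ contribution misses the point of introducing $\Theta_j$. The individual $T_j$ have $T_j(r)\sim \hat d_j r^{2j-2}\ln r$, so $\nu^{2j-2}T_j(\zeta/\nu)$ carries a factor $|\ln\nu|$; but the combination $\Theta_j(r)\sim -\hat d_j r^{2j-2}$ has \emph{no} logarithm. This cancellation is what makes $\|\nu\pa_\nu(\phi_{\ell,\nu}-\phi_{0,\nu})\|_{L^2_{\omega_\nu/\zeta}}=O(1)$ rather than $O(|\ln\nu|)$, and hence is essential for the bound $\|\tilde m_E\|_{L^2_{\omega_\nu/\zeta}}\lesssim \nu^2$ in \eqref{bd:tildemeinstable}. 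Applying \eqref{est:PhinPointEst} directly, as you propose, would only give $\nu^2|\ln\nu|$.

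\textbf{On \eqref{bd:tildemeinstable2}.} Your idea is workable here, but note that the paper takes a cleaner route: it differentiates the norm identity $\|\phi_{\ell,\nu}\|^2_{L^2_{\omega_\nu/\zeta}}=c_\ell\ln^2\nu$ and the orthogonality $\la\phi_{\ell,\nu},\phi_{0,\nu}\ra=0$ with respect to $\nu$ to obtain $\la\nu\pa_\nu(\phi_{\ell,\nu}-\phi_{0,\nu}),\phi_{\ell,\nu}\ra=c_\ell\ln\nu+O(|\ln\nu|^{1/2})$ directly, and uses an algebraic identity $jT_j=T_j-(2\beta\nu^2)^{-1}(\As T_j+T_{j-1})+\tfrac{1}{2}\Theta_j$ to show $\beta\pa_\beta\phi_{\ell,\nu}=\ell\phi_{\ell,\nu}+\sum_{j<\ell}\tilde c_j\phi_{j,\nu}+O_{L^2}(|\ln\nu|^{1/2})$, from which the projection onto $\phi_{\ell,\nu}$ follows by orthogonality. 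This avoids the brute-force Gaussian integrals you outline.
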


\begin{proof} 

Since the proof is exactly the same to that of Lemma \ref{lemm:appProf}, we only sketch it. From the a priori bounds, we focus on the leading order term of $\tilde{m}_E$ which is
\begin{equation} \label{id:instablemainterm}
\tm_E \sim -a_\ell \left( \frac{\nu_\tau}{\nu} \nu \pa_\nu (\phi_{\ell, \nu} - \phi_{0, \nu})+\frac{\beta_\tau}{\beta} \beta \pa_\beta (\phi_{\ell, \nu} - \phi_{0, \nu})\right).
\end{equation}
We introduce (the logarithmic cancellation as $r\to \infty$ is a consequence of \eqref{est:Tj}):
\begin{equation} \label{id:asymptthetainstable}
\Theta_j = (2j - 2)T_j - r\pa_r T_j =_{r\to \infty} \left\{ \begin{array}{l l} -\hat d_j r^{2j - 2}+O(r^{2j-4}\ln r) \qquad \mbox{if }j\geq 1, \\ O(r^{-4}) \qquad \qquad \qquad \quad \mbox{if }j=0,\end{array} \right.
\end{equation}
with $\hat d_1 = -\frac{1}{2}$, $\hat d_{j} = -\frac{\hat d_{j-1}}{4j(j-1)}$. This and \eqref{def:hatphi} imply the identities:
\begin{align} \label{id:instableidentity1}
&\nu\pa_\nu (\phi_{\ell, \nu} - \phi_{0, \nu}) = \sum_{j=1}^\ell c_{\ell,j}\beta^j \nu^{2j-2}  \Theta_j(\zeta/\nu) +\nu\pa_\nu \tilde \phi_{\ell, \nu} -\nu\pa_\nu \tilde \phi_{0, \nu}, \\
\label{id:instableidentity2}&\beta\pa_\beta (\phi_{\ell, \nu} - \phi_{0, \nu}) = \sum_{j=1}^\ell j c_{\ell,j}\beta^j \nu^{2j-2}  T_j(\zeta/\nu) +\beta \pa_\beta \tilde \phi_{\ell, \nu} -\beta \pa_\beta \tilde \phi_{0, \nu}.
\end{align}
From Proposition 1 in \cite{CGNNarx19a} we have the following estimates for the error in \eqref{def:hatphi}:
\begin{equation} \label{id:estimationtildephiinstable}
\| D^k\tilde \phi_{j,\nu}\|_{L^2_{\frac{\omega_\nu}{\zeta}}}+\| D^k\nu \pa_\nu \tilde \phi_{j,\nu}\|_{L^2_{\frac{\omega_\nu}{\zeta}}}+\| D^k\beta \pa_\beta \tilde \phi_{j,\nu}\|_{L^2_{\frac{\omega_\nu}{\zeta}}}\lesssim |\ln \nu|^{-\frac12}
\end{equation}
for $k=0,1,2$ and $0\leq j\leq N$. Thus, the a priori bounds \eqref{bd:aprioriinstable}, the identities \eqref{id:instableidentity1} and \eqref{id:instableidentity2}, the asymptotics \eqref{id:asymptthetainstable} and \eqref{est:Tj} and the bounds \eqref{id:estimationtildephiinstable} prove the second bound in \eqref{bd:tildemeinstable}. This bound in turns imply the first one in \eqref{bd:tildemeinstable} after applying Cauchy-Schhwarz and \eqref{est:PhinL2norm1}. We have proved \eqref{bd:tildemeinstable} for the leading term \eqref{id:instablemainterm}.

Next, we write $\la \cdot \ra=\la \cdot \ra_{L^2_{\frac{\omega_\nu}{\zeta}}}$. The identities \eqref{id:instableidentity1} and \eqref{id:instableidentity2}, the decomposition \eqref{def:hatphi} and the bound \eqref{id:estimationtildephiinstable} imply that, where we change variables  $\zeta\mapsto r=\zeta/\nu$ for the main term:
\begin{align*}
\la \nu \pa_\nu (\phi_{\ell, \nu} - \phi_{0, \nu}),\phi_{0,\nu}\ra = \sum_{j=1}^\ell c_{\ell,j}\beta^j \nu^{2j} \int_0^{\infty} \Theta_j(r) T_0(r) \frac{e^{-\beta \nu^2 r^2/2}}{rU(r)} dr+O(|\ln \nu|^{-\frac 12}),\\
\la \beta \pa_\beta (\phi_{\ell, \nu} - \phi_{0, \nu}),\phi_{0,\nu}\ra =\sum_{j=1}^\ell j c_{\ell,j}\beta^j \nu^{2j} \int_0^{\infty} T_j(r) T_0(r) \frac{e^{-\beta \nu^2 r^2/2}}{rU(r)} dr +O(|\ln \nu|^{\frac 12}).
\end{align*}
We compute the numbers appearing above. By \eqref{id:asymptthetainstable} and \eqref{est:Tj}:
\begin{align*}
 &\sum_{j=1}^\ell c_{\ell,j}\beta^j \nu^{2j} \int_0^{\infty} \Theta_j(r) T_0(r) \frac{e^{-\beta \nu^2 r^2/2}}{rU(r)} dr= -\sum_{j = 1}^{\ell}c_{\ell, j} \hat d_j 2^{j - 4} \int_{0}^{\infty} \xi^{j - 1}e^{-\xi} d\xi +O(\nu) \\
& =\frac 18 \sum_{j = 1}^\ell (-1)^{j + 1} \frac{\ell!}{(\ell - j)! j!}\ = \  \frac{1}{8}+O(\nu),
\end{align*}
using that $\hat d_j = (-1)^j \frac{2^{-2j + 1}}{j[(j -1)!]^2}$ and $c_{\ell, j} = 2^j \frac{\ell!}{(\ell - j)!}$ from \eqref{est:Tj} and \eqref{def:ProTj}. Hence
\begin{equation} \label{instableerror1}
\la \nu \pa_\nu (\phi_{\ell, \nu} - \phi_{0, \nu}),\phi_{0,\nu}\ra=\frac18+O(|\ln \nu|^{-\frac 12}).
\end{equation}
Similarly, using \eqref{est:Tj} and the identity $\int_{0}^{\infty} \xi^{j - 1}e^{-\xi} d\xi=(j-1)!$:
\begin{align*}
& \sum_{j=1}^\ell jc_{\ell,j}\beta^j \nu^{2j} \int_0^{\infty} T_j(r) T_0(r) \frac{e^{-\beta \nu^2 r^2/2}}{rU(r)} dr = |\ln \nu |\sum_{j = 1}^{\ell} c_{\ell, j} \hat d_j 2^{j - 4} j!+O(1)\\
& = \frac 18 |\ln \nu| \sum_{j = 1}^\ell (-1)^{j } \frac{\ell!}{(\ell - j)! (j-1)!}+O(1) \ = \ O(1).
\end{align*}
Hence
\begin{equation} \label{instableerror2}
\la \beta \pa_\beta (\phi_{\ell, \nu} - \phi_{0, \nu}),\phi_{0,\nu}\ra =O(|\ln \nu|^{\frac 12}).
\end{equation}
Next, we recall that $\| \phi_{\ell,\nu}\|_{L^2_{\frac{\omega_\nu}{\zeta}}}=c_\ell \ln^2 \nu$ from \eqref{est:PhinL2norm1}. Differentiating, we get:
$$
2\la \nu \pa_\nu \phi_{\ell,\nu},\phi_{\ell,\nu}\ra=2c_{\ell}\ln \nu-\la  \phi_{\ell,\nu},\phi_{\ell,\nu}\frac{\nu \pa_\nu \omega_\nu}{\omega_\nu}\ra=2c_{\ell}\ln \nu+O(|\ln \nu|^{\frac 12})
$$
where we used the cancellation $|\nu \pa_\nu \omega_\nu|\lesssim \omega_\nu \la \zeta/\nu\ra^{-2}$, \eqref{est:Tj} and \eqref{id:estimationtildephiinstable}. We recall the orthogonality $\la \phi_{\ell,\nu},\phi_{0,\nu} \ra=0$. Differentiating it, using the same cancellation, \eqref{est:Tj} and \eqref{id:estimationtildephiinstable} we get:
$$
2\la \nu \pa_\nu \phi_{0,\nu},\phi_{\ell,\nu}\ra=-\la  \phi_{0,\nu},\phi_{\ell,\nu}\frac{\nu \pa_\nu \omega_\nu}{\omega_\nu}\ra=O(|\ln \nu|^{\frac 12}).
$$
Collecting the two estimates above we have proved:
\begin{equation} \label{instableerror3}
 \la \nu \pa_\nu (\phi_{\ell, \nu} - \phi_{0, \nu}),\phi_{\ell,\nu}\ra=c_{\ell}\ln \nu+O(|\ln \nu|^{\frac 12}).
\end{equation}
Next, we write from \eqref{id:asymptthetainstable} and \eqref{def:AsAs0}, using that $\Ac_0 T_j=-T_{j-1}$:
$$
j T_j =T_j+\frac 12 r \pa_r T_j+\frac 12 \Theta_j=T_j-(2\beta \nu^2)^{-1}\Ac T_{j}-(2\beta \nu^2)^{-1}T_{j-1}+\frac 12 \Theta_j .
$$
Above, we recall that from \eqref{def:hatphi} and \eqref{id:estimationtildephiinstable}, for $j<n$ one has $\nu^{2j-2}T_j(\zeta/\nu)=\sum_0^j \tilde c_{j,i}\phi_{i,\nu}+O_{L^2_{\omega_\nu/\zeta}}(|\ln \nu|^{1/2})$ for some constants $\tilde c_{j,i}$. Using this, the above identity, the identity \eqref{def:hatphi}, the fact that $\Ac^\zeta \phi_{\ell,\nu}=\alpha_{\ell,\nu} \phi_{\ell,\nu}$ and \eqref{def:specAsb} we obtain:
$$
\beta \pa_\beta \phi_{\ell,\nu}=\ell \phi_{\ell,\nu}+\sum_{j<n}\tilde c_j \phi_{j,\nu}+O_{L^2_{\omega_\nu/\zeta}}(|\ln \nu|^{1/2})
$$
for some constants $\tilde c_j$. Therefore, from the orthogonality of the eigenfunctions:
$$
\la  \beta \pa_\beta \phi_{\ell,\nu},\phi_{\ell,\nu}\ra=\ell \| \phi_{\ell,\nu}\|_{L^2_{\omega_\nu/\zeta}}^2+O(|\ln \nu|^{3/2}).
$$
Since from \eqref{def:hatphi}, $\pa_\beta \phi_{0,\nu}=\pa_\beta \tilde \phi_{0,\nu}$, from \eqref{est:PhinL2norm1}, \eqref{id:estimationtildephiinstable} and Cauchy-Schwarz we get $\la  \beta \pa_\beta \phi_{0,\nu},\phi_{\ell,\nu}\ra=O(|\ln \nu|^{3/2})$. This and the above give
\begin{equation} \label{instableerror4}
\la  \beta \pa_\beta (\phi_{\ell,\nu}-\phi_{0,\nu}),\phi_{\ell,\nu}\ra=\ell \| \phi_{\ell,\nu}\|_{L^2_{\omega_\nu/\zeta}}^2+O(|\ln \nu|^{3/2}).
\end{equation}
Collecting \eqref{instableerror1}, \eqref{instableerror2}, \eqref{instableerror3} and \eqref{instableerror4}, using the a priori bounds \eqref{bd:aprioriinstable}, we have established \eqref{bd:tildemeinstable2} and \eqref{bd:tildemeinstable3} for the main order term \eqref{id:instablemainterm}.

\end{proof}
\begin{remark}[Unstable blowup rates] \label{rem:unBlowuprateell} From Lemma \ref{lemm:appWell}, we project \eqref{exp:mEtilee} onto $\phi_{0, \nu}$ and $\phi_{\ell, \nu}$ to obtain the following system of ODEs
\begin{equation}\label{eq:companuaellsystem}
\left\{ \begin{array}{l l} &\left(\frac{\nu_\tau}{\nu} - \beta \right)8\nu^2 +  a_{\ell, \tau} - 2\beta a_\ell \big(1 + \frac{1}{2\ln \nu} \big) + a_\ell \frac{\nu_\tau}{\nu} \frac{1}{\ln \nu} = \Oc\left(\frac{\nu^2}{|\ln \nu|^{3/2}} \right),\\
& a_{\ell, \tau} - 2\beta a_\ell \big(1 - \ell + \frac{1}{2\ln \nu}\big) + a_\ell \frac{\nu_\tau}{\nu} \frac{1}{\ln \nu} +\ell a_\ell \frac{\beta_\tau}{\beta}= \Oc\left(\frac{\nu^2}{|\ln \nu|^{3/2}} \right).
\end{array} \right.
\end{equation}
We solve this system for $0<\nu\ll 1$ and $\beta\approx 1$ under the compatibility condition
\begin{equation}\label{eq:companuaell}
\frac{a_{\ell}}{4\nu^2} = -1+\frac{1}{2 \ln \nu},
\end{equation}
which is a constraint on $\beta$. Namely one obtains from \eqref{eq:companuaellsystem} that this condition is satisfied provided that $\beta_\tau =O(|\ln \nu|^{-3/2})$. Under \eqref{eq:companuaell}, \eqref{eq:companuaellsystem} gives:
$$
\frac{\nu_\tau}{\nu} = \beta(1 - \ell)+\frac{\beta \ell}{2\ln \nu}+O(|\ln \nu|^{-\frac 32}).
$$
Solving this yields that $\beta\to \beta_{\infty}$ and $\nu(\tau)\sim e^{\beta (1-\ell)\tau}\tau^{\frac{\ell}{2(1-\ell)}}\nu_{\infty}$ for some $\beta_{\infty},\nu_{\infty}>0$. Since $\mu_\tau=\beta \mu$, $\pa_t \tau=\mu^{-2}$ and $\lambda =\mu \nu$, we get that for some blow up time $T>0$:
\begin{equation}\label{eq:lawlambdaell}
\lambda(t) \sim C(u_0) (T-t)^\frac{\ell}{2} |\ln (T-t)|^{-\frac{\ell}{2(\ell - 1)}}.
\end{equation}
\end{remark}

\begin{remark} \label{re:instable}
Note that $\int |\ln \nu|^{-\alpha}d\tau<\infty$ for all $\alpha>1$, which is not the case for the stable blow-up law \eqref{bootstrap:param1} where $\alpha>2$ is needed. This is of a simplification for the instable case: one can only perform the analysis with an accuracy of one order in $|\ln \nu|^{-1}$ less than for the stable case and still be able to close the estimates.
\end{remark}
\paragraph*{Bootstrap regime.} Lemma \ref{lemm:appWell} provides information about the size of the error and Remark \ref{rem:unBlowuprateell} formally gives us the law of $\nu$, from which we can redesign the bootstrap regime \eqref{def:bootstrap} adapted to the case $\ell \geq 2$. In particular, we control the remainder $\e$ according to the following regime. 

\begin{definition}[Bootstrap]  \label{def:bootstrapUnell}
Let $\ell \in \mathbb{N}$ with $\ell \geq 2$ and $\tau_0\gg 1$. A solution $w$ is said to be trapped on $[\tau_0,\tau^*]$ if it satisfies the initial bootstrap conditions in the sense of Definition \ref{def:ini} at time $\tau_0$ and the following conditions on $(\tau_0,\tau^*]$. There exists $\mu \in C^1([0,t^*],(0,\infty))$ and constants $K''\gg K'\gg K\gg 1$ such that the solution can be decomposed according to \eqref{decomposition}, \eqref{orthogonality} on $(\tau_0,\tau^*]$ with:
\begin{itemize}
\item[(i)] \emph{(Compatibility condition for the renormalisation rate $\beta$)}
$$\frac{a_{\ell}}{4\nu^2} = -1+\frac{1}{2 \ln \nu},$$
\item[(ii)] \emph{(Modulation parameters)}
$$
e^{\frac{(1 - \ell)\tau}{2}} \tau^{\frac{ \ell}{2(1 - \ell)}} \leq \nu(\tau) \leq 2 e^{\frac{(1 - \ell)\tau}{2}} \tau^{\frac{ \ell}{2(1 - \ell)}} \ \ \mbox{ and } \ \ \frac 12 < \beta <2,
$$
$$
|a_n|< \frac{\nu^2}{|\ln \nu|} \ \mbox{ for } 1 \leq n \ne \ell \leq N.
$$
\item[(iii)] \emph{(Remainder)}
$$
 \|m_\varepsilon(\tau) \|_{L^2_{\frac{\omega_\nu}{\zeta}}}<K \nu^2, \quad \| m_\e(\tau) \|_{H^2(\zeta_*\leq \zeta\leq \zeta^*)} < K' \nu^2,
$$
$$\quad \| \tilde m_{v} \|_{\inn}  < K''\nu^2, \quad \| \hat w^{0} \|_{\out} < K''\nu^2.$$

\end{itemize}

\end{definition}

The bootstrap definition \ref{def:bootstrapUnell} is almost the same as the one defined in Definition \ref{def:bootstrap}, except for the bounds on $m_\e$ which are of size $\nu^2$ only, see remark \ref{re:instable}. All the energy estimates as well as the derivation of the modulation equations given in Section \ref{sec:ControlStab} can be adapted to the new definition \ref{def:bootstrapUnell} without any difficulties to derive the conclusion of Theorem \ref{theo:UnStab}.

%%      ---------------------------------------------------------------------
%%      ------------------------- APPENDIX (OPTIONAL) -----------------------
%%      ---------------------------------------------------------------------
        
%%      If you have one appendix, uncomment the line \appendix and add
%%      a \section{ *** APPENDIX TITLE ***}. If you have more than
%%      one, uncomment the line \appendices and add a \section{ ***
%%      APPENDIX TITLE ***} command for each appendix title.

%\appendix
\appendices

\appendix
\section{Some useful estimates}

\noindent \textbf{Hardy-Poincar\'e type inequality:} Let us recall the following estimates in spaces with weights involving $\rho$, for functions $v$ and $u$ without radial components. The first one is a Poincar\'e-type inequality
\begin{equation} \label{bd:genpoincare}
\int_{\mathbb R^2} v^2 |z|^{2k}(1+|z|^{2})e^{-\frac{z^2}{2}}dz \lesssim \int_{\mathbb R^2} |\nabla v|^2 |z|^{2k} e^{-\frac{z^2}{2}}dz,
\end{equation}
for any $k\geq 0$. The second one is a Hardy type inequality: for $0<b<1$, there exists $C > 0$ independent of $b$ such that
\begin{equation} \label{bd:hardyL2rho}
\int_{\mathbb R^2} (1+|y|^2) u^2 e^{-\frac{b|y|^2}{2}}  \leq C \int_{\mathbb R^2} (1+|y|^4)|\nabla u|^2e^{-\frac{b|y|^2}{2}}.
\end{equation}
By the change of variables $z=\sqrt by$, the two above inequalities imply for any $1\leq \alpha \leq 3$:
\begin{equation}\label{est:HaPo1}
b^{\alpha - 1} \int_{\mathbb R^2} |q^\perp|^2  (1 + |y|^{2\alpha}) \rho dy \leq C_\alpha \int_{\mathbb R^2} \frac{|\nabla q^\perp|^2}{U}\rho dy.
\end{equation}

\bigskip

\noindent \textbf{Estimates on the Poison field:} For $u$ localized on a single spherical harmonics, the Laplace operator is  written as
$$
\Delta u (x)=\Delta^{(k)} (u^{(k,i)})(r) \phi^{(k,i)}(\theta), \ \ \Delta^{(k)}:= \pa_{rr}+\frac{1}{r}\pa_r-\frac{k^2}{r^2}.
$$
The fundamental solutions to $\Delta^{(k)}f=0$ are
$$
\left\{ \ba{l l} \log(r) \ \ \text{and} \ \ 1\ \ \text{for} \ k=0, \\r^k \ \ \text{and} \ \ r^{-k} \ \text{for} \ k\geq 1. \ea \right.
$$
and their Wronskian relation
$$
W^{(0)}=\frac{d}{dr}\log (r)=r^{-1} \ \ \text{and} \ \ W^{(k)}=\frac{d}{dr}(r^k)r^{-k}-r^k \frac{d}{dr}(r^{-k})=2kr^{-1} \ \text{for} \ k\geq1.
$$
The solution to the Laplace equation $-\Delta \Phi_u=u$ given by $\Phi_u=-(2\pi)^{-1}\log (|x|)*u$ is given on spherical harmonics by:
$$
\Phi_u^{(0,0)}(r)=-\log (r) \int_0^r u^{(0,0)}(\tilde r)\tilde r d \tilde r -\int_r^{+\infty} u^{(0,0)}(\tilde r)\log(\tilde r)\tilde r d\tilde r ,
$$
$$
\nabla \Phi_u^{(0,0)}(x)=-\frac{x}{|x|^2} \int_0^{|x|} u^{(0,0)}(\tilde r)\tilde r d \tilde r,
$$
\begin{equation} \label{id:Phiki}
\Phi_u^{(k,i)}(r)= \frac{r^k}{2k} \int_r^{+\infty} u^{(k,i)}(\tilde r)\tilde r^{1-k} d \tilde r +\frac{r^{-k}}{2k} \int_0^r u^{(k,i)}(\tilde r) \tilde r^{1+k} d\tilde r,
\end{equation}
\begin{equation} \label{id:nablaPhiki}
\pa_r \Phi_u^{(k,i)}(r)= \frac{r^{k-1}}{2} \int_r^{+\infty} u^{(k,i)}(\tilde r)\tilde r^{1-k} d \tilde r -\frac{r^{-k-1}}{2} \int_0^r u^{(k,i)}(\tilde r) \tilde r^{1+k} d\tilde r,
\end{equation}

The following lemma gives pointwise estimates of the Poison field.
\begin{lemma}
If $u^{(0,0)}=0$, there holds the estimate for any $\alpha>0$,
\begin{equation} \label{bd:poisson1}
|\Phi_{u}|^2+ |y|^2 |\nabla \Phi_{u}|^2\lesssim |y|^2(1+|y|)^{-2\alpha} \big(1 + \mathbf{1}_{|y| \leq 1} |\log |y||\big)\int_{\mathbb R^2}  |u|^2(1+|y|)^{2\alpha }dy.
\end{equation}
\end{lemma}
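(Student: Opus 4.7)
The plan is to reduce pointwise bounds on $\Phi_u$ and $\nabla \Phi_u$ to weighted $L^2$ estimates mode by mode, by exploiting the explicit representation formulas \eqref{id:Phiki}--\eqref{id:nablaPhiki}. Since $u^{(0,0)}=0$, we can write $u=\sum_{k\geq 1,i} u^{(k,i)}(r)\phi^{(k,i)}(\theta)$, so by orthonormality of the spherical harmonics it suffices to prove, for each pair $(k,i)$ with $k\geq 1$, the bound
\begin{equation} \label{pl:mode}
|\Phi_u^{(k,i)}(r)|^2 + r^2|\partial_r \Phi_u^{(k,i)}(r)|^2 \lesssim \frac{r^2\big(1+\mathbf{1}_{r\leq 1}|\log r|\,\mathbf{1}_{k=1}\big)}{(1+r)^{2\alpha}}\,\mathcal{I}_{k,i},
\end{equation}
where $\mathcal{I}_{k,i}=\int_0^\infty |u^{(k,i)}(\tilde r)|^2(1+\tilde r)^{2\alpha}\tilde r\,d\tilde r$, with an implicit constant depending on $\alpha$ but summable in $k$. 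Summing \eqref{pl:mode} in $(k,i)$ and using Parseval in $\theta$ then yields the claim.

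First I would apply Cauchy--Schwarz to the two integrals in \eqref{id:Phiki}, inserting the weight $(1+\tilde r)^{2\alpha}$. For the inner piece,
\[
\left|\frac{r^{-k}}{2k}\int_0^r u^{(k,i)}\tilde r^{1+k}d\tilde r\right|^2 \leq \frac{r^{-2k}}{4k^2}\,\mathcal{I}_{k,i}\cdot\int_0^r(1+\tilde r)^{-2\alpha}\tilde r^{1+2k}\,d\tilde r,
\]
and the last integral is controlled by $r^{2k+2}$ for $r\leq 1$ and by $C_\alpha r^{2k+2}/(1+r)^{2\alpha}$ for $r\geq 1$ (split into $\int_0^1+\int_1^r$ and bound the tail using $\tilde r^{1+2k}\lesssim r^{2k+2-2\alpha}(1+\tilde r)^{2\alpha-1}$ to get the right decay). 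For the outer piece,
\[
\left|\frac{r^{k}}{2k}\int_r^{\infty} u^{(k,i)}\tilde r^{1-k}d\tilde r\right|^2 \leq \frac{r^{2k}}{4k^2}\,\mathcal{I}_{k,i}\cdot\int_r^{\infty}(1+\tilde r)^{-2\alpha}\tilde r^{1-2k}\,d\tilde r,
\]
and here one must distinguish $k\geq 2$, where the tail integral is bounded by $C_k r^{2-2k}(1+r)^{-2\alpha}$ uniformly, from the exceptional mode $k=1$, where $\int_r^\infty \tilde r^{-1}(1+\tilde r)^{-2\alpha}d\tilde r$ contributes a $|\log r|$ for $r\leq 1$. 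Multiplying by $r^{2k}$ produces the desired $r^2(1+r)^{-2\alpha}$ factor, with the extra $|\log r|$ appearing only from $k=1$, which explains the indicator on the right-hand side of \eqref{bd:poisson1}.

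The estimate on $r\partial_r \Phi_u^{(k,i)}$ is essentially identical: the representation \eqref{id:nablaPhiki} differs from \eqref{id:Phiki} by a factor $r^{\pm 1}/(\text{constant})$ in front of each of the two integrals, so after multiplication by $r$ the exponents match those above. One must be slightly careful with the $k=1$ outer integral, whose coefficient is now $r^0$, but after multiplication by $r$ on the left the same $r^2(1+r)^{-2\alpha}|\log r|\mathbf{1}_{r\leq 1}$ bound arises by the same splitting argument.

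The only real obstacle is the summability of the mode-by-mode estimates. For $k\geq 2$ the explicit $1/k^2$ factor appearing in Cauchy--Schwarz gives uniform summability, while the problematic $k=1$ contributes only finitely many modes ($i=1,2$), so the logarithm is harmless. Combining the inner and outer contributions, summing over $(k,i)$ and using $\sum_{k,i}\mathcal{I}_{k,i}=\int_{\mathbb R^2}|u|^2(1+|y|)^{2\alpha}\tilde r\, d\tilde r \lesssim \int |u|^2(1+|y|)^{2\alpha}dy$ yields \eqref{bd:poisson1}.
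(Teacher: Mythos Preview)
The paper does not prove this lemma here; it refers to Lemma~A.1 of the companion paper \cite{CGNNarx19a}. Your spherical-harmonics approach via \eqref{id:Phiki}--\eqref{id:nablaPhiki} with Cauchy--Schwarz is natural and does handle $|\Phi_u|^2$ correctly: the prefactor $1/(2k)$ in \eqref{id:Phiki} produces a $1/k^2$ gain after squaring, the weighted integral factors contribute a further $1/k$, and the resulting $1/k^3$ in the squared mode bound is more than enough to sum the pointwise Fourier series in $\theta$ against $\sum_{k,i}\mathcal I_{k,i}$.

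There is, however, a genuine gap in your treatment of $|y|^2|\nabla\Phi_u|^2$. First, you never discuss the tangential part $r^{-1}\partial_\theta\Phi_u$, which contributes an extra factor $k$ per mode. Second, and more importantly, the prefactor in \eqref{id:nablaPhiki} is $\tfrac12$, \emph{not} $\tfrac1{2k}$: when you write that \eqref{id:nablaPhiki} ``differs from \eqref{id:Phiki} by a factor $r^{\pm1}/(\text{constant})$'', that constant is $k$, and it is precisely the source of your claimed $1/k^2$ summability. With the correct prefactor, Cauchy--Schwarz gives only
\[
|r\partial_r\Phi_u^{(k,i)}(r)|^2\;\lesssim\;\frac{1}{k}\,\frac{r^2}{(1+r)^{2\alpha}}\,\mathcal I_{k,i},
\]
the single $1/k$ coming from the integrals $\int\tilde r^{\,1\pm 2k}(1+\tilde r)^{-2\alpha}d\tilde r$. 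To get a \emph{pointwise} bound on $r\partial_r\Phi_u(r,\theta)=\sum_{k,i}(r\partial_r\Phi_u^{(k,i)})\phi^{(k,i)}(\theta)$ you need $\ell^1$-in-$k$ control of the coefficients, and $\sum_k k^{-1/2}\mathcal I_{k,i}^{1/2}$ is not dominated by $\big(\sum_k\mathcal I_{k,i}\big)^{1/2}$ in general. The Cauchy--Schwarz step is in fact not sharp here (for a fixed smooth radial profile one checks directly that $|r\partial_r\Phi_u^{(k,i)}|\sim k^{-1}$ rather than $k^{-1/2}$), so the statement is salvageable, but it requires replacing Cauchy--Schwarz by a sharper Hardy-type bound on the Volterra integrals, or bypassing the mode decomposition altogether via a kernel estimate after subtracting the spherical average of $\frac{y-\tilde y}{|y-\tilde y|^2}$. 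As written, your summability argument for $\nabla\Phi_u$ does not close.
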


\begin{proof} See Lemma A.1 in \cite{CGNNarx19a}.
\end{proof}
\noindent For the control of the outer part, we need the following estimates for the Poison field in term of the outer norm. 
\begin{lemma}
For $|z|\geq 1$ one has the estimate,
\begin{equation} \label{estimationpoissonexte}
 \frac{1}{\zeta}|\Phi_{(1-\chi_{\frac{\zeta^*}{2}})\hat w}(z)| + |\nabla \Phi_{(1-\chi_{\frac{\zeta^*}{2}})\hat w}(z)|+\zeta |\nabla^2 \Phi_{(1-\chi_{\frac{\zeta^*}{2}})\hat w}(z)|\lesssim \frac{1}{\zeta^{\frac 12}} \| \hat w \|_{\out}.
\end{equation}
\end{lemma}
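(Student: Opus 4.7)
The plan is to reduce \eqref{estimationpoissonexte} to a pointwise decay estimate on $\hat w$ in the outer zone, obtained from the definition of $\| \cdot \|_\out$ via a weighted Sobolev embedding, and then to plug this into the explicit Newton formula together with the spherical-harmonic representation \eqref{id:Phiki}--\eqref{id:nablaPhiki} of the Poisson field recalled just above the statement.

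First I would apply the weighted Sobolev embedding already underlying \eqref{weightedsobo}, on dyadic annuli $A_n = \{2^n \leq |z| \leq 2^{n+1}\}$, $n\geq 0$. The exponents $2-\tfrac14$ on $\hat w$ and $2+\tfrac34$ on $\nabla \hat w$ appearing in $\|\cdot\|_\out$ differ by exactly $1$, so that, after rescaling $A_n$ to the unit annulus via $y = 2^n \tilde y$, both terms in the outer norm scale identically. The Morrey embedding $W^{1,2p}\hookrightarrow L^\infty \cap C^{0,\alpha}$ in dimension $2$ with $2p \gg 2$ then yields, for $|y|\geq \zeta^*/2$ and $\alpha = 1 - 1/p$,
$$
|\hat w(y)| \lesssim |y|^{-7/4}\|\hat w\|_\out, \qquad [\hat w]_{C^{0,\alpha}(A_n)} \lesssim 2^{-n(7/4+\alpha)}\|\hat w\|_\out.
$$
Setting $u := (1-\chi_{\zeta^*/2})\hat w$, these bounds hold for $u$, which is moreover supported in $\{|y|\geq \zeta^*/2\}$.

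Second, for $|\Phi|$ and $|\nabla \Phi|$ I would decompose $u$ into spherical harmonics $u = \sum_{k,i} u^{(k,i)}(r) \phi^{(k,i)}(\theta)$, so that $|u^{(k,i)}(r)| \lesssim r^{-7/4}\|\hat w\|_\out$, and insert this into the Newton formula for $k=0$ and into \eqref{id:Phiki}, \eqref{id:nablaPhiki} for $k \geq 1$. The model computation is
$$
\frac{1}{r}\Bigl|\int_{\zeta^*/2}^r u^{(0,0)}(\tilde r)\, \tilde r\, d\tilde r\Bigr| \lesssim \frac{\|\hat w\|_\out}{r}\int_{\zeta^*/2}^r \tilde r^{-3/4} d\tilde r \lesssim r^{-3/4}\|\hat w\|_\out,
$$
and analogous estimates on the two pieces $\tilde r \lessgtr r$ in \eqref{id:nablaPhiki}; the prefactor $(2k)^{-1}$ in \eqref{id:Phiki} ensures absolute convergence of the sum over non-radial modes after Cauchy--Schwarz on the spherical variable. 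For $|\Phi|$ itself the only non-trivial contribution is the logarithmic radial term, which produces $O(r^{1/4}\log r)\|\hat w\|_\out$, easily absorbed into the generous $r^{1/2}\|\hat w\|_\out$ of the claim.

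The main obstacle is the Hessian bound: pointwise differentiation of the Newton potential produces the Calder\'on--Zygmund kernel $|z-y|^{-2}$, which is borderline non-integrable in dimension $2$, so one cannot simply differentiate under the integral sign. Here I would invoke interior Schauder estimates for the Poisson equation $-\Delta \Phi = u$ on the ball $B = B(z, |z|/4)$,
$$
\|\nabla^2 \Phi\|_{L^\infty(B/2)} \lesssim |z|^\alpha [u]_{C^{0,\alpha}(B)} + \|u\|_{L^\infty(B)} + |z|^{-2}\|\Phi\|_{L^\infty(B)}.
$$
Plugging in the H\"older and $L^\infty$ bounds on $u$ from Step~1, and the $L^\infty$ bound on $\Phi$ from Step~2, each of the three terms is $\lesssim |z|^{-3/2}\|\hat w\|_\out$, whence $\zeta|\nabla^2 \Phi(z)| \lesssim \zeta^{-1/2}\|\hat w\|_\out$. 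Collecting the three bounds yields \eqref{estimationpoissonexte}.
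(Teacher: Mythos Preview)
Your approach is correct and reaches the same conclusion, but the technical route differs from the paper's in two places.

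For $\Phi$ and $\nabla\Phi$, the paper does not first extract a pointwise bound on $\hat w$ and then estimate the Newton potential mode by mode; it invokes the ready-made weighted estimate \eqref{bd:poisson1} with $\alpha=\tfrac12$ and then uses H\"older in $z$ to bound $\int (1-\chi_{\zeta^*/2})|\hat w|^2|z|\,dz$ by $\|\hat w\|_{\out}^2$. Your route via $|\hat w(y)|\lesssim |y|^{-7/4}\|\hat w\|_{\out}$ is more self-contained; note that once you have this pointwise decay you can also estimate the convolution $\int|z-y|^{-1}|u(y)|\,dy$ directly by splitting $|y|\lessgtr |z|$, which avoids the spherical-harmonic summation entirely and is quicker than justifying the ``$(2k)^{-1}$ ensures convergence'' step. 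For $\nabla^2\Phi$, the paper splits $\hat w=\hat w^0+\hat w^\perp$: the non-radial part is handled exactly as you do (H\"older regularity of the source from the interpolated embedding $W^{3/p,2p}\hookrightarrow C^{1/p}$, then interior elliptic estimates on $B(z,|z|/2)$ with rescaling), while the radial part is treated by explicitly differentiating the one-dimensional formula $\nabla\Phi_{\hat w^0}(z)=-\tfrac{z}{|z|^2}\int_0^{|z|}\hat w^0(\tilde\zeta)\tilde\zeta\,d\tilde\zeta$ and bounding the resulting three terms by H\"older. Your unified Schauder argument on $B(z,|z|/4)$ handles both pieces at once and is cleaner; the paper's radial/non-radial split is not essential here.
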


\begin{proof}

Notice that for $v(y)=\nu^2w(z)$, the Poisson field scales in $L^{\infty}$, that is: $\Phi_v(y)=\Phi_w(z)$. Hence \eqref{bd:poisson1} holds also in $z$ variables. We apply it with $\alpha=\frac 12$, so that for $|z|\geq 1$:
\begin{align*}
&\frac{1}{|z|}|\Phi_{(1-\chi_{\frac{\zeta^*}{2}})\hat w}|^2+ |z||\nabla \Phi_{(1-\chi_{\frac{\zeta^*}{2}})\hat w}|^2 \lesssim  \int_{\mathbb R^2}  |(1-\chi_{\frac{\zeta^*}{2}})\hat w|^2|z|dz\lesssim  \int_{\mathbb R^2}  (1-\chi_{\frac{\zeta^*}{2}}) \hat w^2|z|^{4-\frac 12} |z|^{-\frac 12}\frac{dz}{|z|^2}\\
&\quad \lesssim  \left(\int_{\mathbb R^2}  (1-\chi_{\frac{\zeta^*}{2}}) |\hat w|z|^{2-\frac 14}|^{2p}\frac{dz}{|z|^2}\right)^{\frac{1}{p}}  \left(\int_{\mathbb R^2}  (1-\chi_{\frac{\zeta^*}{2}}) |z|^{-\frac 12 p'}\frac{dz}{|z|^2}\right)^{\frac{1}{p'}} \lesssim  \| \hat w\|_{\out}^{2}
\end{align*}
where we used $(1-\chi_{\frac{\zeta^*}{2}})\leq 1$ and H\"older with $p'$ the conjugate exponent of $p$. Next, we decompose $\hat w=\hat w^{0}+\hat w^\perp$ between radial and non radial components. One has $\| \hat w^{\perp}\|_{\out}+\| \hat w^0\|_{\out}\lesssim \| \hat w \|_{\out}$. from the Sobolev embedding of $W^{3/p,2p}$ into the H\"older space $C^{1/p}$ in dimension $2$, we obtain by interpolation that for any $|z|\geq 1$:
\begin{align*}
&\| (1-\chi_{\frac{\zeta^*}{2}})\hat w^\perp \|_{C^{\frac 1p}(B(z,\frac{|z|}{2}))}  \leq  C \| \hat w^\perp \|_{W^{\frac{3}{p},2p}(B(z,\frac{|z|}{2}))}\leq C  \| \hat w^\perp \|_{L^{2p}(B(z,\frac{|z|}{2}))}^{1-\frac 3p} \| \nabla \hat w^\perp \|_{L^{2p}(B(z,\frac{|z|}{2}))}^{\frac 3p} \\
& \quad \leq  \frac{C}{\zeta^{\frac 32}}  \| \zeta^{2-\frac 12}\hat w^\perp \|_{L^{2p}(B(z,\frac{|z|}{2}))}^{1-\frac 3p} \| \zeta^{2-\frac 12} \nabla \hat w^\perp \|_{L^{2p}(B(z,\frac{|z|}{2}))}^{\frac 3p}\leq \frac{C}{\zeta^{\frac 32}} \| \hat w \|_{\out},
\end{align*}
where we used the definition of the $\| \hat w \|_{\out}$ norm and the inequality $1/2\geq 1/4+1/p$ for $p$ large enough. Therefore, in the ball $B(z,|z|/2)$, $\Phi_{(1-\chi_{\frac{\zeta^*}{2}})\hat w^\perp}$ solves $-\Delta \Phi_{(1-\chi_{\frac{\zeta^*}{2}})\hat w^\perp}=(1-\chi_{\frac{\zeta^*}{2}})\hat w^\perp$ with the estimates:
$$
\frac{1}{|z|^{\frac 12}}|\Phi_{(1-\chi_{\frac{\zeta^*}{2}})\hat w^\perp}|+|z|^{\frac 12}|\nabla \Phi_{(1-\chi_{\frac{\zeta^*}{2}})\hat w^\perp}+|z|^{\frac 32}\|(1-\chi_{\frac{\zeta^*}{2}})\hat w^\perp\|_{C^{1/p}(B(z,|z|/2)}\leq C\| \hat w \|_{\out}.
$$
By standard regularity properties of the Dirichlet problem and a rescaling argument, we obtain that $\| \nabla^2 \Phi_{(1-\chi_{\frac{\zeta^*}{2}})\hat w^\perp} \|_{B(z,\frac{|z|}{2}}\leq |z|^{-3/2}C\| \hat w \|_{\out}$. This proves \eqref{estimationpoissonexte} for the nonradial part of $\hat w$. Next, for the radial part, we have for $i=1,2$ and $\zeta \geq 1$:
\begin{eqnarray*}
\nabla \partial_{z_i} \Phi_{(1-\chi_{\frac{\zeta^*}{2}})\hat w^0}(z)  &= & -\nabla \left(\frac{z_i}{\zeta^2} \int_0^{\zeta} (1-\chi_{\frac{\zeta^*}{2}})\hat w^0(\tilde \zeta)\tilde \zeta d \tilde \zeta\right),\\
&=&-\nabla (z_i) \frac{1}{\zeta^2} \int_0^{\zeta} (1-\chi_{\frac{\zeta^*}{2}})\hat w^0(\tilde \zeta)\tilde \zeta d \tilde{\zeta} +\frac{2z_iz}{\zeta^4} \int_0^{\zeta} (1-\chi_{\frac{\zeta^*}{2}})\hat w^0(\tilde \zeta)\tilde \zeta d \tilde \zeta -\frac{2z_iz}{\zeta^2} (1-\chi_{\frac{\zeta^*}{2}})\hat w^0(\zeta).
\end{eqnarray*}
From H\"older, where $(2p)'$ is the H\"older conjugate of $2p$:
\begin{align*}
&\left| \int_0^{\zeta} (1-\chi_{\frac{\zeta^*}{2}})\hat w^0(\tilde \zeta)\tilde \zeta d \tilde \zeta \right|=\left| \int_0^{\zeta} (1-\chi_{\frac{\zeta^*}{2}})\tilde \zeta^{2-\frac 14 -\frac 1p}\hat w^0(\tilde \zeta)\tilde \zeta^{-1+\frac 14 +\frac 1p} d \tilde \zeta \right|  \lesssim  \left| \int_0^{\zeta} (1-\chi_{\frac{\zeta^*}{2}})\tilde \zeta^{2-\frac 14 -\frac{1}{2p}}\hat w^0(\tilde \zeta)\tilde \zeta^{-1+\frac 14 +\frac{1}{2p}} d \tilde \zeta \right|\\
& \quad \lesssim   \left( \int_0^{\zeta} (1-\chi_{\frac{\zeta^*}{2}})|\tilde \zeta^{2-\frac 14}\hat w^0(\tilde \zeta)|\frac{d\tilde \zeta}{\zeta} \right)^{\frac{1}{2p}} \left( \int_0^{\zeta} (1-\chi_{\frac{\zeta^*}{2}}) \tilde \zeta^{(-1+\frac{1}{4}+\frac{1}{2p})(2p)'} d\tilde \zeta\right)^{\frac{1}{(2p)'}} \lesssim  \| \hat w \|_{\out} \zeta^{\frac 14}.
\end{align*}
We recall that $|\hat w^{0}|\lesssim \zeta^{-3/2}$ for $\zeta \geq 1$ from \eqref{weightedsobo}. This, and the two above identities imply that for $\zeta \geq 1$:
$$
\left| \nabla \partial_{z_i} \Phi_{(1-\chi_{\frac{\zeta^*}{2}})\hat w^0}(z) \right|\lesssim \zeta^{-\frac 32} \| \hat w \|_{\out}.
$$
This proves \eqref{estimationpoissonexte} for the radial part of $\hat w$.

\end{proof}

\section{Coercivity of $\As_0$}
In this section, we aim at deriving the coercive estimate of $\As_0$ which is the key to establish the monotonicity formula of the inner norm \eqref{bootstrap:in}. We first claim the following.

\begin{lemma}[Coercivity of $\As_0$] \label{lemm:coerA0} Assume that $f:[0,\infty)\rightarrow \mathbb R$ satisfies
\begin{equation} \label{coerciviteA0:hp1}
\int_0^{\infty} \left(\frac{|f|^2}{1+r^2}+|\pa_r f|^2\right)\frac{\omega_0(r)}{r}dr<\infty,
\end{equation}
and 
$$\int_0^\infty f(r)T_0(r) \chi_{_M}(r) \frac{\omega_0}{r} dr=0.$$
Then, there exists a constant $\delta_0>0$ such that
$$
\int_0^\infty f \As_0 f\frac{\omega_0}{r} dr \leq -\delta_0 \int_0^\infty  \left(\frac{|f|^2}{1+r^2}+|\pa_r f|^2\right)\frac{\omega_0}{r} dr.
$$
\end{lemma}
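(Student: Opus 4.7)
The plan is to reduce the coercivity to a weighted Hardy/Dirichlet inequality via factorization of $\As_0$ through its kernel element $T_0$. A direct computation using $\As_0 T_0=0$ shows that for $g=f/T_0$,
\[
\As_0(T_0 g)=\frac{r}{T_0\omega_0}\,\pa_r\!\left(\frac{T_0^2\omega_0}{r}\,\pa_r g\right),
\]
so that after an integration by parts (the boundary terms at $0$ and $\infty$ vanish thanks to assumption \eqref{coerciviteA0:hp1} and the behaviour $T_0\sim r^2$ at the origin, $T_0\sim 1/r^2$ at infinity) one obtains the key identity
\[
-\int_0^\infty f\,\As_0 f\,\frac{\omega_0}{r}\,dr \;=\;\int_0^\infty T_0^2\,\frac{\omega_0}{r}\,\left|\pa_r\!\left(\frac{f}{T_0}\right)\right|^2 dr.
\]
In terms of $g$ the orthogonality condition becomes $\int_0^{2M}T_0^2\,g\,\chi_{_M}\,\omega_0/r\,dr=0$, which is a weighted average constraint of $g$ over a compact set.

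Next, I would prove the coercivity by a standard contradiction/compactness argument. Suppose there exists a sequence $(f_n)$ with
\[
\int_0^\infty f_n T_0 \chi_{_M}\frac{\omega_0}{r}\,dr=0, \qquad \int_0^\infty\!\left(\frac{|f_n|^2}{1+r^2}+|\pa_r f_n|^2\right)\!\frac{\omega_0}{r}\,dr=1,
\]
but $\int_0^\infty T_0^2(\omega_0/r)|\pa_r g_n|^2\,dr\to 0$, where $g_n=f_n/T_0$. Weak compactness in the Hilbert space defined by \eqref{coerciviteA0:hp1} yields a subsequence $f_n\rightharpoonup f_\star$, and lower semicontinuity forces $g_\star=f_\star/T_0$ to be constant, i.e.\ $f_\star=c\,T_0$. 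A direct computation shows that $\int_0^\infty T_0^2/(1+r^2)\cdot \omega_0/r\,dr\sim \int^\infty dr/r =+\infty$, hence $T_0$ does not belong to the Hilbert space of assumption \eqref{coerciviteA0:hp1}; thus $c=0$ and $f_\star=0$. The localized orthogonality is then used together with a Rellich-type compact embedding on the fixed compact $[0,2M]$ to upgrade the weak convergence to strong convergence of $g_n\chi_{_M}$ in $L^2(T_0^2\omega_0/r\,dr)$, and similarly for $f_n$ on $[0,2M]$; in the complementary region $r\geq M$ a direct weighted Hardy inequality controls $\int_{r\geq M}(|f|^2/(1+r^2)+|\pa_r f|^2)\omega_0/r$ by $\int_{r\geq M}T_0^2\omega_0/r\,|\pa_r g|^2$ without needing any orthogonality (this is the regime where $T_0$ fails to lie in the Hilbert space, so no adjustment is necessary). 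Combining the two contributions contradicts the normalization $\|f_n\|=1$.

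The main obstacle is the mismatch between the localized nature of the orthogonality condition (at scale $M$) and the non-localized kernel element $T_0$. The resolution rests on the split just described: (i) at infinity, the natural norm of \eqref{coerciviteA0:hp1} is strictly stronger than the Dirichlet energy in $g$, and this built-in decay already rules out concentration in the direction of $T_0$ without any further assumption; (ii) on the compact set where $\chi_{_M}=1$, the orthogonality $\int T_0^2 g\,\chi_{_M}\,\omega_0/r\,dr=0$ together with a standard Poincaré/Rellich argument kills the only remaining almost-kernel direction, namely the constant mode of $g$. The care needed to handle the cross terms when transferring the coercivity back from $g$ to $f$ (through the expansion $\pa_r(f/T_0)=\pa_r f/T_0-f\pa_r T_0/T_0^2$ and integration by parts against $r^2\pa_r(f^2)$) is routine but must be performed so that the resulting constant $\delta_0$ can be chosen independent of $M$.
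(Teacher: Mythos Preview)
Your factorization identity
\[
-\int_0^\infty f\,\As_0 f\,\frac{\omega_0}{r}\,dr \;=\;\int_0^\infty T_0^2\,\frac{\omega_0}{r}\,\Big|\pa_r\!\Big(\frac{f}{T_0}\Big)\Big|^2 dr
\]
is correct and is a natural starting point. The gap is in the computation that follows: you assert that $T_0$ does not belong to the Hilbert space of \eqref{coerciviteA0:hp1}, but this is false. Since $T_0(r)=r^2/(1+r^2)^2$ and $\omega_0(r)=(1+r^2)^2/8$, one has at infinity
\[
\frac{T_0^2}{1+r^2}\cdot\frac{\omega_0}{r}\;\sim\; \frac{r^{-4}}{r^2}\cdot\frac{r^4}{8r}\;=\;\frac{1}{8r^{3}},\qquad
|\pa_r T_0|^2\cdot\frac{\omega_0}{r}\;\sim\;\frac{4}{r^{6}}\cdot\frac{r^4}{8r}\;=\;\frac{1}{2r^{3}},
\]
both integrable; so $T_0$ has finite norm. (You seem to have dropped the factor $1/(1+r^2)$ in the $L^2$-part of the norm, computing $\int T_0^2\,\omega_0/r\sim\int dr/r$ instead.) This single error breaks the argument in two places: first, the conclusion $c=0$ for the weak limit $f_\star=cT_0$ cannot be drawn from membership alone --- it is precisely here that the localized orthogonality $\int f_\star T_0\chi_{_M}\,\omega_0/r=0$ must be invoked; second, the claimed ``Hardy inequality at infinity without orthogonality'' is false, since for $g\equiv c$ the Dirichlet energy $\int_{r\geq M} T_0^2(\omega_0/r)|\pa_r g|^2$ vanishes while the left-hand side $\int_{r\geq M}(|cT_0|^2/(1+r^2)+|c\,\pa_r T_0|^2)\omega_0/r$ does not.

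The paper's proof avoids this trap by working with a different subcoercivity estimate. After integrating by parts one has the exact identity
\[
\int_0^\infty f\,\As_0 f\,\frac{\omega_0}{r}\,dr \;=\; -\int_0^\infty |\pa_r f|^2\,\frac{\omega_0}{r}\,dr \;+\;\int_0^\infty \frac{f^2}{r}\,dr,
\]
and a direct Hardy inequality (proved separately near $0$ and near $\infty$) gives $\int|\pa_r f|^2\omega_0/r\gtrsim\int f^2/(1+r^2)\cdot\omega_0/r$. Together these yield a subcoercive bound with a \emph{compact} lower-order term $\int f^2/r$, and the compactness argument then proceeds: a normalized minimizing sequence has a weak limit $f_\infty=\lambda T_0$, the orthogonality forces $\lambda=0$, and strong convergence of the compact term produces the contradiction. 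Your factorization approach could be repaired along similar lines, but the orthogonality cannot be relegated to a local Rellich step --- it is what rules out the kernel direction globally.
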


\begin{proof}

\noindent \textbf{Step 1} We first claim the Hardy inequality for any $f$ satisfying \eqref{coerciviteA0:hp1}:
\begin{equation} \label{coerciviteA0:hardy}
\int_0^\infty |\pa_r f|^2\frac{\omega_0}{r}dr\gtrsim \int_0^\infty \frac{f^2}{1+r^2}\frac{\omega_0}{r}dr.
\end{equation}
We first study the function near the origin. If $f$ satisfies \eqref{coerciviteA0:hp1} then by standard one dimensional Sobolev embedding, $f$ is continuous on $[0,1]$. Hence the estimate \eqref{coerciviteA0:hp1} implies $f(0)=0$. From the fundamental Theorem of Calculus (which is justified for $f$ via a standard approximation procedure):
\begin{align*}
\left|f(r)\right|&=\left| f(0)+\int_0^r \pa_r (f)d\tilde r \right|=\left|\int_0^r \pa_r (f)d\tilde r \right| \leq \left( \int_0^r \frac{|\pa_r (f)|^2}{\tilde r}d\tilde r \right)^{\frac 12} \left( \int_0^r \tilde rd\tilde r \right)^{\frac 12}\lesssim r \int_0^\infty |\pa_r f|^2\frac{\omega_0}{\tilde r}d\tilde r.
\end{align*}
This proves that:
\begin{equation} \label{coerciviteA0:inter1}
\int_0^\infty |\pa_r f|^2\frac{\omega_0}{r}dr \gtrsim \int_0^1  \frac{f^2}{1+r^2}\frac{\omega_0}{r}dr.
\end{equation}
Away from the origin, integrating by parts:
$$
-\int_1^\infty \pa_r f f r^2 dr=\int_1^\infty f^2rdr+f^2(1).
$$
By Cauchy-Schwarz and Hardy:
$$
\left| \int_1^\infty \pa_r f f r^2 dr \right| \leq \frac12 \int_1^\infty |\pa_r f|^2r^3dr+\frac 12 \int_1^\infty f^2rdr.
$$
The two above identities then give:
\begin{align*}
\int_1^\infty f^2rdr&\leq -\int_1^\infty \pa_r f f r^2 dr-f^2(1)\leq -\int_1^\infty \pa_r f f r^2 dr \leq \frac12 \int_1^\infty |\pa_r f|^2r^3dr+\frac 12 \int_1^\infty f^2rdr,
\end{align*}
from what one deduces, since for $r\geq 1$, one has $cr^4\leq \omega_0(r)\leq r^4/c$ for some $c>0$:
\begin{equation} \label{coerciviteA0:inter2}
\int_1^\infty f^2rdr \leq  \int_1^\infty |\pa_r f|^2r^3dr\lesssim \int_0^\infty |\pa_r f|^2\frac{\omega_0}{r}dr.
\end{equation}
The two estimates \eqref{coerciviteA0:inter1} and \eqref{coerciviteA0:inter2} imply the desired Hardy inequality \eqref{coerciviteA0:hardy}.\\

\noindent \textbf{Step 2} \emph{Proof of the coercivity estimate}. $\As_0$ has $T_0(r)$ in its kernel, with $T_0$ a strictly positive function on $(0,\infty)$ implies that the spectrum of this self-adjoint operator is nonnegative by standard Sturm-Liouville argument. Hence for any $f$ satisfying \eqref{coerciviteA0:hp1}:
$$
\int_0^\infty f \As_0 f\frac{\omega_0}{r}dr\leq 0 .
$$
Integrating by parts:
\begin{equation} \label{coerciviteA0:inter4}
\int_0^\infty f \As_0 f\frac{\omega_0}{r} dr=-\int_0^\infty |\pa_r f|^2\frac{\omega_0}{r}dr+\int_0^\infty f^2\frac{dr}{r}.
\end{equation}
Combining this and Step 1 gives that for some $c,C>0$, for any $f$ satisfying \eqref{coerciviteA0:hp1}:
\begin{equation} \label{coerciviteA0:inter3}
\int_0^\infty f \As_0 f\frac{\omega_0}{r} dr \leq -c \int_0^\infty  \left(\frac{|f|^2}{1+r^2}+|\pa_r f|^2\right)\frac{\omega_0}{r} dr+C\int_0^\infty \frac{f^2}{r}dr.
\end{equation}
We now assume by contradiction that there exists a sequence of functions $f_n$ with
\begin{equation} \label{coerciviteA0:hpfn}
\int_0^{\infty} \left(\frac{|f_n|^2}{1+r^2}+|\pa_r f_n|^2\right)\frac{\omega_0(r)}{r}dr=1, 
\end{equation}
and 
$$ \int_0^\infty f_n \As_0 f_n\frac{\omega_0}{r} dr\rightarrow 0, \ \ \int_0^\infty f_n(r)\chi(r) T_0(r)dr=0.$$
Up to a subsequence, $f_n$ converges weakly in $H^1_{loc}$ and strongly in $L^2_{loc}$ to some function $f_\infty$. By lower-semicontinuity of the above norm, and by strong continuity in $L^2_{loc}$ it satisfies:
$$
\int_0^{\infty} \left(\frac{|f_\infty|^2}{1+r^2}+|\pa_r f_\infty|^2\right)\frac{\omega_0(r)}{r}dr\leq 1, \ \ \int_0^\infty f_{\infty}(r)\chi(r) T_0(r)dr=0.
$$
From the bound $\int_0^{\infty} |f_\infty|^2\omega_0/(r+r^3)dr\leq 1$ and the strong continuity in $L^2_{loc}$ one has:
$$
\int_0^\infty \frac{f_n^2}{r}dr\rightarrow \int_0^\infty \frac{f_\infty^2}{r}dr.
$$
The subcoercivity \eqref{coerciviteA0:inter3} and the first bound in \eqref{coerciviteA0:hpfn} imply $\int_0^\infty \frac{f_n^2}{r}dr\geq c$ for some $c>0$. From the above strong convergence this implies:
$$
\int_0^\infty \frac{f_\infty^2}{r}dr \geq c >0.
$$
From \eqref{coerciviteA0:inter4}, \eqref{coerciviteA0:hpfn}, the aforementioned strong convergence and lower semi-continuity:
\begin{align*}
0\leq -\int_0^\infty f \As_0 f\frac{\omega_0}{r}&=\int_0^\infty |\pa_r f|^2\frac{\omega_0}{r}dr-\int_0^\infty f^2\frac{dr}{r}\leq \liminf\left( \int_0^\infty |\pa_r f_n|^2\frac{\omega_0}{r}dr-\int_0^\infty f_n^2\frac{dr}{r} \right)=0.
\end{align*}
Hence $\int_0^\infty f \As_0 f\frac{\omega_0}{r}=0$. Hence $f_\infty=\lambda T_0(r)$, for some $\lambda \neq 0$, which contradicts with $\int_0^\infty f_{\infty}\chi T_0=0$. This concludes the proof of Lemma \ref{lemm:coerA0}.
\end{proof}

We also have the following coercivity estimate for $\As_0$.
\begin{lemma}[Coercivity of $\As_0$] \label{lemm:coerA02} Assume that $f:[0,\infty)\rightarrow \mathbb R$ satisfies
\begin{equation*}
\int_0^{\infty} \left(|\pa^2_r f|^2 + \frac{|\pa_r f|^2}{1 + r^2} + \frac{|f|^2}{(1+r^4)(1  +\ln ^2\la r \ra)}\right)\frac{\omega_0(r)}{r}dr<\infty,
\end{equation*}
and 
$$\int_0^\infty f(r)T_0(r) \chi_{_M}(r) \frac{\omega_0}{r} dr=0.$$
Then, there exists a constant $\delta_1>0$ such that
$$
\int_0^\infty |\As_0 f|^2\frac{\omega_0}{r} dr \geq \delta_1 \int_0^\infty  \left(|\pa^2_r f|^2 + \frac{|\pa_r f|^2}{1 + r^2} + \frac{|f|^2}{(1+r^4)(1  +\ln ^2\la r\ra )}\right)\frac{\omega_0(r)}{r} dr.
$$
\end{lemma}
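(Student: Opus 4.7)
My proof plan mirrors the proof of the first-order coercivity Lemma~\ref{lemm:coerA0}: I will first establish a \emph{subcoercivity} estimate bounding the target weighted $H^2$ norm of $f$ by $\|\As_0 f\|_{L^2_{\omega_0/r}}$ plus a lower-order local $L^2$ remainder, and then remove the remainder via a compactness/contradiction argument exploiting the orthogonality condition to $T_0$.

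For the subcoercivity, setting $g=\As_0 f$, the explicit inversion formula \eqref{def:invAs} yields the decomposition $f = \As_0^{-1}g + c_0\,T_0$, where the coefficient of the second kernel element $\tilde\psi_0$ is forced to vanish by integrability at the origin. Cauchy--Schwarz applied to the integral expressions for $\As_0^{-1}g$ and $\pa_r \As_0^{-1}g$ (noting the cancellation of the two boundary contributions in the derivative already used below \eqref{def:invAs}), together with the explicit asymptotics of $\psi_0$ and $\tilde\psi_0$ from \eqref{def:psi01}, gives the pointwise bounds
\[ |f(r)| + |r\,\pa_r f(r)| \lesssim \|g\|_{L^2_{\omega_0/r}}\,\frac{r^2}{1+r^2}\sqrt{\ln\langle r\rangle} + |c_0|\,\frac{r^2}{1+r^2}, \]
exactly as in Step~2 of the derivation of \eqref{pointwisemerefined}. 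Projecting $f=\As_0^{-1}g+c_0 T_0$ against $T_0\chi_M\omega_0/r$ and using the orthogonality hypothesis then controls $|c_0|\lesssim \|g\|_{L^2_{\omega_0/r}}$ via Cauchy--Schwarz on the compact support of $\chi_M$. Integrating the pointwise bound against the weights $\omega_0/(r(1+r^4)(1+\ln^2\langle r\rangle))$ and $\omega_0/(r(1+r^2))$ respectively controls the $f$ and $\pa_r f$ parts of the target norm. The second-order part is then recovered directly from the equation $f''=\As_0 f - ((Q-1)/r)\pa_r f - (Q'/r) f$: the coefficients $(Q-1)/r$ and $Q'/r$ have $O(r^{-1})$ and $O(1)$ behaviour at the origin and $O(r^{-1}),O(r^{-4})$ decay at infinity, so after squaring and weighting by $\omega_0/r$ their products with $|\pa_r f|^2$ and $|f|^2$ fit inside the weighted norms of $\pa_r f$ and $f$ already controlled, once one absorbs the near-origin $r^{-1}$ singularity using Hardy's inequality \eqref{coerciviteA0:hardy} together with the vanishing $f(0)=0$ implied by $f\in L^2_{\omega_0/r}$.

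The subcoercivity so obtained implies that if $(f_n)$ is a sequence with unit target norm but $\|\As_0 f_n\|_{L^2_{\omega_0/r}}\to 0$, then the local $L^2$ norm of $f_n$ stays bounded below by a positive constant; up to extracting a subsequence, $f_n\rightharpoonup f_\infty$ weakly in the target weighted $H^2$ space and strongly in $L^2_{\mathrm{loc}}$, with $\As_0 f_\infty = 0$ and $\int_0^\infty f_\infty T_0\chi_M \omega_0/r\,dr=0$. The first identity forces $f_\infty=\lambda T_0$ (the only kernel element of $\As_0$ regular at the origin and lying in the target space), and the second forces $\lambda=0$, contradicting the strict positivity of $\lim\|f_n\|_{L^2_{\mathrm{loc}}}$. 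I expect the main technical obstacle to be the weighted control of $\pa_r^2 f$ near the origin, where the singular coefficient $(Q-1)/r\sim -1/r$ in the equation for $f''$ forces a delicate use of \eqref{coerciviteA0:hardy} applied to $\pa_r f$ in order to absorb the $r^{-1}$ factor; the contradiction step and the pointwise bounds themselves are direct adaptations of arguments already present in the excerpt.
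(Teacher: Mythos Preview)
Your inversion-formula approach is different from the paper's route (which follows Lemma~\ref{lemm:coerA0} verbatim, replacing the standard Hardy inequality by the \emph{critical} Hardy inequality
\[
\int_0^\infty \frac{|\pa_r f|^2}{1+r^2}\,\frac{\omega_0}{r}\,dr \;\gtrsim\; \int_0^\infty \frac{|f|^2}{(1+r^4)(1+\ln^2\langle r\rangle)}\,\frac{\omega_0}{r}\,dr,
\]
and then running the same subcoercivity/compactness scheme). More importantly, your argument has a genuine gap at infinity.

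The Cauchy--Schwarz pointwise bound you invoke gives, for large $r$,
\[
|\As_0^{-1}g(r)|\lesssim \|g\|_{L^2_{\omega_0/r}}\sqrt{\ln r},\qquad
|\pa_r\As_0^{-1}g(r)|\lesssim \|g\|_{L^2_{\omega_0/r}}\,r^{-1},
\]
(the first from $\bigl(\int_0^r \zeta^3/\omega_0\,d\zeta\bigr)^{1/2}\sim\sqrt{\ln r}$, the second from $\psi_0'\sim r^{-3}$ times $\bigl(\int_1^r \zeta^3\,d\zeta\bigr)^{1/2}\sim r^2$). Squaring and integrating against the target weights for $f$ and $\pa_r f$ yields, for large $r$,
\[
\int^\infty \frac{\ln r}{r^4\ln^2 r}\,r^3\,dr=\int^\infty\frac{dr}{r\ln r}
\quad\text{and}\quad
\int^\infty \frac{r^{-2}}{r^2}\,r^3\,dr=\int^\infty\frac{dr}{r},
\]
both logarithmically divergent. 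So the step ``integrating the pointwise bound against the weights \ldots\ controls the $f$ and $\pa_r f$ parts'' fails. The loss is intrinsic to the pointwise Cauchy--Schwarz: its equality case is $g\propto \zeta^2/\omega_0=8T_0$, which is \emph{not} in $L^2_{\omega_0/r}$, so the bound is never saturated but is uniformly one half-log too weak. (Concretely, for $g_R=r^{-2}\mathbf 1_{[1,R]}$ one has $\|g_R\|^2=\ln R$ and the true weighted norms of $\As_0^{-1}g_R$ are $\sim\ln R$, yet your bound predicts $\infty$.)

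To close the argument you would need an $L^2\!\to\!L^2$ estimate on the integral operator $\As_0^{-1}$ directly (e.g.\ a Schur-type bound on its kernel in the weighted spaces), not a pointwise one. The paper sidesteps this entirely: the critical Hardy inequality above is exactly the sharp weighted $L^2$ statement that absorbs the logarithm, and combined with the first-order coercivity of Lemma~\ref{lemm:coerA0} it yields the subcoercivity without ever inverting $\As_0$. Your compactness step and the recovery of $\pa_r^2 f$ from the equation are fine once the lower-order terms are under control, but as written the subcoercivity does not go through.
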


\begin{proof}Since the proof follows exactly the same lines to the one of Lemma \ref{lemm:coerA0}, apart from the following Hardy inequality (in the critical case) which is used to obtain a sub-coercivity estimate,
\begin{equation}
\int_0^{+\infty} \frac{|\pa_r f|^2}{1 + r^2} \frac{\omega_0}{r}dr \geq C \int_0^{+\infty} \frac{|f|^2}{(1 + r^4)(1 + \ln^2\la r\ra )} \frac{\omega_0}{r}dr. 
\end{equation}
so we omit the proof. 
\end{proof}

%%      Type body of appendix/-ices here.

%%      ---------------------------------------------------------------------
%%      ---------------------------ACKNOWLEDGMENTS (OPTIONAL) ---------------
%%      ---------------------------------------------------------------------

%% ***** UNCOMMENT THE FOLLOWING LINE TO ADD ACKNOWLEDGMENTS.

% \ack 

%%      Type acknowledgments here.

%%      ---------------------------------------------------------------------
%%      --------------------------- BIBLIOGRAPHY ----------------------------
%%      ---------------------------------------------------------------------

\frenchspacing
\bibliographystyle{cpam}
\def\cprime{$'$}

%%      For each reference, provide the following information:

% \bibitem{ *** LABEL *** }             %% Give a reference label.
% * Name(s) of Author(s) *              %% Enter author(s) names.
% EXAMPLE:  Gray, M., Black, F., and White, A.          

%%      Use the following template for a journal article:
% * Title of article *.                 %% Example: Existence and uniqueness.
% \textit{* Abbreviated journal name *} %% Example: \textit{Comm. Pure Appl. Math.}
% \textbf{* Volume number *}            %% Example: \textbf{72}
% (* Year of publication *),            %% Example: (1993),
% * Issue number [optional],            %% Example: no. 6,
% * Page range *.                       %% Example: 675--690.
                                
%%      Use the following template for a book:
% \textit{* Title of book *}.           %% Example: \textit{Ancient Topology}.
% * Publisher *,                        %% Example: Wiley-Interscience,
% * City of publisher *,                %% Example: New York,
% * Year of publication *.              %% Example: 1993.

%%      ---------------------------------------------------------------------
%%      ------------------------ CONTACT INFORMATION ------------------------
%%      ---------------------------------------------------------------------

%      Place contact information for each author between
%      the \begin{comment} and \end{comment} commands. Include
%      preferred mailing address and e-mail addresses. Please note
%      that these will not print. We will format them for printing
%      during the editing stage.

\end{document}